\newcommand{\ctilde}{\raise.17ex\hbox{$\scriptstyle\sim$}}
\newcommand{\upcite}[1]{\textsuperscript{\textsuperscript{\cite{#1}}}}
\newcommand{\R}{\mathbb{R}}
\newcommand{\C}{\mathbb{C}}
\newcommand{\I}{\mathrm{i}\,}
\newcommand{\rovlp}{[\hspace{-0.08em}\rangle}
\newcommand{\rovlpint}{[\hspace{-0.06em}\hspace{-0.14em}\raisebox{0.055em}{\hbox{\tiny$\bullet$}}\hspace{-0.25em}\rangle}
\newcommand{\lovlp}{\langle\hspace{-0.08em}]}
\newcommand{\lovlpint}{\langle\hspace{-0.14em}\hspace{-0.14em}\raisebox{0.055em}{\hbox{\tiny$\bullet$}}\hspace{-0.18em}]}
\newcommand{\all}{\langle\hspace{-0.07em}\rule[-0.21em]{0.2em}{0.04em}\hspace{-0.2em}\rule[0.5em]{0.2em}{0.04em}\hspace{-0.06em}\rangle}
\newcommand{\allint}{\langle\hspace{-0.07em}\rule[-0.21em]{0.2em}{0.04em}\hspace{-0.2em}\rule[0.5em]{0.2em}{0.04em}\hspace{-0.06em}\hspace{-0.22em}\raisebox{0.055em}{\hbox{\tiny$\bullet$}}\hspace{-0.16em}\rangle}
\newcommand{\thetaint}{[\hspace{-0.188em}\raisebox{0.06em}{\hbox{\tiny$\bullet$}}\hspace{-0.21em}]}
\newcommand{\Allint}{\langle\hspace{-0.07em}\rule[-0.21em]{0.2em}{0.04em}\hspace{-0.2em}\rule[0.5em]{0.2em}{0.04em}\hspace{-0.06em}\hspace{-0.15em}\raisebox{0.055em}{\hbox{\tiny$\bullet$}}\hspace{-0.1em}\rangle}
\newcommand{\Lovlpint}{\langle\hspace{-0.14em}\hspace{-0.04em}\raisebox{0.055em}{\hbox{\tiny$\bullet$}}\hspace{-0.14em}]}
\newcommand{\Rovlpint}{[\hspace{-0.14em}\raisebox{0.055em}{\hbox{\tiny$\bullet$}}\hspace{-0.04em}\hspace{-0.14em}\rangle}
\newcommand{\Thetaint}{[\hspace{-0.16em}\raisebox{0.06em}{\hbox{\tiny$\bullet$}}\hspace{-0.14em}]}
\title{A Class of Iterative Solvers for the Helmholtz Equation: Factorizations,
  Sweeping Preconditioners, Source Transfer, Single Layer Potentials, Polarized
  Traces, and Optimized Schwarz Methods}
\author{Martin J. Gander\thanks{Section of Mathematics, University of Geneva,
    1211 Geneva 4, Switzerland (\email{martin.gander@unige.ch}). He thanks the
    Swiss National Science Foundation SNF (200020\_168999/1).} \and Hui
  Zhang\thanks{Corresponding author. Key Laboratory of Oceanographic Big Data
    Mining \& Application of Zhejiang Province, Zhejiang Ocean University,
    Zhoushan 316022, China (\email{huiz@zjou.edu.cn}). He thanks Zhejiang
    Provincial Natural Science Foundation of China (LY17A010014), Natural
    Science Foundation of China (11671074, 11771313, 11771398) and research
    start funding of ZJOU.}}
\date{\today}
\begin{document}
\maketitle

\begin{abstract}
  Solving time-harmonic wave propagation problems by iterative methods
  is a difficult task, and over the last two decades, an important
  research effort has gone into developing preconditioners for the
  simplest representative of such wave propagation problems, the
  Helmholtz equation. A specific class of these new preconditioners
  are considered here. They were developed by researchers with various
  backgrounds using formulations and notations that are very
  different, and all are among the most promising preconditioners for
  the Helmholtz equation.

  The goal of the present manuscript is to show that this class of
  preconditioners are based on a common mathematical principle, and
  they can all be formulated in the context of domain decomposition
  methods called optimized Schwarz methods. This common formulation
  allows us to explain in detail how and why all these methods
  work. The domain decomposition formulation also allows us to avoid
  technicalities in the implementation description we give of these
  recent methods.

  The equivalence of these methods with optimized Schwarz methods
  translates at the discrete level into equivalence with approximate
  block LU decomposition preconditioners, and we give in each case the
  algebraic version, including a detailed description of the
  approximations used. While we chose to use the Helmholtz equation
  for which these methods were developed, our notation is completely
  general and the algorithms we give are written for an arbitrary
  second order elliptic operator. The algebraic versions are even more
  general, assuming only a connectivity pattern in the discretization
  matrix.

  All these new methods studied here are based on sequential decompositions
  of the problem in space into a sequence of subproblems, and they have in their
  optimal form the property to lead to nilpotent iterations, like an exact block
  LU factorization. Using our domain decomposition formulation, we finally
  present an algorithm for two dimensional decompositions, i.e. decompositions
  that contain cross points, which is still nilpotent in its optimal form. Its
  approximation is currently an active area of research, and it would have been
  difficult to discover such an algorithm without the domain decomposition
  framework.
  
\end{abstract}


%
%
\section{Introduction}

Solving the Helmholtz equation numerically for moderate to high
wavenumbers is a difficult task, and very different from solving
Laplace-like problems. This is for three main reasons: first,
Helmholtz problems are often posed on unbounded domains, which have to
be artificially truncated to perform computations on finite computers,
or by using Green's functions, see e.g. \cite{Nedelec, CGLS}. Second,
one needs to have a much higher mesh resolution than what would be
required to represent the solution because of the so called pollution
effect \cite{Ihlen, BabuSaut, Zhu}. And finally, one has then to solve
the resulting very large scale system of linear equations, for which
classical iterative methods are not suitable \cite{EG, Erlangga}.  Our
focus here is on a class of recently developed novel iterative solvers
for the Helmholtz equation based on sequential decompositions in
space. Many research groups around the world have focused on
developing such solvers, and for practitioners, and even specialists,
it is not easy to keep up with these developments, and to grasp
important novelties in these methods. The main reason for this is that
these new methods sometimes are formulated at the continuous level,
sometimes at the discrete level, sometimes using integral formulations
and sometimes volume discretizations, and the groups developing these
methods come from different backgrounds and use different motivations,
intuitions and notations when formulating their methods. Furthermore,
most of these new methods require absorbing boundary conditions or
perfectly matched layers for their formulation, which are ingredients
that are not commonly encountered for classical iterative methods for
Laplace-like problems.

The purpose of the present manuscript is to first describe in simple
terms the main underlying fundamental algorithms for the new class of
methods based on sequential decompositions in space. At the continuous
level, the underlying algorithms are optimal and optimized Schwarz
methods, and at the discrete level, the underlying algorithms are
exact and approximate block LU factorizations. This first, relatively
short part is giving the main insight needed to understand the new
Helmholtz solvers in a simplified and non-technical setting. In the
second, main part, we then rigorously show how this new class of
Helmholtz solvers are tightly related. To do so, we introduce a
compact notation that allows us to formulate all these new techniques,
and we give for each one first the original formulation given by the
authors, and then equivalent formulations at the continuous and
discrete level in the form of the fundamental underlying algorithms,
for which we can prove equivalence results. We hope that our
manuscript will help people working in this challenging area of
numerical analysis to rapidly understand this new class of
algorithms and their potential.

\section{Underlying Fundamental Algorithms}\label{SecUnderlying}

We start by explaining two fundamental algorithms which are very much related,
one at the discrete level and one at the continuous level.  These algorithms are
the key ingredient in all the recent iterative methods proposed for the
Helmholtz equation. Even though these algorithms can be formulated for other
partial differential equations and all our equivalence results still hold, we
use here first the Helmholtz equation in its simplest form to explain them,
namely
\begin{equation}\label{HelmholtzSimple}
  (\Delta+k^2) u = f\qquad \mbox{in $\Omega:=(0,1)\times(0,1)$},
\end{equation}
with suitable boundary conditions to make the problem well posed\footnote{We use
  this simplest form of the Helmholtz equation only here at the beginning, and
  treat in the main part the more complete formulation given in \Cref{eq:pde}.}.
Discretizing \Cref{HelmholtzSimple} using a standard five point finite
difference discretization for the Laplacian on an equidistant grid leads to the
linear system of equations
\begin{equation}\label{eq:trilP}
  \left[
    \begin{array}{ccccc}
      D_{1} & U_{1} &  &  &  \\ 
      L_{1} & D_{2} & U_{2} & & \\
      & \ddots & \ddots & \ddots & \\
      &  & L_{J-2} & D_{J-1} & U_{J-1}\\
      & & & L_{J-1} & D_J
    \end{array}
  \right]
  \left[
    \begin{array}{c}
      \mathfrak{u}_1 \\ \mathfrak{u}_{2} \\ \vdots \\ \mathfrak{u}_{J-1} \\
    \mathfrak{u}_{J}
    \end{array}
  \right]
  =
  \left[
    \begin{array}{c}
      \mathfrak{f}_1 \\ \mathfrak{f}_{2} \\ \vdots \\ \mathfrak{f}_{J-1} \\
      \mathfrak{f}_J
    \end{array}
  \right],
\end{equation}
where $D_j=\mathrm{tridiag}\,(\frac{1}{h^2},-\frac{4}{h^2}+k^2,\frac{1}{h^2})$
\footnote{We assume here homogeneous Dirichlet boundary conditions and well-posedness for simplicity at the beginning, see \Cref{sec:solvers} for more information.},
$L_j=U_j=\mathrm{diag}\,(\frac{1}{h^2})$. The block LU factorization of
the coefficient matrix in \Cref{eq:trilP} is given by
\begin{equation}\label{Factors}
  A=
  \arraycolsep0.35em
  \left[
    \begin{array}{ccccc}
      T_{1} &  &  &  &  \\
      L_{1} & T_{2} &  & & \\
      & \ddots & \ddots &  & \\
      &  & L_{J-2} & T_{J-1} & \\
      & & & L_{J-1} & T_J
    \end{array}
  \right]
  \left[
    \begin{array}{ccccc}
      I_{1} & T_1^{-1}U_{1} &  &  &  \\
      & I_{2} & T_2^{-1}U_{2} & & \\
      &  & \ddots & \ddots & \\
      &  & & I_{J-1} & T_{J-1}^{-1}U_{J-1}\\
      & & & & I_J
    \end{array}
  \right],
\end{equation}
where $T_j$'s are the Schur complements\footnote{We also assume here
for simplicity at the beginning that the $T_j$'s are
    invertible.} that satisfy the recurrence relation
\begin{equation}\label{eq:TjP}
  T_1 = D_1, \quad T_j = D_j - L_{j-1}T_{j-1}^{-1}U_{j-1} \mbox{ for }j\geq 2,
\end{equation}
as one can see by simply multiplying the two factors in
\Cref{Factors} and comparing with the original matrix in
\Cref{eq:trilP}.  Using this factorization, we can solve
\Cref{eq:trilP} by first solving by forward substitution the block lower
triangular system
\begin{equation}\label{ForwardSubstitution}
  \left[
    \begin{array}{ccccc}
      T_{1} &  &  &  &  \\
      L_{1} & T_{2} &  & & \\
      & \ddots & \ddots &  & \\
      &  & L_{J-2} & T_{J-1} & \\
      & & & L_{J-1} & T_J
    \end{array}
  \right]
  \left[
    \begin{array}{c}
      \mathfrak{v}_1 \\ \mathfrak{v}_{2} \\ \vdots \\ \mathfrak{v}_{J-1} \\ \mathfrak{v}_{J}
    \end{array}
  \right]
  =
  \left[
    \begin{array}{c}
      \mathfrak{f}_1 \\ \mathfrak{f}_{2} \\ \vdots \\ \mathfrak{f}_{J-1} \\
      \mathfrak{f}_J
    \end{array}
  \right],
\end{equation}
and then solving by backward substitution the block upper triangular system
\begin{equation}\label{BackwardSubstitution}
  \left[
    \begin{array}{ccccc}
      I_{1} & T_1^{-1}U_{1} &  &  &  \\
      & I_{2} & T_2^{-1}U_{2} & & \\
      &  & \ddots & \ddots & \\
      &  & & I_{J-1} & T_{J-1}^{-1}U_{J-1}\\
      & & & & I_J
    \end{array}
  \right]
    \left[
    \begin{array}{c}
      \mathfrak{u}_1 \\ \mathfrak{u}_{2} \\ \vdots \\ \mathfrak{u}_{J-1} \\
    \mathfrak{u}_{J}
    \end{array}
  \right]
  =
  \left[
    \begin{array}{c}
      \mathfrak{v}_1 \\ \mathfrak{v}_{2} \\ \vdots \\ \mathfrak{v}_{J-1} \\
      \mathfrak{v}_J
    \end{array}
  \right].
\end{equation}
This shows that one forward sweep (forward substitution) and one
backward sweep (backward substitution) are enough to solve the linear
system, and this is the fundamental underlying idea of the new
`sweeping algorithms' for the Helmholtz equation mentioned in the
title. This becomes a preconditioner, if the block LU factorization is
approximated by using approximate Schur complement matrices instead of
the exact ones. If we use the exact ones, then the iteration would
converge in one step and thus the iteration matrix is nilpotent of
degree (or index) one. One can however already see another one of
  the new algorithms here by taking a closer look at the forward
substitution in \Cref{ForwardSubstitution}: solving the first
equation, and substituting into the second one, and the result
obtained into the third one, and so on, we get
\begin{equation}\label{SourceTransferIdea}
  \begin{array}{rccccccl}
    \mathfrak{v}_1&=&T_1^{-1}\mathfrak{f}_1,\hfill&\\
    \mathfrak{v}_2&=&T_2^{-1}(\mathfrak{f}_2-L_{1}\mathfrak{v}_1)
             &=&T_2^{-1}(\mathfrak{f}_2-L_{1}T_1^{-1}\mathfrak{f}_1)
             &=:&T_2^{-1}\tilde{\mathfrak{f}}_2,\\
    \mathfrak{v}_3&=&T_3^{-1}(\mathfrak{f}_3-L_{2}\mathfrak{v}_2)
             &=&T_3^{-1}(\mathfrak{f}_3-L_{2}T_2^{-1}\tilde{\mathfrak{f}}_2)
             &=:&T_3^{-1}\tilde{\mathfrak{f}}_3,\\
             &\vdots& &\vdots& &\vdots& 
  \end{array}
\end{equation}
where we introduced new source terms
$\tilde{\mathfrak{f}}_2:=\mathfrak{f}_2-L_{1}T_1^{-1}\mathfrak{f}_1$,
$\tilde{\mathfrak{f}}_3:=\mathfrak{f}_3-L_{2}T_2^{-1}\tilde{\mathfrak{f}}_2$,\ldots
to make the solve for $\mathfrak{v}_2$, $\mathfrak{v}_3$, \ldots look
like the first solve for $\mathfrak{v}_1$.  These new source terms
contain a transferred source term from the previous line,
$$
  \tilde{\mathfrak{f}}_j:=\mathfrak{f}_j-L_{j-1}T_{j-1}^{-1}\tilde{\mathfrak{f}}_{j-1}, 
$$
which is the feature that led to the so called `source transfer'
methods mentioned in the title. Note that $\mathfrak{v}_J=\mathfrak{u}_J$,
so after the forward substitution, the last set of unknowns is already
the exact solution, a property that will be used later by some algorithms.

In the form we presented the block LU decomposition, the diagonal
blocks only contained one grid line of unknowns, but one could also
collect several grid lines into one block. This suggests to
look at the problem at the continuous level, where we decompose the
domain into subdomains, as illustrated in \Cref{DomDecFig}.
\begin{figure}
  \centering
  \includegraphics[width=\textwidth,trim=0 10 0 0,clip]{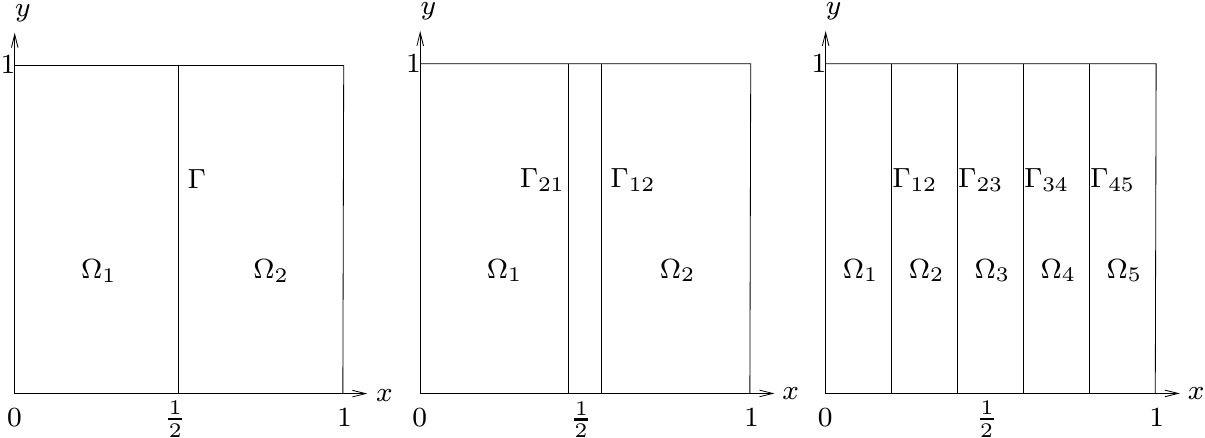}
  \caption{Domain decompositions of the unit square: without or with overlap,
    and many subdomains}
  \label{DomDecFig}
\end{figure}
For the first decomposition on the left, which would correspond to only two
large diagonal blocks at the discrete level, an optimal parallel Schwarz
algorithm is given by the iterative method (see \cite{NRS94, EZ98}) for
  arbitrary initial guess $u_1^0$, $u_2^0$,
\begin{equation}\label{OptSchwarzNoOvlp}
  \begin{array}{rcll}
  (\Delta+k^2)u_1^n & = & f \qquad &\mbox{in $\Omega_{1}$},\\
  \partial_{\mathbf{n}_1} u_1^n+\mbox{DtN}_1(u_1^n) & = & 
    \partial_{\mathbf{n}_1} u_2^{n-1}+\mbox{DtN}_1(u_2^{n-1}) & \mbox{on $\Gamma$},\\
  (\Delta+k^2)u_2^n & = & f \qquad &\mbox{in $\Omega_2$},\\
  \partial_{\mathbf{n}_2} u_2^n+\mbox{DtN}_2(u_2^n) & = & 
    \partial_{\mathbf{n}_2} u_1^{n-1}+\mbox{DtN}_2(u_1^{n-1}) & \mbox{on $\Gamma$},\\
  \end{array}
\end{equation}
where $\partial_{\mathbf{n}_j}$ denotes the outward normal derivative
for subdomain $\Omega_j$, and $\mbox{DtN}_j$ is the {\it
  Dirichlet-to-Neumann} operator taking Dirichlet interface data on
the interface $\Gamma$ and returning the outward normal derivative of
the corresponding solution of the Helmholtz equation on the exterior
of the subdomain $\Omega_j$. This algorithm converges in two
iterations, and thus the iteration operator is nilpotent of degree
two, as one can easily understand as follows: denoting by
$e_j^n:=u-u_j^n$ the error at iteration $n$, this error satisfies by
linearity the same equation as $u_j^n$, but with zero right hand side,
$f=0$. Since after the first iteration, the error $e_j^1$ satisfies
the equation in subdomain $\Omega_j$, its normal derivative at the
interface will exactly be canceled by the Dirichlet-to-Neumann
operator result when evaluating the right hand side on the interface
$\Gamma$ for the second iteration\footnote{The right hand side on the
  interface is in fact an exact or transparent boundary condition for
  the neighboring subdomain.}. The error $e_j^2$ then satisfies the
equation in $\Omega_j$ with homogeneous data and thus by uniqueness is
zero, the algorithm has converged.

The optimal parallel Schwarz algorithm in \Cref{OptSchwarzNoOvlp} can
also be run with overlap, as indicated in \Cref{DomDecFig} in
the middle, i.e.
\begin{equation}
  \begin{array}{rcll}
  (\Delta+k^2)u_1^n & = & f \qquad &\mbox{in $\Omega_{1}$},\\
  \partial_{\mathbf{n}_1} u_1^n+\mbox{DtN}_1(u_1^n) & = & 
    \partial_{\mathbf{n}_1} u_2^{n-1}+\mbox{DtN}_1(u_2^{n-1}) & \mbox{on $\Gamma_{12}$},\\
  (\Delta+k^2)u_2^n & = & f \qquad &\mbox{in $\Omega_2$},\\
  \partial_{\mathbf{n}_2} u_2^n+\mbox{DtN}_2(u_2^n) & = & 
    \partial_{\mathbf{n}_2} u_1^{n-1}+\mbox{DtN}_2(u_1^{n-1}) & \mbox{on $\Gamma_{21}$}.\\
  \end{array}
\end{equation}
The overlap has no influence on the two step convergence property of
the optimal parallel Schwarz method\footnote{This will be different if
  one uses approximations of the DtN operators, as we will see.}. With
$J$ subdomains, as indicated in \Cref{DomDecFig} on the right,
the corresponding optimal parallel Schwarz algorithm
\begin{equation}
  \begin{array}{rcll}
  (\Delta+k^2)u_j^n & = & f \qquad &\mbox{in $\Omega_{j}$},\\
  \partial_{\mathbf{n}_j} u_j^n+\mbox{DtN}_{j}(u_j^n) & = & 
    \partial_{\mathbf{n}_j} u_{j+1}^{n-1}+\mbox{DtN}_{j}(u_{j+1}^{n-1}) & \mbox{on $\Gamma_{j,j+1}$},\\
  \partial_{\mathbf{n}_j} u_j^n+\mbox{DtN}_{j}(u_j^n) & = & 
    \partial_{\mathbf{n}_j} u_{j-1}^{n-1}+\mbox{DtN}_{j}(u_{j-1}^{n-1}) & \mbox{on $\Gamma_{j,j-1}$},\\
  \end{array}
\end{equation}
converges in $J$ iterations \cite{NRS94}, and thus the iteration operator is
nilpotent of degree $J$. At the discrete level, this result was also presented
by F.-X. Roux at the IMACS conference in 2001. If we however organize the solves
in sweeps, starting on the leftmost subdomain and going to the rightmost
subdomain and back, i.e. we sweep once forward and once backward, the algorithm
converges in one such double sweep, independently of the number of subdomains
$J$, and thus the sweeping iteration operator becomes nilpotent of degree
one. This algorithm is in fact the continuous analog of the block LU
factorization then, with just a small modification that the DtN transmission
condition on the right is replaced by the Dirichlet condition, as we will see
later in \Cref{sec:lu}. Optimized Schwarz methods use approximations of the
Dirichlet-to-Neumann operator and thus the transparent boundary condition, in
the same spirit as approximate block LU factorizations use approximations to the
Schur complement matrices. Well known such approximations are {\it absorbing
  boundary conditions} (ABCs, c.f. \cite{EM1,EM2,HagRev1,HanWu}) and {\it
  perfectly matched layers} (PMLs, c.f. \cite{Berenger,Chew,HPR,DGK16}).

\section{The Helmholtz equation}
\label{sec:helm}

To be able to explain the details of recent iterative solvers for the
Helmholtz equation, we need to consider a slightly more general
Helmholtz equation than the simple model problem in
\Cref{HelmholtzSimple}, namely
\begin{equation}\label{eq:pde}
  \mathcal{L}\, u:=-\nabla^T(\alpha\nabla u)
    -\frac{\omega^2}{\kappa}\,u = f \mbox{\ in\ } \Omega,\quad
  \mathcal{B}\,u = g \mbox{\ on\ } \partial\Omega,
\end{equation}
where $\omega\in\C,$ $\Omega\subset\R^d$, $d=2,3$, is a bounded Lipschitz
domain, the coefficient matrix $\alpha$, the scalar field $\kappa$ and the
source $f$ are all given complex-valued quantities varying on $\Omega$, and the
unkown function $u$ on $\Omega$ is to be sought. In addition to the truly
physical part, the domain $\Omega$ may contain also artificial layers, for
example representing PMLs. The boundary condition $\mathcal{B}\,u=g$ is
piece-wise defined on
$\partial\Omega=\overline{\Gamma}_D\cup\overline{\Gamma}_R\cup\overline{\Gamma}_V$
as follows\footnote{For simplicity, we assume either $\Gamma_V=\emptyset$ or
    $\Gamma_R=\emptyset$ and that $\Gamma_V$ has no edges and
    corners.}:
\begin{align}
  &u=g_D,\quad \mbox{on }\Gamma_D,\label{eq:BD}\\
  &\mathbf{n}^T(\alpha\nabla u)+p_0u=g_R,
  \quad\mbox{on }\Gamma_R,\label{eq:BR}\\
  &\mathbf{n}^T(\alpha\nabla u) + q_0u + p_1\mathbf{n}^T(\alpha\nabla_Su)
  -\nabla_S^T\left(q_1\Pi_S(\alpha\mathbf{n})u+p_2\alpha\nabla_Su\right)=g_V,
  \quad\mbox{on }\Gamma_V,\label{eq:BV}
\end{align}%
where $\mathbf{n}$ is the unit outer normal vector, $\nabla_S$ is the surface
gradient, $p_0,$ $p_1, q_1, p_2$ are complex-valued functions, and $\Pi_{S}$ is
the orthogonal projection onto the tangential plane of the surface.  Besides
acoustic waves, the Helmholtz \Cref{eq:pde} is also used to describe
electromagnetics, where it can be derived from Maxwell's equations, see
\cite{Nedelec}.

\begin{example}
  In a typical geophysical application, $\Omega$ is a rectangle in
  $\R^2$ or a box in $\R^3,$ obtained from truncation of the free
  space problem. The original time domain equation in free space is
  given by
  \begin{displaymath}
    \frac{1}{c^2}\frac{\partial^2\tilde{u}}{\partial t^2} -
    \rho\nabla_{\mathbf{x}}^T\left(\frac{1}{\rho}\nabla_{\mathbf{x}}
      \tilde{u}\right) = \tilde{f},
  \end{displaymath}
  where $c$ is the wave speed and $\rho$ is the mass density, both dependent
  only on space, $\tilde{u}$ is the pressure, and $\tilde{f}$ is the space-time
  source term with compact support in $\Omega^{phy}\subset\Omega$ at all time.
  $\Omega^{phy}$ is the domain in which the solution is of interest.  We make
  the ansatz that $\tilde{f}(\mathbf{x},t)$ is a superposition of the
  time-harmonic sources
  $\rho(\mathbf{x})f(\mathbf{x},\omega)\mathrm{e}^{-\I\omega{t}}$.
  Then, for each $\omega$, the corresponding mode
  $u(\mathbf{x},\omega) \mathrm{e}^{-\I\omega{t}}$ satisfies
  \begin{displaymath}
  -\frac{\omega^2}{\rho c^2}u -
  \nabla_{\mathbf{x}}^T\left(\frac{1}{\rho}\nabla_{\mathbf{x}} u\right) = f.
  \end{displaymath}
  The time domain solution $\tilde{u}(\mathbf{x},t)$ is the sum of the
  time-harmonic modes $u(x,\omega)$ over all possible values of
  $\omega$.  Note that $\omega$ is the time {\it frequency},
  $k:=\frac{\omega}{c}$ is called {\it wavenumber}, and the {\it
    wavelength} is $\lambda=\frac{2\pi}{k}.$ A certain boundary
  condition along infinity is imposed to exclude energy incoming from
  infinity and to allow energy outgoing to infinity, viewed from
  $\Omega^{phy.}.$ An example is the Sommerfeld radiation condition
  (c.f. \cite[p. 189]{Sommer}) in a homogeneous medium,
  \begin{equation}\label{eq:sommer}
    \lim_{|\mathbf{x}|\rightarrow\infty}|\mathbf{x}|^{\frac{d-1}{2}}
    \left(\partial_{|\mathbf{x}|}{u} -\I{k}{u}\right)=0.
  \end{equation}
  Since we are interested in
  only the near-field solution (i.e. in $\Omega^{phy}$), the free
  space model is truncated to $\Omega^{phy}$ by imposing on
  $\partial\Omega^{phy}$ artificial boundary conditions or artificial
  layers, which leads to \Cref{eq:pde}.
\end{example}

\begin{example}
  Some models have physical boundaries from special materials, which leads
  directly to a boundary condition, e.g. Dirichlet for sound soft
  and Neumann for sound hard matter in acoustics.  As a simple model, one can consider a parallel
  pipe open in one dimension and closed with walls in the other dimensions on
  which Dirichlet, Neumann or artificial boundary conditions (layers) are
  imposed.  We further truncate the open dimension to obtain \Cref{eq:pde}.
  The truncated dimension is typically still much larger than the other
  dimensions such as for optical waveguides, see \cite{Lu06}.
\end{example}

\begin{example}
  An important class of models are the so-called scattering problems which are
  posed on an unbounded domain exterior to obstacles delimited by physical
  boundaries.  A given incident wave then hits the obstacles and gets scattered.
  The sum of the incident wave and the scattered wave gives the total wave field
  which satisfies homogeneous Dirichlet, Neumann or impedance boundary
  conditions as shown in \Cref{eq:BD,eq:BR} on the physical boundaries.  The
  scattered wave field satisfies homogeneous Helmholtz equation and some
  condition along infinity. Usually, this is the Sommerfeld radiation condition
  given in \Cref{eq:sommer} based on the assumption that the medium is
  homogeneous outside a bounded region.  The unbounded domain is truncated to a
  bounded region near the obstacles which results in \Cref{eq:pde}.  Once the
  Dirichlet and Neumann traces of the solution are known on some surface, the
  solution in the far-field, i.e. far away from the obstacles, can be recovered
  by using a representation formula, see \cite{Nedelec}.
\end{example}

\begin{remark}
  The algorithms to be discussed in this paper are applicable not only to the
  model \Cref{eq:pde} but also to more complicated cases as long as the partial
  differential equation and the boundary conditions are defined locally in space
  such that they make sense in subsets of $\Omega$ and $\partial\Omega$.  For
  instance, we can supplement \Cref{eq:BV} with edge and corner conditions, see
  \cite{Bamberg}, or use high-order absorbing boundary conditions localized
  with auxiliary unknowns, see \cite{Collino,HH}, which can also be viewed as
  semi-discretized PMLs, see \cite{DK99, GT00}, and solve other partial
  differential equations (see e.g. \cite{NN97}). This will become clearer in the
  following sections.
\end{remark}


We will occasionally need the weak formulation of \Cref{eq:pde} in appropriate
function spaces; see e.g. \cref{lem:TC}.  Multiplying both sides of
\Cref{eq:pde} with the complex conjugate of an arbitrary function $v$ and
integrating by parts in $\Omega$, we find formally
\begin{displaymath}
\int_{\Omega}(\alpha\nabla{u})^T\nabla{\bar{v}}-\frac{\omega^2}{\kappa}u\bar{v}
-\int_{\partial\Omega}\mathbf{n}^T(\alpha\nabla{u})\bar{v} =
\int_{\Omega}f\bar{v}.
\end{displaymath}
Substituting the boundary conditions from \Cref{eq:BD,eq:BR,eq:BV} into the
above equation leads us to the following weak formulation of \Cref{eq:pde}: find
$u-\mathcal{E}g_D\in V$, such that
\begin{equation}\label{eq:weak}
  \medmuskip=0.6mu
  \thinmuskip=0.6mu
  \thickmuskip=0.6mu
  \nulldelimiterspace=-0pt
  \scriptspace=0pt 
  \begin{array}{rl}
    &a(u,v) + b(u,v) = c(v),\mbox{ \ }\forall\,v\in V,\\
    &a(u,v):=\int_{\Omega}(\alpha\nabla{u})^T\nabla{\bar{v}}
    -\frac{\omega^2}{\kappa}u\bar{v},\\
    &b(u,v):= \int_{\Gamma_R\cup\Gamma_V} p_0u\bar{v} +
    \int_{\Gamma_V}p_1\mathbf{n}^T(\alpha\nabla_Su)\bar{v}+
    \int_{\Gamma_V}\left(q_1\Pi_S(\alpha\mathbf{n})u+
      p_2\alpha\nabla_Su\right)^T\nabla_S\bar{v},\\
    &c(v):= {}_{V'}\langle f,v\rangle_V +
    {}_{H^{-\frac{1}{2}}}\langle g_R,v|_{\Gamma_R}\rangle_{H^{\frac{1}{2}}} +
    {}_{H^{-\frac{1}{2}}}\langle g_V,v|_{\Gamma_V}\rangle_{H^{\frac{1}{2}}},
  \end{array}
\end{equation}
where $\mathcal{E}g_D\in V$ is an extension of $g_D,$ and
\begin{displaymath}
V=\{v\in H^1(\Omega):~v|_{\Gamma_D}=0,~v|_{\Gamma_V}\in H^1(\Gamma_V)\}, \
\|v\|_V=\sqrt{\|v\|_{H^1(\Omega)}^2+\|v\|_{H^1(\Gamma_V)}^2}.
\end{displaymath}
The well-posedness of \Cref{eq:weak} can be ensured by the following
simultaneous assumptions:
\begin{enumerate}
\item[(i)] $\alpha,\frac{\omega^2}{\kappa}$ are uniformly bounded in
  $\Omega,$ and so are $p_0,p_1,q_1,p_2$ in their domains;
\item[(ii)] there exist constants $C_1,C_2,C_3>0$ independent of $u$
  such that
  \begin{displaymath}
    \medmuskip=-1mu
    \thinmuskip=-1mu
    \thickmuskip=-1mu
    \nulldelimiterspace=-0pt
    \scriptspace=0pt 
    \left|\int_{\Omega}(\alpha\nabla{u})^T\nabla\bar{u}
    +\int_{\Gamma_V}p_2(\alpha\nabla_Su)^T\nabla_S\bar{u}
    +C_2\int_{\Omega}|u|^2+C_3\int_{\Gamma_V\cup\Gamma_R}|u|^2\right|
    \geq C_1\|u\|_V^2;
  \end{displaymath}
\item[(iii)] $a(u,u)+b(u,u)=0$ and $g_D\equiv0$ together imply $u\equiv0;$
\item[(iv)] $f\in V',$ $g_D\in H^{\frac{1}{2}}_{00}(\Gamma_D),$
  $g_R\in H^{-\frac{1}{2}}(\Gamma_R),$ $g_V\in
  H^{-\frac{1}{2}}(\Gamma_V).$
\end{enumerate}
We refer the reader to \cite{TWbook} for the definitions of the
function spaces and the well-posedness which is based on the
Lax-Milgram lemma and the Fredholm alternative.

\section{Direct and iterative solvers}
\label{sec:solvers}

After discretization of \Cref{eq:pde}, we obtain the linear system
\begin{equation}\label{eq:las}
  A\mathbf{u}=\mathbf{f},
\end{equation}
where $A$ is an $N$-by-$N$ matrix, $\mathbf{u}$ is the solution to be
sought and $\mathbf{f}$ is given.

Gaussian elimination is probably the oldest\footnote{Gaussian elimination can
  already be found in Chinese scripts \cite{LiuHui}} and the most fundamental
solver for linear algebraic systems like \Cref{eq:las}.  Its modern form
consists in first computing the LU factorization $A=RPLUQ$ with $R$ a row
scaling diagonal matrix, $L$ ($U$) being lower (upper) triangular and $P$ ($Q$)
being row (column) permutations, and then solving
$L\mathbf{v}=P(R^{-1}\mathbf{f})$ and $U(Q\mathbf{u})=\mathbf{v}$. For a dense
matrix $A$, e.g. from boundary element or spectral discretizations, the
factorization requires $\mathcal{O}(N^3)$ flops and $\mathcal{O}(N^2)$ storage,
and the triangular solves cost $\mathcal{O}(N^2)$ flops. For a sparse matrix
$A$, e.g. from a low-order finite element discretization, one can benefit from
the non-zero structure of the entries of $A$ by ordering the eliminations such
that as few non-zeros as possible are generated in $L$ and $U$, because the
number of non-zeros determines the storage and time complexities of the
triangular solves, see e.g. \cite{Davis, Duff}.  Doing so on 2-D regular meshes,
we need $\mathcal{O}(N^{\frac{3}{2}})$ flops for a sparse factorization, and the
$L, U$ factors one obtains have $\mathcal{O}(N\log{N})$ non-zeros, see
\cite{George, Hoffman}; on 3-D regular meshes, we need $\mathcal{O}(N^2)$ flops
and get $\mathcal{O}(N^{4/3})$ non-zeros in $L, U$, see \cite[p.143]{Davis}.
Significant progress has been made on reducing the actual constants hidden in
the asymptotic complexities, which is nowadays coded in widely-used software
packages like UMFPACK\upcite{DavisUMF}, PARDISO\upcite{Schenk},
SuperLU\upcite{LiSuperLU} and MUMPS\upcite{AMUMPS}.  The classical,
factorization based direct solvers aim at the exact solution and introduce no
error in the algorithms other than round-off errors due to finite precision
arithmetic.  They have been proved in practice to be robust for various
problems, and they are robust with respect to problem parameters. Moreover, they
are very advantageous for multiple {\it right hand sides} (r.h.s.), because the
factorization can be reused for different r.h.s. just performing triangular
solves, which is much faster than the factorization stage.

The drawbacks of direct solvers are the superlinear complexities
they have in time and storage requirements, and also the important
communication overhead in a parallel environment both in the
factorization and the triangular solution stages.

A recent trend of direct solvers is introducing low rank truncation of some
off-diagonal dense blocks arising in the process of factorization.  This is
accomplished by $\mathcal{H}$-matrix techniques \cite{Hack1, Hack2, Banja,
  Bebe1, Bebe2, Bebe3}, and related approaches \cite{MaRo, MaRo2, CHSS, Wang,
  Wang1, Wang2, Xia, HoGr, Gillman, WMUMPS2}.  The numerical low rank property
depends on the Green's function of the underlying {\it partial differential
  equation} (PDE).  In particular, for the Helmholtz equation, the numerical
rank can be shown to grow in specific geometric conditions in 2-D only
logarithmically with the wavenumber (see \cite{MaRo2, EY1}).  In general,
however, as indicated in \cite[p. 157]{Bebe2}, the growth seems to be linear in
the wavenumber.  In \cite{Hackbook}, the author says that there are two types of
  off-diagonal blocks, one type is small and can be treated easily by
  $\mathcal{H}$-matrix arithemetic, and the other type is large and should better be
  treated by a multipole expansion; see \cite{Banja} for more details.
  Recently, \cite{EZgreen} gave lower and upper bounds for the separability of
  Green's function.  Nearly linear complexities for 2-D Helmholtz problems have
been presented in e.g. \cite{MaRo, MaRo2, Banja, Wang, Xia, HoGr, Gillman}, with
fixed or increasing wavenumbers.  For 3-D Helmholtz problems, although
considerable improvements over classical direct solvers have been made using
$\mathcal{H}$-matrix techniques, the numerical experiments in \cite{Wang1,
  WMUMPS2} show that the time complexity of factorization tends to
$\mathcal{O}(N^{5/3})$\ctilde$\mathcal{O}(N^2)$ at high wavenumber on
proportionally refined meshes.

In contrast to direct solvers which deliver very accurate solutions in a finite
number of operations, iterative solvers start from an initial guess and improve
the accuracy successively by iteration. Iterative solvers have become a core
area of research in numerical analysis\footnote{Trefethen
  \cite{NickTrefethen}: ``The name of the new game is {\it iteration with
    preconditioning}. Increasingly often it is not optimal to try to solve a
  problem exactly in one pass; instead, solve it approximately, then iterate''.}.
The central issue of designing an iterative solver is finding an approximation
of $A^{-1}$ which is called {\it preconditioner}.  In this sense, the direct
solvers with low-rank truncation mentioned in the previous paragraph can be used
as preconditioners; see e.g. \cite{Banja, EY1, Xia}.  The simplest way to use a
preconditioner $M^{-1}\approx A^{-1}$ is iterative refinement, also called
Richardson iteration\footnote{The method Richardson proposed is much
  more sophisticated, including a relaxation parameter that changes with each
  iteration and is chosen to lead to an optimized polynomial
  \cite{richardson1911approximate}.} or deferred correction:
$\mathbf{u}\leftarrow \mathbf{u}+M^{-1}(\mathbf{f}-A\mathbf{u}).$ More advanced
are Krylov subspace methods; see \cite{Greenbaum, SaadBook} for general
introductions to iterative methods for linear systems.  It is particularly
difficult to design a fast iterative solver for the Helmholtz equation; see
e.g. the review papers \cite{Erlangga, AD, EG}.  The main challenge is to
accomplish $\mathcal{O}(N)$ time complexity for increasing frequency $\omega$ of
\Cref{eq:pde} on appropriately refined meshes; an easier goal is the linear
complexity under mesh refinement for fixed frequency, because this does not add
more propagating waves to the solution.  To tackle the special difficulties of
the Helmholtz equation, many techniques have been developed and integrated into
three major frameworks: incomplete factorizations, (algebraic) multigrid and
domain decomposition.  We will now briefly review some of these techniques.

The shifted-Laplace preconditioner $M_{\epsilon}^{-1}$, proposed in
\cite{Kim, EVO04}, introduces an imaginary shift $\epsilon$ to the
frequency $\omega$ or $\omega^2$ in the continuous problem given in
\Cref{eq:pde}, and $M_{\epsilon}$ is obtained from a discretization of the
shifted operator. The analyses in \cite{EG, GEV, GGS, CV13,
  GanderCocquet} exhibit altogether a gap between the requirements for
the shifted operator being close to the original operator and yet
being cheap to solve.
In practice, $\mathcal{O}(\omega^2)$ imaginary shifts to $\omega^2$
are often used.  In this case, it is easy to find an iterative solver
$\widetilde{M}_{\epsilon}^{-1}$ of $\mathcal{O}(N)$ complexity for
$M_{\epsilon}$ for any $\omega$; but $M_{\epsilon}$ deviates from the
original matrix $A$ more and more as $\omega$ increases so that the
iteration numbers for the original system with the preconditioner
$\widetilde{M}_{\epsilon}^{-1}$ also grow with $\omega$ and can be
$\mathcal{O}(\omega)$ for the truncated free space problem or even
$\mathcal{O}(\omega^2)$ in the case of a waveguide \cite{GanderCocquet}.  In
the former case, we observed from numerical experiments in
\cite{EOV06, BGS, CCFWY11, CPV13, GSV} for 2-D problems with
$N=\mathcal{O}(\omega^2)$ the overall time complexity becomes
$\mathcal{O}(N^{3/2})$, and in \cite{CPV13, RKEVOPM, CGPV} for 3-D
problems with $N=\mathcal{O}(\omega^3)$ it seems to be
$\mathcal{O}(N^{4/3})$.  Even though not optimal, for 3-D models and
$\mathcal{O}(1)$ r.h.s. these iterative solvers can be faster than a
complete factorization.

To accelerate convergence of iterations, an important idea is identifying the
slowly convergent components of the errors, and the corresponding residuals,
and projecting them out from the iterates by solving the original problem
restricted to the corresponding subspace.  This is called coarse correction or
deflation. When these slowly convergent coarse components are however based on a
grid discretization, then for the Helmholtz equation the coarse problem for
projection needs to be {\it fine} enough, typically of dimension
$\mathcal{O}(\omega^2)$ in 2-D and $\mathcal{O}(\omega^3)$ in 3-D, to keep the
convergence independent of $\omega$; see e.g. \cite{Cai92, LiTu09} for
convergence theory with such rich coarse problems.  This excessive requirement
manifests inadequacy of the basic iterations presumed in $\mathcal{O}(N)$ time
for oscillatory waves.  Of course, the more time we allocate to the basic
underlying iteration, the smaller the coarse problem will be that we need to
compensate for inadequacies of the basic iteration. Another approach is to try
to develop an efficient solver for the coarse problem. For the shifted-Laplace
preconditioner, multilevel Krylov with multigrid deflation was studied in
\cite{EN08, SLV13} and it was seen that the first coarse level (with mesh size
twice as coarse as the finest level) needs to be solved more and more accurately
to keep iteration numbers from growing as $\omega$ increases.  Another direction
is to seek more efficient coarse problems.  Those based on wave-ray or plane
waves that originated from \cite{Taasan, BL97} in the context of multigrid have
become popular in algebraic multigrid methods (see e.g. \cite{VMB, OS10}) and in
domain decomposition methods (see e.g. \cite{BFMMR, FATL, GZ1, HuLi}).  Some recent
developments include the bootstrap trick \cite{Liv11} to discover the slowly
convergent subspace, the local eigen-spaces \cite{CDKN} for heterogeneous media,
and improvement in stability of the coarse problems \cite{CLX14,
  Stolk2}. Complex-symmetric least squares formulations
\cite{OS10,Liv11,Gordon13}, numerical-asymptotic hybridization \cite{Popovic}
and block Krylov methods for multiple r.h.s. \cite{Lago} have also given
further insight in the search for a scalable Helmholtz solver.

When applying domain decomposition methods (see e.g. \cite{LeTallec,
  FarhatRoux, SBGbook, QVbook, TWbook, DNJbook}) to the Helmholtz
equation, an immediate obstacle is using the usual Dirichlet or
Neumann boundary conditions on subdomain interfaces.  In
particular, taking a diagonal block of $A$, one can not ensure that it
is nonsingular.  For example, in the unit square $(0,1)^2$ with
homogeneous Dirichlet boundary conditions, the negative Laplace
operator $-\Delta=-\partial_{xx}-\partial_{yy}$ has eigenvalues
$(n^2+m^2)\pi^2$ ($n,m=1,2,..$), so the Helmholtz operator
$-\Delta-k^2$ is singular if $k^2$ is equal to one of these eigenvalues.
This will not happen if a subdomain is sufficiently small, because the
minimal eigenvalue will then be larger than the given $k^2\in\R$: for
example, in the square $(0,H)^2$, the minimal eigenvalue is
$2\pi^2/H^2$, which can be made bigger than any given real number $k^2$
when $H$ is sufficiently small.  This strategy was adopted in
\cite{Cai92,FATL,LiTu09}, but the coarse problems become then very
large to maintain scalability with so many subdomains.  Another
natural treatment is regularization of subdomain problems with
absorbing boundary conditions (or layers), which helps verify
assumption (iii) in \Cref{sec:helm}.  For example, let $\Delta u
+k^2u=0$ in $\Omega$ ($k\in\mathbb{R}$) and
$\partial_{\mathbf{n}}u+pu=0$ on $\partial\Omega$, we have
$\int_{\Omega} |\nabla u|^2 -k^2|u|^2 +
\int_{\partial\Omega}p|u|^2=0$; if $\mathrm{Im}\,p\neq 0,$ we obtain
$\partial_{\mathbf{n}}u=u=0$ on $\partial\Omega$ which implies
$u\equiv0$ in $\Omega$ by the unique continuation property (see
\cite{Wolff}).  The well-posedness of high-order ABCs can be found in
e.g.\cite{TrefHalp}.
Regularization with a zeroth-order absorbing condition was first used by
B. Despr\'{e}s in his thesis \cite{Despres}, and later in e.g. \cite{BFMMR,
  Cai98, FMLRMB}.  Actually, one gets even more from this choice: compared to
the classical Schwarz method that uses Dirichlet transmission conditions, faster
convergence was observed.  This can be understood in the ideal case with {\it
  transparent} transmission conditions as we have seen in \Cref{SecUnderlying}
for the optimal Schwarz method \cite{NRS94, EZ98} motivated by the numerical
study in \cite{Hagstrom}.  Based on this principle, optimized Schwarz methods
(see e.g. \cite{GanderOSM} for an introduction, \cite{qin2006parallel,Loisel,gander2015analysis} for analyses and \cite{GX14,GX16} for geometry-dependent optimization)
leverage various approximations
i.e. absorbing transmission conditions or PMLs for fast convergence.  For
Helmholtz problems, the second-order Taylor expansion was used in
\cite{Douglas}, square-root based nonlocal conditions were studied in
\cite{Ghanemi, CGJ00}, best approximations of zero- and second-order were sought
in \cite{Chevalier, GMN02, GHM07, GZ2, GZolp, Stupfel}, Pad\'{e} approximants
with complex-shifted wavenumbers were used in \cite{Boubendir},
PMLs were first employed in \cite{Toselli, Schadle}, and
recently some rational interpolants were tested in \cite{KimZhang} for waveguide
problems.  For a numerical comparison of low-order and high-order
  transmission conditions for the overlapping Schwarz methods, we refer to
  \cite{GZolp}.

Parallel to the development of optimized Schwarz methods, absorbing transmission
conditions have also found use in the {\it analytic incomplete LU} (AILU)
preconditioner; see \cite{GanderAILU00, GanderAILU05}. The idea is based on
the identification of the DtN based transparent transmission condition with the
Schur complements arising in the block LU factorization we have seen in
\Cref{SecUnderlying}.  An important improvement to the AILU preconditioner has
been made by the independent development in \cite{EY2} using PML instead of the
second-order approximation used in \cite{GanderAILU05}.  This triggered more
studies on exploiting PML for the iterative solution of the Helmholtz equation
in a forward and backward sweeping fashion, see e.g. \cite{Chen13a, Chen13b,
  Stolk, Poulson, Vion, Zepeda, ZD, Kimsweep, EGdouble}.  A recursive
version that solves the 2-D sub-problems
  in a 3-D domain recursively by decomposing them into 1-D lines and sweeping
  can be found in \cite{LiuYingRecur}, see also 
  \cite{ZepedaNested} for a similar idea. A 
  recursive sweeping algorithm with low-order ABCs was already proposed earlier,  see
  \cite{achdou2003dimension}.  Another double sweep process that extends the
Dirichlet-Neumann alternating method \cite{QVbook} to many subdomains is
proposed in \cite{CLX16}.  In all these sweeping methods, 
there is little parallelism across
the block solves, since the blocks (or
subdomains) are passed through one by one, 
but in return an expensive coarse problem for connecting the
blocks is avoided.  Hence, the parallelism and the complexity within each block
become crucial.  In \cite{Poulson} for 3-D models, each block is kept quasi-2D
with fixed thickness, and a tailored parallel direct solver for the quasi-2D
problems is used.  The sequential
complexity was shown to be $\mathcal{O}(\delta^2 N^{4/3})$ for the
setup and $\mathcal{O}(\delta N\log N)$ for the solve, where
$\delta=\delta(k)$ is the thickness of the discrete PML on one side of
each block. Instead of PML, hierarchical matrix approximations can
also be used, see \cite{EY1, Bagci}.  More recently, in an effort to
parallelize the sweeping preconditioner, the authors of
\cite{LiuYingAdd} proposed to decompose the source term into
subdomains and then to simulate its influence on the other subdomains
by sweeping from that subdomain towards the first and the last
subdomain. The final approximation is then obtained by adding the
solutions corresponding to the different subdomain sources.  The
sweeping methods have also been combined with a two-level method in
\cite{StolkImproved}, and with the sparsifying preconditioner
\cite{ying2015sparsifying} for a volume integral reformulation of the
Helmholtz equation in \cite{ZepedaLippmann,LiuYingInt}.

The methods above based on approximation of transparent boundary
conditions are currently among the most promising iterative methods
for the Helmholtz equation and more general wave propagation
phenomena.  In the following sections, we will explain how these
methods were invented following various paths from very different
starting points, and give a formulation of each method in a common
notation that allows us to prove that each of these methods is in fact
a special optimized Schwarz method distinct only in transmission
conditions, overlaps, and/or implementation. A first such relation
between the source transfer method and an optimized Schwarz method was
discovered in the proceedings paper \cite{CGZ}, and futher relations
were pointed out in \cite{GZieee}.

\section{Notation}\label{sec:notation}

To make the analogy we have seen between the block LU factorization and the
optimal Schwarz algorithm mathematically rigorous, and then to show precisely
how all the new Helmholtz solvers are related to one another requires a common
notation that works for all formulations. This formulation must permit at the
same time the use of overlapping and non-overlapping blocks or subdomains,
Green's function formulations and volume discretizations, and very general
transmission conditions including absorbing boundary conditions and PML, and all
this both for continuous and discrete formulations. We introduce the reader to
this notation in this section, as we introduce the fundamental concepts common
to all algorithms step by step. The first steps learning the
notation will be hard, but it will be rewarding to be able to understand the
details of all these new Helmholtz solvers and their tight relation.

\subsection{Domain decomposition}
\label{sec:dd}

As we have seen, the algorithms are based on a decomposition, and we
introduce this decomposition for the original domain $\Omega$ on which
\Cref{eq:pde} is posed.  We decompose $\overline{\Omega}$ into
serially connected subdomains $\overline{\Omega}_j, j=1,\ldots,J$ such
that
\begin{equation}\label{eq:dd}
  \overline{\Omega}=\cup_{j=1}^J\overline{\Omega}_j,\quad
  \overline{\Omega}_j\cap\overline{\Omega}_l=\emptyset\ \mbox{if } |j-l|>1.
\end{equation}
To simplify the notation for the algorithms, we also introduce at each
end an empty subdomain, $\Omega_{0}=\Omega_{J+1}=\emptyset$.  We
denote the overlap between the subdomains by
$O_j:=\Omega_j\cap\Omega_{j+1}$, $j=1,..,J-1$, the interfaces by
$\Gamma_{j,j\pm1}:=\partial\Omega_j\cap\partial(\Omega_{j\pm1}-\Omega_j)$,
$j,j\pm1\in\{1,..,J\}$, and the non-overlapping region within each
subdomain by
$\Theta_j:=\overline{\Omega}_j-(\overline{\Omega}_{j-1}\cup\overline{\Omega}_{j+1})$,
$j=1,..,J$, as indicated in \Cref{fig:dd}. Note that this partition
can be considered either for the continuous domain or for the
discretized domain.
\begin{figure}
  \centering
  \includegraphics[scale=.5,trim=30 110 30 80,clip]{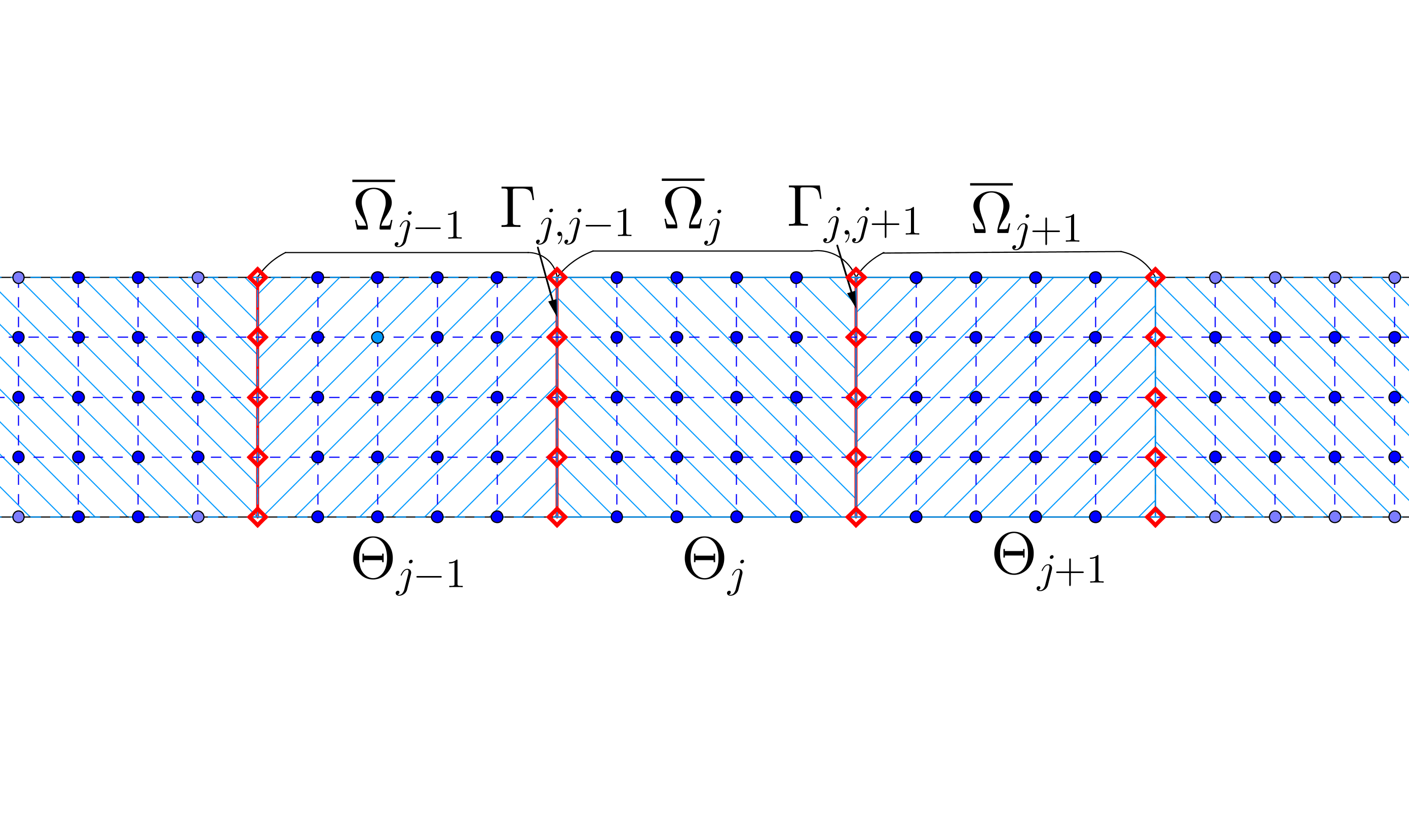}%
  \\
  \includegraphics[scale=.52,trim=30 90 30 80,clip]{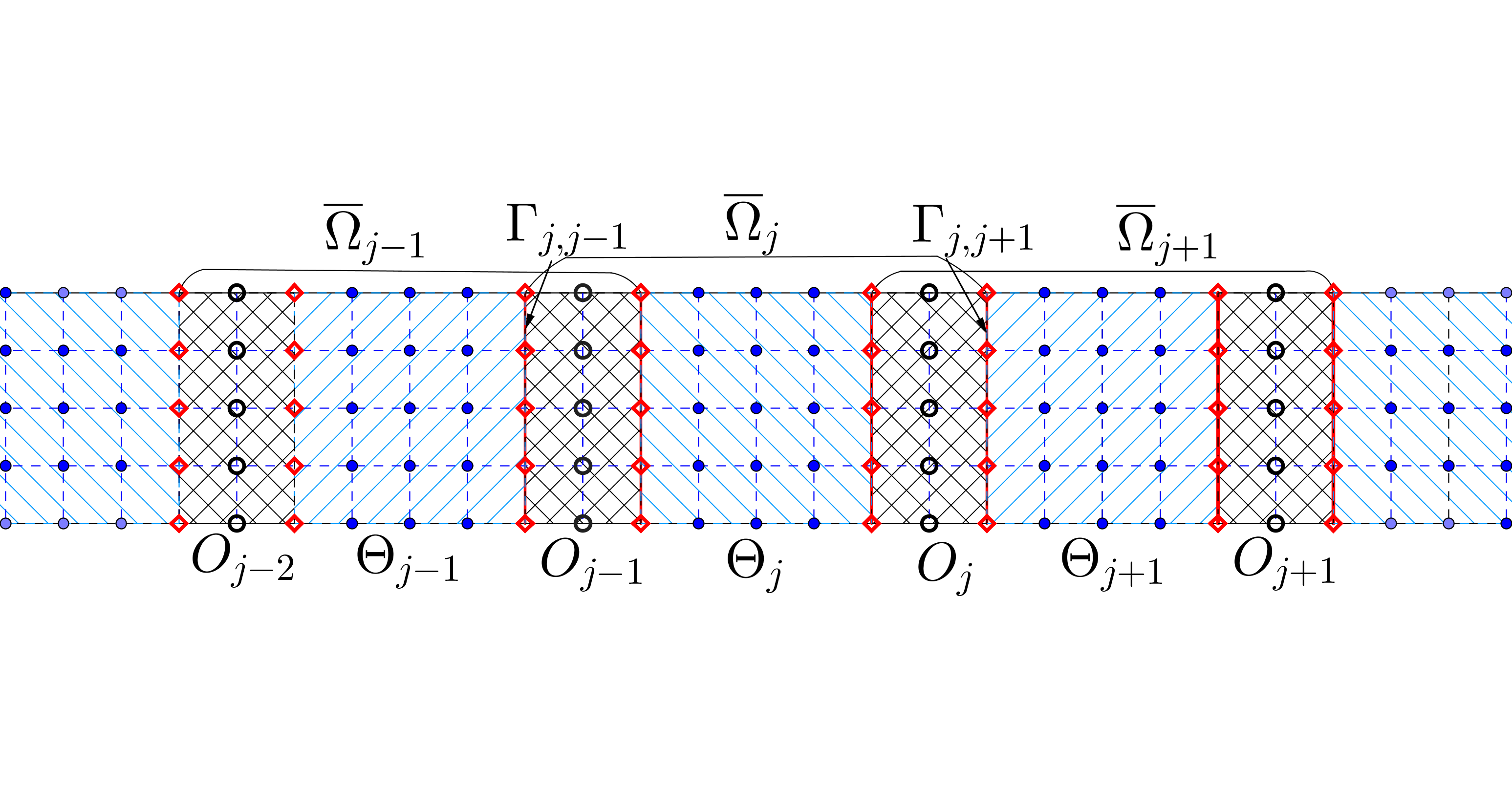}%
  \caption{ Non-overlapping and overlapping domain decomposition,
   ${\color{blue}\bullet}\in\Theta_*$, $\circ \in O_*$,
    ${\color{red}\Diamond}\in \Gamma_{*,\#}$.
   }
  \label{fig:dd}
\end{figure}
At the discrete level, the domain decomposition is a partition of the
d.o.f. $\mathbf{u}$ of \Cref{eq:las}. Corresponding to that partition,
we introduce the notion of index sets as shown in \Cref{tab:symb},
\begin{table}
  \centering
  \caption{Meaning of the subscripted vector $\mathbf{v}_*$}
  \label{tab:symb}
  \tabcolsep0.2em
  \begin{tabular}{|c|l|}
    \hline
    subscript $*$ & meaning of the vector $\mathbf{v}_*$\\
    \hline
    $\Large{j\all}$ or $j$ & $\overline{\Omega}_j$: all the d.o.f. on the $j$-th subdomain $\overline{\Omega}_j$ \\
    \hline
    $\Large{j\langle}$ & left interface of $\Omega_j$: restriction of $\mathbf{v}_j$ to the interface $\Gamma_{j,j-1}$ \\
    \hline
    $\Large{j\rangle}$ & right interface of $\Omega_j$: restriction of $\mathbf{v}_j$ to the interface $\Gamma_{j,j+1}$ \\
    \hline
    $\Large{j\lovlp}$ & left overlap of $\Omega_j$ : restriction of $\mathbf{v}_j$ to the overlap $\overline{\Omega}_{j}\cap\overline{\Omega}_{j-1}$ \\
    \hline
    $\Large{j\rovlp}$ & right overlap of $\Omega_j$: restriction of $\mathbf{v}_j$ to the overlap $\overline{\Omega}_{j}\cap\overline{\Omega}_{j+1}$\\
    \hline
    $\Large j]$ & end of left overlap in $\Omega_j$: restriction of $\mathbf{v}_j$ to $\Gamma_{j-1,j}$ \\
    \hline
    $\Large j[$ & end of right overlap in $\Omega_j$: restriction of $\mathbf{v}_j$ to $\Gamma_{j+1,j}$ \\
    \hline
    $\Large{j\Allint}$ or $j\bullet$ & interior unknowns of $\Omega_j$: after removing $\mathbf{v}_{j\langle}$ and $\mathbf{v}_{j\rangle}$ from $\mathbf{v}_{j\all}$ \\   
    \hline
    $\Large{j\Lovlpint}$ & interior of left overlap of $\Omega_j$: after removing $\mathbf{v}_{j\langle}$ and $\mathbf{v}_{j]}$ from $\mathbf{v}_{j\lovlp}$\\
    \hline
    $\Large{j\Rovlpint}$ & interior of right overlap of $\Omega_j$: after removing $\mathbf{v}_{j[}$ and $\mathbf{v}_{j\rangle}$ from $\mathbf{v}_{j\rovlp}$ \\
    \hline
    $\Large{j\Thetaint}$ & interior without overlaps  of $\Omega_j$: after removing $\mathbf{v}_{j\lovlp}$ and $\mathbf{v}_{j\rovlp}$ from $\mathbf{v}_{j\all}$\\
    \hline
    $\ctilde j$ & everything to the left of $\Omega_j$, i.e. all the d.o.f. in $\Large(\cup_{m=1}^{j-1}\overline{\Omega}_m)
                  -\overline{\Omega}_j$\\
    \hline
    $j\ctilde$ & everything to the right of $\Omega_j$, i.e. all the d.o.f. in $\Large(\cup_{m=j+1}^{J}\overline{\Omega}_m)
                 -\overline{\Omega}_j$\\
    \hline
  \end{tabular}
\end{table}  
which we chose to simplify remembering them: for example, in the non-overlapping
case, we have for subdomain $\Omega_j$ on the left boundary
$\mathbf{u}_{j\langle}=\mathbf{u}_{j]}=\mathbf{u}_{j\lovlp}$, and similarly on
the right boundary $\mathbf{u}_{j\rangle}=\mathbf{u}_{j[}=\mathbf{u}_{j\rovlp}$,
and the unknowns interior to subdomain $\Omega_j$ are
$\mathbf{u}_{j\allint}=\mathbf{u}_{j\thetaint}$, where a dot always means only
the interior unknowns, and we have equality because the overlap is empty. As
another example, if $\Theta_j\neq\emptyset$ in the overlapping case, we have
$\mathbf{u}_{j\allint}=\left[\mathbf{u}_{j\lovlpint}; \mathbf{u}_{j]};
  \mathbf{u}_{j\thetaint}; \mathbf{u}_{j[};
  \mathbf{u}_{j\rovlpint}\right]$\footnote{we use Matlab notation for
  concatenating column vectors vertically to avoid having to use the transpose symbol $^T$.}.
We denote by $I^{\#}_{*}$ the 0-1 restriction or extension matrix from the set
of d.o.f. indicated by $\#$ to the set indicated by $*$. For example,
$I_{j\langle}^{j-1}$ is the restriction from $\overline{\Omega}_{j-1}$ to
$\Gamma_{j,j-1}$.  For the identity matrix $I_*^*$ we simply write $I_*.$ A
common restriction operator we will often use is $R_*:=I^{1,..,J}_*$.  Note that
by $\mathbf{v}_l$ and $\mathbf{v}_j$ we do not mean in general the restriction
$\mathbf{v}_l=R_l\mathbf{v}$ and $\mathbf{v}_j=R_j\mathbf{v}$ of a common
$\mathbf{v}$, but that the components of $\mathbf{v}_j$ correspond to the
d.o.f. on $\overline{\Omega}_j$. Hence $I^j_{j\rovlp}\mathbf{v}_j$ can be
different from $I^{j+1}_{j+1\lovlp}\mathbf{v}_{j+1}$.  Only for the exact
solution $\mathbf{u}$ and the r.h.s. $\mathbf{f}$ of \Cref{eq:las}, we use
$\mathbf{u}_j$ and $\mathbf{f}_j$ for $R_j\mathbf{u}$ and $R_j\mathbf{f}$. For
submatrices of $A$, we also use the subscripts $j[$ interchangeably
with $j+1\langle$ because they refer to the same index set in
the global case, and similarly for $j]$ and $j-1\rangle$.  

If $\Theta_j\neq\emptyset$ for $j=1,..,J$, we assume that
\Cref{eq:las} has block tridiagonal structure\footnote{This holds
  naturally for classical finite difference and finite element
  discretizations which approximate derivatives by only looking at
  neighboring nodes.} when the vector of unknowns is partitioned as
$\mathbf{u}=[\mathbf{u}_{1\thetaint}; \mathbf{u}_{1\rovlp}; \ldots;
  \mathbf{u}_{J-1\rovlp}; \mathbf{u}_{J\thetaint}]$, i.e.
\begin{equation}\label{eq:tri}
  \left[
    \begin{array}{ccccc}
      A_{1\thetaint} & A_{1\thetaint1\rovlp} &  &  &  \\
      A_{1\rovlp1\thetaint} & A_{1\rovlp} & A_{1\rovlp2\thetaint} &  & \\
                     & \ddots & \ddots & \ddots & \\
                     & & A_{J-1\rovlp J-1\thetaint} & A_{J-1\rovlp} & A_{J-1\rovlp J\thetaint}\\
                     & & & A_{J\thetaint J-1\rovlp} & A_{J\thetaint}
    \end{array}
  \right]
  \left[
    \begin{array}{c}
      \mathbf{u}_{1\thetaint}\\ \mathbf{u}_{1\rovlp} \\
      \vdots \\ \mathbf{u}_{J-1\rovlp} \\ \mathbf{u}_{J\thetaint}
    \end{array}
  \right]
  = \mathbf{f}.
\end{equation}
For overlapping decompositions (i.e. $O_j\neq \emptyset$), we can also
partition the overlap, $\mathbf{u}_{j\rovlp}=\left[\mathbf{u}_{j[};
    \mathbf{u}_{j\rovlpint};\mathbf{u}_{j\rangle}\right]$, and similar
  to the block tridiagonal assumption, we assume that there are no
  direct interactions between d.o.f. across the interfaces, e.g.
\begin{displaymath}
  A_{j\rovlpint j\thetaint}=0,\quad A_{j\rangle j\thetaint}=0,\quad A_{j[ j+1\thetaint}=0,\quad
  A_{j\rovlpint j+1\thetaint}=0.
\end{displaymath}
In the non-overlapping case we also use $j\rangle$ for $j\rovlp$, since
the index sets $\rangle$ and $\rovlp$ coincide in that case, and the same
holds for $\langle$ and $\lovlp$.

\begin{remark}\label{rem:ddst}
  When $\Theta_1, \Theta_J\neq \emptyset,$ $\Theta_j=\emptyset$ for $j=2,..,J-1$
  and $O_j\neq\emptyset$ for $j=1,..,J-1$, we assume \Cref{eq:las} is block
  tridiagonal under the partition
  \begin{displaymath}
    \mathbf{u}=[\mathbf{u}_{1\thetaint}; \mathbf{u}_{1[};
    \mathbf{u}_{1\rovlpint}; \mathbf{u}_{1\rangle};
    \mathbf{u}_{2\rovlpint}; \mathbf{u}_{2\rangle};\ldots; \mathbf{u}_{J-1\rangle};
    \mathbf{u}_{J\thetaint}].
  \end{displaymath}
  Then, all our subsequent discussions on \Cref{eq:tri} can be adapted to this case
  without essential difference.  This case corresponds to what is called {\it generous overlap} in domain
  decomposition, i.e. almost every subdomain consists of overlaps
  with neighbors: $\Omega_j=O_{j-1}\cup\Gamma_{j-1,j}\cup O_j$ and
  $\Gamma_{j+1,j}=\Gamma_{j-1,j}$ for $j=2,..,J-1.$ For convenience,
  we will also denote by $O_0:=\Theta_1$ and $O_J:=\Theta_J$ for later use
  in \Cref{sec:stddm}.
\end{remark}

\begin{remark}
  The algorithms we will discuss below also permit $A_{j\rovlp
    l\rovlp}\neq 0$ for $|j-l|=1$, which can happen for example for
  spectral element discretizations with only one spectral element over the
  entire subdomain width. We exclude however this situation for
  simplicity in what follows.
\end{remark}

\begin{remark}
  The block tridiagonal form we assumed in \Cref{eq:tri} is natural if the
  d.o.f. in the overlap including the interfaces,
  $O_j\cup\Gamma_{j+1,j}\cup\Gamma_{j,j+1}$, are the same and shared by
  $\Omega_j$ and $\Omega_{j+1}$, i.e. the problem stems from a globally
  assembled problem. In domain decomposition, non-matching grids are however
  also common, and we may have two sets of d.o.f. in
  $O_j\cup\Gamma_{j+1,j}\cup\Gamma_{j,j+1}$, one set $\mathbf{u}_{j\rovlp}$ for
  $\Omega_j$ and another set $\mathbf{u}_{j+1\lovlp}$ for $\Omega_{j+1}$. In
  this case, when $O_j\neq\emptyset$, we may assume \Cref{eq:las} has the form
  of the {\it augmented system}
  \begin{displaymath}
    \left[\begin{array}{ccccccc}
        A_{1\thetaint} & A_{1\thetaint1\rovlp} &  &  &  & \\
        A_{1\rovlp1\thetaint} & A_{1\rovlp} & A_{1\rovlp2\lovlp} & \fbox{$A_{1\rovlp2\thetaint}$} & & \\
        \fbox{$A_{2\lovlp1\thetaint}$} & A_{2\lovlp1\rovlp} & A_{2\lovlp} & A_{2\lovlp2\thetaint} & &\\
        & & \ddots & \ddots & \ddots & \fbox{$A_{J-1\rovlp J\thetaint}$}\\
        & & \fbox{$A_{J\lovlp J-1\thetaint}$} & A_{J\lovlp J-1\rovlp} & A_{J\lovlp} & A_{J\lovlp J\thetaint}\\
        & & & & A_{J\thetaint J\lovlp} & A_{J\thetaint}
      \end{array}\right]
    \left[
      \begin{array}{c}
        \mathbf{u}_{1\thetaint}\vspace{.28em}\\ \mathbf{u}_{1\rovlp} \vspace{.28em}\\
        \mathbf{u}_{2\lovlp}\vspace{.28em}\\\vdots \vspace{.28em}\\
        \mathbf{u}_{J\lovlp} \vspace{.28em}\\ \mathbf{u}_{J\thetaint}
      \end{array}
    \right]
    = \mathbf{f},
  \end{displaymath}
  which would be block tridiagonal if we removed the boxed blocks.
  This form also arises naturally from non-conforming discretizations
  in the overlaps and on the interfaces, e.g. from certain
  discontinuous Galerkin methods (see e.g. \cite{Arnold}).  It is
  possible to generalize our discussions to this case, but we would
  not gain more insight and will thus not do so here to avoid
  further complications in the notation.
\end{remark}

\subsection{Transmission conditions}

We now present the three fundamental types of transmission conditions
used by the algorithms: Dirichlet, Neumann and generalized Robin
conditions. For the Dirichlet condition, based on the assumptions in
\Cref{sec:dd}, if we take the rows for $\mathbf{u}_{j\bullet}$ from
\Cref{eq:las}, we will find
$A_{j\bullet}\mathbf{u}_{j\bullet}=\mathbf{f}_{j\bullet} - A_{j\bullet
  j\langle}\mathbf{u}_{j\langle} - A_{j\bullet
  j\rangle}\mathbf{u}_{j\rangle}.  $ We rewrite this as a linear
system for $\mathbf{u}_j$ with the interface data
$\mathbf{u}_{j\langle}$ and $\mathbf{u}_{j\rangle}$ provided by the
neighboring subdomains,
\begin{equation}\label{eq:matdt}
\left[
  \begin{array}{ccc}
    I_{j\langle} & &\\
    A_{j\bullet j\langle} & A_{j\bullet} & A_{j\bullet j\rangle}\\
    & & I_{j\rangle}
  \end{array}
\right]
\left[
  \begin{array}{c}
    \mathbf{u}_{j\langle} \\ \mathbf{u}_{j\bullet} \\ \mathbf{u}_{j\rangle}
  \end{array}
\right] =
\left[
  \begin{array}{c}
    \mathbf{u}_{j-1[} \\
    \mathbf{f}_{j\bullet} \\ \mathbf{u}_{j+1]}
  \end{array}
\right].
\end{equation}
At the continuous level, this corresponds to \Cref{eq:pde} localized to
$\Omega_j$ with Dirichlet transmission conditions on the interfaces,
\begin{equation}\label{subdomainproblem}
\begin{array}{r@{\hspace{0.2em}}c@{\hspace{0.2em}}ll}
  \mathcal{L}\,u_j&=&f & \mbox{ in }\Omega_j,\\
  \mathcal{B}\,u_j&=&g & \mbox{ on }\partial\Omega_j\cap\partial\Omega,\\
  u_j&=&u_{j-1} & \mbox{ on }\Gamma_{j,j-1},\\
  u_j&=&u_{j+1} & \mbox{ on }\Gamma_{j,j+1},
\end{array}
\end{equation}
where $u_l:=u|_{\Omega_l}$ ($l=j,j\pm1$).  As mentioned before, the
subdomain problem in \Cref{subdomainproblem} might not be well-posed
if we replace $u_j$ by $v_j$ and then try to solve
\Cref{subdomainproblem} for $v_j$. Similarly, the matrix in
\Cref{eq:matdt} might not be invertible.

For the Neumann condition, we split the diagonal block of $A$
corresponding to the interface $\Gamma_{j,j-1}$ into two parts as it
would arise naturally from the splitting of the bilinear forms in
\Cref{eq:weak} in a conforming finite element method:
$A_{j\langle}=A_{j\langle}^{\rangle}+A_{j\langle}^{\langle}$, where
$A_{j\langle}^{\rangle}$ is the contribution from the left side of
$\Gamma_{j,j-1}$ and $A_{j\langle}^{\langle}$ is the contribution from
the right side of $\Gamma_{j,j-1}$. The reader might wonder why we use
the superscript $\rangle$ here (\verb!\rangle! !) for the
contribution from the left, and the superscript $\langle$ for the
contribution from the right.  The reason is that the contribution from
the right will be used on the left of the corresponding subdomain that
uses it, and vice versa, and based on the assumptions in
\Cref{sec:dd}, the rows from \Cref{eq:las} for $\mathbf{u}_j$ can then
be written similar to the Dirichlet case in the form
\begin{equation}\label{eq:matnt}
\left[
  \begin{array}{ccc}
    A_{j\langle}^{\langle} & A_{j\langle j\bullet} &\\
    A_{j\bullet j\langle} & A_{j\bullet} & A_{j\bullet j\rangle}\\
    & A_{j\rangle j\bullet} & A_{j\rangle}^{\rangle}
  \end{array}
\right]
\left[
  \begin{array}{c}
    \mathbf{u}_{j\langle}\vphantom{_{j\langle}^{\langle}} \\ \mathbf{u}_{j\bullet} \\ \mathbf{u}_{j\rangle}\vphantom{_{j\langle}^{\langle}}
  \end{array}
\right] =
\left[
  \begin{array}{c}
    \mathbf{f}_{j\langle}-A_{j\langle j-1\thetaint}\mathbf{u}_{j-1\thetaint}
    -A_{j\langle}^{\rangle}\mathbf{u}_{j-1[} \\
    \mathbf{f}_{j\bullet} \\
    \mathbf{f}_{j\rangle}-A_{j\rangle j+1\thetaint}\mathbf{u}_{j+1\thetaint}
    -A_{j\rangle}^{\langle}\mathbf{u}_{j+1]}
  \end{array}
\right],
\end{equation}
and now the superscript looks very natural and easy to remember.
\Cref{eq:matnt} corresponds to \Cref{eq:pde} localized to $\Omega_j$ with
Neumann transmission conditions on the interfaces,
\begin{equation}\label{neumannsubproblem}
\begin{array}{r@{\hspace{0.2em}}c@{\hspace{0.2em}}ll}
  \mathcal{L}\,u_j&=&f & \mbox{ in }\Omega_j,\\
  \mathcal{B}\,u_j&=&g & \mbox{ on }\partial\Omega_j\cap\partial\Omega,\\
  \mathbf{n}_j^T(\alpha\nabla u_j)&=&\mathbf{n}_j^T(\alpha\nabla u_{j-1})
                       & \mbox{ on }\Gamma_{j,j-1},\\
  \mathbf{n}_j^T(\alpha\nabla u_j)&=&\mathbf{n}_j^T(\alpha\nabla u_{j+1})
                       & \mbox{ on }\Gamma_{j,j+1}.
\end{array}
\end{equation}
In particular, we note that the discretization of
$-\mathbf{n}_j^T(\alpha\nabla u_j)$ on $\Gamma_{j,j-1}$ gives
$\mathbf{f}_{j\langle}^{\langle}-A_{j\langle}^{\langle}\mathbf{u}_{j\langle}
-A_{j\langle j\bullet}\mathbf{u}_{j\bullet}$ and the discretization of
$\mathbf{n}_j^T(\alpha\nabla u_{j-1})$ on $\Gamma_{j,j-1}$ gives
$\mathbf{f}_{j\langle}^{\rangle}$
$-A_{j\langle}^{\rangle}\mathbf{u}_{j-1[}$ $-A_{j\langle
    j-1\thetaint}$ $\mathbf{u}_{j-1\thetaint},$ where
  $f_{j\langle}=f_{j\langle}^{\rangle}+f_{j\langle}^{\langle}$ is
  again the splitting of the contribution from the two sides of
  $\Gamma_{j,j-1}$. Note that as in the case of Dirichlet conditions,
  if we replace $u_j$ by $v_j$ in \Cref{neumannsubproblem}, 
  the resulting subdomain problem might not be well-posed, and 
the matrix in \Cref{eq:matnt} might not be invertible.

By generalized Robin transmission conditions, we mean the generalized linear
combination of Dirichlet and Neumann conditions, i.e.
\begin{equation}\label{eq:rt}
\mathcal{Q}\left(\mathbf{n}_j^T(\alpha\nabla u_j)\right)+\mathcal{P}u_j
=\mathcal{Q}\left(\mathbf{n}_j^T(\alpha\nabla u_l)\right)+\mathcal{P}u_l
\mbox{ on }\Gamma_{j,l},
\end{equation}
where $\mathcal{Q}$ and $\mathcal{P}$ are linear, possibly non-local operators
along $\Gamma_{j,l},$ $l=j\pm1.$ At the discrete level, this corresponds to a
generalized linear combination of the interface rows of \Cref{eq:matdt} and
\Cref{eq:matnt}, while the interior rows are unchanged, i.e.
\begin{equation}\label{eq:matrt}
  \begin{array}{l}
  \left[
    \begin{array}{ccc}
      Q_{j\langle}^{\langle}A_{j\langle}^{\langle}+P_{j\langle}^{\langle} &
      Q_{j\langle}^{\langle}A_{j\langle j\bullet} &\\
      A_{j\bullet j\langle} & A_{j\bullet} & A_{j\bullet j\rangle}\\
      & Q_{j\rangle}^{\rangle}A_{j\rangle j\bullet} &
      Q_{j\rangle}^{\rangle}A_{j\rangle}^{\rangle}+P_{j\rangle}^{\rangle}
    \end{array}
  \right]
  \left[
    \begin{array}{c}
      \mathbf{u}_{j\langle}\vphantom{_{j\langle}^{\langle}} \\ \mathbf{u}_{j\bullet} \\ \mathbf{u}_{j\rangle}\vphantom{_{j\langle}^{\langle}}
    \end{array}
  \right]\\ \qquad=
  \left[
    \begin{array}{c}
      Q_{j\langle}^{\langle}\left(\mathbf{f}_{j\langle}-
      A_{j\langle j-1\thetaint}\mathbf{u}_{j-1\thetaint}\right)
      +\left(P_{j\langle}^{\langle}-Q_{j\langle}^{\langle}A_{j\langle}^{\rangle}\right)
      \mathbf{u}_{j-1[}) \\
      \mathbf{f}_{j\bullet} \\
      Q_{j\rangle}^{\rangle}\left(\mathbf{f}_{j\rangle}-
      A_{j\rangle j+1\thetaint}\mathbf{u}_{j+1\thetaint}\right)
      +\left(P_{j\rangle}^{\rangle}-Q_{j\rangle}^{\rangle}A_{j\rangle}^{\langle}\right)
      \mathbf{u}_{j+1]}
    \end{array}
  \right].
  \end{array}
\end{equation}
If $Q_{j\langle}^{\langle}=I_{j\langle}$ and
$Q_{j\rangle}^{\rangle}=I_{j\rangle}$, we can also rewrite \Cref{eq:matrt}
without the explicit splitting of $A_{j\langle}$ and $A_{j\rangle}$,
\begin{equation}\label{eq:matrt2}
  \arraycolsep0.1em
  \left[
    \begin{array}{ccc}
      \tilde{S}_{j\langle}^{\langle} & A_{j\langle j\bullet} &
      \vphantom{\left(\tilde{S}_{j\langle}^{\langle}\right)}\\
      A_{j\bullet j\langle} & A_{j\bullet} & A_{j\bullet j\rangle}\\
      & A_{j\rangle j\bullet} & \tilde{S}_{j\rangle}^{\rangle}
      \vphantom{\left(\tilde{S}_{j\langle}^{\langle}\right)}
    \end{array}
  \right]
  \left[
    \begin{array}{c}
      \mathbf{u}_{j\langle}\vphantom{\left(\tilde{S}_{j\langle}^{\langle}\right)} \\
      \mathbf{u}_{j\bullet} \\ \mathbf{u}_{j\rangle}
      \vphantom{\left(\tilde{S}_{j\langle}^{\langle}\right)}
    \end{array}
  \right]=
  \left[
    \begin{array}{c}
      \mathbf{f}_{j\langle}-A_{j\langle j-1\thetaint}\mathbf{u}_{j-1\thetaint}
      +\left(\tilde{S}_{j\langle}^{\langle}-A_{j\langle}\right)
      \mathbf{u}_{j-1[} \\
      \mathbf{f}_{j\bullet} \\
      \mathbf{f}_{j\rangle}-A_{j\rangle j+1\thetaint}\mathbf{u}_{j+1\thetaint}
      +\left(\tilde{S}_{j\rangle}^{\rangle}-A_{j\rangle}\right)
      \mathbf{u}_{j+1]}
    \end{array}
  \right],
\end{equation}
where
$\tilde{S}_{j\langle}^{\langle}=A_{j\langle}^{\langle}+P_{j\langle}^{\langle}$
and
$\tilde{S}_{j\rangle}^{\rangle}=A_{j\rangle}^{\rangle}+P_{j\rangle}^{\rangle}$.
If we first specify $\tilde{S}_{j\langle}^{\langle}$ and
$\tilde{S}_{j\rangle}^{\rangle}$, then it is not necessary to introduce
a splitting of $A_{j\langle}$ and $A_{j\rangle}$ to use \Cref{eq:matrt2}.

We now consider a special case of \Cref{eq:pde}: we assume that the
data $f$ and $g$ is supported only on $\overline{\Omega}_j$ and
vanishes elsewhere.
Suppose we are interested in the solution of \Cref{eq:pde} in
$\Omega_j$ only.  Then it would be desirable to have a problem
equivalent to the original problem in \Cref{eq:pde} but defined just on
the truncated domain $\Omega_j.$ This can be done by setting a {\it
  transparent boundary condition} on the truncation boundary
$\partial\Omega_j-\partial\Omega$ and solving
\begin{equation}\label{eq:trunc}
  \begin{array}{r@{\hspace{0.2em}}c@{\hspace{0.2em}}ll}
    \mathcal{L}\,u_j&=&f & \mbox{ in }\Omega_j,\\
    \mathcal{B}\,u_j&=&g & \mbox{ on }\partial\Omega_j\cap\partial\Omega,\\
    \mathbf{n}_j^T(\alpha\nabla u_j)+\mathrm{DtN}_ju_j&=&0
                         & \mbox{ on }\partial\Omega_j-\partial\Omega,
  \end{array}
\end{equation}
where $\mathrm{DtN}_j$ is a linear operator defined as follows:

\begin{definition}
  \label{def:dtn}
  The {\it Dirichlet-to-Neumann} (DtN) operator exterior to $\Omega_j\subset\Omega$ for
  \Cref{eq:pde} is
  \begin{equation}\label{eq:dtn}
  \begin{array}{r@{\hspace{0.2em}}c@{\hspace{0.2em}}ll}
    \mathrm{DtN}_j:\;d\rightarrow -\mathbf{n}_j^T(\alpha\nabla v),
    \mbox{ s.t.}\qquad
    \mathcal{L}\,v&=&0\quad & \mbox{in }\,\Omega-\Omega_j,\\
    \mathcal{B}\,v&=&0\quad &\mbox{on }\,
    \partial\Omega-\partial\Omega_j,\\
    v&=&d\quad &\mbox{on }\,\partial\Omega_j-\partial\Omega,
  \end{array}
  \end{equation}
  where $\mathbf{n}_j$ is the unit outward normal vector of $\Omega_j.$
\end{definition}

\begin{remark}
  The DtN operator is an example of a Poincar{\'e}-Steklov operator
  referring to maps between different boundary data.  According to
  \cite[p. VI]{KWbook}, this class of operators was first studied by
  V. A. Steklov \cite{Steklov} and H. Poincar{\'e}
  \cite{Poincare}.  They are also related to the Calderon projectors,
  see e.g. \cite{Nedelec}.
\end{remark}

\begin{lemma}\label{lem:TC}
  Assume that $f$ and $g$ in \Cref{eq:pde} vanish outside
  $\overline{\Omega}_j.$ If \Cref{eq:dtn} has a unique solution $v\in
  H^1(\Omega-\Omega_j)$ for $d\in
  H^{1/2}(\partial\Omega_j-\partial\Omega)$ and \Cref{eq:pde} has a
  unique solution $u\in H^1(\Omega)$, then \Cref{eq:trunc} also has a
  unique solution $u_j\in H^1(\Omega_j)$, and $u_j=u|_{\Omega_j}.$
\end{lemma}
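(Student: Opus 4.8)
The plan is to establish the statement in two steps: first, I would show that the restriction $u|_{\Omega_j}$ of the global solution solves \Cref{eq:trunc} (this yields existence together with the identity $u_j=u|_{\Omega_j}$), and second, that \Cref{eq:trunc} has at most one solution.

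For the first step, I set $v:=u|_{\Omega-\Omega_j}$ and $d:=u|_{\partial\Omega_j-\partial\Omega}\in H^{1/2}(\partial\Omega_j-\partial\Omega)$. Since $f$ and $g$ vanish outside $\overline{\Omega}_j$, the function $v$ satisfies $\mathcal{L}v=0$ in $\Omega-\Omega_j$, $\mathcal{B}v=0$ on $\partial\Omega-\partial\Omega_j$ and $v=d$ on $\partial\Omega_j-\partial\Omega$; by the assumed unique solvability of \Cref{eq:dtn} this $v$ is exactly the exterior solution used in \Cref{def:dtn}, so $\mathrm{DtN}_j(u|_{\Omega_j})=-\mathbf{n}_j^T(\alpha\nabla v)$ in $H^{-1/2}(\partial\Omega_j-\partial\Omega)$. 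It then remains to verify the transmission condition on $\partial\Omega_j-\partial\Omega$. Starting from the weak formulation \Cref{eq:weak} tested against an arbitrary $\phi\in V$, I would split the volume integrals into the parts over $\Omega_j$ and over $\Omega-\Omega_j$ and integrate by parts in each part, using $\mathcal{L}(u|_{\Omega_j})=f$ in $\Omega_j$, $\mathcal{L}v=0$ in $\Omega-\Omega_j$, and the boundary conditions on $\partial\Omega$ (which recombine with $b(\cdot,\cdot)$ and $c(\cdot)$ exactly as in the derivation of \Cref{eq:weak}). What survives is the interface contribution $\int_{\partial\Omega_j-\partial\Omega}\big(\mathbf{n}_j^T(\alpha\nabla u_j)-\mathbf{n}_j^T(\alpha\nabla v)\big)\bar\phi$, the outer unit normal of $\Omega-\Omega_j$ on that interface being $-\mathbf{n}_j$. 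Since $u$ is a weak solution this vanishes for every $\phi$, hence $\mathbf{n}_j^T(\alpha\nabla u_j)=\mathbf{n}_j^T(\alpha\nabla v)=-\mathrm{DtN}_j u_j$ on $\partial\Omega_j-\partial\Omega$, i.e. $\mathbf{n}_j^T(\alpha\nabla u_j)+\mathrm{DtN}_j u_j=0$, so $u|_{\Omega_j}$ solves \Cref{eq:trunc}.

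For uniqueness, let $u_j$ and $u_j'$ be two $H^1(\Omega_j)$ solutions of \Cref{eq:trunc} and put $w:=u_j-u_j'$, which by linearity solves \Cref{eq:trunc} with $f=g=0$. I would let $\tilde w\in H^1(\Omega-\Omega_j)$ be the unique solution of \Cref{eq:dtn} with datum $d:=w|_{\partial\Omega_j-\partial\Omega}$ and define $W$ by $W:=w$ on $\Omega_j$ and $W:=\tilde w$ on $\Omega-\Omega_j$; since the two pieces share the trace $d$ on $\partial\Omega_j-\partial\Omega$, $W\in H^1(\Omega)$ by the standard $H^1$-gluing lemma. Running the integration-by-parts computation of the first step backwards, the interior and boundary contributions vanish ($\mathcal{L}w=0$, $\mathcal{L}\tilde w=0$, $\mathcal{B}w=0$, $\mathcal{B}\tilde w=0$), and the interface contribution $\int_{\partial\Omega_j-\partial\Omega}\big(\mathbf{n}_j^T(\alpha\nabla w)-\mathbf{n}_j^T(\alpha\nabla\tilde w)\big)\bar\phi$ vanishes because the homogeneous interface condition for $w$ gives $\mathbf{n}_j^T(\alpha\nabla w)=-\mathrm{DtN}_j w$ while \Cref{def:dtn} applied to $d$ gives $\mathbf{n}_j^T(\alpha\nabla\tilde w)=-\mathrm{DtN}_j w$ as well. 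Hence $W$ is a weak solution of \Cref{eq:pde} with $f=g=0$, so $W\equiv 0$ by the assumed uniqueness for \Cref{eq:pde}, giving $w=W|_{\Omega_j}\equiv 0$ and $u_j=u_j'$; applied to the solution from the first step this shows it is the unique one, so $u_j=u|_{\Omega_j}$.

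The genuinely routine parts are the bookkeeping of the integration by parts with the bilinear forms $a$ and $b$ of \Cref{eq:weak}, in particular the $\Gamma_R$ and $\Gamma_V$ terms which under the standing assumptions do not meet the interior interface $\partial\Omega_j-\partial\Omega$, and the elementary gluing of two $H^1$ functions with matching traces. The step needing the most care is the rigorous treatment of the conormal (Neumann) traces $\mathbf{n}_j^T(\alpha\nabla\cdot)$: they are defined only in $H^{-1/2}(\partial\Omega_j-\partial\Omega)$ through the weak formulation, so "matching of the normal derivatives" must be read as the vanishing of a duality pairing against every $\phi\in V$, not as a pointwise identity. This is exactly where the weak formulation \Cref{eq:weak} is indispensable, and I expect it to be the main technical obstacle.
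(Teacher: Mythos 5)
Your proposal is correct and follows essentially the same line of reasoning as the paper's proof: the central idea in both is the $H^1$-gluing of the interior solution with the exterior DtN solution, followed by invoking uniqueness for \Cref{eq:pde} on the glued function. The only organizational difference is that you verify explicitly, via the weak form, that the restriction $u|_{\Omega_j}$ satisfies the transparent boundary condition (i.e. that the conormal traces from the two sides match in $H^{-1/2}$), whereas the paper simply asserts that $(u|_{\Omega_j},u|_{\Omega-\Omega_j})$ solves the coupled system \Cref{eq:exin} and focuses its detailed argument on the uniqueness half; your uniqueness step, stated for the difference of two solutions rather than directly for the homogeneous data, is equivalent by linearity.
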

\begin{proof}
  In \Cref{def:dtn}, we substitute
  $u_j|_{\partial\Omega_j-\partial\Omega}$ for $d$ and combine it with
  \Cref{eq:trunc} to find
  \begin{equation}\label{eq:exin}
    \begin{array}{r@{\hspace{0.2em}}c@{\hspace{0.2em}}llr@{\hspace{0.2em}}c@{\hspace{0.2em}}ll}
      \mathcal{L}u_j&=&f&\;\mbox{in }\Omega_j, &
      \quad\mathcal{L}v&=&0&\;\mbox{in }\Omega-\Omega_j,\\
      \mathcal{B}u_j&=&g&\;\mbox{on }\partial\Omega_j\cap\partial\Omega, &
      \mathcal{B}v&=&0&\;\mbox{on }\partial\Omega-\partial\Omega_j,\\
      \mathbf{n}_j^T(\alpha\nabla u_j)&=&\mathbf{n}_j^T(\alpha\nabla v)
      &\;\mbox{on }\partial\Omega_j-\partial\Omega, & \quad
      v&=&u_j&\;\mbox{on }\partial\Omega_j-\partial\Omega.
    \end{array}
  \end{equation}
  This coupled system for $(u_j,v)$ has at least one solution
  $(u|_{\Omega_j},u|_{\Omega-\Omega_j}).$ For uniqueness, we
  set $f:=0$ and $g:=0$, and show that $u_j\equiv0$ if
  $u_j\in H^1(\Omega_j)$: similar to \Cref{eq:weak}, we test the
  PDE satisfied by $u_j$ and $v$ separately with arbitrary $w\in V$,
  \begin{displaymath}
    \begin{array}{rcl}
      a_j(u_j,w) + b_j(u_j,w) &=& \int_{\partial\Omega_j-\partial\Omega}
      \mathbf{n}_j^T(\alpha\nabla u_j)\bar{w},\\
      a_j^c(v,w) + b_j^c(v,w) &=& \int_{\partial\Omega_j-\partial\Omega}
      -\mathbf{n}_j^T(\alpha\nabla v)\bar{w},
    \end{array}
  \end{displaymath}
  where the bilinear forms $a_j$ and $b_j$ correspond to $a$ and $b$ in
  \Cref{eq:weak} with the integration domains restricted to
  $\overline{\Omega}_j$, and similarly for $a_j^c$ and $b_j^c$ with the
  corresponding restriction to $\overline{\Omega}-\Omega_j$.  Adding the two
  equations above, and using the Neumann transmission condition from
  \Cref{eq:exin}, the integral terms cancel and we obtain
  \begin{displaymath}
    a_j(u_j,w) + a_j^c(v,w) + b_j(u_j,w) + b_j^c(v,w) = 0.
  \end{displaymath} 
  Now we need to recombine these bilinear forms into the entire
  ones defined on the original function space on $\Omega$.  Given
  $u_j\in H^{1}(\Omega_j)$ and
  $u_j|_{\partial\Omega_j-\partial\Omega}\in H^{1/2},$ we have $v\in
  H^1(\Omega-\Omega_j)$ by assumption. If we define $\tilde{u}:=u_j$
  in $\Omega_j$ and $\tilde{u}:=v$ in $\Omega-\Omega_j$, we know that
  $\tilde{u}\in H^1(\Omega)$ because $u_j=v$ on
  $\partial\Omega_j-\partial\Omega$ from \Cref{eq:exin}. Hence we
  found a $\tilde{u}\in H^1(\Omega)$ satisfying $ a(\tilde{u},w) +
  b(\tilde{u},w) = 0.$ By uniqueness of the solution of \Cref{eq:pde},
  we then conclude that $\tilde{u}\equiv0.$
\end{proof}


The matrix analogue of the exact truncation is simply Gaussian elimination: if we
consider \Cref{eq:tri} with $\mathbf{f}$ non-zero {\it only} in
$\mathbf{f}_j$, then we can rewrite \Cref{eq:tri} as
\begin{equation}\label{eq:mexin}
  \left[
    \begin{array}{ccccc}
      A_{\ctilde j} & A_{\ctilde j,j\langle} & & &\\
      A_{j\langle, \ctilde j} & A_{j\langle} & A_{j\langle j\bullet} & &\\
      & A_{j\bullet j\langle} & A_{j\bullet} & A_{j\bullet j\rangle} &\\
      & & A_{j\rangle j\bullet} & A_{j\rangle} & A_{j\rangle, j\ctilde}\\
      & & & A_{j\ctilde, j\rangle} & A_{j\ctilde}
    \end{array}
  \right]
  \left[
    \begin{array}{c}
      \mathbf{u}_{\ctilde j} \\ \mathbf{u}_{j\langle} \\ \mathbf{u}_{j\bullet}\\
      \mathbf{u}_{j\rangle} \\ \mathbf{u}_{j\ctilde}
    \end{array}
  \right] =
  \left[
    \begin{array}{c}
      0 \\ \mathbf{f}_{j\langle} \\ \mathbf{f}_{j\bullet}\\
      \mathbf{f}_{j\rangle} \\ 0
    \end{array}
  \right].
\end{equation}
To get the truncated model for $\mathbf{u}_j$ only, we eliminate
$\mathbf{u}_{\ctilde j}$ and $\mathbf{u}_{j\ctilde}$ and find
\begin{equation}\label{eq:schur}
  \left[
    \begin{array}{ccc}
      S_{j\langle}^{\langle} & A_{j\langle j\bullet} &\\
      A_{j\bullet j\langle} & A_{j\bullet} & A_{j\bullet j\rangle}\\
      & A_{j\rangle j\bullet} & S_{j\rangle}^{\rangle}
    \end{array}
  \right]
  \left[
    \begin{array}{c}
      \mathbf{u}_{j\langle}\vphantom{_{j\langle}^{\langle}} \\ \mathbf{u}_{j\bullet}\\ \mathbf{u}_{j\rangle}\vphantom{_{j\langle}^{\langle}}
    \end{array}
  \right] =
  \left[
    \begin{array}{c}
      \mathbf{f}_{j\langle}\vphantom{_{j\langle}^{\langle}} \\ \mathbf{f}_{j\bullet}\\ \mathbf{f}_{j\rangle}\vphantom{_{j\langle}^{\langle}}
    \end{array}
  \right],
\end{equation}
where $S_{j\langle}^{\langle}=A_{j\langle}-A_{j\langle, \ctilde
  j}A_{\ctilde j}^{-1}A_{\ctilde j,j\langle}$ and
$S_{j\rangle}^{\rangle}=A_{j\rangle}-A_{j\rangle,
  j\ctilde}A_{j\ctilde}^{-1}A_{j\ctilde,j\rangle}$ are known as Schur
complements, which are usually dense matrices.  Similar to
\Cref{lem:TC}, we have
\begin{lemma}\label{lem:schur}
  If in \Cref{eq:mexin} the coefficient matrix and the diagonal blocks
  $A_{\ctilde j},$ $A_{j\ctilde}$ are invertible, then \Cref{eq:schur} is
  uniquely solvable and its solution is part of the solution of
  \Cref{eq:mexin}.
\end{lemma}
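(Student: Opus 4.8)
The plan is to give the purely algebraic counterpart of the proof of \Cref{lem:TC}, reducing everything to the classical Schur complement lemma for $2\times 2$ block matrices. First I would regroup the five blocks of \Cref{eq:mexin} into an \emph{exterior} pair $\mathbf{u}_{\mathrm{e}}:=[\mathbf{u}_{\ctilde j};\mathbf{u}_{j\ctilde}]$ and an \emph{interior} triple $\mathbf{u}_{\mathrm{i}}:=[\mathbf{u}_{j\langle};\mathbf{u}_{j\bullet};\mathbf{u}_{j\rangle}]$, and write the coefficient matrix of \Cref{eq:mexin} as a $2\times 2$ block matrix with exterior block $\mathcal{A}$, off-diagonal blocks $\mathcal{B}$ (exterior to interior) and $\mathcal{C}$ (interior to exterior), and interior block $\mathcal{D}$. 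The key structural observation, read off directly from \Cref{eq:mexin}, is that $\mathbf{u}_{\ctilde j}$ and $\mathbf{u}_{j\ctilde}$ do not interact, so $\mathcal{A}=\mathrm{diag}(A_{\ctilde j},A_{j\ctilde})$ is block diagonal and hence invertible by hypothesis; that $\mathbf{u}_{\ctilde j}$ couples only to $\mathbf{u}_{j\langle}$ and $\mathbf{u}_{j\ctilde}$ only to $\mathbf{u}_{j\rangle}$, so $\mathcal{B}$ and $\mathcal{C}$ carry only the four blocks $A_{\ctilde j,j\langle},\,A_{j\langle,\ctilde j},\,A_{j\ctilde,j\rangle},\,A_{j\rangle,j\ctilde}$; and that $\mathcal{D}$ is exactly the bordered tridiagonal matrix appearing in \Cref{eq:schur}, but with $A_{j\langle}$ and $A_{j\rangle}$ in the corners in place of $S_{j\langle}^{\langle}$ and $S_{j\rangle}^{\rangle}$. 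The right-hand side splits accordingly into exterior part $\mathbf{0}$ and interior part $[\mathbf{f}_{j\langle};\mathbf{f}_{j\bullet};\mathbf{f}_{j\rangle}]$.

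Next I would apply the standard fact that, with $\mathcal{A}$ invertible, the block matrix is invertible if and only if its Schur complement $\mathcal{S}:=\mathcal{D}-\mathcal{C}\mathcal{A}^{-1}\mathcal{B}$ is. Using the block-diagonal form of $\mathcal{A}$ and the sparsity of $\mathcal{B},\mathcal{C}$, the product $\mathcal{C}\mathcal{A}^{-1}\mathcal{B}$ has nonzero entries only in the two corner blocks, subtracting $A_{j\langle,\ctilde j}A_{\ctilde j}^{-1}A_{\ctilde j,j\langle}$ from $A_{j\langle}$ and $A_{j\rangle,j\ctilde}A_{j\ctilde}^{-1}A_{j\ctilde,j\rangle}$ from $A_{j\rangle}$; hence $\mathcal{S}$ is precisely the coefficient matrix of \Cref{eq:schur} with the Schur complements $S_{j\langle}^{\langle}$, $S_{j\rangle}^{\rangle}$ as defined in the text. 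Since the hypothesis assumes the coefficient matrix of \Cref{eq:mexin} invertible, $\mathcal{S}$ is invertible, which is exactly the unique solvability of \Cref{eq:schur}.

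Finally, for the claim that the solution of \Cref{eq:schur} is part of the solution of \Cref{eq:mexin}, I would use block elimination in both directions. Eliminating $\mathbf{u}_{\mathrm{e}}$ from \Cref{eq:mexin} by $\mathbf{u}_{\mathrm{e}}=-\mathcal{A}^{-1}\mathcal{B}\mathbf{u}_{\mathrm{i}}$ (the exterior right-hand side being zero) reduces the interior block rows to $\mathcal{S}\mathbf{u}_{\mathrm{i}}=[\mathbf{f}_{j\langle};\mathbf{f}_{j\bullet};\mathbf{f}_{j\rangle}]$, i.e. to \Cref{eq:schur}; thus the interior part of the unique solution of \Cref{eq:mexin} solves \Cref{eq:schur}. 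Conversely, given the unique solution of \Cref{eq:schur}, setting $\mathbf{u}_{\ctilde j}:=-A_{\ctilde j}^{-1}A_{\ctilde j,j\langle}\mathbf{u}_{j\langle}$ and $\mathbf{u}_{j\ctilde}:=-A_{j\ctilde}^{-1}A_{j\ctilde,j\rangle}\mathbf{u}_{j\rangle}$ yields a full vector that satisfies every block row of \Cref{eq:mexin}; by invertibility it is the unique solution there, so the solution of \Cref{eq:schur} is its interior part.

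I do not expect a genuinely hard step; the only thing requiring care is the index bookkeeping, namely checking that the two eliminations are independent and act on disjoint corner blocks, and that the algebra of $\mathcal{C}\mathcal{A}^{-1}\mathcal{B}$ reproduces exactly $S_{j\langle}^{\langle}$ and $S_{j\rangle}^{\rangle}$. This mirrors \Cref{lem:TC} at the discrete level, with the exterior solve $\mathcal{A}^{-1}$ playing the role of the exterior DtN solve and the cancellation of the corner coupling terms playing the role of the cancellation of the Neumann traces.
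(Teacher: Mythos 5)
Your proof is correct, and it takes the route the paper clearly intends: the lemma is stated without proof, flagged only as ``similar to \Cref{lem:TC}'', and the text immediately before it already derives \Cref{eq:schur} from \Cref{eq:mexin} by eliminating $\mathbf{u}_{\ctilde j}$ and $\mathbf{u}_{j\ctilde}$ — which is precisely the $2\times 2$ block Schur-complement reduction you make explicit, using that $\mathcal{A}=\mathrm{diag}(A_{\ctilde j},A_{j\ctilde})$ is block diagonal and that the two corner couplings act on disjoint blocks. Nothing more is needed; the standard equivalence (for $\mathcal{A}$ invertible) between invertibility of the full block matrix and of its Schur complement yields both unique solvability of \Cref{eq:schur} and the identification of its solution with the interior part of the solution of \Cref{eq:mexin}.
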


\begin{remark}
  If $j=1$, then there is no $\mathbf{u}_{j\langle}$, and similarly if
  $j=J$, then there is no $\mathbf{u}_{j\rangle}$, so the
  corresponding rows and columns in \Cref{eq:mexin} and
  \Cref{eq:schur} should be deleted. In this case, \Cref{lem:schur}
  still holds.  From now on, we will treat $j=1$ and $j=J$ like
  the other $j$'s, and just assume that the non-existent blocks
  are deleted.
\end{remark}

Recalling the splitting
$A_{j\langle}=A_{j\langle}^{\rangle}+A_{j\langle}^{\langle}$, we can interpret
\Cref{eq:schur} as containing generalized Robin boundary conditions similar to
\Cref{eq:matrt} by writing
$S_{j\langle}^{\langle}=A_{j\langle}^{\langle}+P_{j\langle}^{\langle}$ and
$S_{j\rangle}^{\rangle}=A_{j\rangle}^{\rangle}+P_{j\rangle}^{\rangle}$ with
\begin{equation}\label{eq:Pdtn}
  P_{j\langle}^{\langle}:=A_{j\langle}^{\rangle}-A_{j\langle, \ctilde j}
  A_{\ctilde j}^{-1}A_{\ctilde j,j\langle},\quad
  P_{j\rangle}^{\rangle}:=A_{j\rangle}^{\langle}-A_{j\rangle, j\ctilde}
  A_{j\ctilde}^{-1}A_{j\ctilde,j\rangle}.
\end{equation}
Comparing \Cref{eq:trunc} and \Cref{eq:schur}, both for the exact
truncation, we find $P_{j\langle}^{\langle}$ should be the analogue of
the Dirichlet-to-Neumann operator $\mathrm{DtN}_j$ restricted to its
input argument on $\Gamma_{j,j-1}.$ We can also observe directly from
\Cref{eq:Pdtn} that $P_{j\langle}^{\langle}$ acts on Dirichlet data
$\mathbf{v}_{j\langle}$ by $-A_{\ctilde j,j\langle}$ (negative sign
for moving to the r.h.s.), solves for $\mathbf{v}_{\ctilde j}$ in the
exterior by $A_{\ctilde j}^{-1}$ and then evaluates the Neumann data
by
$A_{j\langle}^{\rangle}\mathbf{v}_{j\langle}+A_{j\langle, \ctilde j}\mathbf{v}_{\ctilde j}.$ %

\begin{remark}\label{rem:impml}
  PML is a popular technique to approximate the transparent boundary
  condition, and it is appropriate to make a connection here between
  the practical implementation of PML and our present discussion. The
  PML technique replaces the original problem exterior to $\Omega_j$
  by a modified one on another exterior domain $\Omega_j^{pml}$
  surrounding $\Omega_j$ along the truncation boundary
  $\partial\Omega_j-\partial\Omega$.  A DtN operator for the modified
  problem in $\Omega_j^{pml}$ can be defined as\footnote{In our
    setting, except for $j=1,J$, $\Omega_j^{pml}$ has two 
    disconnected parts -- one on the left side of $\Gamma_{j,j-1}$ and
    one on the right side of $\Gamma_{j,j+1}$. So
    $\mathrm{DtN}_j^{pml}$ is block diagonal in the sense that
    $\mathbf{n}_j^T(\tilde{\alpha}\nabla v)$ on $\Gamma_{j,j-1}$
    depends only on $d_1$, and on  
    $\Gamma_{j,j+1}$ only on $d_2$.}
  \begin{equation}\label{eq:dtnpml}
    \begin{array}{r@{\hspace{0.2em}}c@{\hspace{0.2em}}ll}
      \mathrm{DtN}_j^{pml}:\;(d_1,d_2)&\rightarrow& -\mathbf{n}_j^T(\tilde{\alpha}\nabla v)\quad&\mbox{on }(\Gamma_{j,j-1}, \Gamma_{j,j+1}),\\
      \mbox{ s.t.}\qquad
      \widetilde{\mathcal{L}}\,v&=&0\quad & \mbox{in }\,\Omega_j^{pml},\\
      \widetilde{\mathcal{B}}\,v&=&0\quad &\mbox{on }\,
      \partial\Omega_j^{pml}-\partial\Omega_j,\\
      v&=&d_1\quad &\mbox{on }\,\Gamma_{j,j-1}\subset(\partial\Omega_j^{pml}\cap\partial\Omega_j),\\
      v&=&d_2\quad &\mbox{on }\,\Gamma_{j,j+1}\subset(\partial\Omega_j^{pml}\cap\partial\Omega_j),
    \end{array}
  \end{equation}
  where $\widetilde{\mathcal{L}}:=-\nabla^T\tilde{\alpha}\nabla- {\omega^2}/\tilde{\kappa}$.
  Then an approximate transparent boundary condition can be used in the case
  stated in \Cref{lem:TC} to obtain $\tilde{u}_j\approx u_j$:
  \begin{equation}\label{eq:truncpml}
    \begin{array}{r@{\hspace{0.2em}}c@{\hspace{0.2em}}ll}
      \mathcal{L}\,\tilde{u}_j&=&f & \mbox{ in }\Omega_j,\\
      \mathcal{B}\,\tilde{u}_j&=&g & \mbox{ on }\partial\Omega_j\cap\partial\Omega,\\
      \mathbf{n}_j^T(\alpha\nabla \tilde{u}_j)+
      \mathrm{DtN}_j^{pml}\tilde{u}_j&=&0 &
      \mbox{ on }\partial\Omega_j-\partial\Omega.
    \end{array}
  \end{equation}
  To actually solve \Cref{eq:truncpml}, as we did in the proof of
  \Cref{lem:TC}, we substitute with \Cref{eq:dtnpml} and compose a problem
  defined on
  $\overline{\tilde{\Omega}}_j:=\overline{\Omega}_j\cup\overline{\Omega}_j^{pml}$,
  \begin{displaymath}
    \begin{array}{r@{\hspace{0.2em}}c@{\hspace{0.2em}}ll}
      \widetilde{\mathcal{L}}\,\tilde{u}_j&=&\tilde{f} &
      \mbox{ in }\tilde{\Omega}_j,\\
      \widetilde{\mathcal{B}}\,\tilde{u}_j&=&\tilde{g} &
      \mbox{ on }\partial\tilde{\Omega}_j,
    \end{array}
  \end{displaymath}
  where $\widetilde{\mathcal{L}}=\mathcal{L}$ in $\Omega_j$,
  $\widetilde{\mathcal{B}}=\mathcal{B}$ on $\partial\Omega_j\cap\partial\Omega$
  and $\tilde{f}$, $\tilde{g}$ are the zero extensions of $f$, $g$.
\end{remark}

\begin{remark}\label{rem:imschur}
  At the matrix level, the PML technique corresponds to replacing
  \Cref{eq:mexin} with
  \begin{equation}\label{eq:mexinpml}
    \left[
      \begin{array}{ccccc}
        \tilde{A}_{\ctilde j} & \tilde{A}_{\ctilde j,j\langle} & & &\\
        \tilde{A}_{j\langle, \ctilde j} & \tilde{A}_{j\langle} & A_{j\langle j\bullet} & &\\
        & A_{j\bullet j\langle} & A_{j\bullet} & A_{j\bullet j\rangle} &\\
        & & A_{j\rangle j\bullet} & \tilde{A}_{j\rangle} & \tilde{A}_{j\rangle, j\ctilde}\\
        & & & \tilde{A}_{j\ctilde, j\rangle} & \tilde{A}_{j\ctilde}
      \end{array}
    \right]
    \left[
      \begin{array}{c}
        \mathbf{\tilde{u}}_{\ctilde j} \\ \mathbf{u}_{j\langle} \\ \mathbf{u}_{j\bullet}\\
        \mathbf{u}_{j\rangle} \\ \mathbf{\tilde{u}}_{j\ctilde}
      \end{array}
    \right] =
    \left[
      \begin{array}{c}
        0 \\ \mathbf{f}_{j\langle} \\ \mathbf{f}_{j\bullet}\\
        \mathbf{f}_{j\rangle} \\ 0
      \end{array}
    \right],
  \end{equation}
  where the entries with tildes (except $\tilde{A}_{j\langle}$ and
  $\tilde{A}_{j\rangle}$) are typically of much smaller dimension than the
  original ones.  The Schur complemented system of \Cref{eq:mexinpml},
  intended to approximate \Cref{eq:schur}, is
  \begin{equation}\label{eq:schurpml}
    \left[
      \begin{array}{ccc}
        \tilde{S}_{j\langle}^{\langle} & A_{j\langle j\bullet} &\\
        A_{j\bullet j\langle} & A_{j\bullet} & A_{j\bullet j\rangle}\\
        & A_{j\rangle j\bullet} & \tilde{S}_{j\rangle}^{\rangle}
      \end{array}
    \right]
    \left[
      \begin{array}{c}
        \mathbf{u}_{j\langle}\vphantom{_{j\langle}^{\langle}} \\ \mathbf{u}_{j\bullet}\\ \mathbf{u}_{j\rangle}\vphantom{_{j\langle}^{\langle}}
      \end{array}
    \right] =
    \left[
      \begin{array}{c}
        \mathbf{f}_{j\langle}\vphantom{_{j\langle}^{\langle}} \\ \mathbf{f}_{j\bullet}\\ \mathbf{f}_{j\rangle}\vphantom{_{j\langle}^{\langle}}
      \end{array}
    \right],
  \end{equation}
  where
  $\tilde{S}_{j\langle}^{\langle}:=\tilde{A}_{j\langle}-\tilde{A}_{j\langle,
    j\ctilde} \tilde{A}_{j\ctilde}^{-1}\tilde{A}_{j\ctilde,j\langle}$
  and
  $\tilde{S}_{j\rangle}^{\rangle}:=\tilde{A}_{j\rangle}-\tilde{A}_{j\rangle,
    j\ctilde} \tilde{A}_{j\ctilde}^{-1}\tilde{A}_{j\ctilde,j\rangle}$.
  As before, we see that
  $\tilde{P}_{j\langle}^{\langle}:=\tilde{S}_{j\langle}^{\langle}-A_{j\langle}^{\langle}$
  is the matrix version of the PML--DtN operator $\mathrm{DtN}_j^{pml}$
  restricted to its input argument on $\Gamma_{j,j-1}$.  For
  implementation, one usually does not solve \Cref{eq:schurpml} directly, 
  one solves instead \Cref{eq:mexinpml}.
\end{remark}

\subsection{Green's function}

We have prepared the reader for the new Helmholtz solvers so far only
based on the concept of domain decomposition and transmission
conditions.  There are however also formulations of these new
Helmholtz solvers based on Green's functions, which we introduce
next. By definition, a fundamental solution $G(\mathbf{x},\mathbf{y})$
of the partial differential operator $\mathcal{M}$ is a solution of
the PDE in a domain $Y$ without consideration of boundary conditions,
\begin{displaymath}
  \mathcal{M}_{\mathbf{y}}G(\mathbf{x},\mathbf{y}) = \delta(\mathbf{y}-\mathbf{x}),
  \quad\forall\,\mathbf{x}\in X\subseteq Y,
\end{displaymath}
where $\mathcal{M}_y$ is the operator $\mathcal{M}$ acting on the
$\mathbf{y}$ variable, and $\delta(\mathbf{y}-\mathbf{x})$ is the
Dirac delta function representing a point source and satisfying
$\int_Y\delta(\mathbf{y}-\mathbf{x})v(\mathbf{y})~d\mathbf{y}=v(\mathbf{x}).$
Let $u$ be a solution of $\mathcal{L}u=f$ in $Y,$ see \Cref{eq:pde}
without boundary condition, and
$\mathcal{M}:=-\nabla^T(\alpha^T\nabla \cdot)-\frac{\omega^2}{\kappa}.$ %
Using integration by parts, we have formally for $\mathbf{x}\in X-\partial Y$
\begin{equation}\label{eq:rep}
  \medmuskip=-1mu
  \thinmuskip=-1mu
  \thickmuskip=-1mu
  \nulldelimiterspace=-0pt
  \scriptspace=0pt 
  u(\mathbf{x}) = \int_{Y} G(\mathbf{x},\mathbf{y})f(\mathbf{y})\,
  \mathrm{d}\mathbf{y}+\int_{\partial Y}\mathbf{n}_{\mathbf{y}}^T(\alpha
  \nabla u(\mathbf{y})){G}(\mathbf{x},\mathbf{y})
  -\mathbf{n}_{\mathbf{y}}^T\left(\alpha^T\nabla_{\mathbf{y}}
    G(\mathbf{x},\mathbf{y})\right)u(\mathbf{y})\,
  \mathrm{d}\sigma(\mathbf{y}),
\end{equation}
which is a representation formula for the solution, and the three summands are
called volume potential, single layer potential and double layer potential; see
\cite{Nedelec}.  A justification of \Cref{eq:rep} involves existence, regularity
and singularity of the fundamental solution and the solution, which can be found
in the literature if $\alpha$, $\kappa$, $f$ and $\partial X$ are bounded and
smooth; see e.g. \cite{Sauvigny, Ramm}.  We note that the r.h.s. of
\Cref{eq:rep} uses both Neumann and Dirichlet traces of $u$ while usually a
well-posed boundary condition only tells us one of them or a generalized linear
combination of them.  For example, let $u=0$ on $\partial Y.$ To get a usable
representation, we can require $G$ also to satisfy $G(\mathbf{x},\mathbf{y})=0$
for $\mathbf{y}\hspace{-0.2em}\in\hspace{-0.2em}\partial Y$,
$\mathbf{x}\hspace{-0.2em}\in\hspace{-0.2em} X.$ Then, the single layer and the
double layer potentials in \Cref{eq:rep} vanish, and we get the simple
representation formula
\begin{equation}\label{eq:repf}
  u(\mathbf{x}) = \int_{Y} G(\mathbf{x},\mathbf{y})f(\mathbf{y})\,
  \mathrm{d}\mathbf{y}.
\end{equation}
We call a {\it fundamental solution} satisfying a homogeneous boundary
condition {\it Green's function}. People however sometimes use the two
terms in an exchangeable way.

\begin{remark}
  If $u$ satisfies an inhomogeneous boundary condition $\mathcal{B}u|_{\partial{Y}}=g$, we
  can lift (extend) the boundary data into $Y$, i.e. find a function $v$ on $Y$
  such that $\mathcal{B}v|_{\partial Y}=g$, and subtract it from $u$ so that the
  boundary condition becomes homogeneous for the new unknown $\tilde{u}:=u-v.$
  (We will see this trick is useful also for the implementation of the
  transmission condition in \Cref{eq:rt}.)  For the Green's function $G$, we impose
  $\mathcal{B}_{\mathbf{y}}^TG(\mathbf{x},\mathbf{y})=0$ where $\mathcal{B}^T$
  corresponds to $\mathcal{B}$ but with $\alpha$ replaced by $\alpha^T$.
\end{remark}

\begin{remark}
  Another convention is to define the Green's function
  $G(\mathbf{x},\mathbf{y})$ through
  $\mathcal{L}_{\mathbf{x}}G(\mathbf{x},\mathbf{y})=\delta(\mathbf{x}-\mathbf{y})$
  equipped with homogeneous boundary conditions as for $u$. Then,
  \Cref{eq:repf} can be obtained by the superposition principle.
  Similarly, we may define $H(\mathbf{x},\mathbf{y})$ through
  $\mathcal{L}_{\mathbf{y}}H(\mathbf{x},\mathbf{y})=\delta(\mathbf{y}-\mathbf{x})$.
  We then have $H(\mathbf{y},\mathbf{x})=G(\mathbf{x},\mathbf{y})$.
  Furthermore, if $\alpha=\alpha^T$, we have
  $G(\mathbf{x},\mathbf{y})=G(\mathbf{y},\mathbf{x})$.
\end{remark}

We now point out an analogy between \Cref{eq:repf} and the solution
$\mathbf{u} = A^{-1} \mathbf{f}$ of \Cref{eq:las}.  For a particular
value of $\mathbf{x}$, $u(\mathbf{x})$ in \Cref{eq:repf} corresponds
to a particular (say, the $m$-th) entry of $\mathbf{u},$ and
$G(\mathbf{x},\mathbf{y})$ corresponds then also to the $m$-th row of
$A^{-1},$ and the integral in \Cref{eq:repf} becomes the inner product
of the row of $A^{-1}$ with $\mathbf{f}.$ Similarly, for a particular
$\mathbf{y},$ $G(\mathbf{x},\mathbf{y})$ corresponds to a particular
column of $A^{-1}.$ We now take a closer look again at the Schur
complement $S_{j\langle}^{\langle}$ in \Cref{eq:schur}, which is
essentially derived from the 2-by-2 block matrix by Gaussian
elimination,
\begin{equation}\label{eq:2bl}
  \left[
    \begin{array}{cc}
      A_{\ctilde j} & A_{\ctilde j,j\langle}\\
      A_{j\langle, \ctilde j} & A_{j\langle}\vphantom{_{j\langle}^{\langle}}
    \end{array}
  \right] =
  \left[
    \begin{array}{cc}
      I_{\ctilde j} & \\
      A_{j\langle, \ctilde j}A_{\ctilde j}^{-1} & I_{j\langle}\vphantom{_{j\langle}^{\langle}}
    \end{array}
  \right]
  \left[
    \begin{array}{cc}
      A_{\ctilde j} & A_{\ctilde j,j\langle}\\
       & S_{j\langle}^{\langle}
    \end{array}
  \right].
\end{equation}
Taking the inverse of both sides, we find
\begin{displaymath}\small
  \begin{array}{rl}
    &\left[
      \begin{array}{cc}
        A_{\ctilde j} & A_{\ctilde j,j\langle}\\
        A_{j\langle, \ctilde j} & A_{j\langle}\vphantom{_{j\langle}^{\langle}}
      \end{array}
    \right]^{-1} =
    \left[
      \begin{array}{cc}
        A_{\ctilde j} & A_{\ctilde j,j\langle}\\
        & S_{j\langle}^{\langle}
      \end{array}
    \right]^{-1}
    \left[
      \begin{array}{cc}
        I_{\ctilde j} & \\
        -A_{j\langle, \ctilde j}A_{\ctilde j}^{-1} & I_{j\langle}\vphantom{_{j\langle}^{\langle}}
      \end{array}
    \right]\\
    = &
    \left[
      \begin{array}{cc}
        I_{\ctilde j} & -A_{\ctilde j}^{-1}A_{\ctilde j,j\langle}\\
        & I_{j\langle}^{\langle}
      \end{array}
    \right]
    \left[
      \begin{array}{cc}
        A_{\ctilde j}^{-1} & \\
        & {S_{j\langle}^{\langle}}^{-1}
      \end{array}
    \right]
    \left[
      \begin{array}{cc}
        I_{\ctilde j} & \\
        -A_{j\langle, \ctilde j}A_{\ctilde j}^{-1} & I_{j\langle}\vphantom{_{j\langle}^{\langle}}
      \end{array}
    \right]=
    \left[
      \begin{array}{cc}
        * & *\\
        * & {S_{j\langle}^{\langle}}^{-1}
      \end{array}
    \right],
  \end{array}
\end{displaymath}
where we omit the terms marked by $*$. Recalling the analogy between
the matrix inverse and the Green's function, we can identify
${S_{j\langle}^{\langle}}^{-1}$ as a diagonal part of the Green's
function $G(\mathbf{x},\mathbf{y}).$ Here, $G$ satisfies for
$\mathbf{x}, \mathbf{y}\in \Omega_{\ctilde j}^{+h}$
\begin{displaymath}
  \begin{array}{r@{\hspace{0.2em}}c@{\hspace{0.2em}}ll}
    \mathcal{M}_{\mathbf{y}} G(\mathbf{x},\mathbf{y})&=&\delta(\mathbf{y}-\mathbf{x})
    & \mbox{in }\Omega_{\ctilde j}^{+h},\\
    \mathcal{B}_{\mathbf{y}}^TG(\mathbf{x},\mathbf{y})&=&0&
    \mbox{on } \partial\Omega\cap\partial\Omega_{\ctilde j}^{+h},\\
    G(\mathbf{x},\mathbf{y})&=&0& \mbox{on } \Gamma_{j,j-1}^{+h},
  \end{array}
\end{displaymath}
where $\Omega_{\ctilde j}^{+h}$ is the domain covering the left part of
$\Omega-\Omega_j$ but with one grid layer further into $\Omega_j,$ and
$\Gamma_{j,j-1}^{+h}$ is the interface of $\Omega_{\ctilde j}^{+h}$ in
$\Omega_j.$ We see that ${S_{j\langle}^{\langle}}^{-1}$ corresponds to
$G(\mathbf{x},\mathbf{y})$ with $\mathbf{x}, \mathbf{y}$ both restricted to
$\Gamma_{j,j-1}.$ This identification was first given in \cite{EY1, EY2}.
\section{Optimized Schwarz methods}
\label{sec:osm}

If we have $u_j=u|_{\Omega_j},$ $j=1,..,J$ with $u$ the solution of
\Cref{eq:pde}, we must have consistency: a) $u_j$ and $u_l$
matching\footnote{Usually, `match' means `coincide'.  But there are exceptions,
  e.g. the original problem can enforce a jump of the solution across a surface,
  or at discrete level non-conforming discretization is used in overlaps and
  interfaces.} on $\overline{\Omega}_j\cap\overline{\Omega}_l$, and b) the
original equations are satisfied in neighborhoods of interfaces
$\partial\Omega_j\cap\overline{\Omega}_l$, for all $j\neq l$,
$j,l\in\{1,..,J\}$.  Conversely, if $u_j$, $j=1,..,J$ solves \Cref{eq:pde}
restricted to $\overline{\Omega}_j$, then a) and b) together
imply $u_j=u|_{\Omega_j}$.  For second-order elliptic PDEs like
\Cref{eq:pde}, b) is equivalent to say the Neumann traces
$\mathbf{n}^T\alpha\nabla u_j$, $\mathbf{n}^T\alpha\nabla u_l$ match\footnote{We
  assume all surface/line/point sources on the interface have been split and
  incorporated into subdomain problems.}  on
$\partial{\Omega}_j\cap\overline{\Omega}_l$ for all $j,l\in\{1,..,J\}$.  Hence,
when $\{\Omega_j\}_{j=1}^{J}$ are non-overlapping, a) and b) reduce to both
Dirichlet and Neumann (or any other equivalent pair of) traces to match on every
interface. If the subdomains $\{\Omega_j\}_{j=1}^{J}$ overlap, a) and b) as a
whole can be further condensed as: a') {\it one} transmission condition
(matching one of Dirichlet/Neumann/generalized Robin traces) on every interface,
and b') the transmission conditions ensure $u_j=u_l$ on
$\overline{\Omega}_{j}\cap\overline{\Omega}_l$ if $u_j$ and $u_l$ both solve the
original equations restricted to overlaps.  Therefore, no matter the
decomposition being overlapping or non-overlapping, the original problem can be
rewritten as a system of subdomain problems coupled through transmission
conditions on interfaces; c.f. \cite{QVbook}.

Schwarz methods split the coupling between subdomains by taking the
interface data from the already available iterates and solve subdomain
problems to get the new iterate\footnote{Our description is
  applicable also to single-trace methods such as BDD, FETI and FETI-H
  which are based on non-overlapping decompositions and use the same
  data for neighboring subdomains on each interface. In contrast, for
  Schwarz methods each subdomain is equipped with its own interface
  data which is provided by (but not used by) the other
  subdomains.}. Historically the first Schwarz method was the {\it
  alternating Schwarz method} introduced by Schwarz himself
\cite{Schwarz:1870:UGA}, where one subdomain is solved at a time, and
then the newest data is passed on to the neighboring subdomains. This
is analogous to the Gauss-Seidel iteration in linear algebra. More
than a century later, Lions introduced the so called {\it parallel
  Schwarz method} \cite{Lions:1988:SAM}, where each subdomain solves
its local problem at the same time, and data is only exchanged
afterward. This is analogous to the Jacobi iteration in linear
algebra. In the alternating Schwarz method in the presence of many
subdomains, one also needs to specify an ordering, and for the
Helmholtz solvers we are interested in here with the decomposition
into a one dimensional sequence of subdomains, the particular ordering
of sweeping from the first subdomain to the last one and then back,
like in the symmetrized Gauss-Seidel iteration in linear algebra, is
important, and we call these `double sweep' methods.

One also has to decide on which unknowns to write the iteration: one
can choose subdomain approximations (for the equivalent coupled system),
global approximations (for the original problem), interface data, and
residuals. We explain now in detail these formulations and their relations.

\subsection{Subdomain transmission form of Schwarz methods}

In this formulation, the iterates represent approximate solutions on
the subdomains. The corresponding {\it double sweep optimized Schwarz
  method} (DOSM) was first proposed in \cite{Nataf93, NN97}.  Based on
the decomposition defined in \Cref{eq:dd}, we state the DOSM in
\Cref{alg:DOSMPDE} at the PDE
\begin{algorithm}
  \caption{DOSM in the {\bf subdomain transmission} form at the {\bf PDE} level
   }
  \label{alg:DOSMPDE}
  Given the last iterate $\left\{u_j^{(n-1)}\mbox{ in }\Omega_j,\
    j=1,..,J\right\}$, solve successively for $j=1,..,J-1,$
  \begin{displaymath}
  \small
  \begin{array}{r@{\hspace{0.2em}}c@{\hspace{0.2em}}ll}
    \mathcal{L}\,u_j^{(n-\frac{1}{2})}&=&f & \mbox{ in }\Omega_j,\\
    \mathcal{B}\,u_j^{(n-\frac{1}{2})}&=&g & \mbox{ on }
    \partial\Omega\cap\partial\Omega_j,\\
    \mathcal{Q}_{j}^{\langle}\left(\mathbf{n}_j^T\alpha\nabla
      u_j^{(n-\frac{1}{2})}\right)+\mathcal{P}_{j}^{\langle}u_j^{(n-\frac{1}{2})}&=&
    \mathcal{Q}_{j}^{\langle}\left(\mathbf{n}_j^T\alpha\nabla
      u_{j-1}^{(n-\frac{1}{2})}\right)+\mathcal{P}_{j}^{\langle}u_{j-1}^{(n-\frac{1}{2})}
     & \mbox{ on }\Gamma_{j,j-1},\\
     \mathcal{Q}_{j}^{\rangle}\left(\mathbf{n}_j^T\alpha\nabla
       u_j^{(n-\frac{1}{2})}\right)+\mathcal{P}_{j}^{\rangle}u_j^{(n-\frac{1}{2})}&=&
     \mathcal{Q}_{j}^{\rangle}\left(\mathbf{n}_j^T\alpha\nabla
       u_{j+1}^{(n-1)}\right)+\mathcal{P}_{j}^{\rangle}u_{j+1}^{(n-1)}
     & \mbox{ on }\Gamma_{j,j+1},
  \end{array}
  \end{displaymath}
  where $\mathcal{Q}_j^{\langle},$ $\mathcal{P}_j^{\langle}$
  and $\mathcal{Q}_j^{\langle},$ $\mathcal{P}_j^{\langle}$ are some
  possibly non-local operators on the interfaces $\Gamma_{j,j-1}$
  and $\Gamma_{j,j+1}$, and $\mathbf{n}_j$ is the unit outward normal vector of
  $\Omega_j$. We call this process the forward sweep.

  Then, the backward sweep consists in solving successively for $j=J,..,1,$
  \begin{displaymath}
    \small
  \begin{array}{r@{\hspace{0.2em}}c@{\hspace{0.2em}}ll}
    \mathcal{L}\,u_j^{(n)}&=&f & \mbox{ in }\Omega_j,\\
    \mathcal{B}\,u_j^{(n)}&=&g & \mbox{ on }
    \partial\Omega\cap\partial\Omega_j,\\
    \mathcal{Q}_{j}^{\langle}\left(\mathbf{n}_j^T\alpha\nabla
      u_j^{(n)}\right)+\mathcal{P}_{j}^{\langle}u_j^{(n)}&=&
    \mathcal{Q}_{j}^{\langle}\left(\mathbf{n}_j^T\alpha\nabla
      u_{j-1}^{(n-\frac{1}{2})}\right)+\mathcal{P}_{j}^{\langle}u_{j-1}^{(n-\frac{1}{2})}
    & \mbox{ on }\Gamma_{j,j-1},\\
    \mathcal{Q}_{j}^{\rangle}\left(\mathbf{n}_j^T\alpha\nabla
      u_j^{(n)}\right)+\mathcal{P}_{j}^{\rangle}u_j^{(n)}&=&
    \mathcal{Q}_{j}^{\rangle}\left(\mathbf{n}_j^T\alpha\nabla
      u_{j+1}^{(n)}\right)+\mathcal{P}_{j}^{\rangle}u_{j+1}^{(n)}
    & \mbox{ on }\Gamma_{j,j+1}.
  \end{array}
  \end{displaymath}
\end{algorithm}
level for \Cref{eq:pde} and in \Cref{alg:DOSMMAT} at the matrix level
\begin{algorithm}
  \caption{DOSM in the {\bf subdomain transmission} form at the {\bf matrix}
    level}
  \label{alg:DOSMMAT}
  Given the last iterate \{$\mathbf{u}_j^{(n-1)},$ $j=1,..,J$\}, solve
  successively for $j=1,..,J-1,$
  \begin{displaymath}
    \small
    \begin{array}{ll}
      &\left[
        \begin{array}{ccc}
          Q_{j\langle}^{\langle}A_{j\langle}^{\langle}+P_{j\langle}^{\langle} &
          Q_{j\langle}^{\langle}A_{j\langle j\bullet} &\vphantom{_{j\langle}^{(n-\frac{1}{2})}}\\
          A_{j\bullet j\langle} & A_{j\bullet} & A_{j\bullet j\rangle}\vphantom{_{j\langle}^{(n-\frac{1}{2})}}\\
          & Q_{j\rangle}^{\rangle}A_{j\rangle j\bullet} &
          Q_{j\rangle}^{\rangle}A_{j\rangle}^{\rangle}+P_{j\rangle}^{\rangle}\vphantom{_{j\langle}^{(n-\frac{1}{2})}}
        \end{array}
      \right]
      \left[
        \begin{array}{c}
          \mathbf{u}_{j\langle}^{(n-\frac{1}{2})} \\
          \mathbf{u}_{j\bullet}^{(n-\frac{1}{2})} \\
          \mathbf{u}_{j\rangle}^{(n-\frac{1}{2})}
        \end{array}
      \right]\\ =&
      \left[
        \begin{array}{c}
          Q_{j\langle}^{\langle}\left(\mathbf{f}_{j\langle}-
            A_{j\langle j-1\thetaint}\mathbf{u}_{j-1\thetaint}^{(n-\frac{1}{2})}\right)
          +\left(P_{j\langle}^{\langle}-Q_{j\langle}^{\langle}A_{j\langle}^{\rangle}\right)
          \mathbf{u}_{j-1[}^{(n-\frac{1}{2})} \\
          \mathbf{f}_{j\bullet} \\
          Q_{j\rangle}^{\rangle}\left(\mathbf{f}_{j\rangle}-
            A_{j\rangle j+1\thetaint}\mathbf{u}_{j+1\thetaint}^{(n-1)}\right)
          +\left(P_{j\rangle}^{\rangle}-Q_{j\rangle}^{\rangle}A_{j\rangle}^{\langle}\right)
          \mathbf{u}_{j+1]}^{(n-1)}
        \end{array}
      \right],
    \end{array}
  \end{displaymath}
  which constitutes the forward sweep.

  Then perform the backward sweep: solve successively for $j=J,..,1,$
  \begin{displaymath}
    \small
    \begin{array}{ll}
      &\left[
        \begin{array}{ccc}
          Q_{j\langle}^{\langle}A_{j\langle}^{\langle}+P_{j\langle}^{\langle} &
          Q_{j\langle}^{\langle}A_{j\langle j\bullet} &\\
          A_{j\bullet j\langle} & A_{j\bullet} & A_{j\bullet j\rangle}\\
          & Q_{j\rangle}^{\rangle}A_{j\rangle j\bullet} &
          Q_{j\rangle}^{\rangle}A_{j\rangle}^{\rangle}+P_{j\rangle}^{\rangle}
        \end{array}
      \right]
      \left[
        \begin{array}{c}
          \mathbf{u}_{j\langle}^{(n)} \\
          \mathbf{u}_{j\bullet}^{(n)} \\
          \mathbf{u}_{j\rangle}^{(n)}
        \end{array}
      \right]\\ =&
      \left[
        \begin{array}{c}
          Q_{j\langle}^{\langle}\left(\mathbf{f}_{j\langle}-
            A_{j\langle j-1\thetaint}\mathbf{u}_{j-1\thetaint}^{(n-\frac{1}{2})}\right)
          +\left(P_{j\langle}^{\langle}-Q_{j\langle}^{\langle}A_{j\langle}^{\rangle}\right)
          \mathbf{u}_{j-1[}^{(n-\frac{1}{2})} \\
          \mathbf{f}_{j\bullet} \\
          Q_{j\rangle}^{\rangle}\left(\mathbf{f}_{j\rangle}-
            A_{j\rangle j+1\thetaint}\mathbf{u}_{j+1\thetaint}^{(n)}\right)
          +\left(P_{j\rangle}^{\rangle}-Q_{j\rangle}^{\rangle}A_{j\rangle}^{\langle}\right)
          \mathbf{u}_{j+1]}^{(n)}
        \end{array}
      \right].
    \end{array}
  \end{displaymath}

  If $Q_{j\langle}^{\langle}=I$ and $Q_{j\rangle}^{\rangle}=I$, one
  can rewrite the subproblems like in \Cref{eq:matrt2}.
\end{algorithm}
for \Cref{eq:tri}. Note that the transmission conditions on the
interfaces can be changed in the process, e.g. from the forward sweep
to the backward sweep or from one iteration to the next. Note also
that in the double sweep, the subproblem on the last subdomain
$\Omega_J$ is solved only once.  If the transmission conditions on
$\Gamma_{1,2}$ are the same in the forward and the backward sweeps, we
find the same problem on $\Omega_1$ is solved in the backward sweep of
the current iteration and in the forward sweep of the next iteration,
so one can also solve it only once.

For the {\it parallel optimized Schwarz method} (POSM), where all
subdomains are solved simultaneously and data is exchanged afterward,
it was shown in \cite{NRS94} that if optimal transmission conditions
based on the DtN operators are used, then the algorithm converges in a
finite number of steps, equal to the number of subdomains, and thus
the iteration operator is nilpotent of degree equal to the number of
subdomains.  We present now an optimal choice for DOSM, where the
operators $\mathcal{Q}_{j}^{\rangle}$ and $\mathcal{P}_{j}^{\rangle}$
can still be arbitrary, as long as the subdomain problems are
well-posed.

\begin{theorem}\label{thm:1steppde}
  If in the forward and the backward sweeps $\mathcal{Q}_j^{\langle}$ is the
  identity,
  $\mathcal{P}_j^{\langle}=\mathrm{DtN}_j^{\langle}:=\mathrm{DtN}_j|_{\Gamma_{j,j-1}}$
  is well-defined for $j=2,..,J$ as in \Cref{def:dtn}, and the original problem
  in \Cref{eq:pde} and the subdomain problems in \Cref{alg:DOSMPDE} are uniquely
  solvable, then \Cref{alg:DOSMPDE} converges in one double sweep for an
  arbitrary initial guess, and $u_j^{(1)}=u|_{\Omega_j},$ $j=1,..,J$ with $u$
  the solution of \Cref{eq:pde}. This means that the iteration operator of
    DOSM is nilpotent of degree one.
\end{theorem}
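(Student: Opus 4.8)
The plan is to run the error argument sketched in \Cref{SecUnderlying} for two subdomains, but to track how the exact left transmission condition propagates transparency down the chain $\Omega_1,\dots,\Omega_J$. By linearity, the forward- and backward-sweep errors $e_j^{(\frac12)}:=u|_{\Omega_j}-u_j^{(\frac12)}$ and $e_j^{(1)}:=u|_{\Omega_j}-u_j^{(1)}$ satisfy the same problems as the iterates of \Cref{alg:DOSMPDE} but with $f\equiv 0$, $g\equiv 0$; it therefore suffices to show $e_j^{(1)}\equiv 0$ for $j=1,\dots,J$ and for every initial guess, since then the linear part of the affine iteration is the zero operator, i.e.\ nilpotent of index one.

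The engine of the proof is a transparency lemma. Write $\Omega_{\ctilde j}$ for the subdomain to the left of $\Omega_j$, i.e.\ the left connected component of $\Omega\setminus\overline{\Omega}_j$, which carries the restriction of $\mathrm{DtN}_j$ to $\Gamma_{j,j-1}$ as in the footnote to \Cref{rem:impml}, and set $\Omega^{(j)}:=\cup_{m=1}^{j}\Omega_m$. First, if $w$ is a weak solution of $\mathcal{L}w=0$ in $\Omega_{\ctilde j}$ with $\mathcal{B}w=0$ on $\partial\Omega\cap\partial\Omega_{\ctilde j}$, then $w$ \emph{is} the solution of the problem \Cref{eq:dtn} defining $\mathrm{DtN}_j$ with data $d=w|_{\Gamma_{j,j-1}}$, so by the assumed unique solvability of \Cref{eq:dtn} we get $\mathrm{DtN}_j^{\langle}\big(w|_{\Gamma_{j,j-1}}\big)=-\mathbf{n}_j^T(\alpha\nabla w)|_{\Gamma_{j,j-1}}$; hence substituting $w$ into the left transmission datum yields $\mathbf{n}_j^T(\alpha\nabla w)+\mathrm{DtN}_j^{\langle}w=0$ on $\Gamma_{j,j-1}$. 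Conversely, if $v$ solves $\mathcal{L}v=0$ in $\Omega_j$ with $\mathcal{B}v=0$ on $\partial\Omega\cap\partial\Omega_j$ and satisfies the \emph{homogeneous} transparent left condition $\mathbf{n}_j^T(\alpha\nabla v)+\mathrm{DtN}_j^{\langle}v=0$ on $\Gamma_{j,j-1}$, then gluing $v$ across $\Gamma_{j,j-1}$ with the $\mathrm{DtN}_j^{\langle}$-extension of $v|_{\Gamma_{j,j-1}}$ into $\Omega_{\ctilde j}$ produces a function $W$ on $\Omega^{(j)}$ that is a weak solution of $\mathcal{L}W=0$ in $\Omega^{(j)}$ with $\mathcal{B}W=0$ on $\partial\Omega\cap\partial\Omega^{(j)}$ and $W|_{\Omega_j}=v$: the Dirichlet traces match on $\Gamma_{j,j-1}$ by construction, and the conormal traces match there because that is exactly the homogeneous condition $\mathbf{n}_j^T(\alpha\nabla v)=-\mathrm{DtN}_j^{\langle}v$ read through \Cref{def:dtn}, so the standard splitting of the weak form \Cref{eq:weak} across the interface identifies $W$ as a weak solution.

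Using this, for the forward sweep ($n=1$) I would prove by induction on $j=1,\dots,J-1$ that there is a weak solution $W_j$ of $\mathcal{L}W_j=0$ in $\Omega^{(j)}$ with $\mathcal{B}W_j=0$ on $\partial\Omega\cap\partial\Omega^{(j)}$ and $W_j|_{\Omega_j}=e_j^{(\frac12)}$. The case $j=1$ is immediate, $e_1^{(\frac12)}$ solving $\mathcal{L}e_1^{(\frac12)}=0$ in $\Omega_1$ with $\mathcal{B}e_1^{(\frac12)}=0$ on $\partial\Omega\cap\partial\Omega_1$ (its right transmission datum, built from the arbitrary initial guess, is irrelevant). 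For the step $j-1\to j$: restricting $W_{j-1}$ to $\Omega_{\ctilde j}\subseteq\Omega^{(j-1)}$ and applying the first half of the lemma, the left transmission datum handed to $\Omega_j$, namely $\mathbf{n}_j^T(\alpha\nabla e_{j-1}^{(\frac12)})+\mathrm{DtN}_j^{\langle}e_{j-1}^{(\frac12)}$ on $\Gamma_{j,j-1}$, vanishes because $W_{j-1}|_{\Gamma_{j,j-1}}=e_{j-1}^{(\frac12)}|_{\Gamma_{j,j-1}}$; hence $e_j^{(\frac12)}$ satisfies the homogeneous transparent left condition on $\Gamma_{j,j-1}$, and the second half of the lemma produces $W_j$. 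For the backward sweep I would use a downward induction on $j=J,\dots,1$: its left transmission datum on $\Gamma_{j,j-1}$ is the \emph{same} expression $\mathbf{n}_j^T(\alpha\nabla e_{j-1}^{(\frac12)})+\mathrm{DtN}_j^{\langle}e_{j-1}^{(\frac12)}$ as in the forward sweep, so it vanishes by the forward-sweep claim, and thus $e_j^{(1)}$ solves $\mathcal{L}e_j^{(1)}=0$ in $\Omega_j$, $\mathcal{B}e_j^{(1)}=0$ on $\partial\Omega\cap\partial\Omega_j$, $\mathbf{n}_j^T(\alpha\nabla e_j^{(1)})+\mathrm{DtN}_j^{\langle}e_j^{(1)}=0$ on $\Gamma_{j,j-1}$, together with the right transmission condition built from $e_{j+1}^{(1)}$. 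For $j=J$ there is no right interface, and this is the homogeneous version of the (uniquely solvable, by hypothesis) subdomain problem on $\Omega_J$, equivalently it is covered by \Cref{lem:TC} with $f\equiv g\equiv 0$ and the assumed uniqueness for \Cref{eq:pde}, so $e_J^{(1)}\equiv 0$. For $j<J$ the induction hypothesis $e_{j+1}^{(1)}\equiv 0$ makes the right transmission datum vanish too, so $e_j^{(1)}$ is the zero solution of the homogeneous subdomain problem of \Cref{alg:DOSMPDE} on $\Omega_j$, hence $e_j^{(1)}\equiv 0$. Therefore $u_j^{(1)}=u|_{\Omega_j}$ for all $j$ and every initial guess, which, by the linearity reduction, says the DOSM iteration operator is nilpotent of degree one.

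The main obstacle is the transparency lemma, and within it the "gluing" direction: turning the identity "$e_j^{(\frac12)}$ satisfies the homogeneous transparent left condition" into "$e_j^{(\frac12)}$ is the $\Omega_j$-restriction of a genuine weak solution on $\Omega^{(j)}$". This requires the matching of both the Dirichlet trace and the weak conormal trace across $\Gamma_{j,j-1}$ to be argued in the weak formulation \Cref{eq:weak}, with the boundary operator $\mathcal{B}$ on the physical part and any interface source splitting carried along consistently, and it must be done uniformly for overlapping and non-overlapping decompositions, where $\Gamma_{j,j-1}$ and $\Gamma_{j-1,j}$ need not be the same curve; once the generic step is in place, the degenerate endpoints $j=1$ (no left interface) and $j=J$ (no right interface) are only bookkeeping.
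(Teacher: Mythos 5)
Your proof is correct and follows essentially the same strategy as the paper's: reduce to the zero-data error problem by linearity, show by upward induction that the forward-sweep error satisfies the homogeneous transparent condition on each left interface (which you repackage as the existence of a homogeneous weak solution $W_j$ on $\Omega^{(j)}$ restricting to $e_j^{(\frac12)}$, and the paper packages as \Cref{eq:uj0}), and finish by downward induction in the backward sweep using the unique solvability of the subdomain problems. Your ``transparency lemma'' makes explicit, in a convenient two-way form, exactly the two facts the paper uses inline from \Cref{def:dtn} and \Cref{lem:TC}.
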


\begin{proof}
  We note that the subdomain problems in \Cref{alg:DOSMPDE} are
  satisfied by the solution $u.$ By linearity, it is thus sufficient to prove
  $u_j^{(1)}\equiv0$ when $f\equiv0$ and $g\equiv 0.$ We first consider
  $u_2^{(\frac{1}{2})}$ which satisfies the transmission condition
  \begin{equation}\label{eq:u2u1}
  \mathbf{n}_2^T\alpha\nabla u_2^{(\frac{1}{2})}
  +\mathrm{DtN}_{2}^{\langle}u_2^{(\frac{1}{2})}=
  \mathbf{n}_2^T\alpha\nabla u_{1}^{(\frac{1}{2})}
  +\mathrm{DtN}_{2}^{\langle}u_{1}^{(\frac{1}{2})}\mbox{ on }\Gamma_{2,1}.\\
  \end{equation}
  Since $\mathcal{L}u_1^{(\frac{1}{2})}=0$ in $\Omega_1\supset\Omega_{\ctilde
    2},$ $\mathcal{B}u_1^{(\frac{1}{2})}=0$ on
  $(\partial\Omega\cap\partial\Omega_1)\supset
  (\partial\Omega\cap\partial\Omega_{\ctilde 2})$, from \Cref{def:dtn}
  we have
  $\mathrm{DtN}_{2}^{\langle}u_{1}^{(\frac{1}{2})}=-\mathbf{n}_2^T\alpha\nabla
  u_{1}^{(\frac{1}{2})}.$ Substituting this into \Cref{eq:u2u1} we obtain
  \begin{displaymath}
  \mathbf{n}_2^T\alpha\nabla u_2^{(\frac{1}{2})}
  +\mathrm{DtN}_{2}^{\langle}u_2^{(\frac{1}{2})}=0\mbox{ on }\Gamma_{2,1}.
  \end{displaymath}
  Now assuming that
  \begin{equation}\label{eq:uj0}
  \mathbf{n}_j^T\alpha\nabla u_j^{(\frac{1}{2})}
  +\mathrm{DtN}_{j}^{\langle}u_j^{(\frac{1}{2})}=0\mbox{ on }\Gamma_{j,j-1},
  \end{equation}
  we will show that this also holds for $j+1$ instead of $j.$ In fact, by the
  assumption that $\mathrm{DtN}_{j+1}^{\langle}$ is well-defined, we have a
  unique solution $v_{\ctilde j+1}$ of the problem
  \begin{displaymath}
  \begin{array}{r@{\hspace{0.2em}}c@{\hspace{0.2em}}ll}
    \mathcal{L}v_{\ctilde j+1}&=&0 & \mbox{ in }\Omega_{\ctilde j+1},\\
    \mathcal{B}v_{\ctilde j+1}&=&0 & \mbox{ on }
    \partial\Omega\cap\partial\Omega_{\ctilde j+1},\\
    v_{\ctilde j+1}&=&u_j^{(\frac{1}{2})} & \mbox{ on }\Gamma_{j+1,j}.\\
  \end{array}
  \end{displaymath}
  By \Cref{lem:TC}, we have from \Cref{eq:uj0} that %
  $u_j^{(\frac{1}{2})}=v_{\ctilde j+1}$ in $\Omega_j\cap\Omega_{\ctilde j+1}$. %
  Therefore, %
  $\mathrm{DtN}_{j+1}^{\langle}u_j^{(\frac{1}{2})}=-\mathbf{n}_{j+1}^T
  \alpha\nabla v_{\ctilde j+1}=-\mathbf{n}_{j+1}^T\alpha\nabla
  u_{j}^{(\frac{1}{2})}$ %
  on $\Gamma_{j+1,j}.$ Substituting this into the transmission condition for
  $u_{j+1}^{(\frac{1}{2})}$ we find
  \begin{displaymath}
  \mathbf{n}_{j+1}^T\alpha\nabla u_{j+1}^{(\frac{1}{2})}
  +\mathrm{DtN}_{j+1}^{\langle}u_{j+1}^{(\frac{1}{2})}=0 \mbox{ on
  }\Gamma_{j+1,j}.
  \end{displaymath}
  By induction, \Cref{eq:uj0} holds for all $j=2,..,J-1$ and also
  $j=J$ except that we write $u_J^{(1)}$ instead of
  $u_J^{(\frac{1}{2})}.$ By \Cref{lem:TC}, and recalling
  that $f\equiv0$ and $g\equiv0,$ we obtain $u_J^{(1)}=u|_{\Omega_J}\equiv0.$
  Now assuming that
  \begin{equation}\label{eq:uj00}
    u_{j+1}^{(1)}\equiv 0\mbox{ in }\Omega_{j+1},
  \end{equation}
  we have to show that $u_{j}\equiv0$ in $\Omega_{j}.$ This follows
  directly from \Cref{eq:uj00} and \Cref{eq:uj0}, which imply
  that all the data in the problem for $u_{j}^{(1)}$ vanish, and 
  by the assumption that the subdomain problem is uniquely solvable.
\end{proof}
We also have the equivalent result of convergence in one step for the 
discrete case:
\begin{theorem}\label{thm:1stepmat}
  If in the forward and the backward sweeps
  $Q_{j\langle}^{\langle}=I_{j\langle},$
  $P_{j\langle}^{\langle}=A_{j\langle}^{\rangle}-A_{j\langle, \ctilde j}
  A_{\ctilde j}^{-1}A_{\ctilde j,j\langle}$ is well-defined, for $j=2,..,J,$ and
  the original problem in \Cref{eq:tri} and the subdomain problems in
  \Cref{alg:DOSMMAT} are uniquely solvable, then \Cref{alg:DOSMMAT} converges in
  one step and $\mathbf{u}_j^{(1)}=R_j\mathbf{u}$ with $\mathbf{u}$ the solution
  of \Cref{eq:tri}. This means the iteration matrix of DOSM is nilpotent of
    degree one.
\end{theorem}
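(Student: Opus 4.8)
The plan is to transcribe the proof of \Cref{thm:1steppde} into matrix language, with the exact Schur complement $P_{j\langle}^{\langle}=A_{j\langle}^{\rangle}-A_{j\langle,\ctilde j}A_{\ctilde j}^{-1}A_{\ctilde j,j\langle}$ from \Cref{eq:Pdtn} playing the role of $\mathrm{DtN}_j^{\langle}$ and \Cref{lem:schur} that of \Cref{lem:TC}. First I would observe that the exact solution $\mathbf{u}$ of \Cref{eq:tri}, partitioned as $R_j\mathbf{u}$, satisfies every subproblem of \Cref{alg:DOSMMAT}: the interior block rows are literally rows of $A\mathbf{u}=\mathbf{f}$ because of the block tridiagonal and ``no direct interaction across interfaces'' assumptions of \Cref{sec:dd}, while the interface rows of \Cref{eq:matrt} were built to be consistent with \Cref{eq:las}. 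Since the iteration is linear, subtracting $R_j\mathbf{u}$ reduces everything to: if $\mathbf{f}=\mathbf{0}$, then $\mathbf{u}_j^{(1)}=\mathbf{0}$ for every $j$ and every initial guess.

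The core is the forward sweep. With $Q_{j\langle}^{\langle}=I_{j\langle}$ and $\mathbf{f}=\mathbf{0}$, and since $\tilde S_{j\langle}^{\langle}=A_{j\langle}^{\langle}+P_{j\langle}^{\langle}$ is exactly the exact Schur complement $S_{j\langle}^{\langle}$ of \Cref{eq:schur}, the first block row of the $\Omega_j$ subproblem in \Cref{alg:DOSMMAT} reads
\[
  S_{j\langle}^{\langle}\mathbf{u}_{j\langle}^{(1/2)}+A_{j\langle j\bullet}\mathbf{u}_{j\bullet}^{(1/2)}
  =-A_{j\langle j-1\thetaint}\mathbf{u}_{j-1\thetaint}^{(1/2)}-A_{j\langle,\ctilde j}A_{\ctilde j}^{-1}A_{\ctilde j,j\langle}\mathbf{u}_{j-1[}^{(1/2)}.
\]
I would show by induction on $j=1,\dots,J$ (writing $\mathbf{u}_J^{(1)}$ for the absent $\mathbf{u}_J^{(1/2)}$, as in \Cref{thm:1steppde}) that the left residual $S_{j\langle}^{\langle}\mathbf{u}_{j\langle}^{(1/2)}+A_{j\langle j\bullet}\mathbf{u}_{j\bullet}^{(1/2)}$ vanishes, i.e. the right-hand side above is zero; this is the discrete form of \Cref{eq:uj0}. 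The base case $j=1$ is vacuous. For the step, since the interior rows of the $\Omega_{j-1}$ subproblem are global rows of \Cref{eq:tri}, the inductive hypothesis forces $\mathbf{u}_{j-1}^{(1/2)}$, continued into $\Omega_{\ctilde j}$ by the (unique, because $A_{\ctilde j}$ is invertible) solution of the exterior equations there, to satisfy all global equations indexed by $\Omega_{\ctilde j}$; hence that continuation equals $-A_{\ctilde j}^{-1}A_{\ctilde j,j\langle}\mathbf{u}_{j-1[}^{(1/2)}$, and substituting it into the right-hand side above, together with the fact that $A_{j\langle,\ctilde j}$ is supported only on the layer of $\Theta_{j-1}$ adjacent to $\Gamma_{j,j-1}$ (so that the substituted term equals $A_{j\langle j-1\thetaint}\mathbf{u}_{j-1\thetaint}^{(1/2)}$), makes it vanish. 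A final application of \Cref{lem:schur} to the $\Omega_J$ subproblem, which now has vanishing left residual and zero right-hand side, yields $\mathbf{u}_J^{(1)}=R_J\mathbf{u}=\mathbf{0}$.

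For the backward sweep I would use downward induction on $j=J,\dots,1$. If $\mathbf{u}_{j+1}^{(1)}=\mathbf{0}$, then the right transmission data of the backward-sweep subproblem for $\mathbf{u}_j^{(1)}$ is a linear combination of $\mathbf{f}_{j\rangle}$, $\mathbf{u}_{j+1\thetaint}^{(1)}$ and $\mathbf{u}_{j+1]}^{(1)}$, hence vanishes; its left transmission data is assembled from $\mathbf{u}_{j-1}^{(1/2)}$ exactly as in the forward sweep, so by the previous paragraph it forces the left residual to vanish as well. Therefore the subproblem for $\mathbf{u}_j^{(1)}$ has identically zero right-hand side, and since its coefficient matrix is the same uniquely solvable matrix used in the forward sweep, $\mathbf{u}_j^{(1)}=\mathbf{0}$. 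This closes the induction, restores $\mathbf{u}_j^{(1)}=R_j\mathbf{u}$ in the inhomogeneous case, and shows nilpotency of degree one.

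The step I expect to be the main obstacle is the index-set bookkeeping in the forward-sweep induction: one has to use the partition and the ``no direct interaction across interfaces'' hypotheses of \Cref{sec:dd} to verify that the rows of $A$ indexed by $\Omega_{\ctilde j}$ coincide with interior rows of the $\Omega_{j-1}$ subproblem (so that \Cref{lem:schur} applies with the block $A_{\ctilde j}$ appearing in \Cref{eq:Pdtn}) and to justify the support statement for $A_{j\langle,\ctilde j}$ used in the cancellation; the overlapping and non-overlapping cases, where $\mathbf{u}_{j\langle}$, $\mathbf{u}_{j]}$ and $\mathbf{u}_{j\lovlp}$ may or may not denote the same d.o.f., must be kept apart, and a fresh exterior continuation must be used at each step rather than one global composite. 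Once this is in place the computation is routine linear algebra paralleling \Cref{thm:1steppde}. As a cross-check, one may instead note that with exact Schur complements the double sweep is precisely the forward and backward block substitutions of the exact block $LU$ factorization of $A$ under the ordering of \Cref{eq:tri}, which gives nilpotency of degree one at once; the inductive argument above is the self-contained version.
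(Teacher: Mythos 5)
Your proposal is correct and follows essentially the same route as the paper's proof: reduce by consistency and linearity to the case $\mathbf{f}=\mathbf{0}$, prove by forward induction that the Schur-complement residual $S_{j\langle}^{\langle}\mathbf{u}_{j\langle}^{(1/2)}+A_{j\langle j\bullet}\mathbf{u}_{j\bullet}^{(1/2)}$ vanishes (the paper's \Cref{eq:sj0}) via the exterior continuation and \Cref{lem:schur}, conclude $\mathbf{u}_J^{(1)}=\mathbf{0}$, and finish with a downward induction in the backward sweep. The bookkeeping you flag (identifying the rows of $A$ indexed by $\Omega_{\ctilde j}$, restricting $A_{j\langle,\ctilde j}$ to $A_{j\langle j-1\thetaint}$, and matching the exterior continuation to $\mathbf{u}_{j-1}^{(1/2)}$ on the overlap) is exactly what the paper carries out explicitly with its auxiliary vector $\mathbf{v}_{\ctilde j+1}$ and Gaussian elimination.
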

\begin{proof}
  First, the subdomain problems are consistent, i.e. neglecting the iteration
  numbers and substituting $\mathbf{u}_j=R_j\mathbf{u}$, we find the equations
  are satisfied by $\mathbf{u}$.  Hence, by considering the errors, we only need
  to show that $\mathbf{u}_j^{(1)}=0$ if $\mathbf{f}=0$. In the problem for
  $\mathbf{u}_2^{(\frac{1}{2})},$ the r.h.s. corresponding to $\Gamma_{2,1}$
  becomes
  \begin{displaymath}
    -A_{2\langle1\thetaint}\mathbf{u}_{1\thetaint}^{(\frac{1}{2})} -A_{2\langle1\thetaint}
    A_{1\thetaint}^{-1}A_{1\thetaint 2\langle}
    \mathbf{u}_{1[}^{(\frac{1}{2})}
    =-A_{2\langle1\thetaint}\left(\mathbf{u}_{1\thetaint}^{(\frac{1}{2})} -
      A_{1\thetaint}^{-1}A_{1\thetaint 2\langle} \mathbf{u}_{1[}^{(\frac{1}{2})}
    \right)=0,
  \end{displaymath}
  since $\mathbf{u}_1^{(\frac{1}{2})}$ satisfies %
  $A_{1\thetaint}\mathbf{u}_{1\thetaint}^{(\frac{1}{2})} +
  A_{1\thetaint2\langle}\mathbf{u}_{1[}^{(\frac{1}{2})}=0.$ %
  In other words, we have for $j=2,$
  \begin{equation}\label{eq:sj0}
    S_{j\langle}^{\langle}\mathbf{u}_{j\langle}^{(\frac{1}{2})}
    + A_{j\langle j\bullet} \mathbf{u}_{j\bullet}^{(\frac{1}{2})} = 0.
  \end{equation}
  Assuming \Cref{eq:sj0} holds for one $j$ in $\{2,..,J-1\},$ we will
  show it will also hold for $j+1$ instead of $j$. In the theorem we are
  proving, we have assumed that $A_{\ctilde j+1}$ is invertible, so we can
  introduce $\mathbf{v}_{\ctilde j+1}:=[\mathbf{v}_{\ctilde j};\mathbf{v}_{j-1\rovlp};\mathbf{v}_{j\thetaint}]$ as the solution of
  \begin{displaymath}
  \left[
    \begin{array}{ccc}
      A_{\ctilde j} & A_{\ctilde j,j-1\rovlp}\\
      A_{j-1\rovlp\ctilde j} & A_{j-1\rovlp} & A_{j-1\rovlp j\thetaint}\\
      & A_{j\thetaint j-1\rovlp} & A_{j\thetaint}\vphantom{_{j\langle}^{(n-\frac{1}{2})}}
    \end{array}
  \right]
  \left[
    \begin{array}{c}
      \mathbf{v}_{\ctilde j}\\ \mathbf{v}_{j-1\rovlp} \\ \mathbf{v}_{j\thetaint}\vphantom{_{j\langle}^{(n-\frac{1}{2})}}
    \end{array}
  \right] =
  \left[
    \begin{array}{c}
      0 \\ 0 \\ -A_{\ctilde j+1,j+1\langle}\mathbf{u}_{j[}^{(\frac{1}{2})}
    \end{array}
  \right].
  \end{displaymath}
  By Gaussian elimination, we have
  \begin{displaymath}
  \left[
    \begin{array}{cc}
      S_{j-1\rovlp}^{\langle} & A_{j-1\rovlp j\thetaint}\\
      A_{j\thetaint j-1\rovlp} & A_{j\thetaint}\vphantom{_{j\langle}^{(n-\frac{1}{2})}}
    \end{array}
  \right]
  \left[
    \begin{array}{c}
      \mathbf{v}_{j-1\rovlp} \\ \mathbf{v}_{j\thetaint}\vphantom{_{j\langle}^{(n-\frac{1}{2})}}
    \end{array}
  \right] =
  \left[
    \begin{array}{c}
      0 \\ -A_{\ctilde j+1,j+1\langle}\mathbf{u}_{j[}^{(\frac{1}{2})}
    \end{array}
  \right],
  \end{displaymath}
  which is also satisfied by the restriction of $\mathbf{u}_{j}^{(\frac{1}{2})}$
  because of \Cref{eq:sj0}, $\mathbf{f}\equiv0$ and
  \Cref{alg:DOSMMAT}. By \Cref{lem:schur}, we have that
  $\mathbf{v}_{j-1\rovlp}=\mathbf{u}_{j\lovlp}^{(\frac{1}{2})}$ and
  $\mathbf{v}_{j\thetaint}=\mathbf{u}_{j\thetaint}^{(\frac{1}{2})}.$ In
  \Cref{alg:DOSMMAT}, the r.h.s. for
  $\mathbf{u}_{j+1\langle}^{(\frac{1}{2})}$ then becomes
  \begin{displaymath}
  \begin{array}{ll}
    &-A_{j+1\langle j\thetaint}\mathbf{u}_{j\thetaint}^{(\frac{1}{2})}
    -A_{j+1\langle,\ctilde j+1}A_{\ctilde j+1}^{-1}A_{\ctilde j+1,j+1\langle}
    \mathbf{u}_{j[}^{(\frac{1}{2})}\\
    &\qquad =-A_{j+1\langle j\thetaint}\mathbf{u}_{j\thetaint}^{(\frac{1}{2})}
    +A_{j+1\langle,\ctilde j+1}\mathbf{v}_{\ctilde j+1}\\
    &\qquad =-A_{j+1\langle j\thetaint}\mathbf{u}_{j\thetaint}^{(\frac{1}{2})}
    +A_{j+1\langle j\thetaint}\mathbf{v}_{j\thetaint}\\
    &\qquad=-A_{j+1\langle j\thetaint}\mathbf{u}_{j\thetaint}^{(\frac{1}{2})}
    +A_{j+1\langle j\thetaint}\mathbf{u}_{j\thetaint}^{(\frac{1}{2})}=0.
  \end{array}
  \end{displaymath}
  Now we know \Cref{eq:sj0} holds for $j=2,..,J-1$ and $j=J$ except
  that we write $\mathbf{u}_J^{(1)}$ instead of
  $\mathbf{u}_J^{(\frac{1}{2})},$ and we see that the r.h.s. for
  $\mathbf{u}_J^{(1)}$ vanishes, so $\mathbf{u}_J^{(1)}=0.$ In the
  backward sweep, the r.h.s. on $\Gamma_{j,j-1}$ for
  $\mathbf{u}_j^{(1)}$ is by \Cref{alg:DOSMMAT} the same as for
  $\mathbf{u}_j^{(\frac{1}{2})}$ and hence is zero by
  \Cref{eq:sj0}, and the r.h.s. on
  $\Gamma_{j,j+1}$ vanishes, given $\mathbf{u}_{j+1}^{(1)}=0.$ By
  induction, we thus conclude that $\mathbf{u}_j^{(1)}=0,$ $j=1,..,J.$
\end{proof}

\Cref{alg:DOSMPDE} and \Cref{alg:DOSMMAT} use the subdomain
approximations as iterates.  If we want to have a global approximation
for the original problem as a final result, we can just glue subdomain
approximations after stopping the iteration. This can be done by
setting $u^{(n)}:=\sum_{j=1}^J\mathcal{E}_j(\phi_ju_j^{(n)})$ at the
PDE level and
$\mathbf{u}^{(n)}=\sum_{j=1}^JR_j^T\Phi_j\mathbf{u}_j^{(n)}$ at the
matrix level, where $\mathcal{E}_j$ is the extension by zero from
$\Omega_j$ to $\Omega,$ and $\phi_j$ is a weighting function and
correspondingly $\Phi_j$ a diagonal matrix. For consistency, when
$u_j^{(n)}=u|_{\Omega_j}$ with $u$ the solution of the original
problem, we want $u^{(n)}=u$, or
$\sum_{j=1}^J\mathcal{E}_j(\phi_ju|_{\Omega_j})=\sum_{j=1}^J(\mathcal{E}_j\phi_j)u=u.$
To ensure this for arbitrary data of \Cref{eq:pde}, we must have
$\sum_{j=1}^J\mathcal{E}_j\phi_j\equiv1.$ At the matrix level, we must
have $\sum_{j=1}^JR_j^T\Phi_jR_j=I.$ In particular, for
a non-overlapping decomposition, we must have $\phi_j\equiv1$ in $\Omega_j.$

\subsection{Global deferred correction form of Schwarz methods}

If we want to use global approximations as iterates, i.e. input the
last iterate $u^{(n-1)}$ to DOSM and get $u^{(n)}$ as output, we need
to be very careful with the weighting functions introduced in the last paragraph.
This is because
\Cref{alg:DOSMPDE} relies essentially on the interface data, and when
inputting $u^{(n-1)}$ instead of $\{u_j^{(n-1)}\}$, we must ensure
that the values of $\{u_j^{(n-1)}\}$ necessary for the evaluation of
the interface data in \Cref{alg:DOSMPDE} can still be found in
$u^{(n-1)}$. We thus need a variant of \Cref{alg:DOSMPDE} that
generates the iterate $u^{(n)}$ without storing $u_j^{(n-1)}$, just
storing $u^{(n-1)}$, and satisfying
$u^{(n)}=\sum_{j=1}^J\mathcal{E}_j\phi_ju_j^{(n)}$, given that this
relation holds for $n=0$. The result of this algorithm should be
identical to the glued result from the iterates $\{u_j^{(n)}\}$
generated by \Cref{alg:DOSMPDE}. The equivalence of the new variant to
\Cref{alg:DOSMPDE} and \Cref{alg:DOSMMAT} at the discrete level is
important, because then \Cref{thm:1steppde}, and \Cref{thm:1stepmat} at
the discrete level will also hold for the variant. We present the DOSM
version with global approximations as iterates in \Cref{alg:GDCPDE} at
\begin{algorithm}
  \caption{DOSM in the {\bf global deferred correction} form at the {\bf PDE}
    level}
  \label{alg:GDCPDE}
  Given the last iterate $u^{(n-1)}$, solve successively for $j=1,..,J-1,$
  \begin{displaymath}
    \small
  \begin{array}{r@{\hspace{0.2em}}c@{\hspace{0.2em}}ll}
    \mathcal{L}\,v_j^{(n-\frac{1}{2})}&=&f-\mathcal{L}\,u^{(n-1+\frac{j-1}{2J-1})}
    & \mbox{ in }\Omega_j,\\
    \mathcal{B}\,v_j^{(n-\frac{1}{2})}&=&g & \mbox{ on }
    \partial\Omega\cap\partial\Omega_j,\\
    \mathcal{Q}_{j}^{\langle}\left(\mathbf{n}_j^T\alpha\nabla
      v_j^{(n-\frac{1}{2})}\right)+\mathcal{P}_{j}^{\langle}v_j^{(n-\frac{1}{2})}&=&
    0 & \mbox{ on }\Gamma_{j,j-1},\\
    \mathcal{Q}_{j}^{\rangle}\left(\mathbf{n}_j^T\alpha\nabla
      v_j^{(n-\frac{1}{2})}\right)+\mathcal{P}_{j}^{\rangle}v_j^{(n-\frac{1}{2})}&=&
    0 & \mbox{ on }\Gamma_{j,j+1},
  \end{array}
  \end{displaymath}
  and each solve is followed by
  $u^{(n-1+\frac{j}{2J-1})}\gets u^{(n-1+\frac{j-1}{2J-1})}+\mathcal{E}_j(\phi_j
  v_j^{(n-\frac{1}{2})})$ with $\mathcal{E}_j\phi_j$ forming a
    non-overlapping partition of unity of $\Omega$, i.e. $\phi_j=1$ in its
  support contained in $\overline\Omega_j,$
  $(\mathcal{E}_j\phi_j)(\mathcal{E}_l\phi_l)\equiv0$ for $j\neq l$ and
  $\sum_{j=1}^J\mathcal{E}_j\phi_j\equiv1$ in $\Omega$.

  Then, in the backward sweep solve successively for $j=J,..,1,$
  \begin{displaymath}
    \small
  \begin{array}{r@{\hspace{0.2em}}c@{\hspace{0.2em}}ll}
    \mathcal{L}\,v_j^{(n)}&=&f-\mathcal{L}\,u^{(n-1+\frac{2J-j-1}{2J-1})}
    & \mbox{ in }\Omega_j,\\
    \mathcal{B}\,v_j^{(n)}&=&g & \mbox{ on }
    \partial\Omega\cap\partial\Omega_j,\\
    \mathcal{Q}_{j}^{\langle}\left(\mathbf{n}_j^T\alpha\nabla
      v_j^{(n)}\right)+\mathcal{P}_{j}^{\langle}v_j^{(n)}&=&
    0 & \mbox{ on }\Gamma_{j,j-1},\\
    \mathcal{Q}_{j}^{\rangle}\left(\mathbf{n}_j^T\alpha\nabla
      v_j^{(n)}\right)+\mathcal{P}_{j}^{\rangle}v_j^{(n)}&=&
    0 & \mbox{ on }\Gamma_{j,j+1},
  \end{array}
  \end{displaymath}
  and each solve is followed by $u^{(n-1+\frac{2J-j}{2J-1})}\gets
  u^{(n-1+\frac{2J-j-1}{2J-1})}+\mathcal{E}_j(\phi_j v_j^{(n)}).$
\end{algorithm}
the continuous level, and in \Cref{alg:GDCMAT} for the discrete case.
\begin{algorithm}
  \caption{DOSM in the {\bf global deferred correction} form at the {\bf matrix}
    level}
  \label{alg:GDCMAT}
  Given the last iterate $\mathbf{u}^{(n-1)},$ solve successively for
  $j=1,..,J-1,$ (note that $\mathbf{v}_j$ changes with $n$)
  \begin{equation}\label{eq:gdcmat}
    \arraycolsep0.3em
    \small
    \left[
      \begin{array}{ccc}
        Q_{j\langle}^{\langle}A_{j\langle}^{\langle}+P_{j\langle}^{\langle} &
        Q_{j\langle}^{\langle}A_{j\langle j\bullet} &\\
        A_{j\bullet j\langle} & A_{j\bullet} & A_{j\bullet j\rangle}\\
        & Q_{j\rangle}^{\rangle}A_{j\rangle j\bullet} &
        Q_{j\rangle}^{\rangle}A_{j\rangle}^{\rangle}+P_{j\rangle}^{\rangle}
      \end{array}
    \right]
    \left[
      \begin{array}{c}
        \mathbf{v}_{j\langle}\vphantom{_{j\rangle}^{\rangle}} \\
        \mathbf{v}_{j\bullet} \\
        \mathbf{v}_{j\rangle}\vphantom{_{j\rangle}^{\rangle}}
      \end{array}
    \right]=
    \left[
      \begin{array}{c}
        Q_{j\langle}^{\langle}\\
        I_{j\bullet}\\
        Q_{j\rangle}^{\rangle}
      \end{array}
    \right]
    R_j\left(\mathbf{f}-A\mathbf{u}^{(n-1+\frac{j-1}{2J-1})}\right),
  \end{equation}
  and each solve is followed by
  \begin{equation}\label{eq:gdcmat2}
    \mathbf{u}^{(n-1+\frac{j}{2J-1})}\gets
    \mathbf{u}^{(n-1+\frac{j-1}{2J-1})}+R_j^T\Phi_j\mathbf{v}_j
  \end{equation}
  with the constraints: $\Phi_j$ is a diagonal matrix with its diagonal ones on
  $\Theta_j\cup\Gamma_{j+1,j}$ and zeros on $\Gamma_{j,j-1},$
  $(R_j^T\Phi_j)R_j(R_l^T\Phi_l)=0$ as $j\neq l$ and
  $\sum_{j=1}^JR_j^T\Phi_jR_j=I_{1,..,J}.$

  Then solve successively for $j=J,..,1,$
  \begin{displaymath}
    \arraycolsep0.3em
    \small
    \left[
      \begin{array}{ccc}
        Q_{j\langle}^{\langle}A_{j\langle}^{\langle}+P_{j\langle}^{\langle} &
        Q_{j\langle}^{\langle}A_{j\langle j\bullet} &\\
        A_{j\bullet j\langle} & A_{j\bullet} & A_{j\bullet j\rangle}\\
        & Q_{j\rangle}^{\rangle}A_{j\rangle j\bullet} &
        Q_{j\rangle}^{\rangle}A_{j\rangle}^{\rangle}+P_{j\rangle}^{\rangle}
      \end{array}
    \right]
    \left[
      \begin{array}{c}
        \mathbf{v}_{j\langle}\vphantom{_{j\rangle}^{\rangle}} \\
        \mathbf{v}_{j\bullet} \\
        \mathbf{v}_{j\rangle}\vphantom{_{j\rangle}^{\rangle}}
      \end{array}
    \right]=
    \left[
      \begin{array}{c}
        Q_{j\langle}^{\langle}\\
        I_{j\bullet}\\
        Q_{j\rangle}^{\rangle}
      \end{array}
    \right]
    R_j\left(\mathbf{f}-A\mathbf{u}^{(n-1+\frac{2J-j-1}{2J-1})}\right),
  \end{displaymath}
  and each solve is followed by $\mathbf{u}^{(n-1+\frac{j}{2J-1})}\gets
  \mathbf{u}^{(n-1+\frac{j-1}{2J-1})}+R_j^T\Phi_j\mathbf{v}_j.$ This time
  the diagonal matrix $\Phi_j$ has diagonal values ones on
  $\Theta_j\cup\Gamma_{j-1,j}$ and zeros on $\Gamma_{j,j+1},$ and the last two
  constraints are the same as before.
\end{algorithm}
For the parallel form of the algorithms, i.e. POSM, the situation is
even more delicate and has been studied at length in \cite{Efstathiou,
  St-Cyr07}. For example, the well known preconditioner
\cite{Dryja:1987:AVS}, called the {\it additive Schwarz method} (AS),
is designed to be symmetric but then loses the equivalence to POSM.
AS can also not be used as a standalone iterative method, since it is not
convergent, for a discussion, see \cite{Efstathiou,gander2008schwarz}.

\begin{remark}
  \Cref{alg:GDCPDE} and \Cref{alg:GDCMAT} require the weighting
  functions to take values either 0 or 1, and to constitute a
  partition of unity. The resulting way of gluing subdomain
  approximations into a global approximation was introduced in
  \cite{Cai99} under the name {\it restricted additive Schwarz method}
  (RAS), and made the method equivalent to the underlying parallel
  Schwarz method, but at the price of sacrificing symmetry. The
  restricted weighting never adds two subdomain approximations at the
  same location so that the current subdomain approximation can be
  subtracted and updated through a correction. One can then use a
  global deferred correction at every substep. One could also evaluate
  the global residual at the beginning of the $n$-th iteration and do
  the gluing and the global correction at the end of the $n$-th
  iteration, while carrying out the intermediate substeps in the subdomain
  transmission form or a local deferred correction form (see
  \Cref{sec:stddm}).  Then, weighting functions from the backward
  sweep of \Cref{alg:GDCMAT} without the restricted constraint
  $(R_j^T\Phi_j)R_j(R_l^T\Phi_l)=0$ could be used for gluing the
  global correction.
\end{remark}

\begin{remark}\label{rem:OSHM}
  \Cref{alg:GDCPDE} (\Cref{alg:GDCMAT}) uses a restricted extension
  $\mathcal{E}_j\phi_j$ ($R_j^T\Phi_j$) of the local approximations,
  but a full restriction $\cdot|_{\Omega_j}$ ($R_j$) of the global
  residuals.  A variant of each algorithm can be obtained by using a
  restricted restriction $\cdot|_{\Omega_j}\phi_j$ ($\Phi_jR_j$) of
  the global residuals but a full extension $\mathcal{E}_j$ ($R_j^T$)
  of the local approximations.  For example, in \Cref{alg:GDCMAT},
  $\Phi_j$ could be moved from the right of $R_j^T$ in
  \Cref{eq:gdcmat2} to the left of $R_j$ in \Cref{eq:gdcmat}.  This
  idea was first introduced in \cite{Cai99} to transform RAS into
    the {\it additive Schwarz method with harmonic extension} (ASH),
  and later adopted and studied in \cite{Kwok} for optimized Schwarz
  methods with overlap.  In \cite{Kwok}, a close relation was proved
  between the iterates of optimized ASH and POSM.  Moreover, the
  harmonic extension variant of \Cref{alg:GDCPDE} and
  \Cref{alg:GDCMAT} can be shown to converge in one step under the
  assumptions of \Cref{thm:1steppde} and \Cref{thm:1stepmat}.  When
  the coefficient matrices of the original problem and the subproblems
  are complex symmetric (i.e.  $A^T=A$), then RAS and ASH lead to
  preconditioned systems that are the transpose of each other, and
  hence they have the same spectra.
\end{remark}

\begin{theorem}\label{thm:GDCPDE}
  Suppose the subproblems in \Cref{alg:GDCPDE} are well-posed.  For
  an overlapping decomposition, if $\phi_j$ of \Cref{alg:GDCPDE}
  satisfies also $\mathbf{n}_j^T\alpha\nabla\phi_j=0$ on
  $\Gamma_{j\pm1,j},$ $j=1,..,J,$ then \Cref{alg:GDCPDE} is equivalent
  to \Cref{alg:DOSMPDE}.  That is, given
  $u^{(0)}=\sum_{j=1}^J\mathcal{E}_j(\phi_ju_j^{(0)})$, the iterates
  generated by the two algorithms satisfy for all $n$ that
  $u^{(n)}=\sum_{l=1}^{J}\mathcal{E}_l(\phi_lu_l^{(n)})$.
\end{theorem}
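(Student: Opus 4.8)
The plan is to show the two algorithms produce \emph{identical} global iterates by a double induction: an outer induction on the iteration index $n$, and, inside iteration $n$, an inner induction over the $2J-1$ substeps of the double sweep (the $J-1$ forward substeps $j=1,\dots,J-1$ followed by the $J$ backward substeps $j=J,\dots,1$). The invariant I would carry through the inner induction is: \emph{the current global iterate of \Cref{alg:GDCPDE} equals the patchwork $\sum_{l}\mathcal{E}_l(\phi_lu_l^{(\cdot)})$, in which every subdomain already visited in the current sweep contributes its freshly updated DOSM iterate and every subdomain not yet visited contributes its previous one.} The base case of the inner induction is the state at the start of iteration $n$, namely $u^{(n-1)}=\sum_l\mathcal{E}_l(\phi_lu_l^{(n-1)})$ by the outer hypothesis, and the base case $n=0$ is the assumption $u^{(0)}=\sum_j\mathcal{E}_j(\phi_ju_j^{(0)})$. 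After all $2J-1$ substeps each piece has been refreshed, so $u^{(n)}=\sum_l\mathcal{E}_l(\phi_lu_l^{(n)})$, which closes the outer induction.

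The core is a single substep; take the forward substep $j$ (the backward one being mirror-symmetric). Write $w:=u^{(n-1+\frac{j-1}{2J-1})}$ for the current global iterate, which by the inner hypothesis is the patchwork with pieces $1,\dots,j-1$ updated and $j,\dots,J$ old, and let $w|_{\overline\Omega_j}$ be its restriction to $\overline\Omega_j$, which is in general discontinuous across the surfaces interior to $\Omega_j$ where the weights $\phi_l$ jump. The claim is that the correction $v_j^{(n-\frac12)}$ computed by \Cref{alg:GDCPDE} satisfies $v_j^{(n-\frac12)}=u_j^{(n-\frac12)}-w|_{\overline\Omega_j}$ in $\Omega_j$. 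Granting this, and since on $\mathrm{supp}\,\phi_j$ the patchwork $w$ coincides with its $j$-th piece $u_j^{(n-1)}$ (the supports of the $\phi_l$ are disjoint), the update $u^{(n-1+\frac{j}{2J-1})}\gets w+\mathcal{E}_j(\phi_jv_j^{(n-\frac12)})$ replaces $\mathcal{E}_j(\phi_ju_j^{(n-1)})$ by $\mathcal{E}_j(\phi_ju_j^{(n-\frac12)})$ and leaves the other pieces untouched, which is exactly what the invariant requires one substep later.

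To prove the claim I would verify that $u_j^{(n-\frac12)}-w|_{\overline\Omega_j}$ solves the same subdomain problem as $v_j^{(n-\frac12)}$ and then invoke the assumed well-posedness of that problem. The interior equation matches because each smooth piece of $w$ satisfies $\mathcal{L}(\cdot)=f$ on its part of $\Omega_j$, so $\mathcal{L}(u_j^{(n-\frac12)}-w|_{\overline\Omega_j})=f-\mathcal{L}(w|_{\overline\Omega_j})=f-\mathcal{L}u^{(n-1+\frac{j-1}{2J-1})}$ in $\Omega_j$ as distributions, the single- and double-layer terms generated by applying $\mathcal{L}$ across the surfaces where the indicator weights switch value being reproduced on both sides; this is precisely the right-hand side prescribed in \Cref{alg:GDCPDE}. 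The physical boundary condition is immediate, since every $u_l$, hence $w$, satisfies $\mathcal{B}(\cdot)=g$ on $\partial\Omega$. The transmission conditions are where the extra hypothesis $\mathbf{n}_j^T\alpha\nabla\phi_j=0$ on $\Gamma_{j\pm1,j}$ enters: together with the partition-of-unity structure it forces, at each interface, the neighbouring weight ($\phi_{j-1}$ at $\Gamma_{j,j-1}$, $\phi_{j+1}$ at $\Gamma_{j,j+1}$, using the relation $\Gamma_{j,j-1}=\Gamma_{(j-1)+1,j-1}$ and $\Gamma_{j,j+1}=\Gamma_{(j+1)-1,j+1}$) to be identically $1$ in a neighbourhood of that interface inside $\Omega_j$, so that $w$ there equals a \emph{single} neighbouring iterate together with all of its traces; moreover, because of the forward/backward sweep orderings, that single iterate is exactly the one \Cref{alg:DOSMPDE} uses as the right-hand side of the transmission condition for $u_j^{(n-\frac12)}$ at the same interface and substep. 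Hence the generalized Robin combinations $\mathcal{Q}_j^{\langle}(\mathbf{n}_j^T\alpha\nabla\cdot)+\mathcal{P}_j^{\langle}(\cdot)$ and $\mathcal{Q}_j^{\rangle}(\mathbf{n}_j^T\alpha\nabla\cdot)+\mathcal{P}_j^{\rangle}(\cdot)$ applied to $u_j^{(n-\frac12)}-w|_{\overline\Omega_j}$ vanish on $\Gamma_{j,j-1}$ and on $\Gamma_{j,j+1}$ respectively, matching the homogeneous transmission conditions imposed on $v_j^{(n-\frac12)}$.

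The main obstacle is this transmission-condition bookkeeping in the overlapping case: one must track carefully which subdomain's iterate, and with which iteration index (forward half-step or previous), the patchwork reduces to near each interface, which depends on the sweep direction and on the substep reached; and one must make rigorous sense of $\mathcal{L}$ applied to the discontinuous patchwork so that the deferred-correction right-hand side $f-\mathcal{L}u^{(\cdot)}$ is a well-defined distribution consistent with the matrix residual in \Cref{alg:GDCMAT}. The hypothesis on $\nabla\phi_j$ is precisely what keeps the spurious layer terms off the interfaces and keeps the patchwork single-valued there; without it the identification $v_j^{(n-\frac12)}=u_j^{(n-\frac12)}-w|_{\overline\Omega_j}$ would fail.
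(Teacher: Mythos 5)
Your proposal follows essentially the same route as the paper's own proof: a double induction over iterations and substeps carrying the same patchwork invariant $u^{(n-1+\frac{j-1}{2J-1})}=\sum_{l<j}\mathcal{E}_l(\phi_lu_l^{(n-\frac12)})+\sum_{l\geq j}\mathcal{E}_l(\phi_lu_l^{(n-1)})$, the same identification $v_j^{(n-\frac12)}=u_j^{(n-\frac12)}-u^{(n-1+\frac{j-1}{2J-1})}|_{\Omega_j}$ established via uniqueness of the well-posed subproblem, the same use of the extra flatness hypothesis on $\phi_{j\pm1}$ to make the Dirichlet and Neumann traces of the global iterate coincide with those of the single relevant neighbouring subdomain iterate at each interface, and the same bookkeeping of the overwrite step. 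The only difference is stylistic: you spell out more explicitly the distributional meaning of $\mathcal{L}$ applied to the discontinuous patchwork, which the paper leaves implicit.
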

\begin{proof}
  Suppose $u^{(0)}=\sum_{j=1}^J\mathcal{E}_j(\phi_ju_j^{(0)})$ and the
  sequence $u^{(*)}$ is generated by \Cref{alg:GDCPDE}, and the
  sequence $u_j^{(*)}$ is generated by \Cref{alg:DOSMPDE}. Assuming
  that
  $u^{(n-1+\frac{j-1}{2J-1})}=\sum_{l=1}^{j-1}\mathcal{E}_l(\phi_lu_l^{(n-\frac{1}{2})})
  +\sum_{l=j}^{J}\mathcal{E}_l(\phi_lu_l^{(n-1)}),$ we will show that
  the same relation also holds for $j+1$ instead of $j$. First, we
  have $u^{(n-1+\frac{j-1}{2J-1})}=u_{j-1}^{(n-\frac{1}{2})}$ on
  $\Gamma_{j,j-1}$ because $\phi_{j-1}=1$ in $\Theta_{j-1}$ and
  $\phi_j$ is compactly supported in $\Omega_j$. Second, we have
  $\mathbf{n}_j^T\alpha\nabla u^{(n-1+\frac{j-1}{2J-1})} =
  \mathbf{n}_j^T\alpha\nabla u_{j-1}^{(n-\frac{1}{2})}$ on
  $\Gamma_{j,j-1}$ because $\phi_{j-1}=1$ in $\Theta_{j-1},$ $\phi_j$
  is compactly supported in $\Omega_j,$ $\phi_{j-1}$ is smooth and
  $\mathbf{n}_j^T\alpha\nabla\phi_{j-1}=0$ on $\Gamma_{j,j-1}.$
  Combining the two traces, we have that the transmission condition on
  $\Gamma_{j,j-1}$ in \Cref{alg:DOSMPDE} is equivalent to
  \begin{displaymath}
    \mathcal{Q}_{j}^{\langle}\left(\mathbf{n}_j^T\alpha\nabla
      u_{j}^{(n-\frac{1}{2})}\right)
    +\mathcal{P}_{j}^{\langle}u_{j}^{(n-\frac{1}{2})} =
    \mathcal{Q}_{j}^{\langle}\left(\mathbf{n}_j^T\alpha\nabla
      u^{(n-1+\frac{j-1}{2J-1})}\right)
    +\mathcal{P}_{j}^{\langle}u^{(n-1+\frac{j-1}{2J-1})}.
  \end{displaymath}
  By the same argument, the other transmission condition in the forward sweep
  can also be rewritten using $u^{(n-1+\frac{j-1}{2J-1})}.$ We introduce
  $\tilde{v}_j:=u_{j}^{(n-\frac{1}{2})}-u^{(n-1+\frac{j-1}{2J-1})}|_{\Omega_j}$
  and we find from \Cref{alg:DOSMPDE} that $\tilde{v}_j$ solves the
  forward sweeping problem of \Cref{alg:GDCPDE}. Hence,
  $v_j^{(n-\frac{1}{2})}=\tilde{v}_j=u_{j}^{(n-\frac{1}{2})}-u^{(n-1+\frac{j-1}{2J-1})}|_{\Omega_j}.$
  By this equation, \Cref{alg:GDCPDE} and our assumptions, we obtain
  \begin{displaymath}
  \begin{array}{ll}
    & u^{(n-1+\frac{j}{2J-1})}= u^{(n-1+\frac{j-1}{2J-1})}+
      \mathcal{E}_j(\phi_jv_j^{(n-\frac{1}{2})})\\
    &\qquad= (1-\mathcal{E}_j\phi_j)u^{(n-1+\frac{j-1}{2J-1})}+
    \mathcal{E}_j(\phi_ju_{j}^{(n-\frac{1}{2})})\\
    &\qquad= \sum_{s\neq j}\mathcal{E}_s\phi_s
    \left(\sum_{l=1}^{j-1}\mathcal{E}_l(\phi_lu_l^{(n-\frac{1}{2})})
      +\sum_{l=j}^{J}\mathcal{E}_l(\phi_lu_l^{(n-1)})\right)
    +\mathcal{E}_j(\phi_ju_{j}^{(n-\frac{1}{2})})\\
    &\qquad=\sum_{l=1}^{j}\mathcal{E}_l(\phi_lu_l^{(n-\frac{1}{2})})
    +\sum_{l=j+1}^{J}\mathcal{E}_l(\phi_lu_l^{(n-1)}).
  \end{array}
  \end{displaymath}
  By induction, we know this holds for all $j=1,..,J-1.$ Similarly, we can
  prove for the backward sweep
  \begin{displaymath}
  u^{(n-1+\frac{2J-j}{2J-1})}=
  \sum_{l=1}^{j-1}\mathcal{E}_l(\phi_lu_l^{(n-\frac{1}{2})})
  +\sum_{l=j}^{J}\mathcal{E}_l(\phi_lu_l^{(n)}),\,\mbox{ for all }
  j=J,..,1.
  \end{displaymath}
  In particular, $u^{(n)}=\sum_{l=1}^{J}\mathcal{E}_l(\phi_lu_l^{(n)})$, and
  the result follows by induction.
\end{proof}

\begin{remark}
  The assumption that $\mathbf{n}_j^T\alpha\nabla\phi_j=0$ on
  $\Gamma_{j\pm1,j}$ was first introduced in \cite{Chen13a}. We think
  this assumption can be removed from \Cref{thm:GDCPDE}. In the matrix
  version (see \Cref{thm:GDCMAT}) we find no counterpart of this
  assumption. For the same reason, we think that \Cref{thm:GDCPDE}
  also holds for non-overlapping decompositions.  There are
  however some difficulties for the justification at the PDE
  level: as noted in \cite{Stolk}, ``$u^{(*)}$ is generally
      discontinuous at the interface $\Gamma_{j,j-1}$, which results
      in a most singular residual, and the restriction of the residual
      to the subdomain $\Omega_j$ is not well defined''.
\end{remark}

\begin{theorem}\label{thm:GDCMAT}
  If the subproblems in \Cref{alg:GDCMAT} are well-posed, \Cref{alg:GDCMAT} is
  equivalent to \Cref{alg:DOSMMAT}.  That is, given
  $\mathbf{u}^{(0)}=\sum_{j=1}^JR_j^T\Phi_j\mathbf{u}_j^{(0)}$, the iterates
  generated by the two algorithms satisfy
  $\mathbf{u}^{(n)}=\sum_{l=1}^{J}R_l^T\Phi_l\mathbf{u}_l^{(n)}$ for all $n$.
\end{theorem}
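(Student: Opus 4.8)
The plan is to mirror the proof of \Cref{thm:GDCPDE} at the matrix level, running a double induction: an outer one on the iteration index $n$ and an inner one over the $2J-1$ substeps of a double sweep. Denote by $\mathbf{u}_j^{(\cdot)}$ the iterates of \Cref{alg:DOSMMAT} and by $\mathbf{u}^{(\cdot)}$ those of \Cref{alg:GDCMAT}. The inner induction hypothesis after the $j$-th forward substep is
\[
  \mathbf{u}^{(n-1+\frac{j}{2J-1})}=\sum_{l=1}^{j}R_l^T\Phi_l\mathbf{u}_l^{(n-\frac12)}+\sum_{l=j+1}^{J}R_l^T\Phi_l\mathbf{u}_l^{(n-1)},
\]
with base case the assumed relation at $n=0$, $j=0$; the backward sweep will give the symmetric statement with $n-\frac12$ and $n$ interchanged, ending at $\mathbf{u}^{(n)}=\sum_{l}R_l^T\Phi_l\mathbf{u}_l^{(n)}$.

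The heart of the argument is to show that $\tilde{\mathbf{v}}_j:=\mathbf{u}_j^{(n-\frac12)}-R_j\mathbf{u}^{(n-1+\frac{j-1}{2J-1})}$ solves the subproblem \Cref{eq:gdcmat}. Since the coefficient matrix in \Cref{eq:gdcmat} is exactly the matrix $M_j$ of the $j$-th forward subproblem of \Cref{alg:DOSMMAT}, one has $M_j\mathbf{u}_j^{(n-\frac12)}$ equal to the right-hand side of that subproblem, so it suffices to check that this right-hand side minus $M_jR_j\mathbf{u}^{(\cdot)}$ equals $[Q_{j\langle}^{\langle};I_{j\bullet};Q_{j\rangle}^{\rangle}]R_j(\mathbf{f}-A\mathbf{u}^{(\cdot)})$. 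I would do this block row by block row. For the interior row $j\bullet$, the block-tridiagonal assumption on $A$ and the ``no interaction across interfaces'' assumptions of \Cref{sec:dd} make both sides reduce to $0$. For the interface row $j\langle$ one expands $(R_jA\mathbf{u}^{(\cdot)})_{j\langle}$ into the contributions of $\Theta_{j-1}$, of $\Gamma_{j,j-1}$ itself, and of the interior of $\Omega_j$, and then uses the inner induction hypothesis together with the constraints on the $\Phi_l$ ($\Phi_{j-1}$ the identity on $\Theta_{j-1}\cup\Gamma_{j,j-1}$, $\Phi_j$ vanishing on $\Gamma_{j,j-1}$, partition of unity, non-overlap) to identify $R_j\mathbf{u}^{(\cdot)}$ restricted to $\Theta_{j-1}$ with $\mathbf{u}_{j-1\thetaint}^{(n-\frac12)}$ and to $\Gamma_{j,j-1}$ with $\mathbf{u}_{j-1[}^{(n-\frac12)}$; with the splitting $A_{j\langle}=A_{j\langle}^{\langle}+A_{j\langle}^{\rangle}$ the two sides then coincide, exactly as in the verification following \Cref{eq:matrt2} and \Cref{eq:mexin}. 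The $j\rangle$ row is the mirror image, using $\Theta_{j+1}$, $\Gamma_{j,j+1}$ and the level-$(n-1)$ iterate $\mathbf{u}_{j+1}^{(n-1)}$. Invertibility of $M_j$ (the well-posedness hypothesis on the subproblems of \Cref{alg:GDCMAT} — note that, unlike \Cref{thm:1stepmat}, no invertibility of the exterior blocks is needed) then forces $\mathbf{v}_j=\tilde{\mathbf{v}}_j$.

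To close the inner induction, substitute $\mathbf{v}_j=\mathbf{u}_j^{(n-\frac12)}-R_j\mathbf{u}^{(n-1+\frac{j-1}{2J-1})}$ into the update \Cref{eq:gdcmat2}, obtaining $\mathbf{u}^{(n-1+\frac{j}{2J-1})}=(I-R_j^T\Phi_jR_j)\mathbf{u}^{(\cdot)}+R_j^T\Phi_j\mathbf{u}_j^{(n-\frac12)}$. Using $\sum_{l}R_l^T\Phi_lR_l=I_{1,..,J}$, the orthogonality $(R_l^T\Phi_l)R_l(R_m^T\Phi_m)=0$ for $l\neq m$, the identity $R_lR_l^T=I_l$ and $\Phi_l^2=\Phi_l$, the first term collapses to $\sum_{l=1}^{j-1}R_l^T\Phi_l\mathbf{u}_l^{(n-\frac12)}+\sum_{l=j+1}^{J}R_l^T\Phi_l\mathbf{u}_l^{(n-1)}$, which together with the last term is precisely the hypothesis for $j$. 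The backward sweep is handled identically, now with $\Phi_j$ the identity on $\Theta_j\cup\Gamma_{j-1,j}$ and vanishing on $\Gamma_{j,j+1}$, and with the ``already updated'' neighbour on the right at level $n$ and the ``stale'' neighbour on the left at level $n-\frac12$.

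The step I expect to be the main obstacle is the identification carried out in the second paragraph: correctly matching the restriction $R_j\mathbf{u}^{(\cdot)}$ of the global iterate to the individual subdomain iterates $\mathbf{u}_{j-1}^{(n-\frac12)}$ and $\mathbf{u}_{j+1}^{(n-1)}$ over the overlap regions and, in particular, over their end interfaces $\Gamma_{j,j\pm1}$ — one has to be sure that the partition of unity places each overlap-end interface on the side of the neighbour whose data \Cref{alg:DOSMMAT} actually reads there, which is guaranteed by the normalization of the $\Phi_l$ (vanishing on $\Gamma_{j,j-1}$ in the forward sweep, on $\Gamma_{j,j+1}$ in the backward sweep). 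This is the discrete counterpart of the support bookkeeping for the partition of unity $\{\phi_j\}$ in the proof of \Cref{thm:GDCPDE}; the extra assumption $\mathbf{n}_j^T\alpha\nabla\phi_j=0$ on $\Gamma_{j\pm1,j}$ needed there has no analogue here, because the coupling of the normal fluxes across interfaces is already encoded in the off-diagonal blocks of $A$ and handled automatically by the splitting $A_{j\langle}=A_{j\langle}^{\langle}+A_{j\langle}^{\rangle}$. Finally, the generous-overlap situation of \Cref{rem:ddst} is covered by the same computation after the obvious relabeling of blocks.
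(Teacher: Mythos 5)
Your proposal is correct and follows essentially the same route as the paper's proof: the same inner induction hypothesis over the substeps, the same identification $\mathbf{v}_j=\mathbf{u}_j^{(n-\frac12)}-R_j\mathbf{u}^{(n-1+\frac{j-1}{2J-1})}$ via the restrictions of the global iterate to $\Theta_{j\pm1}$ and $\Gamma_{j,j\pm1}$ enforced by the constraints on the $\Phi_l$, and the same collapse of the update step using the partition-of-unity and disjoint-support properties. Your block-row verification and the remark on which neighbour owns each overlap-end interface merely make explicit what the paper's "substituting these into the forward sweep problem" step leaves implicit.
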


\begin{proof}
  Suppose $\mathbf{u}^{(0)}=\sum_{j=1}^JR_j^T\Phi_j\mathbf{u}_j^{(0)}$
  and the iterates $\mathbf{u}^{(*)}$ are generated by
  \Cref{alg:GDCMAT}, and the iterates $\mathbf{u}_j^{*}$,
  $j=1,..,J$ by \Cref{alg:DOSMMAT}. Assuming that
  \begin{equation}\label{Ass13}
  \mathbf{u}^{(n-1+\frac{j-1}{2J-1})}=\sum_{l=1}^{j-1}R_l^T\Phi_l
  \mathbf{u}_l^{(n-\frac{1}{2})}+
  \sum_{l=j}^{J}R_l^T\Phi_l\mathbf{u}_l^{(n-1)},
  \end{equation}
  we will show that the same relation also holds for $j+1$
  instead of $j$. By \Cref{Ass13} and the constraints on $\Phi_l$,
  we have
  \begin{displaymath}
  \begin{array}{ll}
    R_{j\langle}\mathbf{u}^{(n-1+\frac{j-1}{2J-1})}
    =\mathbf{u}_{j-1[}^{(n-\frac{1}{2})},&
    R_{j-1\thetaint}\mathbf{u}^{(n-1+\frac{j-1}{2J-1})}
    =\mathbf{u}_{j-1\thetaint}^{(n-\frac{1}{2})},\\
    R_{j\rangle}\mathbf{u}^{(n-1+\frac{j-1}{2J-1})}
    =\mathbf{u}_{j+1]}^{(n-1)},&
    R_{j+1\thetaint}\mathbf{u}^{(n-1+\frac{j-1}{2J-1})}
    =\mathbf{u}_{j+1\thetaint}^{(n-1)}.
  \end{array}
  \end{displaymath}
  Substituting these into the forward sweep problem in
  \Cref{alg:DOSMMAT}, we find
  $\mathbf{u}_j^{(n-\frac{1}{2})}-R_j\mathbf{u}^{(n-1+\frac{j-1}{2J-1})}$ solves
  the forward sweep problem of \Cref{alg:GDCMAT},
  i.e. $\mathbf{v}_j=\mathbf{u}_j^{(n-\frac{1}{2})}-R_j\mathbf{u}^{(n-1+\frac{j-1}{2J-1})}.$
  Substituting this into the update relation of \Cref{alg:GDCMAT} and
  using the constraints on $\Phi_l,$ we obtain
  \begin{displaymath}
  \begin{array}{ll}
    &\mathbf{u}^{(n-1+\frac{j}{2J-1})}=(I-R_j^T\Phi_jR_j)
      \mathbf{u}^{(n-1+\frac{j-1}{2J-1})}+
      R_j^T\Phi_j\mathbf{u}_j^{(n-\frac{1}{2})}\\
    &\qquad=\sum_{s\neq j}R_s^T\Phi_sR_s\left(\sum_{l=1}^{j-1}R_l^T
       \Phi_l\mathbf{u}_l^{(n-\frac{1}{2})}
       +\sum_{l=j}^{J}R_l^T\Phi_l\mathbf{u}_l^{(n-1)}\right)
       +R_j^T\Phi_j\mathbf{u}_j^{(n-\frac{1}{2})}\\
    &\qquad=\sum_{l=1}^{j}R_l^T\Phi_l\mathbf{u}_l^{(n-\frac{1}{2})}
       +\sum_{l=j+1}^{J}R_l^T\Phi_l\mathbf{u}_l^{(n-1)}.
  \end{array}
  \end{displaymath}
  By induction, we know this holds for all $j=1,..,J-1$. Similarly, the
  backward sweep gives 
  \begin{displaymath}
  \mathbf{u}^{(n-1+\frac{2J-j}{2J-1})}=
  \sum_{l=1}^{j-1}R_l^T\Phi_l\mathbf{u}_l^{(n-\frac{1}{2})}
  +\sum_{l=j}^{J}R_l^T\Phi_l\mathbf{u}_l^{(n)},\quad\mbox{ for all }j=J,..,1.
  \end{displaymath}
  In particular, we have
  $\mathbf{u}^{(n)}=\sum_{l=1}^{J}R_l^T\Phi_l\mathbf{u}_l^{(n)}$, and
  the result follows by induction.
\end{proof}

\begin{remark}
  DOSM can be used also as a preconditioner for the original problem.
  The DOSM preconditioner as a linear operator acts on a given
  r.h.s. and outputs an approximate solution by one or multiple steps
  of the DOSM iteration with {\it zero} initial guess. As
  we showed above, choosing an appropriate gluing scheme is very
  important for the preconditioned Richardson iteration
  $\mathbf{u}^{(l)}=
  \mathbf{u}^{(l-1)}+M^{-1}(\mathbf{f}-A\mathbf{u}^{(l-1)})$ to be
  equivalent to the DOSM in the subdomain transmission form.  If we use
  the preconditioner with a Krylov method, e.g. GMRES, the convergence
  can not be slower than the Richardson iteration.  Hence, the
  equivalence tells us that if the optimal DOSM from \Cref{thm:1stepmat} is
  used as a preconditioner for a Krylov method, the latter also
  converges in one step.  This is not the case for the gluing scheme
  of AS which uses for the weights all ones on each subdomain. An
  advantage of using DOSM as a {\it preconditioner} is also that even if one
  uses inexact subdomain solves, i.e. $\tilde{M}^{-1}\approx M^{-1}$,
  the global consistency is retained, i.e. the converged iterate is
  always the solution of the original problem, while the {\it plain} DOSM
  iterates have a consistency error.
\end{remark}

\begin{remark}
  From the subdomain transmission form (\Cref{alg:DOSMPDE} or
  \Cref{alg:DOSMMAT}) to the deferred correction form
  (\Cref{alg:GDCPDE} or \Cref{alg:GDCMAT}), we see that the interface
  conditions are becoming homogeneous and the evaluation of the
  r.h.s. becomes unrelated to the transmission operators $\mathcal{P},
  \mathcal{Q}$ or $P, Q.$ This can be an advantage when the action of
  those operators is expensive.  For example, the PML technique (see
  \Cref{rem:impml}) leads to the choice $\mathcal{Q}=\mathcal{I},$
  $\mathcal{P}=\mathrm{DtN}_j^{pml}$ on $\Gamma_{j,j\pm1}.$ In this
  case, the action of $\mathcal{P}$ involves solving a problem in
  $\Omega_{j}^{pml}$ which one might want to avoid.  In the deferred
  correction form, the action of $\mathcal{P}$ is not required for the
  evaluation of the r.h.s., but $\mathcal{P}$ still appears
  acting on the unknown function in the interface condition
  \begin{displaymath}
  \mathbf{n}_j^T\alpha\nabla v_j + \mathrm{DtN}_j^{pml}v_j=0\quad \mbox{on
  }\Gamma_{j,j\pm1},
  \end{displaymath}
  where we omitted the superscripts for simplicity.  For the
  implementation, one usually unfolds the PML--DtN operator and composes a
  bigger problem defined in $\Omega_j\cup\Omega_j^{pml}$ as explained in
  \Cref{rem:impml}. The first use of PML in a Schwarz method is due
  to Toselli (see \cite{Toselli}) who seemed to use the full extension
  including the PML regions for updating the global iterates, so that
  his algorithm deviates from OSM and may be interpreted as an
  overlapping Schwarz method with the PML equations used in the
  overlap. This resembles in the overlap the shifted-Laplace based 
  Schwarz method recently proposed and studied in \cite{GSV}.
\end{remark}

\begin{remark}\label{rem:schadle}
  There are many other ways of implementing the PML transmission
  conditions.  One was proposed in \cite{Schadle, SZBKS}.  First, we
  rewrite the condition as
  \begin{displaymath}
  \mathbf{n}_j^T\alpha\nabla u_j + \mathrm{DtN}_j^{pml}
  (u_j-u_{j-1})=\mathbf{n}_j^T\alpha\nabla u_{j-1}\quad\mbox{on }\Gamma_{j,j-1}.
  \end{displaymath}
  Then, we unfold $\mathrm{DtN}_j^{pml}$ and compose a coupled problem in
  $\Omega_j\cup\Omega_{j}^{pml}$ as follows:
  \begin{displaymath}
    \begin{array}{r@{\hspace{0.2em}}c@{\hspace{0.2em}}ll}
      \mathcal{L}u_j&=&f&\;\mbox{in }\Omega_j,\\
      \mathcal{B}u_j&=&g&\;\mbox{on }\partial\Omega_j\cap\partial\Omega,\\
      \mathbf{n}_j^T\alpha\nabla u_j-\mathbf{n}_j^T\alpha\nabla v
      &=&\mathbf{n}_j^T\alpha\nabla u_{j-1}
                        &\;\mbox{on }\partial\Omega_j-\partial\Omega, \\[1em]
      \widetilde{\mathcal{L}}v&=&0&\;\mbox{in }\Omega_j^{pml},\\
      \widetilde{\mathcal{B}}v&=&0&\;\mbox{on }
                                    \partial\Omega_j^{pml}-\partial\Omega_j,\\
      v-u_j&=&-u_{j-1}&\;\mbox{on }\partial\Omega_j^{pml}-\partial\Omega.
    \end{array}
  \end{displaymath}
  A straightforward discretization requires two sets of d.o.f. on the interfaces
  $\Gamma_{j,j\pm1},$ one for $u_j$ and the other for $v.$ In this way, we need
  only to solve one coupled problem in $\Omega_j\cup\Omega_j^{pml}$ and we avoid
  to solve an extra problem in $\Omega_j^{pml}.$ We can further remove the
  Dirichlet jump from the coupled problem by extending (lifting) $u_{j-1}$ from
  the interfaces into $\Omega_j^{pml}$. Let the extended function be
  $\tilde{u}_{j-1}^{j,pml}$. We change the unknown in the PML to
  $\tilde{v}:=v+\tilde{u}_{j-1}^{j,pml}$ so that the coupled problem for
  $(u_j,\tilde{v})$ enforces no jump of Dirichlet traces across the interfaces.
\end{remark}

\subsection{Substructured form of Schwarz methods}

The substructured form of OSM consists in taking interface data as
iterates (unknowns)\footnote{If exact subdomain solvers are used,
  the glued global approximation has compact residual for the original
  problem near interfaces.  Making use of this property leads to
  yet another substructured form; see \Cref{alg:spyres} later in
  \Cref{sec:stolk}.}. These iterates are substantially smaller than the
volume iterates and thus can save memory and flops for Krylov subspace
methods. This form was first introduced in \cite{NRS95} for one-way
domain decompositions like \Cref{fig:dd}.  In particular, for
Helmholtz problems, it was used in \cite{Chevalier, GMN02}.  Later,
the substructured form was generalized to the case of domain
decompositions with cross-points (points shared by three or more
subdomains), see e.g. \cite{Bendali, FMLRMB, GK2, GZ1, Loisel} for
various approaches (some are called FETI-2LM).  Here, we consider only
the sequential one-way domain decomposition from \Cref{fig:dd}.  The
substructured form of DOSM is given in \Cref{alg:ISPDE} at the PDE
level, and in \Cref{alg:ISMAT} at the matrix level.  
\begin{algorithm}
  \caption{DOSM in the {\bf substructured} form at the {\bf PDE} level
    }
  \label{alg:ISPDE}
  Given the last iterate $\lambda_{\rangle}^{(n-1)}=\left\{\lambda_{j\rangle}^{(n-1)}\mbox{
      on }\Gamma_{j,j+1},\ j=1,..,J-1\right\}$, solve successively for
  $j=1,..,J-1,$
  \begin{displaymath}
    \small
  \begin{array}{r@{\hspace{0.2em}}c@{\hspace{0.2em}}ll}
    \mathcal{L}\,u_j^{(n-\frac{1}{2})}&=&f & \mbox{ in }\Omega_j,\\
    \mathcal{B}\,u_j^{(n-\frac{1}{2})}&=&g & \mbox{ on }
    \partial\Omega\cap\partial\Omega_j,\\
    \mathcal{Q}_{j}^{\langle}\left(\mathbf{n}_j^T\alpha\nabla
      u_j^{(n-\frac{1}{2})}\right)+\mathcal{P}_{j}^{\langle}u_j^{(n-\frac{1}{2})}&=&
    \lambda_{j\langle}^{(n)} & \mbox{ on }\Gamma_{j,j-1},\\
    \mathcal{Q}_{j}^{\rangle}\left(\mathbf{n}_j^T\alpha\nabla
      u_j^{(n-\frac{1}{2})}\right)+\mathcal{P}_{j}^{\rangle}u_j^{(n-\frac{1}{2})}&=&
    \lambda_{j\rangle}^{(n-1)} & \mbox{ on }\Gamma_{j,j+1},
  \end{array}
  \end{displaymath}
  and each solve is followed by
  $\lambda_{j+1\langle}^{(n)}\gets\mathcal{Q}_{j+1}^{\langle}\left(\mathbf{n}_{j+1}
    ^T\alpha\nabla u_{j}^{(n-\frac{1}{2})}\right)
  +\mathcal{P}_{j+1}^{\langle}u_{j}^{(n-\frac{1}{2})}.$

  Then solve successively for $j=J,..,1,$
  \begin{displaymath}
    \small
  \begin{array}{r@{\hspace{0.2em}}c@{\hspace{0.2em}}ll}
    \mathcal{L}\,u_j^{(n)}&=&f & \mbox{ in }\Omega_j,\\
    \mathcal{B}\,u_j^{(n)}&=&g & \mbox{ on }
    \partial\Omega\cap\partial\Omega_j,\\
    \mathcal{Q}_{j}^{\langle}\left(\mathbf{n}_j^T\alpha\nabla
      u_j^{(n)}\right)+\mathcal{P}_{j}^{\langle}u_j^{(n)}&=&
    \lambda_{j\langle}^{(n)} & \mbox{ on }\Gamma_{j,j-1},\\
    \mathcal{Q}_{j}^{\rangle}\left(\mathbf{n}_j^T\alpha\nabla
      u_j^{(n)}\right)+\mathcal{P}_{j}^{\rangle}u_j^{(n)}&=&
    \lambda_{j\rangle}^{(n)} & \mbox{ on }\Gamma_{j,j+1},
  \end{array}
  \end{displaymath}
  and each solve is followed by $\lambda_{j-1\rangle}^{(n)}\gets
  \mathcal{Q}_{j-1}^{\rangle}\left(\mathbf{n}_{j-1}^T\alpha\nabla
    u_{j}^{(n)}\right)+\mathcal{P}_{j-1}^{\rangle}u_{j}^{(n)}$.

  We obtain $\lambda_{\rangle}^{(n)}\gets\left\{\lambda_{j\rangle}^{(n)}\mbox{ on
    }\Gamma_{j,j+1},\ j=1,..,J-1\right\}$.
\end{algorithm}
\begin{algorithm}
  \caption{DOSM in the {\bf substructured} form at the {\bf matrix}
    level}
  \label{alg:ISMAT}
  Given the last iterate
  $\small\boldsymbol\lambda_{\rangle}^{(n-1)}=\left\{\boldsymbol{\lambda}_{j\rangle}^{(n-1)},
    j=1,..,J-1\right\}$, solve successively for $j=1,..,J-1,$
  \begin{displaymath}
    \small
    \left[
      \begin{array}{ccc}
        Q_{j\langle}^{\langle}A_{j\langle}^{\langle}+P_{j\langle}^{\langle} &
        Q_{j\langle}^{\langle}A_{j\langle j\bullet} &\vphantom{_{j\langle}^{(n-\frac{1}{2})}}\\
        A_{j\bullet j\langle} & A_{j\bullet} & A_{j\bullet j\rangle}\\
        & Q_{j\rangle}^{\rangle}A_{j\rangle j\bullet} &
        Q_{j\rangle}^{\rangle}A_{j\rangle}^{\rangle}+P_{j\rangle}^{\rangle}\vphantom{_{j\langle}^{(n-\frac{1}{2})}}
      \end{array}
    \right]
    \left[
      \begin{array}{c}
        \mathbf{u}_{j\langle}^{(n-\frac{1}{2})} \\
        \mathbf{u}_{j\bullet}^{(n-\frac{1}{2})} \\
        \mathbf{u}_{j\rangle}^{(n-\frac{1}{2})}
      \end{array}
    \right]=
    \left[
      \begin{array}{c}
        Q_{j\langle}^{\langle}\mathbf{f}_{j\langle}^{\langle}
        +\boldsymbol\lambda_{j\langle}^{(n)}\vphantom{_{j\langle}^{(n-\frac{1}{2})}}\\
        \mathbf{f}_{j\bullet} \\
        Q_{j\rangle}^{\rangle}\mathbf{f}_{j\rangle}^{\rangle}+\boldsymbol\lambda_{j\rangle}^{(n-1)}\vphantom{_{j\langle}^{(n-\frac{1}{2})}}
      \end{array}
    \right],
  \end{displaymath}
  and each solve is followed by
  \begin{displaymath}
    \small
    \boldsymbol\lambda_{j+1\langle}^{(n)}\gets
    Q_{j+1\langle}^{\langle}\left(\mathbf{f}_{j+1\langle}^{\rangle} -
      A_{j+1\langle j\thetaint} \mathbf{u}_{j\thetaint}^{(n-\frac{1}{2})}\right)
    +\left(P_{j+1\langle}^{\langle}-Q_{j+1\langle}^{\langle}A_{j+1\langle}^{\rangle}\right)
    \mathbf{u}_{j[}^{(n-\frac{1}{2})}.
  \end{displaymath}
  Note that it does not matter what splits
  $\mathbf{f}_{j\langle}=\mathbf{f}_{j\langle}^{\rangle}+\mathbf{f}_{j\langle}^{\langle}$
  and
  $\mathbf{f}_{j\rangle}=\mathbf{f}_{j\rangle}^{\rangle}+\mathbf{f}_{j\rangle}^{\langle}$
  are used, the only difference will be the definition of the
  interface data.  For example, one can use the simple splits
  $\mathbf{f}_{j\langle}^{\rangle}=0$ and
  $\mathbf{f}_{j\rangle}^{\langle}=0.$

  Then solve successively for $j=J,..,1,$
  \begin{displaymath}
    \small
    \left[
      \begin{array}{ccc}
        Q_{j\langle}^{\langle}A_{j\langle}^{\langle}+P_{j\langle}^{\langle} &
        Q_{j\langle}^{\langle}A_{j\langle j\bullet} &\\
        A_{j\bullet j\langle} & A_{j\bullet} & A_{j\bullet j\rangle}\\
        & Q_{j\rangle}^{\rangle}A_{j\rangle j\bullet} &
        Q_{j\rangle}^{\rangle}A_{j\rangle}^{\rangle}+P_{j\rangle}^{\rangle}
      \end{array}
    \right]
    \left[
      \begin{array}{c}
        \mathbf{u}_{j\langle}^{(n)} \\
        \mathbf{u}_{j\bullet}^{(n)} \\
        \mathbf{u}_{j\rangle}^{(n)}
      \end{array}
    \right]=
    \left[
      \begin{array}{c}
        Q_{j\langle}^{\langle}\mathbf{f}_{j\langle}^{\langle}+\boldsymbol\lambda_{j\langle}^{(n)}\\
        \mathbf{f}_{j\bullet} \\
        Q_{j\rangle}^{\rangle}\mathbf{f}_{j\rangle}^{\rangle}+\boldsymbol\lambda_{j\rangle}^{(n)}
      \end{array}
    \right],
  \end{displaymath}
  and each solve is followed by
  \begin{displaymath}
    \small
    \boldsymbol\lambda_{j-1\rangle}^{(n)}\gets
    Q_{j-1\rangle}^{\rangle}\left(\mathbf{f}_{j-1\rangle}^{\langle} -
      A_{j-1\rangle j\thetaint}\mathbf{u}_{j\thetaint}^{(n)}\right)
    +\left(P_{j-1\rangle}^{\rangle}-Q_{j-1\rangle}^{\rangle}A_{j-1\rangle}^{\langle}\right)
    \mathbf{u}_{j]}^{(n)}.
  \end{displaymath}

  We obtain
  $\small\boldsymbol\lambda_{\rangle}^{(n)}=\left\{\boldsymbol{\lambda}_{j\rangle}^{(n)},
    j=1,..,J-1\right\}$.
\end{algorithm}
\Cref{thm:ISPDE} and \Cref{thm:ISMAT} give the equivalence of
the substructured formulations to the formulations with subdomain
iterates. Their proofs are simple and we thus omit them here.

\begin{theorem}\label{thm:ISPDE}
  \Cref{alg:ISPDE} is equivalent to \Cref{alg:DOSMPDE}.  That is, given
  $\lambda_{j\rangle}^{(0)}=\mathcal{Q}_{j}^{\rangle}\left(\mathbf{n}_{j}^T\alpha\nabla
    u_{j}^{(0)}\right)+\mathcal{P}_{j}^{\rangle}u_{j}^{(0)}$, the iterates
  generated by the two algorithms satisfy
  $\lambda_{j\rangle}^{(n)}=\mathcal{Q}_{j}^{\rangle}\left(\mathbf{n}_{j}^T\alpha\nabla
    u_{j}^{(n)}\right)+\mathcal{P}_{j}^{\rangle}u_{j}^{(n)}$.
\end{theorem}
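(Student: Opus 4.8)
The plan is to prove the equivalence by induction on the iteration index $n$, processing within each iteration the subdomains in exactly the order used by \Cref{alg:DOSMPDE} (forward $j=1,\dots,J-1$, then backward $j=J,\dots,1$). The structural observation that makes everything go through is that in \Cref{alg:DOSMPDE} a neighbouring iterate enters a subdomain solve \emph{only} through a generalized Robin trace on the shared interface: the solve for $u_j$ sees its left neighbour through $\mathcal{Q}_j^{\langle}(\mathbf{n}_j^T\alpha\nabla u_{j-1})+\mathcal{P}_j^{\langle}u_{j-1}$ on $\Gamma_{j,j-1}$ and its right neighbour through $\mathcal{Q}_j^{\rangle}(\mathbf{n}_j^T\alpha\nabla u_{j+1})+\mathcal{P}_j^{\rangle}u_{j+1}$ on $\Gamma_{j,j+1}$. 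These are precisely the quantities \Cref{alg:ISPDE} carries between iterations (the $\lambda_{j\rangle}$) or recomputes on the fly in the update steps (the $\lambda_{j\langle}$ produced in the forward sweep, and the refreshed $\lambda_{j\rangle}$ produced in the backward sweep). Consequently the whole proof reduces to checking that the $\lambda$'s fed into the subdomain solves of \Cref{alg:ISPDE} coincide with those right-hand sides; then, by the assumed well-posedness of the subdomain problems, the subdomain iterates of the two algorithms coincide, and the asserted identity $\lambda_{j\rangle}^{(n)}=\mathcal{Q}_j^{\rangle}(\mathbf{n}_j^T\alpha\nabla u_j^{(n)})+\mathcal{P}_j^{\rangle}u_j^{(n)}$ follows for free because $u_j^{(n)}$ satisfies exactly that transmission condition with right-hand side $\lambda_{j\rangle}^{(n)}$.

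For the forward sweep I would induct on $j$. At $j=1$ there is no left interface, and the right datum is $\lambda_{1\rangle}^{(n-1)}$; by the outer induction hypothesis (base case $n=0$ being the stated initialization $\lambda_{j\rangle}^{(0)}=\mathcal{Q}_j^{\rangle}(\mathbf{n}_j^T\alpha\nabla u_j^{(0)})+\mathcal{P}_j^{\rangle}u_j^{(0)}$, used for the very first forward sweep, and for $n\ge 1$ obtained at the end of the previous backward sweep) one has $\lambda_{1\rangle}^{(n-1)}=\mathcal{Q}_1^{\rangle}(\mathbf{n}_1^T\alpha\nabla u_1^{(n-1)})+\mathcal{P}_1^{\rangle}u_1^{(n-1)}$, which equals the datum $\mathcal{Q}_1^{\rangle}(\mathbf{n}_1^T\alpha\nabla u_2^{(n-1)})+\mathcal{P}_1^{\rangle}u_2^{(n-1)}$ used by \Cref{alg:DOSMPDE} because $u_1^{(n-1)}$ was produced with precisely that transmission condition on $\Gamma_{1,2}$. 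Hence $u_1^{(n-\frac{1}{2})}$ agrees for the two algorithms, and the following update sets $\lambda_{2\langle}^{(n)}$ to the $\langle$-Robin trace of $u_1^{(n-\frac{1}{2})}$, which is exactly what \Cref{alg:DOSMPDE} uses on $\Gamma_{2,1}$; combined with $\lambda_{2\rangle}^{(n-1)}$ (treated as above) the two subproblems for $u_2^{(n-\frac{1}{2})}$ become identical, and the induction on $j$ continues verbatim to $j=J-1$.

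The backward sweep is handled symmetrically, now descending $j=J,\dots,1$: for $j=J$ there is no right interface, the left datum $\lambda_{J\langle}^{(n)}$ was already fixed in the forward sweep as the $\langle$-Robin trace of $u_{J-1}^{(n-\frac{1}{2})}$, matching \Cref{alg:DOSMPDE}, so $u_J^{(n)}$ agrees; the ensuing update sets $\lambda_{J-1\rangle}^{(n)}$ to the $\rangle$-Robin trace of $u_J^{(n)}$, again matching, and descending in $j$ each solve sees the same left and right data as in \Cref{alg:DOSMPDE}, so $u_j^{(n)}$ agrees throughout, closing the outer induction together with the "for free" step noted above. The only genuine delicacy is purely bookkeeping: keeping the half-step and full-step superscripts straight, respecting the index shift in the $\lambda$-update steps ($\lambda_{j+1\langle}$ and $\lambda_{j-1\rangle}$ are the ones produced when solving $\Omega_j$), and treating $j=1$ and $j=J$ as degenerate cases with one interface absent — together with noting that the first forward sweep simply reads the initialization hypothesis as the $n=0$ instance of the invariant (which is automatic for a consistent, e.g. zero, initial guess, the situation of interest when DOSM is used as a preconditioner). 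No analytic difficulty arises beyond the well-posedness that is already assumed, which is presumably why the paper leaves the proof to the reader.
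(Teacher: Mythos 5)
The paper omits the proof of this theorem (``Their proofs are simple and we thus omit them here''), so there is no argument to compare against; your proof takes the natural route and its overall structure is correct: a nested induction, outer on $n$ and inner on the subdomain index $j$ following the sweep order, driven by the observation that neighbouring iterates enter a subdomain solve of \Cref{alg:DOSMPDE} only through a generalized Robin trace on the shared interface. Once the interface data fed to the corresponding solves of the two algorithms coincide, well-posedness forces the subdomain iterates to coincide, and the claimed identity for $\lambda_{j\rangle}^{(n)}$ then follows from the transmission condition that $u_j^{(n)}$ satisfies on $\Gamma_{j,j+1}$ in the backward sweep.

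The one place the argument needs tightening is the base case, which you mention in the final parenthetical but do not actually incorporate into the induction. Your step justifies the agreement of the right-interface data by ``$u_j^{(n-1)}$ was produced with precisely that transmission condition on $\Gamma_{j,j+1}$''; this is valid for $n-1\geq 1$ because the backward sweep of the previous iteration enforces the transmission condition, but at $n-1=0$ the $u_j^{(0)}$ are arbitrary inputs, and so the $\rangle$-Robin trace of $u_j^{(0)}$ (which is what the stated initialization makes $\lambda_{j\rangle}^{(0)}$) need not equal the $\rangle$-Robin trace of $u_{j+1}^{(0)}$ (which is what the first forward sweep of \Cref{alg:DOSMPDE} actually uses on $\Gamma_{j,j+1}$). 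The clean fixes are either to restate the initialization as $\lambda_{j\rangle}^{(0)}=\mathcal{Q}_{j}^{\rangle}(\mathbf{n}_{j}^T\alpha\nabla u_{j+1}^{(0)})+\mathcal{P}_{j}^{\rangle}u_{j+1}^{(0)}$, in which case the stated identity with $u_j^{(n)}$ holds for every $n\geq 1$, or to add the hypothesis that the initial iterates already satisfy the transmission conditions (trivially so for the zero initial guess used in the preconditioner setting). You observe the latter, but it should appear as an explicit hypothesis, or the initialization adjusted, so that the induction step at $n=1$ is actually licensed rather than quietly assumed.
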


\begin{theorem}\label{thm:ISMAT}
  \Cref{alg:ISMAT} is equivalent to \Cref{alg:DOSMMAT}.  That is, given
  $\boldsymbol{\lambda}_{j\rangle}^{(0)}=
  (Q_{j\rangle}^{\rangle}A_{j\rangle}^{\rangle}+P_{j\rangle}^{\rangle})\mathbf{u}_{j\rangle}^{(0)}
  +Q_{j\rangle}^{\rangle}A_{j\rangle
    j\bullet}\mathbf{u}_{j\bullet}^{(0)}-Q_{j\rangle}^{\rangle}\mathbf{f}_{j\rangle}^{\rangle}$,
  the iterates generated by the two algorithms satisfy
  $\boldsymbol{\lambda}_{j\rangle}^{(n)}=
  (Q_{j\rangle}^{\rangle}A_{j\rangle}^{\rangle}+P_{j\rangle}^{\rangle})\mathbf{u}_{j\rangle}^{(n)}
  +Q_{j\rangle}^{\rangle}A_{j\rangle
    j\bullet}\mathbf{u}_{j\bullet}^{(n)}-Q_{j\rangle}^{\rangle}\mathbf{f}_{j\rangle}^{\rangle}$.
\end{theorem}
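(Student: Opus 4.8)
The plan is to recognise that \Cref{alg:ISMAT} is simply \Cref{alg:DOSMMAT} rewritten with the ``interface transfer'' parts of the right-hand sides abbreviated by the symbols $\boldsymbol\lambda$, and then to read off the claimed identity for $\boldsymbol\lambda_{j\rangle}^{(n)}$ directly from the right-interface row of the subdomain problem. The starting observation is the splitting $\mathbf{f}_{j\langle}=\mathbf{f}_{j\langle}^{\langle}+\mathbf{f}_{j\langle}^{\rangle}$ (and likewise for $\mathbf{f}_{j\rangle}$): substituting it into the right-hand side of the $j$-th subproblem of \Cref{alg:DOSMMAT} separates that right-hand side into a ``source part'' $Q_{j\langle}^{\langle}\mathbf{f}_{j\langle}^{\langle}$ (respectively $Q_{j\rangle}^{\rangle}\mathbf{f}_{j\rangle}^{\rangle}$ on the other interface) plus a remainder that involves only the \emph{neighbouring} subdomain iterate. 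First I would check the two purely algebraic identities that make the bookkeeping work: the remainder on $\Gamma_{j,j-1}$ coincides with the quantity produced by the forward-sweep update $\boldsymbol\lambda_{j\langle}^{(n)}\gets Q_{j\langle}^{\langle}(\mathbf{f}_{j\langle}^{\rangle}-A_{j\langle j-1\thetaint}\mathbf{u}_{j-1\thetaint}^{(n-\frac{1}{2})})+(P_{j\langle}^{\langle}-Q_{j\langle}^{\langle}A_{j\langle}^{\rangle})\mathbf{u}_{j-1[}^{(n-\frac{1}{2})}$ of \Cref{alg:ISMAT} (relabelling $j+1\to j$ in that update), and the remainder on $\Gamma_{j,j+1}$ in the forward sweep of iteration $n$ coincides with $\boldsymbol\lambda_{j\rangle}^{(n-1)}$ produced by the backward-sweep update of iteration $n-1$; the analogous matchings hold in the backward sweep. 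Each of these is a one-line expansion once the conventions $j[\leftrightarrow j+1\langle$ and $j]\leftrightarrow j-1\rangle$ are used.

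Next I would run an induction over the substeps of one double sweep, nested inside an induction over $n$. Assume that at the start of the $j$-th substep of the forward sweep of iteration $n$ the subdomain iterates computed so far by \Cref{alg:ISMAT} agree with those of \Cref{alg:DOSMMAT}, and that the currently stored interface data satisfy the matching identities of the previous paragraph. Then the right-hand sides of the $j$-th subproblem in the two algorithms are literally the same vector; since the coefficient matrices coincide and the subproblems are well-posed by hypothesis, the two algorithms produce the same $\mathbf{u}_j^{(n-\frac{1}{2})}$, and the forward-sweep update of \Cref{alg:ISMAT} then reproduces the matching identity on the next interface. The backward sweep is handled in the same way, using that on each $\Gamma_{j,j-1}$ the backward sweep of both algorithms reuses the datum already computed during the forward sweep (so that $\Omega_1$ is effectively solved only once). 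The base case is supplied by the hypothesis on $\boldsymbol\lambda_{j\rangle}^{(0)}$, which is exactly the matching identity at level $n=0$, with the DOSM initial guess $\{\mathbf{u}_j^{(0)}\}$ taken compatibly so that its interface transfer on $\Gamma_{j,j+1}$ equals $\boldsymbol\lambda_{j\rangle}^{(0)}$.

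Once the two iterations are identified, the formula for $\boldsymbol\lambda_{j\rangle}^{(n)}$ drops out: in the backward sweep $\mathbf{u}_j^{(n)}$ solves the $j$-th subproblem of \Cref{alg:ISMAT}, whose bottom (right-interface) row reads
\begin{displaymath}
  Q_{j\rangle}^{\rangle}A_{j\rangle j\bullet}\mathbf{u}_{j\bullet}^{(n)}
  +\bigl(Q_{j\rangle}^{\rangle}A_{j\rangle}^{\rangle}+P_{j\rangle}^{\rangle}\bigr)\mathbf{u}_{j\rangle}^{(n)}
  = Q_{j\rangle}^{\rangle}\mathbf{f}_{j\rangle}^{\rangle}+\boldsymbol\lambda_{j\rangle}^{(n)},
\end{displaymath}
and rearranging this gives exactly $\boldsymbol\lambda_{j\rangle}^{(n)}=(Q_{j\rangle}^{\rangle}A_{j\rangle}^{\rangle}+P_{j\rangle}^{\rangle})\mathbf{u}_{j\rangle}^{(n)}+Q_{j\rangle}^{\rangle}A_{j\rangle j\bullet}\mathbf{u}_{j\bullet}^{(n)}-Q_{j\rangle}^{\rangle}\mathbf{f}_{j\rangle}^{\rangle}$ for $j=1,\dots,J-1$; this also re-establishes the matching identity at level $n$, closing the induction. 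The main obstacle here is not analytic at all -- it is bookkeeping: keeping straight the $\langle/\rangle$ superscripts on the split matrices, the $j[\leftrightarrow j+1\langle$ identification, the boundary cases $j=1$ and $j=J$ (deleting the non-existent rows and columns as in the remark following \Cref{lem:schur}), and the fact that the arbitrary $\mathbf{f}$-splitting chosen in \Cref{alg:ISMAT} must be propagated consistently through the updates. With the abbreviations lined up, every step is immediate, which is presumably why the authors omitted it; the entirely parallel argument, with the discrete traces replaced by normal derivatives, proves \Cref{thm:ISPDE}.
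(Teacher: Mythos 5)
Your argument is correct and is precisely the routine verification the authors had in mind when they declared the proofs of \Cref{thm:ISPDE} and \Cref{thm:ISMAT} ``simple'' and omitted them: split $\mathbf{f}_{j\langle}=\mathbf{f}_{j\langle}^{\langle}+\mathbf{f}_{j\langle}^{\rangle}$ and $\mathbf{f}_{j\rangle}=\mathbf{f}_{j\rangle}^{\rangle}+\mathbf{f}_{j\rangle}^{\langle}$ in the right-hand sides of \Cref{alg:DOSMMAT}, identify the remainders with the $\boldsymbol\lambda$-updates of \Cref{alg:ISMAT} under the $j[\leftrightarrow j+1\langle$, $j]\leftrightarrow j-1\rangle$ conventions, induct over the substeps of the double sweep, and read the claimed identity off the right-interface row of the backward-sweep subproblem once the subdomain iterates are identified. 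You also correctly flag the only delicate point, namely that the hypothesis at $n=0$ expresses $\boldsymbol\lambda_{j\rangle}^{(0)}$ through $\mathbf{u}_j^{(0)}$ while the first forward sweep of \Cref{alg:DOSMMAT} consumes data built from $\mathbf{u}_{j+1}^{(0)}$, so the DOSM initial guesses must be taken compatibly for the two first sweeps to coincide.
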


To explain how to use Krylov acceleration for the substructured
formulations, we denote by
$\lambda_{\langle}^{(n)}:=\left\{\lambda_{j\langle}^{(n)},
j=2,..,J\right\}$.  The forward and backwards sweeps of
\Cref{alg:ISPDE} define a linear forward map $\mathcal{F}_{\langle}$
and backward map $\mathcal{F}_{\rangle}$ such that
$\lambda^{(n)}_{\langle}=\mathcal{F}_{\langle}(\lambda^{(n-1)}_{\rangle},f,g)$ and
$\lambda^{(n)}_{\rangle}=\mathcal{F}_{\rangle}(\lambda^{(n)}_{\langle},f,g)$. The
corresponding fixed point equation
$\lambda_{\rangle}=\mathcal{F}_{\rangle}(\mathcal{F}_{\langle}(\lambda_{\rangle},f,g),f,g)$
can be rewritten as a linear system,
\begin{displaymath}
\left(\mathcal{I}-\mathcal{F}_\rangle(\mathcal{F}_{\langle}(\cdot,0,0),0,0)\right)
\lambda_\rangle=\mathcal{F}_\rangle(\mathcal{F}_\langle(0,f,g),f,g),
\end{displaymath}
which can now be solved by polynomial methods e.g. Chebyshev iterations and/or
Krylov methods.

\begin{remark}\label{rem:iab}
If we look at each solve and the following update, we have a linear
map $\mathcal{F}_{j+1\langle}$ such that
$\lambda_{j+1\langle}^{(n)}=\mathcal{F}_{j+1\langle}(\lambda_{j\langle}^{(n)},\lambda_{j\rangle}^{(n-1)},f,g)$
and $\mathcal{F}_{j-1\rangle}$ such that
$\lambda_{j-1\rangle}^{(n)}=\mathcal{F}_{j-1\rangle}(\lambda_{j\langle}^{(n)},\lambda_{j\rangle}^{(n)},f,g)$.
Considering the converged solution (i.e. removing the superscripts),
we find the linear system
\begin{displaymath}\small
\left[
  \begin{array}{cccc|cccc}
    \mathfrak{1} & & & & -\mathfrak{b}_{1}^- & & & \\
    -\mathfrak{a}_1^- & \mathfrak{1} & & & & -\mathfrak{b}_2^- & &\\
    & \ddots & \ddots & & & & \ddots &\\
    & & -\mathfrak{a}_{J-2}^- & \mathfrak{1} & & & & -\mathfrak{b}_{J-1}^-\\ \hline
    -\mathfrak{b}_1^+ & & & & \mathfrak{1} & -\mathfrak{a}_1^+ & &\\
    & -\mathfrak{b}_2^+ & & & & \mathfrak{1} & \ddots &\\
    & & \ddots & & & & \ddots & -\mathfrak{a}_{J-2}^+\\
    & & & -\mathfrak{b}_{J-1}^+ & & & & \mathfrak{1}
  \end{array}
\right]
\left[
  \begin{array}{c}
    \lambda_{2\langle} \\\vspace{.1em} \lambda_{3\langle} \\ \vdots \\
    \lambda_{J\langle} \\ \hline
    \lambda_{1\rangle} \\ \vspace{.3em} \lambda_{2\rangle}\vspace{.2em} \\ \vdots \\
    \vspace{.2em}\lambda_{J-1\rangle}
  \end{array}
\right]
=
\left[
  \begin{array}{c}
    \mathfrak{g}_{2\langle}\\ \mathfrak{g}_{3\langle}\\
    \vdots\\ \mathfrak{g}_{J\langle}\\ \hline
    \mathfrak{g}_{1\rangle}\\ \vspace{.2em} \mathfrak{g}_{2\rangle}\\
    \vspace{.2em}\vdots\\ \vspace{.2em} \mathfrak{g}_{J-1\rangle}
  \end{array}
\right],
\end{displaymath}
where $\mathfrak{1}$'s are the identity operators,
$\mathfrak{a}_j^-:=\mathcal{F}_{j+2\langle}(\cdot,0,0,0)$,
$\mathfrak{b}_j^-:=\mathcal{F}_{j+1\langle}(0,\cdot,0,0)$,
$\mathfrak{a}_j^+:=\mathcal{F}_{j+1\rangle}(0,\cdot,0,0)$,
$\mathfrak{b}_j^+:=\mathcal{F}_{j+1\rangle}(\cdot,0,0,0)$ and
$\mathfrak{g}_{*}=\mathcal{F}_*(0,0,f,g)$.  If we regard the above system as a
2-by-2 block system, the block Gauss-Seidel method
(see \Cref{rem:jacobi} for block Jacobi)
leads to \Cref{alg:ISPDE} with each block solved exactly by forward or backward
substitution.  The operators $\mathfrak{a}_j$ and $\mathfrak{b}_j$ can also be
represented using Green's functions based on \Cref{eq:rep}, which
we will see in more detail in \Cref{sec:phys}.
\end{remark}

\section{AILU and sweeping preconditioners}
\label{sec:lu}

We now explain the Analytic Incomplete LU (AILU) and sweeping
preconditioners. To do so, we denote in \Cref{eq:tri} by
$\mathfrak{u}_1:=\mathbf{u}_{1\thetaint}$,
$\mathfrak{f}_1:=\mathbf{f}_{1\thetaint}$, $D_1:=A_{1\thetaint}$,
$L_1:=[A_{1\rovlp1\thetaint}; 0]$, $U_1:=[A_{1\thetaint1\rovlp},\; 0]$,
and for $j\geq2$, $\mathfrak{u}_j:=[\mathbf{u}_{j-1\rovlp};
  \mathbf{u}_{j\thetaint}]$,
$\mathfrak{f}_{j}:=[\mathbf{f}_{j-1\rovlp}; \mathbf{f}_{j\thetaint}]$,
\begin{displaymath}
  D_j :=
  \left[
    \begin{array}{cc}
      A_{j-1\rovlp} & A_{j-1\rovlp j\thetaint}\\
      A_{j\thetaint j-1\rovlp} & A_{j\thetaint}
    \end{array}
  \right],\quad
  L_{j} :=
  \left[
    \begin{array}{cc}
      0 & A_{j\rovlp j\thetaint}\\
      0 & 0
    \end{array}
  \right],\quad
  U_{j} :=
  \left[
    \begin{array}{cc}
      0 & 0\\
      A_{j\thetaint j\rovlp} & 0
    \end{array}
  \right].
\end{displaymath}
With this notation, \Cref{eq:tri} becomes \Cref{eq:trilP}. Then,
based on the factorization given in \Cref{Factors}, we can solve
\Cref{ForwardSubstitution,BackwardSubstitution} by forward and
backward substitution, which leads to \Cref{alg:lusol}.
\begin{algorithm}
  \caption{Block LU solve for the block tridiagonal system in \Cref{eq:trilP}}
  \label{alg:lusol}
  Compute $T_j$'s according to \Cref{eq:TjP}.

  {Forward sweep}: solve successively the sub-problems
  \begin{displaymath}
    \small
  \begin{array}{r@{\hspace{0.2em}}c@{\hspace{0.2em}}ll}
    T_1\mathfrak{v}_1&=&\mathfrak{f}_1,\quad &\\
    T_j\mathfrak{v}_j&=&\mathfrak{f}_j-L_{j-1}\mathfrak{v}_{j-1},\quad & j=2,\ldots,J.
  \end{array}
  \end{displaymath}

  {Backward sweep}: let $\mathfrak{u}_J\gets\mathfrak{v}_J;$ solve
  successively the sub-problems
  \begin{displaymath}
    \small
  \begin{array}{r@{\hspace{0.2em}}c@{\hspace{0.2em}}ll}
    \mbox{ \quad\ \ }T_j\mathfrak{u}_j&=&T_j\mathfrak{v}_j-U_{j}\mathfrak{u}_{j+1},\quad
    & j=J-1,\ldots,1.
  \end{array}
  \end{displaymath}
\end{algorithm}

\begin{theorem}\label{thm:lu}
  If $T_j$, $j=1,..,J$ are invertible, then \Cref{alg:lusol} is
  equivalent to \Cref{alg:DOSMMAT} with a non-overlapping
  decomposition, zero initial guess and
  $Q_{j\langle}^{\langle}=I_{j\langle}$,
  $P_{j\langle}^{\langle}=A_{j\langle}^{\rangle}-A_{j\langle, \ctilde
    j} A_{\ctilde j}^{-1} A_{\ctilde j,j\langle}$,
  $Q_{j\rangle}^{\rangle}=0$,
  $P_{j\rangle}^{\rangle}=I_{j\rangle}$. That is, the iterates
  generated by the two algorithms satisfy
  $\mathfrak{v}_j=\left[\mathbf{u}_{j\langle}^{(\frac{1}{2})};
    \mathbf{u}_{j\thetaint}^{(\frac{1}{2})}\right]$ and
  $\mathfrak{u}_j=\left[\mathbf{u}_{j\langle}^{(1)};
    \mathbf{u}_{j\thetaint}^{(1)}\right]$.
\end{theorem}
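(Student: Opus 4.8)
The plan is to unfold both sweeps of \Cref{alg:DOSMMAT} under the stated choices of $Q$ and $P$, to eliminate from each subdomain solve the right--interface unknown $\mathbf{u}_{j\rangle}$ (which now satisfies a pure Dirichlet condition), and to match the resulting reduced systems term by term with the forward and backward substitutions of \Cref{alg:lusol}. First I would record how the transmission conditions collapse. With $Q_{j\rangle}^{\rangle}=0$ and $P_{j\rangle}^{\rangle}=I_{j\rangle}$, the third block row of the subdomain matrix in \Cref{alg:DOSMMAT} becomes $\mathbf{u}_{j\rangle}=\mathbf{u}_{j+1]}$; with zero initial guess this gives $\mathbf{u}_{j\rangle}^{(1/2)}=\mathbf{u}_{j+1]}^{(0)}=0$ for $j=1,\dots,J-1$ in the forward sweep, and $\mathbf{u}_{j\rangle}^{(1)}=\mathbf{u}_{j+1]}^{(1)}=\mathbf{u}_{j+1\langle}^{(1)}$ in the backward sweep (non-overlapping identification $j]\equiv j-1\rangle$). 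With $Q_{j\langle}^{\langle}=I_{j\langle}$ and the stated $P_{j\langle}^{\langle}$, the $j\langle$ block row has diagonal $Q_{j\langle}^{\langle}A_{j\langle}^{\langle}+P_{j\langle}^{\langle}=A_{j\langle}-A_{j\langle,\ctilde j}A_{\ctilde j}^{-1}A_{\ctilde j,j\langle}=S_{j\langle}^{\langle}$, which is exactly the passage from \Cref{eq:matrt} to \Cref{eq:schur} already recorded in \Cref{eq:Pdtn}; moreover the coefficient $P_{j\langle}^{\langle}-Q_{j\langle}^{\langle}A_{j\langle}^{\rangle}=-A_{j\langle,\ctilde j}A_{\ctilde j}^{-1}A_{\ctilde j,j\langle}$ multiplies $\mathbf{u}_{j-1[}=\mathbf{u}_{j-1\rangle}$, the right--interface iterate of $\Omega_{j-1}$ produced in the forward sweep, which has just been pinned to $0$; hence that term vanishes in both sweeps. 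Eliminating the (zero, resp. Dirichlet--coupled) $\mathbf{u}_{j\rangle}$ thus reduces every subdomain solve to a system governed by the $2\times 2$ block matrix $\widehat T_j$ with diagonal blocks $S_{j\langle}^{\langle}$, $A_{j\bullet}$ and off--diagonal blocks $A_{j\langle j\bullet}$, $A_{j\bullet j\langle}$ (the first block being absent when $j=1$, where $\widehat T_1=A_{1\bullet}$).

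The second step is the purely algebraic identity $T_j=\widehat T_j$. Using the chosen ordering and the non-overlapping identification $\mathbf{u}_{j-1\rovlp}=\mathbf{u}_{j\langle}$, one has $\mathfrak{u}_j=[\mathbf{u}_{j\langle};\mathbf{u}_{j\bullet}]$ and $\ctilde j=[\mathfrak{u}_1;\dots;\mathfrak{u}_{j-1}]$, so that $A_{\ctilde j}$ is the leading $(j-1)$-block principal submatrix of the block--tridiagonal matrix in \Cref{eq:trilP} and $D_j$ is its $j$-th diagonal block. Because of the block--tridiagonal sparsity assumed in \Cref{sec:dd}, the couplings $A_{\ctilde j,\mathfrak{u}_j}$ and $A_{\mathfrak{u}_j,\ctilde j}$ are carried solely by $U_{j-1}$ and $L_{j-1}$ and touch only the $\mathbf{u}_{j\langle}$ component of $\mathfrak{u}_j$; hence the Schur complement $D_j-A_{\mathfrak{u}_j,\ctilde j}A_{\ctilde j}^{-1}A_{\ctilde j,\mathfrak{u}_j}$ differs from $D_j$ only in its $(1,1)$ corner, where it produces exactly $S_{j\langle}^{\langle}$. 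Recognising this Schur complement as the one computed by the recursion \Cref{eq:TjP} gives $T_j=\widehat T_j$; in particular $T_j$ is invertible iff the reduced subdomain matrix is, so the hypothesis of the theorem is precisely the well-posedness required by \Cref{alg:DOSMMAT}.

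It then remains to match the two substitutions by induction. For the forward sweep I induct upward on $j$: at $j=1$ the reduced solve is $A_{1\bullet}\mathbf{u}_{1\bullet}^{(1/2)}=\mathbf{f}_{1\bullet}$, i.e. $T_1\mathfrak{v}_1=\mathfrak{f}_1$; assuming $\mathfrak{v}_{j-1}=[\mathbf{u}_{j-1\langle}^{(1/2)};\mathbf{u}_{j-1\thetaint}^{(1/2)}]$, the right-hand side of the reduced $\Omega_j$ solve equals $[\,\mathbf{f}_{j\langle}-A_{j\langle j-1\thetaint}\mathbf{u}_{j-1\thetaint}^{(1/2)};\ \mathbf{f}_{j\bullet}\,]$ (the $\mathbf{u}_{j-1[}$ term having dropped out by Step~1), which is precisely $\mathfrak{f}_j-L_{j-1}\mathfrak{v}_{j-1}$ by the definition of $L_{j-1}$; with $T_j=\widehat T_j$ invertible this yields $[\mathbf{u}_{j\langle}^{(1/2)};\mathbf{u}_{j\bullet}^{(1/2)}]=\mathfrak{v}_j$. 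For the backward sweep I induct downward on $j$: at $j=J$ there is no right interface, so the reduced solve reads $\widehat T_J[\mathbf{u}_{J\langle}^{(1)};\mathbf{u}_{J\bullet}^{(1)}]=[\,\mathbf{f}_{J\langle}-A_{J\langle J-1\thetaint}\mathbf{u}_{J-1\thetaint}^{(1/2)};\ \mathbf{f}_{J\bullet}\,]=\mathfrak{f}_J-L_{J-1}\mathfrak{v}_{J-1}=T_J\mathfrak{v}_J$, hence $[\mathbf{u}_{J\langle}^{(1)};\mathbf{u}_{J\bullet}^{(1)}]=\mathfrak{v}_J=\mathfrak{u}_J$ (one reads the $\mathfrak{v}_J$ part of the claim with the convention $\mathbf{u}_J^{(1/2)}:=\mathbf{u}_J^{(1)}$, since $\Omega_J$ is solved only once in \Cref{alg:DOSMMAT}); for $j<J$, substituting $\mathbf{u}_{j\rangle}^{(1)}=\mathbf{u}_{j+1\langle}^{(1)}$ moves $A_{j\bullet j\rangle}\mathbf{u}_{j+1\langle}^{(1)}$ to the right, so the reduced solve becomes $\widehat T_j[\mathbf{u}_{j\langle}^{(1)};\mathbf{u}_{j\bullet}^{(1)}]=T_j\mathfrak{v}_j-U_j\mathfrak{u}_{j+1}$ (using the forward identity for $\mathfrak{v}_j$, the inductive hypothesis $\mathfrak{u}_{j+1}=[\mathbf{u}_{j+1\langle}^{(1)};\mathbf{u}_{j+1\thetaint}^{(1)}]$, and the definition of $U_j$), which is exactly the backward substitution of \Cref{alg:lusol}; invertibility of $T_j$ closes the induction.

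The main obstacle I expect is the index bookkeeping rather than any deep estimate: one must keep straight that in the globally assembled non-overlapping block--tridiagonal form of \Cref{eq:trilP} the interface $\Gamma_{j-1,j}$ belongs to the block $\mathfrak{u}_j$ and is $\Omega_j$'s \emph{left} interface, whereas \Cref{alg:DOSMMAT} additionally carries $\Omega_{j-1}$'s own \emph{right}-interface iterate on the same nodes, and it is precisely the choice $Q_{j\rangle}^{\rangle}=0$, $P_{j\rangle}^{\rangle}=I_{j\rangle}$ together with the zero initial guess that forces the latter to vanish and makes the two algorithms coincide. Once this identification is organised, the identity $T_j=\widehat T_j$ and the two inductions are routine.
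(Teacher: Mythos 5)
Your proposal is correct and follows essentially the same route as the paper's own proof: identify the reduced subdomain matrix with $T_j$ by showing $T_j$ from the recursion \Cref{eq:TjP} equals the Schur complement of $A_{\ctilde j}$ (i.e.\ \Cref{eq:Tj2,eq:Tj3}), then carry the equivalence by forward and backward induction, using $Q_{j\rangle}^{\rangle}=0,\,P_{j\rangle}^{\rangle}=I_{j\rangle}$ and the zero initial guess to kill the $\mathbf{u}_{j-1[}$ term. The only difference is cosmetic: where the paper establishes the identity $T_j=D_j-\tilde L_{j-1}A_{\ctilde j}^{-1}\tilde U_{j-1}$ by an explicit nested induction, you appeal to the standard block--tridiagonal Schur-complement fact together with the sparsity of the couplings, which is the same content stated more compactly.
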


\begin{proof}
  We first show that the $T_j$'s defined by \Cref{eq:TjP} satisfy for $j\geq 2$,
  \begin{equation}\label{eq:Tj2}
    T_j = D_j - \tilde{L}_{j-1}A_{\ctilde j}^{-1}\tilde{U}_{j-1},
  \end{equation}
  where $\tilde{L}_1:=L_1$, $\tilde{U}_1:=U_1$, $\tilde{L}_{j-1}:=[0,L_{j-1}]$
  and $\tilde{U}_{j-1}:=[0;U_{j-1}]$ for $j\geq3$.  The case of $j=2$ follows
  directly by definition.  Assuming \Cref{eq:Tj2} holds for one $j$, we now show
  that it also holds for $j+1$ instead of $j$. First, by Gaussian elimination we
  have from \Cref{eq:Tj2} that
  \begin{displaymath}
  A_{\ctilde j+1}^{-1}=
  \left[
    \begin{array}{cc}
      A_{\ctilde j} & \tilde{U}_{j-1}\\
      \tilde{L}_{j-1} & D_{j}
    \end{array}
  \right]^{-1}=
  \left[
    \begin{array}{cc}
      * & *\\
      * & T_j^{-1}
    \end{array}
  \right],
  \end{displaymath}
  where $*$ represents terms not interesting to us. Therefore,
  \begin{displaymath}
  D_{j+1}-\tilde{L}_jA_{\ctilde j+1}^{-1}\tilde{U}_j
  =D_{j+1}-
  \left[
    \begin{array}{cc}
      0 & L_j
    \end{array}
  \right]
  \left[
    \begin{array}{cc}
      * & *\\
      * & T_j^{-1}
    \end{array}
  \right]
  \left[
    \begin{array}{cc}
      0 \\ U_j
    \end{array}
  \right] = T_{j+1},
  \end{displaymath}
  which is \Cref{eq:Tj2} with $j$ replaced by $j+1$. By induction, 
  \Cref{eq:Tj2} holds for all $j\geq 2$. Note that we are considering a
  non-overlapping decomposition so we can write
  \begin{displaymath}
  \tilde{L}_{j-1}=
  \left[
    \begin{array}{cc}
      A_{j\langle,\ctilde j}\\ 0
    \end{array}
  \right],\quad
  \tilde{U}_{j-1}=
  \left[
    \begin{array}{cc}
      A_{\ctilde j,j\langle} & 0
    \end{array}
  \right],\quad
  D_{j}= \left[
    \begin{array}{cc}
      A_{j\langle} & A_{j\langle j\bullet}\\
      A_{j\bullet j\langle} & A_{j\bullet}
    \end{array}
  \right].
  \end{displaymath}
  Substituting the above equations into \Cref{eq:Tj2}, we obtain for $j\geq2$,
  \begin{equation}\label{eq:Tj3}
  T_j=
  \left[
    \begin{array}{cc}
      A_{j\langle}-A_{j\langle,\ctilde j}A_{\ctilde j}^{-1}A_{\ctilde j,j\langle} & A_{j\langle j\bullet}\\
      A_{j\bullet j\langle} & A_{j\bullet}
    \end{array}
  \right].
  \end{equation}

  Let the initial guess of \Cref{alg:DOSMMAT} be
  $\mathbf{u}_j^{(0)}=0$, $j=1,..,J$. Substituting this and the
  specified matrices $P$ and $Q$ into \Cref{alg:DOSMMAT}, we find
  \begin{equation}\label{eq:u1sol}
  \left[
    \begin{array}{cc}
      A_{1\bullet} & A_{1\bullet 1\rangle}\vphantom{_{1\bullet}^{(\frac{1}{2})}}\\
      0 & I_{1\rangle}\vphantom{_{1\bullet}^{(\frac{1}{2})}}
    \end{array}
  \right]
  \left[
    \begin{array}{c}
      \mathbf{u}_{1\bullet}^{(\frac{1}{2})}\\
      \mathbf{u}_{1\rangle}^{(\frac{1}{2})}
    \end{array}
  \right] =
  \left[
    \begin{array}{c}
      \mathbf{f}_{1\bullet}\vphantom{_{1\bullet}^{(\frac{1}{2})}}\\
      0\vphantom{_{1\bullet}^{(\frac{1}{2})}}
    \end{array}
  \right].
  \end{equation}
  By definition, we know $T_1=D_1=A_{1\thetaint}=A_{1\bullet}$ and
  $\mathfrak{f}_1=\mathbf{f}_{1\bullet}$. Hence, from \Cref{eq:u1sol},
  we have $T_1\mathbf{u}_{1\bullet}^{(\frac{1}{2})}=\mathfrak{f}_1$,
  which is satisfied also by $\mathfrak{v}_1$ of
  \Cref{alg:lusol}. Since $T_1$ is invertible and from the fact that
  the decomposition is non-overlapping, we have
  $\mathfrak{v}_1=\mathbf{u}_{1\bullet}^{(\frac{1}{2})}
  =\mathbf{u}_{1\thetaint}^{(\frac{1}{2})}$.  From \Cref{eq:u1sol} and
  again using that the decomposition is non-overlapping, we have
  $\mathbf{u}_{1[}^{(\frac{1}{2})}=\mathbf{u}_{1\rangle}^{(\frac{1}{2})}=0$.
    Now assume that\footnote{when $j=2$, we need to remove the
      non-existent block}
  \begin{equation}\label{eq:y1}
    \mathfrak{v}_{j-1}=\left[
        \mathbf{u}_{j-1\langle}^{(\frac{1}{2})};
        \mathbf{u}_{j-1\thetaint}^{(\frac{1}{2})}
    \right],
    \quad \mathbf{u}_{j-1[}^{(\frac{1}{2})}=0,
  \end{equation}
  with $\mathfrak{v}_{j-1}$ obtained by \Cref{alg:lusol} and
  $\mathbf{u}_{j-1}^{(\frac{1}{2})}$ obtained by \Cref{alg:DOSMMAT}.
  In the next substep of \Cref{alg:DOSMMAT}, we substitute the
  specified matrices $P$ and $Q$, and the second equation of
  \Cref{eq:y1}, to find
  \begin{equation}\label{eq:u2sol}
  \left[
    \arraycolsep0.2em
    \begin{array}{cc|c}
      A_{j\langle}-A_{j\langle,\ctilde j}A_{\ctilde j}^{-1}A_{\ctilde j, j\langle}
      & A_{j\langle j\bullet} & 0\vphantom{u_{j}^{(\frac{1}{2})}}\\
      A_{j\bullet}  & A_{j\bullet} & A_{j\bullet j\rangle}
      \vphantom{u_{j}^{(\frac{1}{j})}}\\ \hline
      0 & 0 & I_{j\rangle}\vphantom{u_{j}^{(\frac{1}{2})}}
    \end{array}
  \right]
  \left[
    \begin{array}{c}
      \mathbf{u}_{j\langle}^{(\frac{1}{2})}\\
      \mathbf{u}_{j\bullet}^{(\frac{1}{2})}\\ \hline
      \mathbf{u}_{j\rangle}^{(\frac{1}{2})}
    \end{array}
  \right] =
  \left[
    \begin{array}{c}
      \mathbf{f}_{j\langle}-A_{j\langle j-1\thetaint}\mathbf{u}_{j-1\thetaint}^{(\frac{1}{2})}\\
      \mathbf{f}_{j\bullet}\vphantom{u_{j}^{(\frac{1}{2})}} \\ \hline
      0\vphantom{u_{j}^{(\frac{1}{2})}}
    \end{array}
  \right].
  \end{equation}
  By \Cref{eq:Tj3}, we know that the upper-left 2-by-2 block matrix in
  \Cref{eq:u2sol} equals $T_j$.  From the first equation of
  \Cref{eq:y1}, we see that the first two rows of the r.h.s. of
  \Cref{eq:u2sol} equal $\mathfrak{f}_2-L_1\mathfrak{v}_1$.  Given
  that $T_j$ is invertible and that the decomposition is
  non-overlapping, we have from \Cref{eq:u2sol} and \Cref{alg:lusol}
  \begin{equation}\label{eq:yj}
    \mathfrak{v}_j=\left[\mathbf{u}_{j\langle}^{(\frac{1}{2})};
        \mathbf{u}_{j\thetaint}^{(\frac{1}{2})}\right],\quad
    \mathbf{u}_{j[}^{(\frac{1}{2})}=0.
  \end{equation}
  Thus, by induction, \Cref{eq:yj} holds for all $j=1,..,J-1$ and
  \begin{displaymath}
  \mathfrak{u}_J=\mathfrak{v}_J=
  \left[
      \mathbf{u}_{J\langle}^{(1)};
      \mathbf{u}_{J\thetaint}^{(1)}
  \right].
  \end{displaymath}
  In \Cref{alg:lusol} we substitute $T_j\mathfrak{v}_j$ from the forward
  sweep to the backward sweep and we get the equivalent backward solve
  \begin{displaymath}
  T_j\mathfrak{u}_j=\mathfrak{f}_j-L_{j-1}\mathfrak{v}_{j-1}-U_{j}\mathfrak{u}_{j+1}.
  \end{displaymath}
  First, we note that the coefficient matrix $T_j$ is the same as in
  the forward solve.  Second, compared to the forward solve, the
  present r.h.s. has an extra term $-U_{j}\mathfrak{u}_{j+1}$ which
  corresponds to use the new Dirichlet data taken from the neighboring
  subdomain on the right. So \Cref{alg:lusol} and the specified case
  of \Cref{alg:DOSMMAT} remain equivalent in their backward solves,
  and we have for $j=J,..,1$\footnote{when $j=1$ we need to remove the
    non-existent block}
  \begin{displaymath}
  \mathfrak{u}_j=
  \left[
    \mathbf{u}_{j\langle}^{(1)}; \mathbf{u}_{j\thetaint}^{(1)}
  \right].
  \end{displaymath}
  Thus the equivalence of the two algorithms is proved.
\end{proof}


Based on \Cref{thm:lu}, the PDE analogue of \Cref{alg:lusol} can
be stated as \Cref{alg:DOSMPDE} with a non-overlapping decomposition,
zero initial guess and $\mathcal{Q}_{j\langle}^\langle=\mathcal{I}$,
$\mathcal{P}_{j\langle}^\langle=\mathrm{DtN}_j^{\langle}$,
$\mathcal{Q}_{j\rangle}^{\rangle}=0$,
$\mathcal{P}_{j\rangle}^{\rangle}=\mathcal{I}$. Next, we revisit the AILU and
the sweeping preconditioners. Based on \Cref{thm:lu}, it is
straightforward to get the following corollaries.

\begin{corollary}
  The AILU preconditioner in \cite{GanderAILU05} is equivalent to one step of
  DOSM with a non-overlapping decomposition such that the subdomain interiors
  are empty (i.e. $\Theta_j=\emptyset$, see \Cref{fig:ailu}), zero initial
  guess, and second-order absorbing transmission conditions on the left
  interfaces and Dirichlet transmission conditions on the right interfaces of
  subdomains.
\end{corollary}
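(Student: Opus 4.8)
The plan is to obtain this from \Cref{thm:lu} by recognising the AILU preconditioner of \cite{GanderAILU05} as an \emph{approximate} block LU solve, i.e.\ \Cref{alg:lusol} in which the exact Schur complements $T_j$ of \Cref{eq:TjP} are replaced throughout by cheaply computable approximations $\tilde T_j$. First I would recall the construction in \cite{GanderAILU05}: it works with the block tridiagonal system \Cref{eq:trilP} coming from the five-point stencil, so in the domain decomposition language of \Cref{sec:dd} the subdomains are non-overlapping and have empty interiors, $\Theta_j=\emptyset$ (the configuration drawn in \Cref{fig:ailu}); and it replaces, at each step of the recurrence for the Schur complement, the dense correction $A_{j\langle,\ctilde j}A_{\ctilde j}^{-1}A_{\ctilde j,j\langle}$ — which by the discussion in \Cref{SecUnderlying} and \Cref{sec:lu} is the discrete analogue of the exterior Dirichlet-to-Neumann operator restricted to the interface $\Gamma_{j,j-1}$ — by a sparse matrix representing a discrete second-order absorbing boundary condition, while leaving the right interface coupling untouched. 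Thus the AILU solve is literally \Cref{alg:lusol} with the $T_j$'s replaced by these $\tilde T_j$'s, and one application of the AILU preconditioner to a right-hand side is exactly one forward plus one backward substitution with the approximate factors, started from zero.

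Next I would push the proof of \Cref{thm:lu} through verbatim with $T_j$ replaced by $\tilde T_j$. The only place in that proof where exactness of the recurrence \Cref{eq:TjP} is used is the derivation of the structural identity \Cref{eq:Tj3}, which expresses the $(1,1)$ block of $T_j$ as $A_{j\langle}-A_{j\langle,\ctilde j}A_{\ctilde j}^{-1}A_{\ctilde j,j\langle}=A_{j\langle}^{\langle}+P_{j\langle}^{\langle}$. In the AILU case this is replaced, by construction, by the statement that the $(1,1)$ block of $\tilde T_j$ equals $A_{j\langle}^{\langle}+\tilde P_{j\langle}^{\langle}$, where $\tilde P_{j\langle}^{\langle}$ is the discrete second-order absorbing operator on $\Gamma_{j,j-1}$. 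Everything after that point in the proof of \Cref{thm:lu} uses only invertibility of the $T_j$'s and the non-overlapping structure, and therefore carries over word for word. Hence \Cref{alg:lusol} with the $\tilde T_j$'s coincides with \Cref{alg:DOSMMAT} for a non-overlapping decomposition with $\Theta_j=\emptyset$, zero initial guess, $Q_{j\langle}^{\langle}=I_{j\langle}$ and $P_{j\langle}^{\langle}=\tilde P_{j\langle}^{\langle}$ (the second-order ABC) on the left interfaces, and $Q_{j\rangle}^{\rangle}=0$, $P_{j\rangle}^{\rangle}=I_{j\rangle}$ (Dirichlet) on the right interfaces; one step of that DOSM from the zero guess is precisely the AILU preconditioner, which is the claimed equivalence. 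The end subdomains $j=1$ (no left interface, hence no ABC) and $j=J$ (no right interface) are handled exactly as in the remark following \Cref{lem:schur}.

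The step I expect to require the most care is purely a matter of translation: in \cite{GanderAILU05} the second-order absorbing condition is introduced through the analytic form (the symbol) of the DtN operator and a Riccati-type update of that symbol, rather than through the algebraic recurrence for $\tilde T_j$ used above, and one must check that the resulting stencil is genuinely \emph{local} — supported on the single interface line $\Gamma_{j,j-1}$ together with its coupling $A_{j\langle j\bullet}$ to the adjacent interior — so that it fits the $(1,1)$-block structure of the DOSM subproblem in \Cref{alg:DOSMMAT}, and that it is applied to the previously computed approximate interface data in exactly the way \Cref{alg:DOSMMAT} prescribes. Once this dictionary between the symbol-based description of \cite{GanderAILU05} and the present matrix notation is set up, no further difficulty arises; in particular the one-step (nilpotency) property is \emph{not} claimed here, since $\tilde P_{j\langle}^{\langle}$ is only an approximation of $P_{j\langle}^{\langle}$.
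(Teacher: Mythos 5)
Your proposal is correct and follows the same route as the paper, which simply asserts the corollary is ``straightforward'' from \Cref{thm:lu}: you correctly identify that the only part of that theorem's proof tied to the exact Schur complement is the structural identity \Cref{eq:Tj3}, and that replacing it by the second-order-ABC form $A_{j\langle}^{\langle}+\tilde P_{j\langle}^{\langle}$ while leaving the rest of the argument untouched gives the claimed equivalence between the approximate block LU sweeps and one DOSM step with the stated transmission conditions. Your closing caveat is also apt: since $\tilde P_{j\langle}^{\langle}$ is only an approximation, the nilpotency conclusion of \Cref{thm:1stepmat} does not carry over, and the corollary indeed claims only equivalence to a single DOSM step, not convergence in one step.
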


\begin{corollary}
  The sweeping preconditioners in \cite{EY1, EY2} are equivalent to one step of
  DOSM with a non-overlapping decomposition such that the subdomain interiors
  are empty (i.e. $\Theta_j=\emptyset$, see \Cref{fig:ailu}), zero initial
  guess, and PML or $\mathcal{H}$-matrix transmission conditions on the left
  interfaces and Dirichlet transmission conditions on the right interfaces of
  subdomains.
\end{corollary}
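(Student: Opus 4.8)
The plan is to obtain this corollary as an immediate consequence of \Cref{thm:lu}, by recording how the Engquist--Ying sweeping preconditioner differs from the exact block LU solve of \Cref{alg:lusol}. First I would recall the construction in \cite{EY1, EY2}: the domain is sliced into thin layers so that each subdomain $\Omega_j$ reduces to a (pair of) interface layer(s), i.e. $\Theta_j=\emptyset$ as in \Cref{fig:ailu}, and the resulting system has the block tridiagonal form \Cref{eq:trilP} in which, in the notation of \Cref{thm:lu}, $D_j$ carries the block $A_{j\langle}$ while $L_{j-1}, U_{j-1}$ are the couplings to the previous layer. The sweeping preconditioner is then exactly \Cref{alg:lusol} with the exact Schur complements $T_j$ of \Cref{eq:TjP} replaced by approximate ones $\tilde{T}_j$, obtained either from a moving-PML-padded local problem (the variant of \cite{EY2}) or from a hierarchical-matrix representation of $T_{j-1}^{-1}$ (the variant of \cite{EY1}); in both cases $\tilde{T}_j = D_j - \tilde{L}_{j-1}\tilde{A}_{\ctilde j}^{-1}\tilde{U}_{j-1}$ with $\tilde{A}_{\ctilde j}$ a modified exterior matrix as in \Cref{eq:mexinpml}. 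One application of the preconditioner is forward substitution followed by backward substitution applied to the right-hand side $\mathbf{f}$.

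The next step is to identify this replacement with a change of transmission condition. By \Cref{rem:imschur} the modified Schur complement satisfies $\tilde{S}_{j\langle}^{\langle} = A_{j\langle}^{\langle} + \tilde{P}_{j\langle}^{\langle}$, where $\tilde{P}_{j\langle}^{\langle}$ is the matrix version of the PML--DtN operator (resp., in the hierarchical-matrix variant, of the $\mathcal{H}$-matrix approximation of the exterior DtN restricted to $\Gamma_{j,j-1}$ — the object identified with a compressed block of the Green's function in \cite{EY1, EY2}). Comparing with the identity $P_{j\langle}^{\langle} = A_{j\langle}^{\rangle} - A_{j\langle,\ctilde j}A_{\ctilde j}^{-1}A_{\ctilde j,j\langle}$ used in \Cref{thm:lu}, passing from the exact $T_j$ of \Cref{eq:Tj3} to $\tilde{T}_j$ is precisely the same as passing from the choice $P_{j\langle}^{\langle}$ of \Cref{thm:lu} to $\tilde{P}_{j\langle}^{\langle}$ in \Cref{alg:DOSMMAT}, while keeping $Q_{j\langle}^{\langle}=I_{j\langle}$, $Q_{j\rangle}^{\rangle}=0$, $P_{j\rangle}^{\rangle}=I_{j\rangle}$ (the Dirichlet condition on the right interface) and the zero initial guess — which is exactly the list of ingredients claimed in the corollary.

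It then remains to re-run the proof of \Cref{thm:lu} with a tilde on every $T$, every $P$, and the exterior inverse $A_{\ctilde j}^{-1}$. The derivation of \Cref{eq:Tj2,eq:Tj3} uses only the block tridiagonal structure and the recurrence \Cref{eq:TjP}, both of which are preserved under the substitution, so $\tilde{T}_j$ is exactly the upper-left block of the coefficient matrix of the $j$-th subproblem of \Cref{alg:DOSMMAT} with these transmission conditions; the forward substitution of \Cref{alg:lusol} reproduces the forward sweep with zero initial guess, and the backward solve $\tilde{T}_j\mathfrak{u}_j = \mathfrak{f}_j - L_{j-1}\mathfrak{v}_{j-1} - U_j\mathfrak{u}_{j+1}$ reproduces the backward sweep of \Cref{alg:DOSMMAT} with the extra Dirichlet term $-U_j\mathfrak{u}_{j+1}$ on the right interface. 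Concatenating the blocks $\mathfrak{v}_j, \mathfrak{u}_j$ (which is the gluing by the non-overlapping partition of unity with $\phi_j\equiv1$ on $\Omega_j$) shows that one step of this DOSM applied to $\mathbf{f}$ with zero initial guess produces $(\tilde{L}\tilde{U})^{-1}\mathbf{f}$, i.e. one application of the sweeping preconditioner.

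The part I expect to require the most care is purely bookkeeping: verifying that the moving-PML construction of \cite{EY2}, where a fresh PML is attached at each level rather than accumulated, genuinely realizes the recurrence $\tilde{T}_j = D_j - \tilde{L}_{j-1}\tilde{A}_{\ctilde j}^{-1}\tilde{U}_{j-1}$ for an appropriate modified exterior matrix $\tilde{A}_{\ctilde j}$ — equivalently, that the local moving-PML problem used to define $\tilde{T}_j$ has $\tilde{T}_{j-1}$ (the Schur complement of the layer-$(j-1)$ moving-PML problem) as its relevant interior block, so that the algebra of \Cref{alg:lusol} is literally the factorization the preconditioner implements. For the hierarchical-matrix variant this is immediate, since there the recurrence \Cref{eq:TjP} is used verbatim with $\mathcal{H}$-matrix arithmetic in place of exact linear algebra. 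Everything else is a direct transcription of \Cref{thm:lu}.
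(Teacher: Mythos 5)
Your proposal follows exactly the route the paper intends: the paper states these corollaries as immediate consequences of \Cref{thm:lu} (``it is straightforward to get the following corollaries'') and does not write a separate argument. What you do is spell out the identification left implicit there, namely that passing from the exact $T_j$ to the moving-PML or $\mathcal{H}$-matrix $\tilde{T}_j$ is precisely the replacement $P_{j\langle}^{\langle}\mapsto\tilde{P}_{j\langle}^{\langle}=\tilde{S}_{j\langle}^{\langle}-A_{j\langle}^{\langle}$ in \Cref{alg:DOSMMAT} while keeping $Q_{j\langle}^{\langle}=I$, $Q_{j\rangle}^{\rangle}=0$, $P_{j\rangle}^{\rangle}=I_{j\rangle}$, and that the induction in the proof of \Cref{thm:lu} (in particular \Cref{eq:yj}, which uses only the Dirichlet right condition and the zero initial guess to get $\mathbf{u}_{j[}^{(1/2)}=0$, and the block form \Cref{eq:Tj3}) survives verbatim under that substitution. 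You also correctly isolate the one step that really needs inspection: that the moving-PML recipe actually produces a $\tilde{T}_j$ whose $j\langle$ block is the PML Schur complement $\tilde S_{j\langle}^\langle$ as in \Cref{rem:imschur}, so that it sits in the upper-left corner of the $j$-th DOSM subproblem matrix. With $\Theta_j=\emptyset$ this is especially clean, since then $D_j=A_{j\langle}$ and $\tilde T_j=\tilde S_{j\langle}^\langle$ directly, and one does not even need the tilde-analogue of the inductive identity \Cref{eq:Tj2}; the matrix in alg:DOSMMAT degenerates to a $2\times2$ block with $\tilde S_{j\langle}^\langle$ and $I_{j\rangle}$ on the diagonal, which is exactly the $\tilde L$, $\tilde U$ sweep of \cite{EY1,EY2}. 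In short: correct, and the same approach as the paper, with the details the paper elides filled in.
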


\begin{figure}
  \centering
  \includegraphics[scale=.50,trim=30 115 90 85,clip]{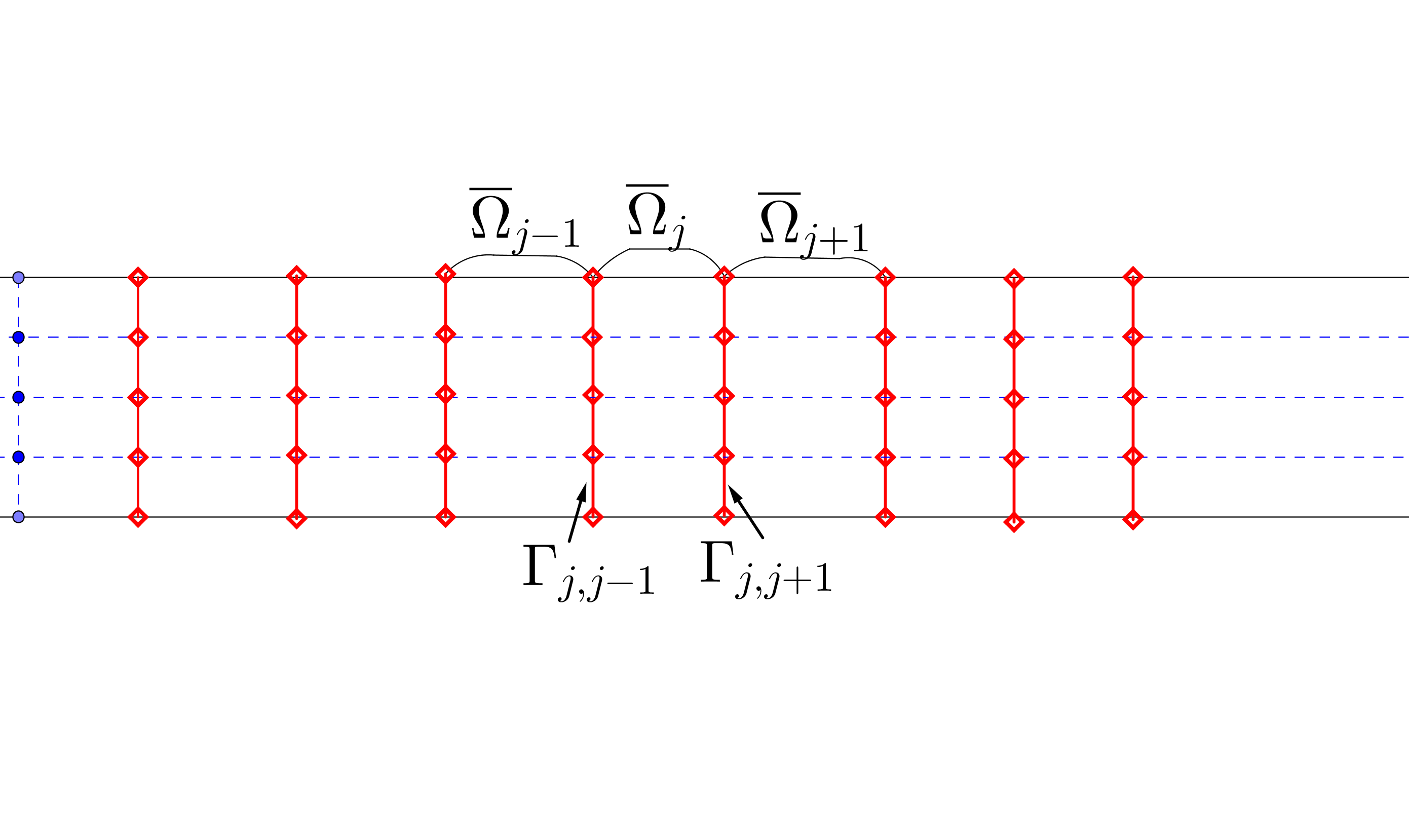}%
  \caption{Non-overlapping domain decomposition with empty interior,
    ${\color{red}\Diamond}\in \Gamma_{*,\#}$.}
  \label{fig:ailu}
\end{figure}
Here we finish our discussions of the algorithms. Now we point out
some analogy between the LU factorization of the matrix from the
discretization of $\partial_{xx}+\partial_{yy}+k^2$ and the formal
analytic factorization of the differential operator,
\begin{equation}\label{eq:anafact}
\partial_{xx}+\partial_{yy}+k^2 =
(\partial_{x} - \I\sqrt{\partial_{yy} + k^2})
(\partial_{x} + \I\sqrt{\partial_{yy} + k^2}).
\end{equation}
This analogy was first drawn in \cite{GanderAILU00}.  The
factorization in \Cref{eq:anafact} represents a reformulation of the
two-way wave (Helmholtz) equation as two one-way wave
equations. Hence, the original {\it boundary value problem} (BVP)
given by the Helmholtz \Cref{eq:pde} can be solved in terms of certain
{\it initial value problems} (IVPs) of the one-way wave equations.
The forward substitution of the lower triangular system and the
backward solution of the upper triangular system at the matrix level
are related to solving these IVPs. Such analytic factorizations have
been studied by many researchers, see e.g. \cite{HalpTref, Lu06}.
Next, we give a derivation of the factorization based on a family of
DtN operators, and explain its relation to \Cref{alg:DOSMPDE}. We will
do it for the more general case with curved domains and curved
interfaces.

We assume that the domain $\Omega$ can be described by the curvilinear
coordinates,
\begin{displaymath}
\Omega=\left\{\mathbf{x}\in\R^d~|~\mathbf{x}=(x_l)_{l=1}^d,
  x_l=x_l(\xi_1,..,\xi_d),\xi_1\in(a,b)\subset\R,(\xi_2,..,\xi_d)\in Y\subset\R^{d-1}
\right\},
\end{displaymath}
with $(\frac{\partial x_j}{\partial\xi_l})$ non-singular and $x_j$ sufficiently
smooth in $\boldsymbol\xi$.  We may view $\Omega$ as a topological cylinder with
the axial variable $\xi_1$.  For $s\in[a,b]$, we denote by
\begin{displaymath}
\begin{array}{l}
  \Omega^s:=\left\{\mathbf{x}\in\Omega~|~x_l=x_l(\xi_1,..,\xi_d),\xi_1\in(a,s),
    (\xi_2,..,\xi_d)\in Y\right\},\\
  \Gamma^s:=\left\{\mathbf{x}\in\Omega~|~x_l=x_l(s,\xi_2,..,\xi_d),
    (\xi_2,..,\xi_d)\in Y\right\}.
\end{array}
\end{displaymath}
Let $d=2$ for simplicity.  In the curvilinear coordinates, the operator
$\mathcal{L}$ of \Cref{eq:pde} becomes
\begin{displaymath}
  \medmuskip=-0mu
  \thinmuskip=-0mu
  \thickmuskip=-0mu
  \nulldelimiterspace=-0pt
  \scriptspace=0pt 
  \mathcal{L}v=-\left(\frac{\partial\xi_1}{\partial x_1}
  \frac{\partial}{\partial\xi_1} + \frac{\partial\xi_2}{\partial
    x_1}\frac{\partial}{\partial\xi_2}\right) \left(\beta^1_1\frac{\partial
    v}{\partial\xi_1} + \beta^1_2 \frac{\partial v}{\partial\xi_2}\right) -
\left(\frac{\partial\xi_1}{\partial x_2} \frac{\partial}{\partial\xi_1} +
  \frac{\partial\xi_2}{\partial x_2}\frac{\partial}{\partial\xi_2}\right)
\left(\beta^2_1\frac{\partial v}{\partial\xi_1} + \beta^2_2 \frac{\partial
    v}{\partial\xi_2}\right) - \frac{\omega^2}{\kappa}v,
\end{displaymath}
where $\beta^j_l=\sum_{m=1}^d\alpha^j_m\frac{\partial\xi_l}{\partial
  x_m}$ and $(\alpha_l^j)$ is the coefficient matrix $\alpha$ in
Cartesian coordinates.  We assume that $\mathcal{L}$ can be rewritten
in the form
\begin{displaymath}
\mathcal{L}v=\gamma_3\left\{ \frac{\partial}{\partial\xi_1}
  \left(\gamma_1\frac{\partial v}{\partial\xi_1} + \gamma_2\frac{\partial
      v}{\partial\xi_2} + \gamma_0v\right) - \mathcal{L}_2v\right\},
\end{displaymath}
with $\gamma_j=\gamma_j(\xi_1,\xi_2)$, $j=1,2,3$, $\gamma_3$ and
$\gamma_1$ nowhere zero and $\mathcal{L}_2$ a partial differential
operator w.r.t.  $\xi_2$ only.  This assumption can be verified if
$\beta^1_1$, $\beta^2_1$ are sufficiently smooth or if $\xi_1=x_1$,
$\xi_2=x_2$ and $\alpha^2_1$ is sufficiently smooth.  We define the
operator-valued function $\mathcal{P}(s)$ for $s\in [a,b]$ as the
$s$-dependent generalized DtN operator (we assume the problem below is
well-posed)
\begin{displaymath}
\begin{array}{r@{\hspace{0.2em}}c@{\hspace{0.2em}}ll}
  \mathcal{P}(s):\;d\rightarrow (\gamma_1\frac{\partial v}{\partial\xi_1} +
  \gamma_2\frac{\partial v}{\partial\xi_2}+\gamma_0v)|_{\Gamma^s},
  \mbox{ s.t.}\qquad
  \mathcal{L}\,v&=&0\quad & \mbox{in }\,\Omega^s,\\
  \mathcal{B}\,v&=&0\quad &\mbox{on }\,
  \partial\Omega^s\cap\partial\Omega,\\
  v&=&d\quad &\mbox{on }\,\Gamma^s.
\end{array}
\end{displaymath}
Let us consider how $\mathcal{P}(s)$ changes with $s$.  Let $v$ be the
solution in the definition of $\mathcal{P}(s)$.  We have for $\Delta
s<0$,
\begin{displaymath}
  \medmuskip=-1mu
  \thinmuskip=-1mu
  \thickmuskip=-1mu
  \nulldelimiterspace=-0pt
  \scriptspace=0pt 
\begin{array}{r@{\hspace{0.2em}}c@{\hspace{0.2em}}ll}
  \gamma_1\frac{\partial v}{\partial\xi_1}(s,\xi_2) +
  \gamma_2\frac{\partial v}{\partial\xi_2}(s,\xi_2) + \gamma_0v(s,\xi_2)&=&
  \mathcal{P}(s)v(s,\xi_2),&\xi_2\in Y,\\
  \gamma_1\frac{\partial v}{\partial\xi_1}(s+\Delta s,\xi_2) +
  \gamma_2\frac{\partial v}{\partial\xi_2}(s+\Delta s,\xi_2)
  +\gamma_0v(s+\Delta s,\xi_2) &=&
  \mathcal{P}(s+\Delta s)v(s+\Delta s,\xi_2),&\xi_2\in Y.
\end{array}
\end{displaymath}
Subtracting the two equations and dividing by $\Delta s$, we get
\begin{displaymath}
\begin{array}{ll}
  &1/\Delta s\cdot\left\{(\gamma_1\partial_1v+\gamma_2\partial_2v+\gamma_0v)
    (s+\Delta s,\xi_2)-(\gamma_1\partial_1v+\gamma_2\partial_2v+\gamma_0v)
    (s,\xi_2)\right\}\\
  =&1/\Delta s\cdot\{\mathcal{P}(s+\Delta s)-\mathcal{P}(s)\}v(s+\Delta s,\xi_2)%
  + \mathcal{P}(s)\{v(s+\Delta s,\xi_2)-v(s,\xi_2)\}/\Delta s.
\end{array}
\end{displaymath}
We assume that the difference quotients above converge as $\Delta
s\rightarrow0$, and we find
\begin{equation}\label{eq:analytic}
\frac{\partial}{\partial\xi_1} \left(\gamma_1\frac{\partial v}{\partial\xi_1} +
  \gamma_2\frac{\partial v}{\partial\xi_2} + \gamma_0v\right)(s,\xi_2)
=\mathcal{P}'(s)v(s,\xi_2)+\mathcal{P}(s)\frac{\partial v}{\partial
  \xi_1}(s,\xi_2).
\end{equation}
Using $\mathcal{L}v=0$ and $\gamma_3\neq0$, we can find that
  $\mathcal{L}_2(s,\xi_2)v(s,\xi_2)$ is equal to the left hand side of
  \Cref{eq:analytic} and thus to the r.h.s. too. We further replace
$\frac{\partial v}{\partial \xi_1}(s,\xi_2)$ with the definition of
$\mathcal{P}(s)$ to obtain
\begin{displaymath}
\mathcal{L}_2(s,\cdot)v(s,\cdot)=\mathcal{P}'(s)v(s,\cdot)+
\mathcal{P}(s)\left\{\gamma_1^{-1}(s,\cdot)\left(\mathcal{P}(s)-
  \gamma_2(s,\cdot)\partial_2-\gamma_0(s,\cdot)\right)v(s,\cdot)\right\}.
\end{displaymath}
Removing $v(s,\cdot)$ from the above equation, we obtain the differential
Riccati equation
\begin{equation}\label{eq:Riccati}
\mathcal{P}'(s) =
\mathcal{P}(s)\left\{\gamma_1^{-1}(s,\cdot)\left(-\mathcal{P}(s)+
    \gamma_2(s,\cdot)\partial_2+\gamma_0(s,\cdot)\right)\right\} +
\mathcal{L}_{2}(s,\cdot).
\end{equation}
As mentioned in \cite{Henry}, \Cref{eq:Riccati} can also be obtained
from \Cref{eq:TjP} when the width of the subdomain is equal to the mesh
size and goes to zero.
The initial value $\mathcal{P}(a)$ for \Cref{eq:Riccati} can be
obtained from the boundary condition $\mathcal{B}v=0$ on $\Gamma^a$ if
the condition is not of Dirichlet type (we assume this in the
following derivation; otherwise, we should not use the DtN operator but the
NtD operator for $\mathcal{P}$).  In particular, if $a=-\infty$ and there
exists $a'\in\R$ such that the problem in the definition of
$\mathcal{P}(s)$ is independent of $s\leq a'$, then by letting
$\mathcal{P}'(s)=0$ we get from the differential Riccati equation the
algebraic Riccati equation
\begin{displaymath}
\mathcal{P}(s)\left\{\gamma_1^{-1}(s,\cdot)\left(-\mathcal{P}(s)+
    \gamma_2(s,\cdot)\partial_2+\gamma_0(s,\cdot)\right)\right\} +
\mathcal{L}_{2}(s,\cdot)=0,\quad\forall s\leq a'.
\end{displaymath}
The solution of the algebraic Riccati equation at $s=a'$ gives us an
initial value $\mathcal{P}(a')$ for the differential Riccati equation.
In the following, we assume $\mathcal{P}$ has been precomputed on
$[a,b]$.

We introduce $w:=\left(\gamma_1\frac{\partial u}{\partial\xi_1} +
\gamma_2\frac{\partial u}{\partial\xi_2} +
\gamma_0u\right)-\mathcal{P}u$ with $u$ the solution of \Cref{eq:pde}.
Again, by the definition of $\mathcal{P}$, the initial value
$w(a,\cdot)$ can be acquired from $\mathcal{B}u=g$ on $\Gamma^a$.  For
example, if $\mathcal{B}=\tilde{\gamma}_1\frac{\partial
}{\partial\xi_1} + \tilde{\gamma}_2\frac{\partial }{\partial\xi_2} +
\tilde{\gamma}_0$ and $v$ is from the definition of $\mathcal{P}(a)$,
i.e. $v(a,\cdot)=u(a,\cdot)$, we have
$\mathcal{B}(u-v)=\tilde{\gamma}_1\frac{\partial
  (u-v)}{\partial\xi_1}=g$ on $\Gamma^a$ and
\begin{displaymath}
  \medmuskip=2mu
  \thinmuskip=2mu
  \thickmuskip=2mu
  \nulldelimiterspace=1pt
  \scriptspace=0pt 
w(a,\cdot)=\gamma_1\frac{\partial (u-v)}{\partial\xi_1}(a,\cdot) +
\gamma_2\frac{\partial (u-v)}{\partial\xi_2}(a,\cdot) + \gamma_0(u-v)(a,\cdot)=
\gamma_1\frac{\partial(u-v)}{\partial\xi_1}(a,\cdot)=
\gamma_1\tilde{\gamma}_1^{-1}g.
\end{displaymath}
We calculate the partial derivative of $w,$
\begin{align}
  \frac{\partial w}{\partial\xi_1}
  &=\frac{\partial}{\partial\xi_1}
    \left(\gamma_1\frac{\partial u}{\partial\xi_1} + \gamma_2\frac{\partial
    u}{\partial\xi_2} + \gamma_0u\right) - \mathcal{P}'u -
    \mathcal{P}\frac{\partial u}{\partial\xi_1}\nonumber\\
  &=\gamma_3^{-1}f+\mathcal{L}_2u-\mathcal{P}'u-
    \mathcal{P}\frac{\partial u}{\partial\xi_1}\nonumber\\
  &=\gamma_3^{-1}f+\mathcal{P}\{\gamma_1^{-1}(\mathcal{P}u-\gamma_2\partial_2u-\gamma_0u)\}
    -\mathcal{P}\frac{\partial u}{\partial\xi_1}\nonumber\\
  &=\gamma_3^{-1}f-\mathcal{P}\{\gamma_1^{-1}w\},\label{eq:ivp4w}
\end{align}
where we successively used the partial differential equation satisfied
by $u,$ the differential Riccati equation of $\mathcal{P}$, and the
definition of $w.$ So we have obtained an IVP for $w$ which is the
analogue of the `$L$' system of the LU factorization.  The analytic
analogue of the `U' system from which to recover $u$ is simply the
definition of $w$,
\begin{equation}\label{eq:ivp4u}
\gamma_1\frac{\partial u}{\partial\xi_1} = w- \gamma_2\frac{\partial
  u}{\partial\xi_2} - \gamma_0u+\mathcal{P}u.
\end{equation}
The initial value for this system is set on $\Gamma^b$ and can be
sought again from the boundary condition $\mathcal{B}u=g$ and the
one-way wave equation itself at $\xi_1=b$. We thus have as the
analytic analogue of the LU factorization
\begin{displaymath}
\frac{\partial}{\partial\xi_1} \left(\gamma_1\frac{\partial }{\partial\xi_1} +
  \gamma_2\frac{\partial}{\partial\xi_2} + \gamma_0\right) - \mathcal{L}_2 =
\left(\frac{\partial}{\partial\xi_1}+\mathcal{P}\{\gamma_1^{-1}\cdot\}\right)
\left(\gamma_1\frac{\partial}{\partial\xi_1} +
  \gamma_2\frac{\partial}{\partial\xi_2} + \gamma_0-\mathcal{P}\right),
\end{displaymath}
which can be verified for an arbitrary function $v(\xi_1,\xi_2)$ as follows:
\begin{displaymath}
\begin{aligned}
  &\left(\frac{\partial}{\partial\xi_1}+\mathcal{P}\{\gamma_1^{-1}\cdot\}\right)
  \left(\gamma_1\frac{\partial}{\partial\xi_1} +
    \gamma_2\frac{\partial}{\partial\xi_2} + \gamma_0-\mathcal{P}\right)v\\
  =&\left(\frac{\partial}{\partial\xi_1}+\mathcal{P}\{\gamma_1^{-1}\cdot\}\right)
  \left(\gamma_1\frac{\partial v}{\partial\xi_1} +
    \gamma_2\frac{\partial v}{\partial\xi_2} + \gamma_0v-\mathcal{P}v\right)\\
  =& \frac{\partial}{\partial\xi_1}\left(\gamma_1\frac{\partial v}
    {\partial\xi_1} +\gamma_2\frac{\partial v}{\partial\xi_2} +
    \gamma_0v\right)-\frac{\partial}{\partial\xi_1}(\mathcal{P}v)
  +\mathcal{P}\frac{\partial v}{\partial\xi_1}+\mathcal{P}
  \{\gamma_1^{-1}(\gamma_2\frac{\partial v}{\partial\xi_2}+\gamma_0v-\mathcal{P}v)\}\\
  =&\frac{\partial}{\partial\xi_1}\left(\gamma_1\frac{\partial v}
    {\partial\xi_1} +\gamma_2\frac{\partial v}
    {\partial\xi_2}+\gamma_0v\right)-\mathcal{L}_2v,
\end{aligned}
\end{displaymath}
where to obtain the last identity we substituted the differential Riccati
\Cref{eq:Riccati}.  Note that all the above derivation needs to be justified in
appropriate function spaces, for which we refer to e.g. \cite{Henry}.

Solving the IVP in \Cref{eq:ivp4w} is not the only way to get $w$.  We
can also solve the original problem \Cref{eq:pde} restricted to
$\Omega^{s_1}$ ($s_1\leq b$) complemented with an arbitrary boundary
condition on $\Gamma^{s_1}$ that guarantees well-posedness, which results
in $\tilde{u}$.  From the proof of \Cref{thm:1steppde}, we have
$w=\left(\gamma_1\frac{\partial \tilde{u}}{\partial\xi_1} +
\gamma_2\frac{\partial \tilde{u}}{\partial\xi_2} +
\gamma_0\tilde{u}\right)-\mathcal{P}\tilde{u}$ on $\Gamma^{t}$ for all
$t\in(a, s_1]$.  Suppose $w$ is known in $\Omega^{s_1}$, to get $w$ in
$\Omega^{s_2}-\Omega^{s_1}$ for $s_2>s_1$, we only have to solve the original
problem restricted to $\Omega^{s_2}-\Omega^{s_1}$ where $\tilde{u}$
satisfies the boundary condition $\left(\gamma_1\frac{\partial
  \tilde{u}}{\partial\xi_1} + \gamma_2\frac{\partial
  \tilde{u}}{\partial\xi_2} +
\gamma_0\tilde{u}\right)-\mathcal{P}\tilde{u}=w$ on $\Gamma^{s_1}$ and an
arbitrary boundary condition for well-posedness on $\Gamma^{s_{2}}$;
then, we have $w=\left(\gamma_1\frac{\partial
  \tilde{u}}{\partial\xi_1} + \gamma_2\frac{\partial
  \tilde{u}}{\partial\xi_2} +
\gamma_0\tilde{u}\right)-\mathcal{P}\tilde{u}$ on $\Gamma^{t}$ for all
$t\in(a, s_2]$.  This process continues forward until $\Gamma^b$ and
$w$ is obtained in $\Omega$.  Then, we solve \Cref{eq:pde} restricted
to $\Omega^b-\Omega^{s_{J-1}}$ for $u$ with $w$ providing interface data on
$\Gamma^{s_{J-1}}$.  To find $u$ further backward in $\Omega^{s_{J-1}}-\Omega^{s_{J-2}}$,
we use again $w$ as interface data on $\Gamma^{s_{J-2}}$, while an
arbitrary boundary condition on $\Gamma^{s_{J-1}}$,
as long as the resulting problem is
well-posed, can be extracted from already known $u$ in
$\Omega^b-\Omega^{s_{J-1}}$.  This process continues backward until
$\Gamma^a$.  The forward plus backward processes constitute exactly
\Cref{alg:DOSMPDE}.  In other words, we may view \Cref{alg:DOSMPDE} as
a way of solving the IVPs for $w$ and $u$ in \Cref{eq:ivp4w,eq:ivp4u}.

\section{Methods motivated by physics} \label{sec:phys}

We now present several algorithms motivated by various intuitions from
physics, and developed using Green's function techniques. We start
with the special case we considered in \Cref{sec:notation}, where the
source term vanished outside a subdomain $\Omega_j$, and we showed how
to truncate the original problem to $\Omega_j$ to avoid discretizing
the big domain $\Omega$. To be able to use this as a building block
for a more general solver, we need two further ingredients: first, since
we are now not only interested in the near-field solution $u$ in
$\Omega_j$, but also the far-field $u$ in $\Omega-\Omega_j$, we need
to be able to map the near-field waves to the far-field. This is a
classical engineering problem, see
e.g. \cite[pp. 329--352]{THbook}. Second, we may have also sources
outside $\Omega_j$ which stimulate waves that come into $\Omega_j$.
The question is then how to incorporate the influence of these exterior
sources on the local solution on the subdomain $\Omega_j$.

In the rest of this section we assume that $g=0$ in \Cref{eq:pde} to
simplify our presentation.  From the solution formula we have seen in
\Cref{eq:repf}, namely
\begin{displaymath}
  u(\mathbf{x}) = \int_{\Omega} G(\mathbf{x},\mathbf{y})f(\mathbf{y})\,
  \mathrm{d}\mathbf{y},
\end{displaymath}
we see that if we restrict to $\mathbf{x},\mathbf{y}\in\Omega_j$, then
the corresponding diagonal part of $G(\mathbf{x},\mathbf{y})$ can be
well approximated by a good truncation of the corresponding BVP to
$\Omega_j$.  The second point raised in the last paragraph actually
asks how to approximate the off-diagonal parts of
$G(\mathbf{x},\mathbf{y})$ when $\mathbf{x},\mathbf{y}$ are in {\it
  different} subdomains.  This is a core problem studied in
$\mathcal{H}$-matrix and similar techniques, see the references
mentioned in \Cref{sec:solvers}, but a direct approximation of the
off-diagonal parts of the Green's function is difficult for waves
traveling long-distance in heterogeneous media.  As an alternative,
one can first solve for the diagonal parts, i.e. the near-field waves
stimulated by the sources within each subdomain, and then propagate
the waves gradually subdomain by subdomain from near to far. This is
possible because of the so-called equivalence theorem in
engineering, which states that if we enclose a source by a
surface, referred to as Huygens surface, then the stimulated
waves in the exterior can be determined from the waves on the
  Huygens surface, thought as new equivalent sources that are called currents
  in engineering.  For example, once we have
$u_{1,1}:=u_{,1}|_{\Omega_1}$ with
$u_{,1}(\mathbf{x}):=\int_{\Omega_1}G(\mathbf{x},\mathbf{y})
f(\mathbf{y})\,\mathrm{d}\mathbf{y},$ $\mathbf{x}\in\Omega$, we should
be able to find also $u_{2,1}:=u_{,1}|_{\Omega_2}$.  In fact, $u_{,1}$
solves the BVP
\begin{displaymath}
  \mathcal{L}u_{,1}=f_1\mbox{\ in\ } \Omega,\quad
  \mathcal{B}u_{,1}=0 \mbox{\ on\ } \partial\Omega,
\end{displaymath}
where $f_1:=\mathcal{E}_1(f|_{\Omega_1})$; so we deduce that $u_{2,1}$ can be
obtained from
\begin{equation}\label{eq:u2f1}
  \begin{array}{r@{\hspace{0.2em}}c@{\hspace{0.2em}}ll}
    \mathcal{L}u_{2,1}&=&0&\mbox{\ in\ } \Omega_2,\\
    \mathcal{B}u_{2,1}&=&0&\mbox{\ on\ } \partial\Omega\cap\partial\Omega_2,\\
    \mathcal{B}_2^{\langle}u_{2,1}&=&\mathcal{B}_2^{\langle}u_{1,1}&
    \mbox{\ on\ } \Gamma_{2,1},\\
    \mathbf{n}_2^T\alpha\nabla u_{2,1}+\mathrm{DtN}_2u_{2,1}&=&0&
    \mbox{\ on\ } \Gamma_{2,3},
  \end{array}
\end{equation}
where $\mathcal{B}_2^{\langle}$ is an arbitrary boundary operator so
that the problem is well-posed, and we assume $\mathrm{DtN}_2$ is
well-defined.  We see that the influence of $f_1$ to the waves in
$\Omega_2$ has been transformed to an equivalent surface current
$\mathcal{B}_2^{\langle}u_{1,1}$.  In summary, the near-field waves
$u_{1,1}$ generate a surface current $\mathcal{B}_2^{\langle}u_{1,1}$
from which one can recover the far-field waves $u_{2,1}$, as the
equivalence theorem says.

Since in $\Omega_2$ we want also
$u_{2,2}(\mathbf{x})=\int_{\Omega_2}G(\mathbf{x},\mathbf{y})
f(\mathbf{y})\,\mathrm{d}\mathbf{y},$ $\mathbf{x}\in\Omega_2$, it is convenient
to add the source $f_2$ directly in \Cref{eq:u2f1}, and to solve for
$(u_{2,1}+u_{2,2})(\mathbf{x})=u_{2,1:2}(\mathbf{x}):=
\int_{\Omega_1\cup\Omega_2}G(\mathbf{x},\mathbf{y})
f(\mathbf{y})\,\mathrm{d}\mathbf{y},$ $\mathbf{x}\in\Omega_2$ at once from
\begin{equation}\label{eq:u2f1f2}
  \begin{array}{r@{\hspace{0.2em}}c@{\hspace{0.2em}}ll}
    \mathcal{L}u_{2,1:2}&=&f_2&\mbox{\ in\ } \Omega_2,\\
    \mathcal{B}u_{2,1:2}&=&0&\mbox{\ on\ } \partial\Omega\cap\partial\Omega_2,\\
    \mathbf{n}_2^T\alpha\nabla u_{2,1:2}+\mathrm{DtN}_2u_{2,1:2}&=&
    \mathbf{n}_2^T\alpha\nabla u_{1,1}+\mathrm{DtN}_2u_{1,1}&
    \mbox{\ on\ } \Gamma_{2,1},\\
    \mathbf{n}_2^T\alpha\nabla u_{2,1:2}+\mathrm{DtN}_2u_{2,1:2}&=&0&
    \mbox{\ on\ } \Gamma_{2,3},
  \end{array}
\end{equation}
where we specified $\mathcal{B}_2^{\langle}$ of \Cref{eq:u2f1} as the
transparent boundary operator to simulate the waves generated by
  $f_2$ without spurious reflections.  Using \Cref{eq:rep}, the solution
of \Cref{eq:u2f1f2} can be represented as
\begin{displaymath}
u_{2,1:2}(\mathbf{x})=\int_{\Omega_2} G(\mathbf{x},\mathbf{y})
f(\mathbf{y})\,\mathrm{d}\mathbf{y}+\int_{\Gamma_{2,1}}G(\mathbf{x},\mathbf{y})
\lambda_{2\langle}(\mathbf{y})\,\mathrm{d}\sigma(\mathbf{y}),
\quad\mathbf{x}\in\Omega_2,
\end{displaymath}
where
$\lambda_{2\langle}:=\mathcal{B}_{2}^{\langle}u_{1,1}=\mathbf{n}_2^T\alpha\nabla
u_{1,1}+\mathrm{DtN}_2u_{1,1}$.  Now that $u_{2,1:2}$ contains the
influence of both $f_1$ and $f_2$, this influence can be passed on to
$\Omega_3$ through a transmission condition on $\Gamma_{3,2}$, and
using a transparent transmission condition also permits to include
then the influence of $f_3$ locally in $\Omega_3$.  This process
continues until we obtain $u_{J,1:J}$ which is the exact solution of
\Cref{eq:pde} restricted to $\Omega_J$ i.e.  $u_{J,1:J}(\mathbf{x})=
\int_{\Omega} G(\mathbf{x},\mathbf{y})f(\mathbf{y})d\mathbf{y},$
$\mathbf{x}\in\Omega_J$. Now that we have $u_{j,1:j}$,
$j=1,..,J$ and the interface data
$\lambda_{j+1\langle}:=\mathcal{B}_{j+1}^{\langle}u_{j,1:j}
=\mathbf{n}_{j+1}^T\alpha\nabla
u_{j,1:j}+\mathrm{DtN}_{j+1}u_{j,1:j}$, $j=1,..,J-1$, we want to add
the waves $u_{j,j+1:J}$, stimulated by the sources on the right of
$\Omega_j$, to $u_{j,1:j}$ to get $u_{j,1:J}=u_{j,1:j}+u_{j,j+1:J}$,
the solution of the original BVP in \Cref{eq:pde} restricted to
$\Omega_j$.  We note that
$\mathcal{B}_{j+1}^{\langle}u_{j,1:J}=\mathcal{B}_{j+1}^{\langle}u_{j,1:j}
=\lambda_{j+1\langle}$ because
$\mathcal{B}_{j+1}^{\langle}u_{j,j+1:J}=0$ by \Cref{lem:TC}.  That is,
the waves from $\Omega_{j+1:J}$ pass through $\Gamma_{j+1,j}$
transparently.  For $u_{J-1,1:J}$, the other interface data
$\mathcal{B}_{J-1}^{\rangle}u_{J-1,1:J}$ on $\Gamma_{J-1,J}$ is
available from the known solution $u_{J,1:J}$.  Therefore,
$u_{J-1,1:J}$ satisfies the BVP
\begin{equation}\label{eq:uJ-1}
  \begin{array}{r@{\hspace{0.2em}}c@{\hspace{0.2em}}ll}
    \mathcal{L}v_{J-1}&=&f_{J-1}&\mbox{\ in\ } \Omega_{J-1},\\
    \mathcal{B}v_{J-1}&=&0&\mbox{\ on\ } \partial\Omega\cap\partial\Omega_{J-1},\\
    \mathbf{n}_{J-1}^T\alpha\nabla v_{J-1}+\mathrm{DtN}_{J-1}v_{J-1}&=&
    \lambda_{J-1\langle}&\mbox{\ on\ } \Gamma_{J-1,J-2},\\
    \mathcal{B}_{J-1}^{\rangle}v_{J-1}&=&\mathcal{B}_{J-1}^{\rangle}u_{J,1:J}&
    \mbox{\ on\ } \Gamma_{J-1,J},
  \end{array}
\end{equation}
where all the data is known, and $\mathcal{B}_{J-1}^{\rangle}$ is
arbitrary as long as \Cref{eq:uJ-1} is well-posed, and the first two
equations are just taken from \Cref{eq:pde}.  In other words,
\Cref{eq:uJ-1} is set up according to what the solution $u$ of
\Cref{eq:pde} satisfies, and the unique solvability of \Cref{eq:uJ-1}
justifies that its solution can only be
$v_{J-1}=u_{J-1,1:J}=u|_{\Omega_{J-1}}$.  After having $u_{J-1,1:J}$,
the exact information can be passed backward further through
$\Gamma_{J-2,J-1}$. This process continues until we obtain the exact
solution in all the subdomains.  We can also formulate this process
using the representation formula in \Cref{eq:rep}.  For example, if
$\lambda_{J-1\rangle}:=\mathcal{B}_{J-1}^{\rangle}u_{J,1:J}=\mathbf{n}_{J-1}^T\alpha
\nabla u_{J,1:J}+\mathrm{DtN}_{J-1}u_{J,1:J}$, then solving
\Cref{eq:uJ-1} is equivalent to computing for $\mathbf{x}\in\Omega_{J-1}$,
\begin{displaymath}
  \medmuskip=-1mu
  \thinmuskip=-1mu
  \thickmuskip=-1mu
  \nulldelimiterspace=-0pt
  \scriptspace=0pt 
  u_{J-1,1:J}(\mathbf{x})=\int_{\Omega_{J-1}}\hspace{-1em}G(\mathbf{x},\mathbf{y})
  f(\mathbf{y})\,\mathrm{d}\mathbf{y} +
  \int_{\Gamma_{J-1,J-2}}\hspace{-2em}G(\mathbf{x},\mathbf{y})
  \lambda_{J-1\langle}(\mathbf{y})\,\mathrm{d}\sigma(\mathbf{y}) +
  \int_{\Gamma_{J-1,J}}\hspace{-2em}G(\mathbf{x},\mathbf{y})
  \lambda_{J-1\rangle}(\mathbf{y})\,\mathrm{d}\sigma(\mathbf{y}).
\end{displaymath}
We have now presented the basic ideas digested from \cite{Chen13a,
  Stolk, Zepeda, ZD}, and have again derived \Cref{alg:DOSMPDE}.  To
propagate the waves based on physical insight, we were forced
here to use absorbing transmission conditions on $\Gamma_{j,j+1}$
in the forward sweep.  In the next subsections, we will explain in
detail the different paths that led to the invention of the new
methods in \cite{Chen13a, Stolk, Zepeda, ZD}. We will see that these
new methods were derived in quite different forms, but it will become
clear how closely they are related to the algorithms discussed in
\Cref{sec:osm,sec:lu}.

\subsection{The source transfer method using equivalent volume sources}
\label{sec:stddm}

The source transfer method from \cite{Chen13a} is based on a decomposition of
$\Omega$ into non-overlapping and serially connected layers $O_j$, $j=0,..,J$,
which are combined into subdomains $\Omega_j:=O_{j-1}\cup\Gamma_{j}\cup O_j$
with $\Gamma_j:=\partial O_{j-1}\cap\partial O_j$ for $j=1,..,J$.  We have seen
this decomposition in \Cref{rem:ddst}, and also assume here that the resulting
system is block tridiagonal, as in \Cref{rem:ddst}.  The key idea of the source
transfer method is the physical intuition that it is possible to transfer the
sources before $O_J$ into $O_{J-1}$ without changing the wave field in $O_J$.
This is done layer by layer.  First, the source in $O_0$ is transferred to $O_1$
without changing the wave field to the right of $O_1$, i.e. in $O_j$,
$j=2,..,J$.  In terms of the Green's function, we need to find a map
$\Psi_1$ that transfers $\tilde{f}_{1\lovlpint}:=f|_{O_0}$ to the source
$\Psi_1(\tilde{f}_{1\lovlpint})$ defined in $O_1$ such that
\begin{displaymath}
\int_{O_0}G(\mathbf{x},\mathbf{y})\tilde{f}_{1\lovlpint}(\mathbf{y})\,\mathrm{d}\mathbf{y}
=\int_{O_1}G(\mathbf{x},\mathbf{y})\Psi_1(\tilde{f}_{1\lovlpint})(\mathbf{y})\,\mathrm{d}\mathbf{y},
\quad \forall \mathbf{x}\in O_{l},~ l=2,..,J.
\end{displaymath}
Then we define
$\tilde{f}_{2\lovlpint}:=f|_{O_1}+\Psi_1(\tilde{f}_{1\lovlpint})$.  For
$j=1,..,J-2$, we try to find a map $\Psi_{j+1}$ that transfers
$\tilde{f}_{j+1\lovlpint}$ to the source
$\Psi_{j+1}(\tilde{f}_{j+1\lovlpint})$ in $O_{j+1}$ such that for
$l=j+2,..,J$
\begin{equation}\label{eq:stbase}
\int_{O_j}G(\mathbf{x},\mathbf{y})\tilde{f}_{j+1\lovlpint}(\mathbf{y})\,\mathrm{d}\mathbf{y}
=\int_{O_{j+1}}G(\mathbf{x},\mathbf{y})\Psi_{j+1}(\tilde{f}_{j+1\lovlpint})(\mathbf{y})\,
\mathrm{d}\mathbf{y},\quad \forall \mathbf{x}\in O_{l},
\end{equation}
and define
$\tilde{f}_{j+2\lovlpint}:=f|_{O_{j+1}}+\Psi_{j+1}(\tilde{f}_{j+1\lovlpint})$.
Eventually, we get $\tilde{f}_{J\lovlpint}$ and the wave field in
$O_J$ of the original problem in \Cref{eq:pde} is given by
\begin{displaymath}
u_{J\thetaint}(\mathbf{x})=\int_{O_J}G(\mathbf{x},\mathbf{y})f_{J\thetaint}(\mathbf{y})
\,\mathrm{d}\mathbf{y} + \int_{O_{J-1}}G(\mathbf{x},\mathbf{y})\tilde{f}_{J\lovlpint}
(\mathbf{y})\,\mathrm{d}\mathbf{y},\quad\forall \mathbf{x}\in O_J.
\end{displaymath}
Once $u_{J\thetaint}$ is known, it leaves a Dirichlet trace on
$\Gamma_{J}$ which, together with the transferred source
$\tilde{f}_{J-1\lovlpint}$ and the source in $O_{J-1}$, results in the
wave field in $O_{J-1}$,
\begin{multline*}  
  u_{J-1\rovlpint}(\mathbf{x})=\int_{O_{J-1}}G_{J-1}(\mathbf{x},\mathbf{y})f_{J-1\rovlpint}
  (\mathbf{y})\,\mathrm{d}\mathbf{y} + \int_{O_{J-2}}
  G_{J-1}(\mathbf{x},\mathbf{y})\tilde{f}_{J-1\lovlpint}
  (\mathbf{y})\,\mathrm{d}\mathbf{y} \\
  + \int_{\Gamma_{J}}\left[\mathbf{n}^T\alpha^T\nabla_{\mathbf{y}}
  G_{J-1}(\mathbf{x},\mathbf{y})\right]
  u_{J\thetaint}(\mathbf{y})\,\mathrm{d}\sigma(\mathbf{y}),
  \quad\forall\mathbf{x}\in O_{J-1},
\end{multline*}
where $G_{J-1}$ is the Green's function in $\Omega_{J-1}$ satisfying a
transparent boundary condition on $\Gamma_{J-2}$ and a homogeneous
Dirichlet condition on $\Gamma_{J}$.  This amounts to solve a BVP in
$\Omega_{J-1}$ with Dirichlet boundary condition $u_{J-1}=u_{J}$ on
$\Gamma_{J}$ and the transparent boundary condition on $\Gamma_{J-2}$.
Afterwards, the wave field in $O_{J-1}$ can be again extended backward
to $O_{J-2}$.  This process continues until we have obtained the wave
field in all the layers.

The authors of \cite{Chen13a} emphasize that ``{\it the key step in
  the method is the source transfer algorithm}'' which results in
\Cref{eq:stbase}. So how can one obtain the source transfer functions
$\Psi_{j+1}$~? While the source transfer appeared very naturally in
the block LU factorization in \Cref{SourceTransferIdea}, it is a bit
more involved to find $\Psi_{j+1}$ in the present setting: the authors
substitute the source $\tilde{f}_{j+1\lovlpint}$ in $O_j$ with the
generated wave field $v_{j+1}$ in $\Omega_{j+1}$ by using the PDE
$\mathcal{L}v_{j+1}=\tilde{f}_{j+1}$ in $\Omega_{j+1}$ (let
$\tilde{f}_{j+1}$ be the zero extension of $\tilde{f}_{j+1\lovlpint}$ to $\Omega_{j+1}$).
Substituting this and
$-\frac{\omega^2}{\kappa}G(\mathbf{x},\mathbf{y})=
\nabla_{\mathbf{y}}^T(\alpha^T\nabla_{\mathbf{y}}
G(\mathbf{x},\mathbf{y}))$ at $\mathbf{x}\neq\mathbf{y}$ into the
l.h.s. of \Cref{eq:stbase}, they obtain for $\mathbf{x}\in
O_l,~l=j+2,..,J,$
\begin{displaymath}
  \medmuskip=-1mu
  \thinmuskip=-1mu
  \thickmuskip=-1mu
  \nulldelimiterspace=-0pt
  \scriptspace=0pt
  \int_{O_j}G(\mathbf{x},\mathbf{y})\tilde{f}_{j+1\lovlpint}(\mathbf{y})\,\mathrm{d}\mathbf{y}=
  \int_{O_j}\left[-\nabla^T(\alpha\nabla v_{j+1}(\mathbf{y})) \right]
  G(\mathbf{x},\mathbf{y})+\left[\nabla_{\mathbf{y}}^T(\alpha^T\nabla_{\mathbf{y}}
    G(\mathbf{x},\mathbf{y}))\right]v_{j+1}(\mathbf{y})
  \,\mathrm{d}\mathbf{y}.
\end{displaymath}
Integrating by parts and substituting the boundary conditions (transparent on
$\Gamma_{j}$ relative to $O_j$ for both $v_{j+1}$ and $G(\mathbf{x},\cdot)$
with $\mathbf{x}$ to the right of $O_{j+1}$) leads to
\begin{equation}\label{eq:gammaj+1}
  \medmuskip=-2mu
  \thinmuskip=-2mu
  \thickmuskip=-2mu
  \nulldelimiterspace=-2pt
  \scriptspace=-1pt 
  \int_{O_j}G(\mathbf{x},\mathbf{y})\tilde{f}_{j+1\lovlpint}
  (\mathbf{y})\,\mathrm{d}\mathbf{y}=
  \int_{\Gamma_{j+1}}\hspace{-0.8em}
  -\left[\mathbf{n}_j^T\alpha\nabla v_{j+1}(\mathbf{y})\right]
  G(\mathbf{x},\mathbf{y})+\left[\mathbf{n}_j^T\alpha^T\nabla_{\mathbf{y}}
    G(\mathbf{x},\mathbf{y}) \right]v_{j+1}(\mathbf{y})
  \,\mathrm{d}\sigma(\mathbf{y}).
\end{equation}
The idea for transferring the source in $O_j$ to $O_{j+1}$ consists in
a secondary integration by parts but from $\Gamma_{j+1}$ to $O_{j+1}$.
This will involve another surface integral
\begin{equation}\label{eq:gammaj+2}
\int_{\Gamma_{j+2}}\left[\mathbf{n}_{j+1}^T\alpha\nabla
  v_{j+1}(\mathbf{y})\right]
G(\mathbf{x},\mathbf{y})-\left[\mathbf{n}_{j+1}^T\alpha^T\nabla_{\mathbf{y}}
  G(\mathbf{x},\mathbf{y}) \right]v_{j+1}(\mathbf{y})
\,\mathrm{d}\sigma(\mathbf{y}).
\end{equation}
Since $\Gamma_{j+2}$ is {\it not} a transparent boundary for
$G(\mathbf{x},\mathbf{y})$, $\mathbf{y}\in O_{j+1}$, $\mathbf{x}\in
O_l$ ($l> j+1$), the above surface integral in general does not
vanish.  Note that, however, the information to be transferred is all
from $\Gamma_{j+1}$, which is provided by the Dirichlet and Neumann
traces of $v_{j+1}$; see \Cref{eq:gammaj+1}.  So $v_{j+1}$ can be
modified in $O_{j+1}$ without changing this information while letting
the Dirichlet and Neumann traces on $\Gamma_{j+2}$ vanish to get rid
of \Cref{eq:gammaj+2}. The authors use a function $\beta_{j+1}$ for
this purpose, which smoothly damps $v_{j+1}$ from $\Gamma_{j+1}$ to
$\Gamma_{j+2}$ such that
\begin{equation}\label{eq:beta}
  \medmuskip=-1mu
  \thinmuskip=-1mu
  \thickmuskip=-1mu
  \nulldelimiterspace=-0pt
  \scriptspace=0pt 
\begin{aligned}
  &\beta_{j+1}=1,\mbox{ }
  (\alpha^T\mathbf{n}_j)^T\nabla\beta_{j+1}=0\mbox{ on }\Gamma_{j+1},\quad
  \beta_{j+1}=0,\mbox{ }
  (\alpha^T\mathbf{n}_j)^T\nabla\beta_{j+1}=0\mbox{ on }\Gamma_{j+2},\\
&\mathcal{B}(\beta_{j+1} v_{j+1})=0\mbox{ on }\partial O_{j+1}\cap\partial\Omega.
\end{aligned}
\end{equation}
Otherwise, the precise shape of the function $\beta_{j+1}$ has 
no influence on the algorithm. Substituting $\beta_{j+1} v_{j+1}$ for
$v_{j+1}$ in the r.h.s. of \Cref{eq:gammaj+1} and \Cref{eq:gammaj+2}
and summing up, we find
\begin{displaymath}
  \medmuskip=-3.5mu
  \thinmuskip=-3.5mu
  \thickmuskip=-3.5mu
  \nulldelimiterspace=-1pt
  \scriptspace=-1.6pt
  \int\limits_{O_j}G(\mathbf{x},\mathbf{y})\tilde{f}_{j+1\lovlpint}(\mathbf{y})\,\mathrm{d}\mathbf{y}=\hspace{-0.6em}
  \int\limits_{\partial O_{j+1}}\hspace{-0.4em}\left[\mathbf{n}^T\alpha\nabla (\beta_{j+1}
    v_{j+1})(\mathbf{y})\right]G(\mathbf{x},\mathbf{y})-
  \left[\mathbf{n}^T\alpha^T\nabla_{\mathbf{y}} G(\mathbf{x},\mathbf{y})
  \right](\beta_{j+1} v_{j+1})(\mathbf{y}) \,\mathrm{d}\sigma(\mathbf{y}).
\end{displaymath}
Integrating by parts for the r.h.s. and using
$\mathcal{M}_{\mathbf{y}}G(\mathbf{x},\mathbf{y})=0$ in $O_{j+1}$ for
$\mathbf{x}\not\in O_{j+1}$ yields the identity
\begin{displaymath}
  \medmuskip=-1mu
  \thinmuskip=-1mu
  \thickmuskip=-1mu
  \nulldelimiterspace=-0pt
  \scriptspace=0pt
  \int_{O_j}G(\mathbf{x},\mathbf{y})\tilde{f}_{j+1\lovlpint}
  (\mathbf{y})\,\mathrm{d}\mathbf{y}=
  \int_{O_{j+1}}G(\mathbf{x},\mathbf{y})\left\{\nabla^T\left[\alpha\nabla
      (\beta_{j+1} v_{j+1})(\mathbf{y})\right]+\frac{\omega^2}{\kappa}
    \beta_{j+1}v_{j+1}(\mathbf{y})\right\}\,\mathrm{d}\mathbf{y}.
\end{displaymath}
The sum inside the above curly-braces is thus the source transfer
function $\Psi_{j+1}(\tilde{f}_{j+1\lovlpint})$ we were looking for, see
\Cref{eq:stbase}. We can now define the source transfer method, see
\Cref{alg:STPDE} at the PDE level and \Cref{alg:STMAT} at the matrix
level.  Despite the quite different motivating ideas, these algorithms
look very similar to \Cref{alg:GDCPDE} and \Cref{alg:GDCMAT}. There is
one difference though: in the forward sweep, the source transfer
methods do not use the source in the right overlap $O_j$ for the
subproblem in $\Omega_j$. One can however regard $O_{j-1}$ as the
physical subdomain, and consider $O_j$ to belong already to the
PML region so that $O_j\cup\Omega_j^{pml}$ becomes the PML region for
$O_{j-1}$.  Then the source transfer methods can be derived
also directly as DOSMs in the {\it local} deferred correction form, and
we obtain the following equivalence results.
\begin{algorithm}
  \caption{{\bf Source transfer} preconditioner at the {\bf PDE} level
    (\cite[Algorithm 3.1-3.2]{Chen13a})}
  \label{alg:STPDE}
  Input the source $f$ and $g$.  Let $\tilde{f}_{1}\gets f$ in $O_0$
  and $\tilde{f}_{1}\gets0$ otherwise.  Solve successively for
  $j=1,..,J-1$,
  \begin{displaymath}
    \small
  \begin{array}{r@{\hspace{0.2em}}c@{\hspace{0.2em}}ll}
    \mathcal{L}\,v_j&=&\tilde{f}_{j} & \mbox{ in }\Omega_j,\\
    \mathcal{B}\,v_j&=&g & \mbox{ on }
    \partial\Omega\cap\partial\Omega_j,\\
    \mathbf{n}_j^T\alpha\nabla v_j+
    \mathrm{DtN}_{j}^{pml}v_j&=&0 & \mbox{ on }\Gamma_{j,j-1},\\
    \mathbf{n}_j^T\alpha\nabla v_j+
    \mathrm{DtN}_{j}^{pml}v_j&=&0 & \mbox{ on }\Gamma_{j,j+1},
  \end{array}
  \end{displaymath}
  where ${\mathrm{DtN}}_{j}^{pml}$ is defined by the PML along $\Gamma_{j,j-1}$
  and $\Gamma_{j,j+1}$ (see \Cref{rem:impml} for practical
  implementation), and after each solve we let $\tilde{f}_{j+1}\gets f -
  \mathcal{L}(\beta_jv_j)$
  in $O_{j}$ and $\tilde{f}_{j+1}\gets0$ otherwise, and $\beta_{j}$ satisfies
  \Cref{eq:beta}.

  Let $\tilde{f}_j$ unchanged in $O_{j-1}$ but $\tilde{f}_j\gets f$ in $O_{j}$.
  Solve in order of $j=J,..,1$ the problem
  \begin{displaymath}
    \small
  \begin{array}{r@{\hspace{0.2em}}c@{\hspace{0.2em}}ll}
    \mathcal{L}\,\tilde{u}_j&=&\tilde{f}_{j} & \mbox{ in }\Omega_j,\\
    \mathcal{B}\,\tilde{u}_j&=&g & \mbox{ on }
    \partial\Omega\cap\partial\Omega_j,\\
    \mathbf{n}_j^T\alpha\nabla \tilde{u}_j+
    \mathrm{DtN}_{j}^{pml}\tilde{u}_j&=&0 &
    \mbox{ on }\Gamma_{j,j-1},\\
    \tilde{u}_j&=&\tilde{u}_{j+1} & \mbox{ on }\Gamma_{j,j+1}.
  \end{array}
  \end{displaymath}

  Output the global approximation $\tilde{u}\gets \tilde{u}_j$ in $O_j$, $j=1,..,J$ and
  $\tilde{u}\gets \tilde{u}_1$ in $O_0$.
\end{algorithm}

\begin{algorithm}
  \caption{{\bf Source transfer} preconditioner at the {\bf matrix} level}
  \label{alg:STMAT}
  Input the r.h.s. $\mathbf{f}$.  Let
  $\tilde{\mathbf{f}}_{1\lovlpint}\gets{\mathbf{f}}_{1\thetaint}$.  Solve
  successively for $j=1,..,J-1$,

  \begin{displaymath}
    \small
    \left[
      \begin{array}{ccccc}
        \tilde{S}_{j\langle}^{\langle} & A_{j\langle j-1\rovlpint} & & &\\
        A_{j-1\rovlpint j\langle} & A_{j-1\rovlpint} & A_{j-1\rovlpint j+1\langle} & &\vphantom{_j^j}\\
        & A_{j+1\langle j-1\rovlpint } & A_{j+1\langle} & A_{j+1\langle j\rovlpint } &\vphantom{_j^j}\\
        & & A_{j\rovlpint  j+1\langle} & A_{j\rovlpint } & A_{j\rovlpint j\rangle}\vphantom{_j^j}\\
        & & & A_{j\rangle j\}} & \tilde{S}_{j\rangle}^{\rangle}
      \end{array}
    \right]
    \left[
      \begin{array}{c}
        \mathbf{v}_{j\langle} \\ \mathbf{v}_{j\lovlpint}
        \vphantom{\tilde{S}_{j\langle}^{\langle}} \\
        \mathbf{v}_{j[} \\ \mathbf{v}_{j\rovlpint} \\
        \mathbf{v}_{j\rangle}\vphantom{\tilde{S}_{j\langle}^{\langle}}
      \end{array}
    \right] =
    \left[
      \begin{array}{c}
        0\vphantom{A_j^j} \\ \mathbf{\tilde{f}}_{j\lovlpint }\vphantom{A_j^j} \\
        \mathbf{f}_{j+1\langle}\vphantom{A_j^j} \\
        0\vphantom{A_j^j} \\ 0\vphantom{A_j^j}
      \end{array}
    \right],
  \end{displaymath}
  where $\tilde{S}_{j\langle}^{\langle}$ and $\tilde{S}_{j\rangle}^{\rangle}$
  are defined by the PML (see \Cref{rem:imschur} for practical
  implementation), and after each solve we let
  $\mathbf{\tilde{f}}_{j+1\lovlpint }\gets\mathbf{f}_{j\rovlpint }-A_{j\rovlpint }I^j_{j\rovlpint }(D_j\mathbf{v}_j)-
  A_{j\rovlpint j+1\langle}I^j_{j+1\langle}(D_j\mathbf{v}_j)$, where $D_j$ satisfies
  \begin{equation}\label{eq:STD}
  I^j_{j\rangle}D_j=0,\, A_{j\rangle j\rovlpint }I^j_{j\rovlpint }D_j=0,\,
  I^j_{j+1\langle}(D_j-I_j)=0, A_{j+1\langle j\rovlpint }I^j_{j\rovlpint }(D_j-I_j)=0.
  \end{equation}

  Solve in order of $j=J,..,1$ the following problem
  \begin{displaymath}
    \small
    \left[
      \begin{array}{ccccc}
        \tilde{S}_{j\langle}^{\langle} & A_{j\langle j-1\rovlpint } & & &\\
        A_{j-1\rovlpint j\langle} & A_{j-1\rovlpint } & A_{j-1\rovlpint j+1\langle} & &\vphantom{_j^j}\\
        & A_{j+1\langle j-1\rovlpint } & A_{j+1\langle} & A_{j+1\langle j\rovlpint } &\vphantom{_j^j}\\
        & & A_{j\rovlpint  j+1\langle} & A_{j\rovlpint } & A_{j\rovlpint j\rangle}\vphantom{_j^j}\\
        & & & 0 & I_{j\rangle}
      \end{array}
    \right]
    \left[
      \begin{array}{c}
        \tilde{\mathbf{u}}_{j\langle} \\ \tilde{\mathbf{u}}_{j\lovlpint}
        \vphantom{\tilde{S}_{j\langle}^{\langle}}\\
        \tilde{\mathbf{u}}_{j]} \\ \tilde{\mathbf{u}}_{j\rovlpint} \\
        \tilde{\mathbf{u}}_{j\rangle}\vphantom{\tilde{S}_{j\langle}^{\langle}}
      \end{array}
    \right] =
    \left[
      \begin{array}{c}
        0\vphantom{A_j^j} \\ \mathbf{\tilde{f}}_{j\lovlpint }\vphantom{A_j^j} \\
        \mathbf{f}_{j+1\langle}\vphantom{A_j^j} \\
        \mathbf{f}_{j\rovlpint }\vphantom{A_j^j} \\
        \tilde{\mathbf{u}}_{j+1]}\vphantom{A_j^j}
      \end{array}
    \right].
  \end{displaymath}

  Output the global approximation $\mathbf{\tilde{u}}\gets\tilde{\mathbf{u}}_j$ in
  $\overline{O}_j$, $j=1,..,J$ and $\mathbf{\tilde{u}}\gets\tilde{\mathbf{u}}_1$ in
  $O_0$.
\end{algorithm}

\begin{figure}
  \centering
  \includegraphics[scale=.5,trim=10 95 160 95,clip]{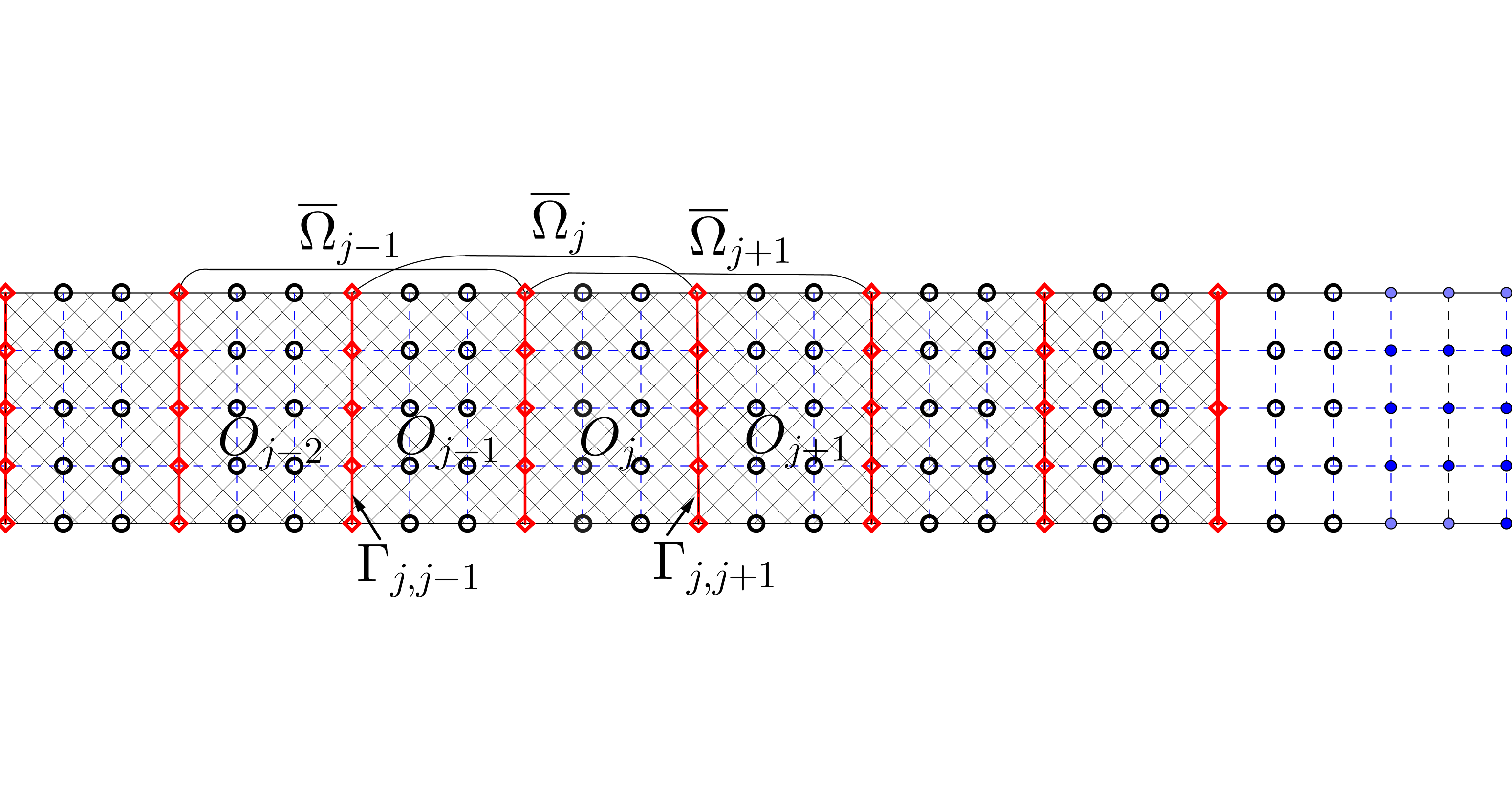}%
  \caption{Overlapping domain decomposition with contacting overlaps,
    $\circ \in O_*$, ${\color{red}\Diamond}\in \Gamma_{*,\#}$.}
  \label{fig:stddm}
\end{figure}  

\begin{theorem}\label{thm:stpde}
  Suppose the subproblems of the source transfer \Cref{alg:STPDE} are
  well-posed. Then \Cref{alg:STPDE} is equivalent to the DOSM
  preconditioner shown in \Cref{alg:DOSMPDE} in the specific case
    where each subdomain consists of two contacting overlaps, see
  \Cref{fig:stddm}, and using PML transmission conditions on the
  interfaces in the forward sweep and Dirichlet instead of PML on the
  right interfaces in the backward sweep; moreover, in the forward
  sweep the source terms in the right overlap of each subdomain are
  put to zero, which turns the right overlap effectively into part of the
  PML on the right of the subdomain.
\end{theorem}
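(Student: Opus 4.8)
The plan is to recognize \Cref{alg:STPDE} as the \emph{deferred correction realization} of exactly the DOSM named in the statement, and then to transfer the equivalence to the subdomain transmission form \Cref{alg:DOSMPDE} via the argument already used for \Cref{thm:GDCPDE} (here in its \emph{local} deferred correction variant, since the source transfer passes only the local residual forward, assembling a global approximation only at the very end). I would first fix the DOSM data: the generous-overlap decomposition of \Cref{rem:ddst} with $\Omega_j=O_{j-1}\cup\Gamma_j\cup O_j$; in the forward sweep $\mathcal{Q}_j^{\langle}=\mathcal{Q}_j^{\rangle}=\mathcal{I}$ and $\mathcal{P}_j^{\langle},\mathcal{P}_j^{\rangle}$ the PML--DtN operators $\mathrm{DtN}_j^{pml}$ of \Cref{rem:impml} attached to $\Gamma_{j,j-1}$ and $\Gamma_{j,j+1}$; in the backward sweep the same PML condition on the left but $\mathcal{Q}_j^{\rangle}=0$, $\mathcal{P}_j^{\rangle}=\mathcal{I}$, i.e. a Dirichlet condition, on the right; zero initial guess and a single double sweep, since \Cref{alg:STPDE} is a preconditioner. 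The one feature of \Cref{alg:STPDE} that is not literally a DOSM is that its forward subproblem on $\Omega_j$ carries $f$ only on $O_{j-1}$, not on $O_j$; I would dispose of this exactly as the text suggests, by regarding $O_{j-1}$ as the physical part of $\Omega_j$ and $O_j\cup\Omega_j^{pml}$ as its right PML region, so that dropping $f|_{O_j}$ only postpones the handling of that part of the source to $\Omega_{j+1}$.

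The technical core is to match the local source update $\tilde f_{j+1}=f-\mathcal{L}(\beta_jv_j)$ on $O_j$ (and zero elsewhere) in \Cref{alg:STPDE} with the deferred correction right-hand side, the residual $f-\mathcal{L}u^{(\cdots)}$ restricted to $\Omega_{j+1}$, after the source-zeroing modification. The damping function $\beta_j$ takes over the role of the weight $\phi_j$ of \Cref{alg:GDCPDE}: the conditions in \Cref{eq:beta} --- value $1$ with vanishing conormal flux towards $\Gamma_{j,j-1}$, value $0$ with vanishing conormal flux towards $\Gamma_{j,j+1}$, and $\mathcal{B}(\beta_jv_j)=0$ on $\partial\Omega\cap\partial O_j$ --- are precisely what is needed for $\beta_jv_j$ to extend by zero across $\Gamma_{j,j+1}$ with continuous Cauchy data, so that $\mathcal{L}(\beta_jv_j)$ is a genuine source without interface singularity; this is the local counterpart of the flux condition $\mathbf{n}_j^T\alpha\nabla\phi_j=0$ on $\Gamma_{j\pm1,j}$ required in \Cref{thm:GDCPDE}, and in the local form $\beta_j$ need not also form a global partition of unity. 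I would then argue by induction on $j$: at the start of forward substep $j+1$, the accumulated iterate equals $\beta_jv_j$ on $\overline\Omega_{j+1}$, because each earlier contribution $\beta_mv_m$ ($m\le j-1$) is supported in $\overline\Omega_m$, which is disjoint from $\overline\Omega_{j+1}$ by the serial-connectivity condition \Cref{eq:dd}; hence the restricted residual is $f-\mathcal{L}(\beta_jv_j)$ on $O_j$ and $f$ on $O_{j+1}$, and the source-zeroing on the right overlap replaces the latter by $0$, which is exactly $\tilde f_{j+1}$. With the homogeneous interface conditions already matched and the subproblems identical, each substep of the deferred correction form reproduces $\beta_jv_j$, i.e. what \Cref{alg:STPDE} computes, and along the way the source-transfer identity \Cref{eq:stbase} is re-derived.

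For the backward sweep the matching is essentially immediate: \Cref{alg:STPDE} imposes $\tilde u_j=\tilde u_{j+1}$ on $\Gamma_{j,j+1}$ (a Dirichlet condition) and $\mathbf{n}_j^T\alpha\nabla\tilde u_j+\mathrm{DtN}_j^{pml}\tilde u_j=0$ on $\Gamma_{j,j-1}$ (the PML condition), with the right-hand side the accumulated source, which is precisely the backward half of the deferred correction form with the stated transmission data; and the final gluing $\tilde u\gets\tilde u_j$ on $O_j$ and $\tilde u\gets\tilde u_1$ on $O_0$ is the $0$--$1$ layerwise partition of unity prescribed by \Cref{thm:GDCPDE}. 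Running the same inductive bookkeeping shows that the glued output of \Cref{alg:DOSMPDE} after one double sweep equals the output of the deferred correction form, hence equals the output of \Cref{alg:STPDE} --- the claimed equivalence.

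The step I expect to be the main obstacle is making the two reductions rigorous at the PDE level: first, that the local deferred correction form is equivalent to \Cref{alg:DOSMPDE} (an analogue of \Cref{thm:GDCPDE}, exploiting that by locality and serial connectivity the global residual restricted to $\Omega_{j+1}$ only ever sees the immediately preceding correction); and second, that the ``right overlap becomes part of the PML'' reading is faithful, i.e. that removing $f|_{O_j}$ from the $\Omega_j$-solve leaves unchanged the traces of $v_j$ on $\Gamma_{j,j-1}$ that are passed on. The remaining ingredients are hypotheses of the theorem (well-posedness of the PML subproblems) or were established in the derivation preceding \Cref{alg:STPDE} (existence of $\beta_j$ satisfying \Cref{eq:beta} and the integrations by parts producing the transfer maps $\Psi_j$); in particular the conormal-flux conditions in \Cref{eq:beta} are exactly what removes the interface singularity of the residual flagged in the remark after \Cref{thm:GDCPDE}, so no smoothing beyond $\beta_j$ is needed.
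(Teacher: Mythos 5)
Your proposal is correct and follows essentially the same route as the paper's proof: the decisive step in both is to subtract the damped previous iterate $\beta_{j-1}u_{j-1}^{(n-\frac12)}$ (extended by zero across $\Gamma_j$) from the DOSM iterate on $\Omega_j$, use the conditions \Cref{eq:beta} to homogenize the PML transmission conditions and turn the source into $\hat f_j-\mathcal{L}(\beta_{j-1}u_{j-1}^{(n-\frac12)})=\tilde f_j$, and conclude by uniqueness that this correction is exactly $v_j$, whence $v_j=u_j^{(n-\frac12)}$ on $O_j$; the backward sweep is handled the same way. The paper phrases this as a direct induction on the subdomain iterates rather than through a local deferred-correction analogue of \Cref{thm:GDCPDE}, but the bookkeeping is identical.
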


\begin{proof}
  We will prove several identities between the iterates generated by
  the two algorithms. Since we are considering the algorithms as
  preconditioners, we use a zero initial guess for \Cref{alg:DOSMPDE}.
  We start from the iterate of \Cref{alg:DOSMPDE} for the specific
  case stated in the theorem:
  \begin{displaymath}
  \begin{array}{r@{\hspace{0.2em}}c@{\hspace{0.2em}}ll}
    \mathcal{L}\,u_1^{(\frac{1}{2})}&=&\hat{f}_{1} & \mbox{ in }\Omega_1,\\
    \mathcal{B}\,u_1^{(\frac{1}{2})}&=&g & \mbox{ on }
    \partial\Omega\cap\partial\Omega_1,\\
    \mathbf{n}_1^T\alpha\nabla u_1^{(\frac{1}{2})}+
    \mathrm{DtN}_{1}^{pml}u_1^{(\frac{1}{2})}&=&0 & \mbox{ on }\Gamma_{1,2},
  \end{array}
  \end{displaymath}
  where $\hat{f}_{j}=f$ in $O_{j-1}$ and $\hat{f}_{j}=0$
  otherwise, for $j=1,..,J-1$.  This is exactly what we solve for $v_1$ in
  \Cref{alg:STPDE}.  By uniqueness of the solution, we see that
  $u_1^{(\frac{1}{2})}=v_1$. Assuming now that
  $u_{j-1}^{(\frac{1}{2})}=v_{j-1}$ in $O_{j-1}$, we will show that
  this relation also holds for $j$ replacing $j-1$.  In the $j$-th
  forward substep of \Cref{alg:DOSMPDE}, we solve
  \begin{equation}\label{eq:stoseq}
  \begin{array}{r@{\hspace{0.2em}}c@{\hspace{0.2em}}ll}
    \mathcal{L}\,u_j^{(\frac{1}{2})}&=&\hat{f}_{j} & \mbox{ in }\Omega_j,\\
    \mathcal{B}\,u_j^{(\frac{1}{2})}&=&g & \mbox{ on }
    \partial\Omega\cap\partial\Omega_j,\\
    \mathbf{n}_j^T\alpha\nabla u_j^{(\frac{1}{2})}+
    \mathrm{DtN}_{j}^{pml}u_j^{(\frac{1}{2})}&=&
    \mathbf{n}_j^T\alpha\nabla u_{j-1}^{(\frac{1}{2})}+
    \mathrm{DtN}_{j}^{pml}u_{j-1}^{(\frac{1}{2})} & \mbox{ on }\Gamma_{j,j-1},\\
    \mathbf{n}_j^T\alpha\nabla u_j^{(\frac{1}{2})}+
    \mathrm{DtN}_{j}^{pml}u_j^{(\frac{1}{2})}&=&0 & \mbox{ on }\Gamma_{j,j+1}.
  \end{array}
  \end{equation}
  We extend $\beta_{j-1}u_{j-1}^{(\frac{1}{2})}$ by zero into $O_j$, and
  substitute the correction
  $u_j^{(\frac{1}{2})}-\beta_{j-1}u_{j-1}^{(\frac{1}{2})}$ for
  $u_j^{(\frac{1}{2})}$ into \Cref{eq:stoseq}. Using \Cref{eq:beta}, we 
  find that the BVP satisfied by the correction term is
  \begin{displaymath}
  \begin{array}{r@{\hspace{0.2em}}c@{\hspace{0.2em}}ll}
    \mathcal{L}\left(u_j^{(\frac{1}{2})}-\beta_{j-1}u_{j-1}^{(\frac{1}{2})}
    \right)&=&\hat{f}_{j}-\mathcal{L}(\beta_{j-1}u_{j-1}^{(\frac{1}{2})}) &
    \mbox{ in }\Omega_j,\\
    \mathcal{B}\left(u_j^{(\frac{1}{2})}-\beta_{j-1}u_{j-1}^{(\frac{1}{2})}
    \right)&=&g & \mbox{ on }
    \partial\Omega\cap\partial\Omega_j,\\
    \left(\mathbf{n}_j^T\alpha\nabla + \mathrm{DtN}_{j}^{pml}\right)
    \left(u_j^{(\frac{1}{2})}-\beta_{j-1}u_{j-1}^{(\frac{1}{2})}\right)&=&0
    & \mbox{ on }\Gamma_{j,j-1},\\
    \left(\mathbf{n}_j^T\alpha\nabla+\mathrm{DtN}_{j}^{pml}\right)
    \left(u_j^{(\frac{1}{2})}-\beta_{j-1}u_{j-1}^{(\frac{1}{2})}\right)&=&0
    & \mbox{ on }\Gamma_{j,j+1}.
  \end{array}
  \end{displaymath}
  Since we assumed that $u_{j-1}^{(\frac{1}{2})}=v_{j-1}$ in
  $O_{j-1}=\Omega_j\cap\Omega_{j-1}$, the above BVP is exactly the
  same as the BVP for $v_{j}$ in \Cref{alg:STPDE}.  By uniqueness of
  the solution, we thus deduce that
  $v_j=u_j^{(\frac{1}{2})}-\beta_{j-1}u_{j-1}^{(\frac{1}{2})}$ in
  $\Omega_j$, and in particular $v_j=u_j^{(\frac{1}{2})}$ in $O_j$.
  By induction, this last relation then holds for all $j=1,..,J-1$.
  In the backward sweep of \Cref{alg:DOSMPDE}, we solve the subdomain
  problem
  \begin{displaymath}
  \begin{array}{r@{\hspace{0.2em}}c@{\hspace{0.2em}}ll}
    \mathcal{L}\,u_j^{(1)}&=&f & \mbox{ in }\Omega_j,\\
    \mathcal{B}\,u_j^{(1)}&=&g & \mbox{ on }
    \partial\Omega\cap\partial\Omega_j,\\
    \mathbf{n}_j^T\alpha\nabla u_j^{(1)}+
    \mathrm{DtN}_{j}^{pml}u_j^{(1)}&=&
    \mathbf{n}_j^T\alpha\nabla u_{j-1}^{(\frac{1}{2})}+
    \mathrm{DtN}_{j}^{pml}u_{j-1}^{(\frac{1}{2})} & \mbox{ on }\Gamma_{j,j-1},\\
    u_j^{(1)}&=&u_{j+1}^{(1)} & \mbox{ on }\Gamma_{j,j+1}.
  \end{array}
  \end{displaymath}
  By the same argument as before, we can show that $\tilde{u}_j=u_j^{(1)}$ in
  $O_j$ for $j=J,..,1$ and $\tilde{u}_1=u_1^{(1)}$ in $O_0$.
\end{proof}

\begin{theorem}\label{thm:stmat}
  Assume that the subproblems of the discrete source transfer
  \Cref{alg:STMAT} are well-posed.  Then \Cref{alg:STMAT} is
  equivalent to the discrete DOSM preconditioner shown in
  \Cref{alg:DOSMMAT}, in the specific case at the discrete
   level corresponding
    to the case stated in \Cref{thm:stpde} at the continous level.
\end{theorem}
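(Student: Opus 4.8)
The plan is to mimic the proof of \Cref{thm:stpde} line by line, replacing PDE restrictions and the cutoff function $\beta_j$ by the corresponding index-set restrictions and the matrix $D_j$ of \Cref{eq:STD}. Both algorithms are used as preconditioners, so I start \Cref{alg:DOSMMAT} from the zero initial guess, with the generous-overlap decomposition of \Cref{rem:ddst} (each $\Omega_j=O_{j-1}\cup\Gamma_j\cup O_j$), with $Q_{j\langle}^{\langle}=I$ and $P_{j\langle}^{\langle}$ chosen so that $A_{j\langle}^{\langle}+P_{j\langle}^{\langle}=\tilde S_{j\langle}^{\langle}$ is the PML--Schur complement of \Cref{rem:imschur}, the same PML condition on $\Gamma_{j,j+1}$ in the forward sweep, the Dirichlet condition ($Q_{j\rangle}^{\rangle}=0$, $P_{j\rangle}^{\rangle}=I_{j\rangle}$) on $\Gamma_{j,j+1}$ in the backward sweep as in \Cref{thm:lu}, and with the right-overlap source $\mathbf{f}_{j\rovlpint}$ replaced by $0$ in the forward sweep. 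With these substitutions the $j$-th forward subproblem of \Cref{alg:DOSMMAT}, written in the form \Cref{eq:matrt2}, becomes exactly the block system of \Cref{alg:STMAT} except that its right-hand side on $\Gamma_{j,j-1}$ carries the neighbour iterate rather than the transferred source $\tilde{\mathbf{f}}_{j\lovlpint}$.

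First I would settle the base case $j=1$: substituting the zero guess and zeroing $\mathbf{f}_{1\rovlpint}$, the first forward subproblem of \Cref{alg:DOSMMAT} has right-hand side $[0;\mathbf{f}_{1\thetaint};\mathbf{f}_{2\langle};0;0]$, i.e.\ precisely the first system of \Cref{alg:STMAT} with $\tilde{\mathbf{f}}_{1\lovlpint}=\mathbf{f}_{1\thetaint}$, so by well-posedness $\mathbf{v}_1$ is the restriction of $\mathbf{u}_1^{(1/2)}$ to the d.o.f.\ of $\Omega_1$ and the two agree on $\overline O_1$. For the induction, assuming $\mathbf{u}_{j-1}^{(1/2)}$ and $\mathbf{v}_{j-1}$ agree on $\overline O_{j-1}$, I subtract from $\mathbf{u}_j^{(1/2)}$ the zero-extension into $\Omega_j$ of $D_{j-1}\mathbf{v}_{j-1}$. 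The four relations in \Cref{eq:STD} (with $j$ replaced by $j-1$) are exactly what is needed: they homogenise the transmission rows of the corrected system on $\Gamma_{j,j-1}$, leave the rows on $\Gamma_j$, on the interior of $O_j$ and on $\Gamma_{j,j+1}$ unchanged (since $D_{j-1}\mathbf{v}_{j-1}$ and its induced fluxes vanish there), and turn the source rows on the left overlap from $\mathbf{f}_{j\lovlpint}$ into $\mathbf{f}_{j\lovlpint}-A_{j-1\rovlpint}I^{j-1}_{j-1\rovlpint}(D_{j-1}\mathbf{v}_{j-1})-A_{j-1\rovlpint\, j\langle}I^{j-1}_{j\langle}(D_{j-1}\mathbf{v}_{j-1})=\tilde{\mathbf{f}}_{j\lovlpint}$; hence the correction solves precisely the $j$-th block system of \Cref{alg:STMAT}, so by well-posedness it equals $\mathbf{v}_j$, and since $D_{j-1}\mathbf{v}_{j-1}$ is supported away from $\overline O_j$, $\mathbf{v}_j$ and $\mathbf{u}_j^{(1/2)}$ agree on $\overline O_j$. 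This closes the forward induction for $j=1,\dots,J-1$.

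The backward sweep is handled the same way: the right transmission condition in \Cref{alg:DOSMMAT} is now Dirichlet, so the last block row is $[0,\ I_{j\rangle}]$ with data $\mathbf{u}_{j+1]}^{(1)}$, matching the last row of the backward system of \Cref{alg:STMAT}; the full source is restored (so $\mathbf{f}_{j\rovlpint}$ reappears) and the right-hand side on $\Gamma_{j,j-1}$ is the same as in the forward sweep. Subtracting again the zero-extension of $D_{j-1}\mathbf{v}_{j-1}$ and using \Cref{lem:schur} to identify the Schur-complemented interface rows exactly as in the proof of \Cref{thm:lu}, one obtains $\tilde{\mathbf{u}}_j=\mathbf{u}_j^{(1)}$ on $\overline O_j$ for $j=J,\dots,1$ and $\tilde{\mathbf{u}}_1=\mathbf{u}_1^{(1)}$ on $O_0$; since both algorithms glue their subdomain iterates by the same non-overlapping partition, the global outputs coincide, which is the asserted equivalence of preconditioners. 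I expect the only real work to be this inductive step: one must verify carefully that the four constraints defining $D_j$ in \Cref{eq:STD} are precisely the discrete counterparts of the conditions \Cref{eq:beta} on $\beta_j$, i.e.\ that the substituted correction both homogenises the $\Gamma_{j,j-1}$ rows and reproduces the transferred source $\tilde{\mathbf{f}}_{j+1\lovlpint}$ with no spurious contributions leaking onto $\Gamma_{j,j+1}$ or beyond; everything else reduces to unique solvability of the subproblems and to \Cref{lem:schur}.
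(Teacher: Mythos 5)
Your proposal follows the same strategy as the paper's proof: zero initial guess, a forward induction showing that the correction $\mathbf{u}_j^{(1/2)} - (\text{zero-extension of } D_{j-1}\mathbf{u}_{j-1}^{(1/2)})$ satisfies exactly the $j$-th STMAT system, with the four conditions of \Cref{eq:STD} playing precisely the role of the discrete $\beta_{j-1}$ cutoff (homogenising the $\Gamma_{j,j-1}$ rows, contributing nothing past $\Gamma_j$, and producing $\tilde{\mathbf f}_{j\lovlpint}$), followed by the analogous argument for the backward sweep with the Dirichlet condition on the right. The only cosmetic discrepancies are that the base case $j=1$ has four blocks (the nonexistent $j\langle$ block is deleted) and that the paper does not explicitly invoke \Cref{lem:schur} in the backward step, but these do not change the argument.
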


\begin{proof}
  In the specific case, and with zero initial guess for its use as a
  preconditioner, \Cref{alg:DOSMMAT} first solves
  \begin{displaymath}
  \left[
    \begin{array}{cccc}
      A_{1\thetaint} & A_{1\thetaint2\langle} & &\vphantom{_{j\thetaint}^{(\frac{1}{2})}}\\
      A_{2\langle 1\thetaint} & A_{2\langle} & A_{2\langle 1\rovlpint} &\vphantom{_{1[}^{(\frac{1}{2})}}\\
      & A_{1\rovlpint 2\langle} & A_{1\rovlpint} & A_{1\rovlpint1\rangle}\vphantom{_{j\rovlpint}^{(\frac{1}{2})}}\\
      & & A_{1\rangle 1\rovlpint} & \tilde{S}_{1\rangle}^{\rangle}\vphantom{_{1\rangle}^{(\frac{1}{2})}}
    \end{array}
  \right]
  \left[
    \begin{array}{c}
      \mathbf{u}_{1\thetaint}^{(\frac{1}{2})} \\
      \mathbf{u}_{1[}^{(\frac{1}{2})} \\
      \mathbf{u}_{1\rovlpint}^{(\frac{1}{2})} \\
      \mathbf{u}_{1\rangle}^{(\frac{1}{2})}
    \end{array}
  \right] =
  \left[
    \begin{array}{c}
      \mathbf{f}_{1\thetaint}\vphantom{A_{j\thetaint}^{(\frac{1}{2})}} \\
      \mathbf{f}_{2\langle}\vphantom{A_{j\thetaint}^{(\frac{1}{2})}} \\
      0\vphantom{A_{j\rovlpint}^{(\frac{1}{2})}} \\ 0\vphantom{A_{j\rangle}^{(\frac{1}{2})}}
    \end{array}
  \right],
  \end{displaymath}
  which is exactly the same problem for $\mathbf{v}_1$ in
  \Cref{alg:STMAT}; so $\mathbf{u}_1^{(\frac{1}{2})}=\mathbf{v}_1$ by
  uniqueness of the solution. Now assuming that
  \begin{equation}\label{eq:stositer}
    \mathbf{u}_{j-1\rovlpint}^{(\frac{1}{2})}=\mathbf{v}_{j-1\rovlpint},\;
    \mathbf{u}_{j-1[}^{(\frac{1}{2})}=\mathbf{v}_{j-1[},
  \end{equation}
  we will show these relations also hold for $j+1$ replacing $j$.
  In the specific version of \Cref{alg:DOSMMAT},
  $\mathbf{u}_{j}^{(\frac{1}{2})}$ solves
  \begin{displaymath}
    \small
    \arraycolsep0.1em
    \medmuskip=-1mu
    \thinmuskip=-1mu
    \thickmuskip=-1mu
    \nulldelimiterspace=-1pt
    \scriptspace=0pt
    \left[
      \begin{array}{ccccc}
        \tilde{S}_{j\langle}^{\langle} & A_{j\langle j-1\rovlpint} & & & \vphantom{_{j\langle}^{\frac{1}{2}}}\\
        A_{j-1\rovlpint j\langle} & A_{j-1\rovlpint} & A_{j-1\rovlpint j+1\langle} & &\vphantom{_{j\lovlpint}^{(\frac{1}{2})}}\\
      & A_{j+1\langle j-1\rovlpint} & A_{j+1\langle} & A_{j+1\langle j\rovlpint} &\vphantom{_{j[}^{(\frac{1}{2})}}\\
      & & A_{j\rovlpint j+1\langle} & A_{j\rovlpint} & A_{j\rovlpint j\rangle}\vphantom{_{j\rovlpint}^{(\frac{1}{2})}}\\
      & & & A_{j\rangle j\rovlpint} & \tilde{S}_{j\rangle}\vphantom{_{j\rangle}^{(\frac{1}{2})}}
      \end{array}
    \right]\hspace{-0.5em}
    \left[
      \begin{array}{c}
        \mathbf{u}_{j\langle}^{(\frac{1}{2})} \\
        \mathbf{u}_{j\lovlpint}^{(\frac{1}{2})} \\
        \mathbf{u}_{j[}^{(\frac{1}{2})} \\
        \mathbf{u}_{j\rovlpint}^{(\frac{1}{2})} \\
        \mathbf{u}_{j\rangle}^{(\frac{1}{2})}
      \end{array}
    \right]\hspace{-0.2em}=\hspace{-0.2em}
    \left[
      \begin{array}{c}
        \mathbf{f}_{j\langle}+
        (\tilde{S}_{j\langle}^{\langle}-A_{j\langle})\mathbf{u}_{j-1[}^{(\frac{1}{2})}
        -A_{j\langle j-2\rovlpint}\mathbf{u}_{j-1\lovlpint}^{(\frac{1}{2})}
        \vphantom{_{j\langle}^{(\frac{1}{2})}}\\
        \mathbf{f}_{j\lovlpint}\vphantom{A_j^j}\vphantom{_{j\langle}^{(\frac{1}{2})}}\\
        \mathbf{f}_{j[}\vphantom{A_j^j} \vphantom{_{j\langle}^{(\frac{1}{2})}}\\
        0\vphantom{_{j\langle}^{(\frac{1}{2})}} \\ 0\vphantom{_{j\langle}^{(\frac{1}{2})}}
      \end{array}
    \right].
  \end{displaymath}
  We can further rewrite the above system into the equivalent system
  \begin{displaymath}
    \small
    \arraycolsep0.1em
    \begin{array}{l}
      \left[
      \begin{array}{ccccc}
        \tilde{S}_{j\langle}^{\langle} & A_{j\langle j-1\rovlpint} & &
        &\vphantom{_{j\langle}^{(\frac{1}{2})}}\\
        A_{j-1\rovlpint j\langle} & A_{j-1\rovlpint} & A_{j-1\rovlpint j+1\langle} &
        &\vphantom{_{j\langle}^{(\frac{1}{2})}}\\
      & A_{j+1\langle j-1\rovlpint} & A_{j+1\langle} & A_{j+1\langle j\rovlpint} &\vphantom{_{j\langle}^{(\frac{1}{2})}}\\
      & & A_{j\rovlpint j+1\langle} & A_{j\rovlpint} & A_{j\rovlpint j\rangle}\vphantom{_{j\langle}^{(\frac{1}{2})}}\\
      & & & A_{j\rangle j\rovlpint} & \tilde{S}_{j\rangle}^{\rangle}\vphantom{_{j\langle}^{(\frac{1}{2})}}
      \end{array}
    \right]
    \left[
    \begin{array}{c}
      \mathbf{u}_{j\langle}^{(\frac{1}{2})}
      -I^{j-1}_{j\langle}(D_{j-1}\mathbf{u}_{j-1}^{(\frac{1}{2})})
      \\
      \mathbf{u}_{j\lovlpint}^{(\frac{1}{2})}
      -I^{j-1}_{j-1\rovlpint}(D_{j-1}\mathbf{u}_{j-1}^{(\frac{1}{2})})
      \\
      \mathbf{u}_{j[}^{(\frac{1}{2})} \\
      \mathbf{u}_{j\rovlpint}^{(\frac{1}{2})} \\
      \mathbf{u}_{j\rangle}^{(\frac{1}{2})}
    \end{array}
      \right]\\
      \qquad=
      \left[
      \begin{array}{c}
        0\\
        \mathbf{f}_{j-1\rovlpint}-A_{j-1\rovlpint}I^{j-1}_{j-1\rovlpint}
        (D_{j-1}\mathbf{u}_{j-1}^{(\frac{1}{2})})-
        A_{j-1\rovlpint j\langle}I^{j-1}_{j\langle}(D_{j-1}\mathbf{u}_{j-1}^{(\frac{1}{2})})\\
        \mathbf{f}_{j+1\langle}\vphantom{A_j^j} \\
        0\vphantom{A_j^j} \\ 0\vphantom{A_j^j}
      \end{array}
      \right],
    \end{array}
  \end{displaymath}
  where for the first row we have used
    $I^{j-1}_{j\langle}(D_{j-1}\mathbf{u}_{j-1}^{(\frac{1}{2})})=
    \mathbf{u}_{j-1[}^{(\frac{1}{2})}$,
    $A_{j\langle j-1\rovlpint}I^{j-1}_{j-1\rovlpint}(D_{j-1}$
    $\mathbf{u}_{j-1}^{(\frac{1}{2})})=A_{j\langle
      j-1\rovlpint}\mathbf{u}_{j-1\rovlpint}^{(\frac{1}{2})}$ from \Cref{eq:STD}
    and
    $A_{j\langle j-2\rovlpint}\mathbf{u}_{j-1\lovlpint}^{(\frac{1}{2})}+
    A_{j\langle}\mathbf{u}_{j-1[}^{(\frac{1}{2})}+A_{j\langle j-1\rovlpint}$
    $\mathbf{u}_{j-1\rovlpint}^{(\frac{1}{2})}=\mathbf{f}_{j\langle}$ from
    the system for $\mathbf{u}_{j-1}^{(\frac{1}{2})}$.  By \Cref{eq:stositer}
  this is exactly the same problem for $\mathbf{v}_j$ in \Cref{alg:STMAT}.  By
  induction, we have
  $\mathbf{u}_{j\rovlpint}^{(\frac{1}{2})}=\mathbf{v}_{j\rovlpint}$ and
  $\mathbf{u}_{j[}^{(\frac{1}{2})}=\mathbf{v}_{j[}$ for all $j=1,..,J-1$.  By
  similar arguments, we can also show that in the backward sweep,
  $\mathbf{u}_{j\rovlpint}^{(1)}=\tilde{\mathbf{u}}_{j\rovlpint}$ and
  $\mathbf{u}_{j[}^{(1)}=\tilde{\mathbf{u}}_{j[}$ for all $j=J,..,2$, and
  $\mathbf{u}_{1}^{(1)}=\tilde{\mathbf{u}}_{1}$.
\end{proof}

\subsection{The method using single layer potentials}
\label{sec:stolk}

Stolk summarizes in \cite{Stolk} the main ideas for the method based
on single layer potentials as follows: ``{\it A new domain
  decomposition method is introduced for the heterogeneous 2-D and 3-D
  Helmholtz equations.  Transmission conditions based on the perfectly
  matched layer (PML) are derived that avoid artificial reflections
  and match incoming and outgoing waves at the subdomain
  interfaces}''.

To motivate the method based on single layer potentials, we consider
the free space problem in a homogeneous medium.  In this case, the
source in each subdomain generates waves biased in two directions --
forward and backward, which both are outgoing relative to the
subdomain. In each subdomain, we are interested in the sum of the
waves stimulated by {\it all} the sources, including the sources from
the other subdomains. We thus must account for the waves incoming from
all the other subdomains in each subdomain.  The idea of the algorithm
based on single layer potentials is forward propagation and
accumulation of the waves from the first subdomain to the last one so
that the waves in $\Omega_j$, $1\leq j\leq J$, have accounted for all the
forward going waves generated from the sources in $\Omega_l$ for all
$l< j$.  Then, a similar procedure can be carried out backward so that
the waves in $\Omega_j$ contain also the backward going waves
generated from the sources in $\Omega_m$ for all $m>j$.  The actual
backward sweep in \cite{Stolk} solves however for the correction from
the residual.

To transfer the waves, the author in \cite{Stolk} states: ``{\it We
  have constructed new transmission conditions [\ldots] These are
  designed to ensure that:}
\begin{itemize}
\item[(i)] \it the boundary conditions at the subdomain interfaces are
  non-reflecting;
\item[(ii)] \it if $\Omega_{j-1}$ and $\Omega_j$ are neighboring
  subdomains, then the outgoing wave field from $\Omega_{j-1}$ equals
  the incoming wave field in $\Omega_j$ at the joint boundary and vice
  versa.
\end{itemize}
{\it This is achieved in a simple and accurate way using a PML
  boundary layer added to the subdomains and single layer
  potentials}.''  As noted in \cite{Stolk}, a related approach has
been studied in \cite{Schadle} that is also motivated by matching the
incoming and outgoing waves.  There is, however, a difference in the
concrete forms to achieve the matching; see \Cref{rem:schadle} for a
brief review of the form used in \cite{Schadle}.

The representation of incoming waves is a well-studied topic in computational
electromagnetics; see \cite[pp. 185--220]{THbook}.  The common idea is to
represent the incoming wave $v_{j-1}$ from $\Omega_{j-1}$ to $\Omega_j$ as an
equivalent source term on the surface $\Gamma_{j,j-1}$.  In \cite{Stolk}, the
proposed equivalent source is
$2\delta(s_{j-1})\mathbf{n}_j^T\alpha\nabla v_{j-1}$; here $s_{j-1}$ is a local
coordinate normal to $\Gamma_{j,j-1}$ and $s_{j-1}=0$ corresponds to
$\Gamma_{j,j-1}$, and $\delta(s_{j-1})$ here represents a surface delta
  function.  The author in \cite{Stolk} states\footnote{We replaced the
  original notation in the quote by the notation used in this manuscript.}:
``{\it a short intuitive explanation goes as follows. The term
  $v_{j-1}|_{\Gamma_{j,j-1}}$ exclusively contains forward going waves because
  of the presence of a PML non-reflecting layer immediately to its
  right\footnote{In fact, the PML is placed on the right of $\Gamma_{j-1,j}$
    for $\Omega_{j-1}$.}. The term
  $2\delta(s_{j-1})\mathbf{n}_j^T\alpha\nabla v_{j-1}$ is meant to cause the
  same forward going wave field in the field $v_j$ as in the field
  $v_{j-1}$. [\ldots] the source generates waves propagating both forwardly and
  backwardly in a symmetric fashion.  The factor 2 is introduced so that the
  forward propagating part equals $v_{j-1}$ on $\Gamma_{j,j-1}$. The backward
  propagating part is absorbed in the neighboring PML layer [along
  $\Gamma_{j,j-1}$ for $\Omega_j$].}''

Let us take a closer look at the reasoning above: we want to extend
the waves $v_{j-1}$ from $\Omega_{j-1}$ to $\Omega_j$.  By
\Cref{eq:rep}, we have for $\mathbf{x}\in \Omega_{j}$,
\begin{equation}\label{eq:potentials}
  \medmuskip=-1mu
  \thinmuskip=-1mu
  \thickmuskip=-1mu
  \nulldelimiterspace=-0pt
  \scriptspace=0pt
  v_{j-1}(\mathbf{x}) = \int_{\Gamma_{j,j-1}}(\mathbf{n}_{j}^T\alpha
  \nabla v_{j-1})(\mathbf{y}){G}(\mathbf{x},\mathbf{y})
  -\mathbf{n}_j^T(\mathbf{y})\left(\alpha^T(\mathbf{y})\nabla_{\mathbf{y}}
    G(\mathbf{x},\mathbf{y})\right)v_{j-1}(\mathbf{y})\,
  \mathrm{d}\sigma(\mathbf{y}).
\end{equation}
The Green's function $G(\mathbf{x},\mathbf{y})$ represents the wave
field at $\mathbf{y}\in\Gamma_{j,j-1}$ stimulated by a point source at
$\mathbf{x}\in\Omega_j$.  For the free space problem in a
homogeneous medium, if two point sources at $\mathbf{x}$
and $\mathbf{x}'$ are symmetric w.r.t. $\Gamma_{j,j-1}$, then the
stimulated waves generated by the individual point sources are also
symmetric w.r.t. $\Gamma_{j,j-1}$\footnote{This is like 
  in the method of images for solving PDEs in a half
  space; c.f. \cite{zauderer}.}. 
Hence,
\begin{equation}\label{eq:Gsym}
  \begin{array}{l}
    \forall\mathbf{x}\in\Omega_j,\,\mathbf{x}'\mbox{ and }\mathbf{x}
    \mbox{ symmetric w.r.t. }\Gamma_{j,j-1},\,\forall\mathbf{y}\in\Gamma_{j,j-1}:\\
    G(\mathbf{x},\mathbf{y})=G(\mathbf{x}',\mathbf{y}),\,
    \mathbf{n}_j^T(\mathbf{y})\left(\alpha^T(\mathbf{y})\nabla_{\mathbf{y}}
      (G(\mathbf{x},\mathbf{y})+G(\mathbf{x}',\mathbf{y}))\right)=0.
  \end{array}
\end{equation}
Since both $v_{j-1}$ and $G(\mathbf{x}',\cdot)$ satisfy homogeneous
Helmholtz equations in $\Omega_j$ and represent outgoing waves to the right of
$\Gamma_{j,j+1}$, by using Green's identity we get
\begin{displaymath}
  \medmuskip=-1mu
  \thinmuskip=-1mu
  \thickmuskip=-1mu
  \nulldelimiterspace=-0pt
  \scriptspace=0pt
  \int_{\Gamma_{j,j-1}}\mathbf{n}_j^T(\mathbf{y})
  \left(\alpha^T(\mathbf{y})\nabla_{\mathbf{y}}G(\mathbf{x}',\mathbf{y})\right)
  v_{j-1}(\mathbf{y})\,\mathrm{d}\sigma(\mathbf{y})
  =\int_{\Gamma_{j,j-1}}\mathbf{n}_{j}^T(\mathbf{y})(\alpha \nabla
  v_{j-1}(\mathbf{y})){G}(\mathbf{x}',\mathbf{y})
  \,\mathrm{d}\sigma(\mathbf{y}).
\end{displaymath}
Substituting this and \Cref{eq:Gsym} into \Cref{eq:potentials}, we get the
single layer potential representation
\begin{displaymath}
v_{j-1}(\mathbf{x}) = \int_{\Gamma_{j,j-1}}2\,
\mathbf{n}_{j}^T(\mathbf{y})(\alpha\nabla v_{j-1}(\mathbf{y}))
{G}(\mathbf{x},\mathbf{y})\,\mathrm{d}\sigma(\mathbf{y}),\,\,\mathbf{x}\in\Omega_j,
\end{displaymath}
which is equivalent to being stimulated by the surface source
\begin{displaymath}
v_{j-1}(\mathbf{x}) = \int_{\Omega_j} 2\delta(s_{j-1}(\mathbf{y}))
\mathbf{n}_j^T(\mathbf{y})(\alpha\nabla v_{j-1}(\mathbf{y})) G(\mathbf{x},\mathbf{y})
\,\mathrm{d}\mathbf{y},\,\,\mathbf{x}\in\Omega_j.
\end{displaymath}
This, under the symmetry assumption \Cref{eq:Gsym}, justifies the equivalent
source proposed in \cite{Stolk}.  One can also calculate the representation in
closed form; c.f. \cite{Pike}.

The forward sweep in the method based on single layer potentials is
performed up to the last subdomain $\Omega_J$.  Then, a global
approximation is defined by setting $v:=v_j$ in $\Omega_j$, $j=1,..,J$
(the subdomains are non-overlapping), and a deferred correction
problem will then be solved in the backward sweep. Note that $v$ has
in general jumps across interfaces between subdomains, and the
residual $\tilde{f}:=f-\mathcal{L}u$ involves a very singular
distribution -- the derivative of the surface delta function,
$\delta'(s_{j-1})$.  To avoid the potential obscurity of deciding to
which subdomain such a singular distribution on an {\it interface}
belongs, it is suggested in \cite{Stolk} to use for the backward sweep
another set of non-overlapping subdomains that contains these
distribution residuals in the interior of the subdomains. These
residuals are then taken as new sources which stimulate the correction
wave field.  A procedure similar to the forward sweep but from the
last subdomain to the first one is carried out to find an
approximation of the correction.  Adding the correction to the wave
field previously obtained with the forward sweep gives a global
approximate solution of the original problem, which finalizes the
definition of the preconditioner based on single layer potentials.
Using two sets of subdomains could potentially double the cost of
factorization.  This situation can be avoided by the following trick
from \cite{Stolk}:  for the first set of subdomains the PML along
$\Gamma_{j,j+1}$ begins with using the original operator near
$\Gamma_{j,j+1}$, and only after a certain distance, say one mesh cell,
it changes to the PML modified operator.  The second set of subdomains
are defined by moving $\Gamma_{j,j+1}$ forward by one mesh cell, but
keeping the PML augmented region $\Omega_j\cup\Omega_j^{pml}$ the same
as before (i.e. just moving outward the interfaces that separate
$\Omega_j$ and $\Omega_j^{pml}$ so that $\Omega_j$ gets bigger and
$\Omega_j^{pml}$ gets smaller).  In this way, every subdomain matrix
is unchanged from the forward sweep to the backward sweep and the
same factorization can be used.

We summarize the preconditioner based on single layer potentials in
\Cref{alg:stolkpde} at the PDE level and in \Cref{alg:stolkmat} at the
matrix level. For the matrix version, we give two variants: the first
one, originally presented in \cite{Stolk}, is especially designed for
the 5-point (or 7-point in 3-D) finite difference scheme with $u$
discretized at integer grid points and the continuous interfaces
located at half grid points.  The second form is motivated by finite
element methods with the discrete interfaces superposed on the
continuous interfaces.  While the first matrix form can be explained
as a special discretization of the PDE \Cref{alg:stolkpde}, the second
matrix form is equivalent (under some assumptions) to the first matrix
form, and unlike the PDE form the second matrix form uses the same
partition for the forward and the backward sweeps.  Both matrix forms
do the extension and restriction like ASH does; see \Cref{rem:OSHM}.
\begin{algorithm}
  \caption{{\bf Single layer potential} preconditioner at the {\bf PDE} level}
  \label{alg:stolkpde}
  Input the source terms $f$ and $g$. Suppose the decomposition is
  non-overlapping.\\
  Solve successively for $j=1,..,J$,
  \begin{displaymath}
    \small
  \begin{array}{r@{\hspace{0.2em}}c@{\hspace{0.2em}}ll}
    \mathcal{L}\,v_j&=&f+2\delta(s_{j-1})\mathbf{n}_j^T\alpha\nabla v_{j-1}
    & \mbox{ in }\Omega_j,\\
    \mathcal{B}\,v_j&=&g & \mbox{ on }
    \partial\Omega\cap\partial\Omega_j,\\
    \mathbf{n}_j^T\alpha\nabla v_j+
    \mathrm{DtN}_{j}^{pml}v_j&=&0 & \mbox{ on }\Gamma_{j,j-1},\\
    \mathbf{n}_j^T\alpha\nabla v_j+
    \mathrm{DtN}_{j}^{pml}v_j&=&0 & \mbox{ on }\Gamma_{j,j+1},
  \end{array}
  \end{displaymath}
  where ${\mathrm{DtN}}_{j}^{pml}$ is the PML--DtN operator along
  $\Gamma_{j,j-1}$ and $\Gamma_{j,j+1}$ (see \Cref{rem:impml} for practical
  implementation), $s_{j-1}$ is a local coordinate normal to $\Gamma_{j,j-1}$
  and $s_{j-1}=0$ corresponds to $\Gamma_{j,j-1}$, and $\delta(s_{j-1})$ here
  represents a surface delta function.  The PML for $\Omega_j$ along
  $\Gamma_{j,j+1}$ has a small starting zone between $\Gamma_{j,j+1}$ and
  $\Gamma_{j,j+1}^h$ where the original operator $\mathcal{L}$ is used.

  Let $v\gets v_j$ in $\Omega_j$, $j=1,..,J$ and compute the residual
  $\tilde{f}\gets f-\mathcal{L}v$ in $\Omega$.

  Denote by $\tilde{\Omega}_j$ the resulting subdomain by moving forward the
  boundaries $\Gamma_{j,j+1}$ to $\Gamma_{j,j+1}^h$ and $\Gamma_{j,j-1}$ to
  $\Gamma_{j,j-1}^h$.  In order of $j=J-1,..,1$, solve the problem
  \begin{displaymath}
    \small
  \begin{array}{r@{\hspace{0.2em}}c@{\hspace{0.2em}}ll}
    \mathcal{L}\,w_j&=&\tilde{f}+2\delta(s_{j})\mathbf{n}_j^T\alpha\nabla w_{j+1}
    & \mbox{ in }\tilde{\Omega}_j,\\
    \mathcal{B}\,w_j&=&g & \mbox{ on }
    \partial\Omega\cap\partial\tilde{\Omega}_j,\\
    \mathbf{n}_j^T\alpha\nabla w_j+
    \mathrm{DtN}_{j}^{pml}w_j&=&0 & \mbox{ on }\Gamma_{j,j-1}^h,\\
    \mathbf{n}_j^T\alpha\nabla w_j+
    \mathrm{DtN}_{j}^{pml}w_j&=&0 & \mbox{ on }\Gamma_{j,j+1}^h,
  \end{array}
  \end{displaymath}
  where the $\mathrm{DtN}_j^{pml}$ on $\Gamma_{j,j+1}^h$
  ($\Gamma_{j,j-1}^h$) uses the PML as the subset (superset) of the PML for $v_j$ on
  $\Gamma_{j,j+1}$ ($\Gamma_{j,j-1}$) that starts later (earlier using the
  original operator).

  Output $\tilde{u}\gets v+w_j$ in $\tilde{\Omega}_j$, $j=1,..,J-1$ and
  $\tilde{u}\gets v$ in $\tilde{\Omega}_J$.
\end{algorithm}

\begin{algorithm}
  \caption{{\bf Single layer potential} preconditioner at the {\bf matrix} level}
  \label{alg:stolkmat}
  Input the r.h.s. $\mathbf{f}$.  Suppose the $\Omega_j$'s are non-overlapping.
  Choose one of the following two forms (see \Cref{thm:stolkmat} for their
  equivalence under certian conditions).

  {\bf Form 1.} Extend the non-overlapping subdomains one layer beyond each
  interface.  We indicate the extra d.o.f. beyond the left interfaces of the
  $j$-th subdomain by the subscripts $j\langle\langle$ or $j-1[[$, and similarly
  for $j\rangle\rangle$ or $j+1]]$.

  Solve successively for $j=1,..,J$,
  \begin{displaymath}\small
  \left[
    \begin{array}{ccccc}
      \tilde{S}_{j\langle\langle}^{\langle}
      & A_{j\langle\langle j\langle} & & &\\
      A_{j\langle j\langle\langle} & A_{j\langle}
      & A_{j\langle j\bullet} & &\\
      & A_{j\bullet j\langle} & A_{j\bullet} & A_{j\bullet j\rangle} &\\
      & & A_{j\rangle j\bullet} & A_{j\rangle} & A_{j\rangle j\rangle\rangle}\\
      & & & A_{j\rangle\rangle j\rangle} & \tilde{S}_{j\rangle\rangle}^{\rangle}
    \end{array}
  \right]
  \left[
    \begin{array}{c}
      \mathbf{v}_{j\langle\langle}\vphantom{\tilde{S}_{j\langle}^{\langle}}\\
      \mathbf{v}_{j\langle}\\
      \mathbf{v}_{j\bullet}\\
      \mathbf{v}_{j\rangle}\\
      \mathbf{v}_{j\rangle\rangle}\vphantom{\tilde{S}_{j\langle}^{\langle}}
    \end{array}
  \right] =
  \left[
    \begin{array}{c}
      \mathbf{v}_{j-1[[}
      +A_{j\langle\langle j\langle}\mathbf{v}_{j-1[}\\
      \mathbf{f}_{j\langle}-A_{j\langle j\langle\langle}
      \mathbf{v}_{j-1[[}-\mathbf{v}_{j-1[}\\
      \mathbf{f}_{j\bullet}\vphantom{^\langle_j}\\
      0\vphantom{^\langle_j}\\
      0\vphantom{^\langle_j}
    \end{array}
  \right],
  \end{displaymath}
  where $\tilde{S}_{j\langle\langle}^{\langle}$ and $\tilde{S}_{j\rangle\rangle}^{\rangle}$
  are the Schur complements from the PML (see \Cref{rem:imschur} for practical
  implementation).

  Let $\mathbf{v}\gets\sum_{j=1}^{J}R_j^T(I_j^{j\langle}I^j_{j\langle}+
  I_j^{j\bullet}I^j_{j\bullet})\mathbf{v}_{j}$ and compute
  $\mathbf{\tilde{f}}\gets\mathbf{f}-A\mathbf{v}$.  Let $\mathbf{w}_J\gets0$.

  In order of $j=J-1,..,1$, solve the correction problem
  \begin{displaymath}
    \small
  \left[
    \begin{array}{ccccc}
      \tilde{S}_{j\langle\langle}^{\langle}
      & A_{j\langle\langle j\langle} & & &\\
      A_{j\langle j\langle\langle} & A_{j\langle}
      & A_{j\langle j\bullet} & &\\
      & A_{j\bullet j\langle} & A_{j\bullet} & A_{j\bullet j\rangle} &\\
      & & A_{j\rangle j\bullet} & A_{j\rangle} & A_{j\rangle j\rangle\rangle}\\
      & & & A_{j\rangle\rangle j\rangle} & \tilde{S}_{j\rangle\rangle}^{\rangle}
    \end{array}
  \right]
  \left[
    \begin{array}{c}
      \mathbf{w}_{j\langle\langle}\vphantom{\tilde{S}_{j\langle}^{\langle}}\\
      \mathbf{w}_{j\langle}\\
      \mathbf{w}_{j\bullet}\\
      \mathbf{w}_{j\rangle}\\
      \mathbf{w}_{j\rangle\rangle}\vphantom{\tilde{S}_{j\langle}^{\langle}}
    \end{array}
  \right] =
  \left[
    \begin{array}{c}
      0\vphantom{^\langle_j}\\
      0\vphantom{^\langle_j}\\
      \mathbf{\tilde{f}}_{j\bullet}\vphantom{^\langle_j}\\
      \mathbf{\tilde{f}}_{j\rangle}-A_{j\rangle j\rangle\rangle}
      \mathbf{w}_{j+1]]}-\mathbf{w}_{j+1]}\\
      \mathbf{w}_{j+1]]}
      +A_{j\rangle\rangle j\rangle}\mathbf{w}_{j+1]}
    \end{array}
  \right].
  \end{displaymath}

  Compute the output $\mathbf{\tilde{u}}\gets\mathbf{v}+\sum_{j=1}^{J-1}R_j^T
  (I_j^{j\bullet}I_{j\bullet}^j+I_j^{j\rangle}I^j_{j\rangle})\mathbf{w}_j$.

  %

  {\bf Form 2.}  Solve successively for $j=1,..J$,
  \begin{displaymath}
    \small
  \left[
    \begin{array}{ccc}
      \tilde{S}_{j\langle}^{\langle} & A_{j\langle j\bullet} &\\
      A_{j\bullet j\langle} & A_{j\bullet} & A_{j\bullet j\rangle}\\
      & A_{j\rangle j\bullet} & \tilde{S}_{j\rangle}^{\rangle}
    \end{array}
  \right]
  \left[
    \begin{array}{c}
      \mathbf{v}_{j\langle}\vphantom{\tilde{S}_{j\langle}^{\langle}}\\
      \mathbf{v}_{j\bullet}\\
      \mathbf{v}_{j\rangle}\vphantom{\tilde{S}_{j\langle}^{\langle}}
    \end{array}
  \right] =
    \left[
    \begin{array}{c}
      \mathbf{f}_{j\langle}-2A_{j\langle j-1\bullet}\mathbf{v}_{j-1\bullet}
      -A_{j\langle}\mathbf{v}_{j-1[}\\
      \mathbf{f}_{j\bullet}\vphantom{^j_{j\langle}}\\
      0
    \end{array}
  \right],
  \end{displaymath}
  where $\tilde{S}_{j\langle}^{\langle}$ and $\tilde{S}_{j\rangle}^{\rangle}$
  are the Schur complements from the PML (see \Cref{rem:imschur} for practical
  implementation).

  Let
  $\mathbf{v}\gets\sum_{j=1}^{J}R_j^T(I_j^{j\langle}I^j_{j\langle}+I_j^{j\bullet}I_{j\bullet}^j)
  \mathbf{v}_{j}$ and compute $\mathbf{\tilde{f}}\gets\mathbf{f}-A\mathbf{v}$.  Let
  $\mathbf{w}_J\gets0$.

  In order of $j=J-1,..,1$, solve the correction problem
  \begin{displaymath}
    \small
    \left[
    \begin{array}{ccc}
      \tilde{S}_{j\langle}^{\langle} & A_{\langle j\bullet} &\\
      A_{j\bullet j\langle} & A_{j\bullet} & A_{j\bullet j\rangle}\\
      & A_{j\rangle j\bullet} & \tilde{S}_{j\rangle}^{\rangle}
    \end{array}
  \right]
  \left[
    \begin{array}{c}
      \mathbf{w}_{j\langle}\vphantom{\tilde{S}_{j\langle}^{\langle}}\\
      \mathbf{w}_{j\bullet}\\
      \mathbf{w}_{j\rangle}\vphantom{\tilde{S}_{j\langle}^{\langle}}
    \end{array}
  \right] =
    \left[
    \begin{array}{c}
      0\\
      \mathbf{\tilde{f}}_{j\bullet}\vphantom{^j_{j\langle}}\\
      \mathbf{\tilde{f}}_{j\rangle}-2
        A_{j\rangle j+1\bullet}\mathbf{w}_{j+1\bullet}
        -A_{j\rangle}\mathbf{w}_{j+1]}
    \end{array}
  \right].
  \end{displaymath}

  Compute the output $\mathbf{\tilde{u}}\gets\mathbf{v}+\sum_{j=1}^{J-1}R_j^T(
  I_j^{j\bullet}I_{j\bullet}^j+I_j^{j\rangle}I^j_{j\rangle})\mathbf{w}_j$.
\end{algorithm}

\begin{theorem}
  Suppose the subproblems of \Cref{alg:stolkpde} are well-posed.  If
  the PML--DtN operators on the two sides of each interface are equal,
  i.e. $\mathrm{DtN}_j^{pml}=\mathrm{DtN}_{j+1}^{pml}$ on
  $\Gamma_{j,j+1}$ and on $\Gamma_{j,j+1}^h$, then the single layer
  potential preconditioner as shown in \Cref{alg:stolkpde} is
  equivalent to one iteration of \Cref{alg:DOSMPDE} with zero initial
  guess, $\mathcal{Q}_j^{\langle}=\mathcal{I}$,
  $\mathcal{P}_j^{\langle}=\mathrm{DtN}_j^{pml}|_{\Gamma_{j,j-1}^{h}}$,
  $\mathcal{Q}_j^{\rangle}=\mathcal{I}$,
  $\mathcal{P}_j^{\rangle}=\mathrm{DtN}_j^{pml}|_{\Gamma_{j,j+1}^{h}}$
  and using the two non-overlapping partitions as in
  \Cref{alg:stolkpde}, one partition for the forward and the
  other for the backward sweep.
\end{theorem}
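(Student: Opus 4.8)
The plan is to follow the template already used for \Cref{thm:stpde} and \Cref{thm:GDCPDE}: run \Cref{alg:DOSMPDE} with zero initial guess and the transmission operators prescribed in the statement, and prove by induction (first over the forward sweep, then over the backward sweep) that its iterates coincide with those of \Cref{alg:stolkpde}. First I would reconcile the two decompositions. Because the PML along $\Gamma_{j,j\pm 1}$ in \Cref{alg:stolkpde} uses the \emph{original} operator $\mathcal L$ for one mesh cell before switching to the modified one, its PML--DtN map agrees with $\mathrm{DtN}_j^{pml}$ built directly on the shifted interface $\Gamma_{j,j\pm1}^h$; hence the subdomain matrices/operators are the same in the forward and the backward sweeps, and one may legitimately view both halves of the iteration of \Cref{alg:DOSMPDE} as run on the shifted non-overlapping decomposition $\{\tilde\Omega_j\}$ (using the forward partition for the forward half-sweep and the partition $\{\tilde\Omega_j\}$ for the backward half-sweep, which \Cref{alg:DOSMPDE} permits since transmission conditions and partitions may change from one sweep to the next). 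The base case $j=1$ is immediate: with zero initial guess and no left neighbour, the DOSM subproblem for $u_1^{(1/2)}$ and the first solve of \Cref{alg:stolkpde} for $v_1$ are literally the same BVP.

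The central ingredient is the translation of the equivalent surface source $2\delta(s_{j-1})\mathbf n_j^T\alpha\nabla v_{j-1}$ into the absorbing transmission condition of \Cref{alg:DOSMPDE}. Assuming inductively $u_{j-1}^{(1/2)}=v_{j-1}$ on $\Omega_{j-1}$, I would split both $v_j$ and $u_j^{(1/2)}$ by linearity into a local particular solution (driven by $f$ inside the subdomain, with homogeneous PML--DtN conditions on both interfaces) plus a homogeneous solution carrying the left neighbour's influence; the local parts are the identical problem in both algorithms. For the transferred parts, I would invoke the single layer potential representation derived in the excerpt around \Cref{eq:potentials}--\Cref{eq:Gsym}, which is valid precisely under the hypothesis $\mathrm{DtN}_j^{pml}=\mathrm{DtN}_{j+1}^{pml}$ (this equality is exactly the statement that point sources mirrored across the interface generate mirrored fields, making the factor-$2$ single layer density reproduce $v_{j-1}$ on $\Gamma_{j,j-1}^h$). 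Unfolding the left PML, the source $2\delta(s_{j-1})\mathbf n_j^T\alpha\nabla v_{j-1}$ acting in $\Omega_j\cup\Omega_j^{pml}$ with homogeneous outer conditions therefore produces the field whose Dirichlet trace on $\Gamma_{j,j-1}^h$ equals $v_{j-1}$ and which is outgoing to the right; consequently its traces satisfy $\mathbf n_j^T\alpha\nabla(\cdot)+\mathrm{DtN}_j^{pml}(\cdot)=\mathbf n_j^T\alpha\nabla v_{j-1}+\mathrm{DtN}_j^{pml}v_{j-1}$ on $\Gamma_{j,j-1}^h$, i.e.\ it solves exactly the homogeneous DOSM subproblem with transmission data taken from $u_{j-1}^{(1/2)}=v_{j-1}$. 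Uniqueness of the subdomain problems then gives $v_j=u_j^{(1/2)}$ on $\Omega_j$, closing the forward induction.

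For the gluing and the backward sweep I would argue as in \Cref{thm:GDCPDE}. After the forward sweep, the SLP global field $v:=\bigoplus_j v_j$ over $\{\Omega_j\}$ is the glued DOSM half-iterate $\sum_j\mathcal E_j(\phi_j u_j^{(1/2)})$ for the forward (trivially non-overlapping) partition of unity, so $\tilde f=f-\mathcal L v$ is the DOSM residual; the full-restriction/restricted-extension bookkeeping here is the ASH-type one of \Cref{rem:OSHM}. Setting $w_j:=u_j^{(1)}-v|_{\tilde\Omega_j}$, one checks that (i)~$\mathcal L w_j=\tilde f$ in $\tilde\Omega_j$; (ii)~on the left interface the transmission data cancels, because $v|_{\Omega_j}=u_j^{(1/2)}$ already satisfied the same left condition as $u_j^{(1)}$, so $w_j$ carries homogeneous PML--DtN data on $\Gamma_{j,j-1}^h$; and (iii)~on the right interface the inhomogeneous absorbing data from $u_{j+1}^{(1)}$ becomes, after subtracting $v$, absorbing data from $w_{j+1}$, which by the same symmetry argument as above is reproduced by the surface source $2\delta(s_j)\mathbf n_j^T\alpha\nabla w_{j+1}$ with homogeneous outer conditions. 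Since the subproblem on $\Omega_J$ is unchanged between the two sweeps, $u_J^{(1)}=u_J^{(1/2)}$, hence $w_J=0$, which matches the initialization of the backward loop in \Cref{alg:stolkpde}; a downward induction $j=J-1,\dots,1$ then identifies $w_j$ with the SLP correction, and assembling gives $\tilde u=v+w_j$ on $\tilde\Omega_j$ equal to the glued DOSM iterate.

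\textbf{Main obstacle.} I expect the delicate point to be step~(ii)--(iii) above together with the use of the single layer potential identity under PML: the glued forward field $v$ is discontinuous across the interfaces, so its residual $\tilde f$ contains a $\delta'$ distribution supported on $\Gamma_{j,j+1}$, which is exactly why \Cref{alg:stolkpde} switches to the shifted decomposition $\{\tilde\Omega_j\}$ so that this singular residual lands in the interior of a subdomain; keeping track of where these distributions live and verifying that the trace identities still hold in the appropriate (weighted) function spaces, rather than any new idea, is the technical heart of the proof. The symmetry hypothesis $\mathrm{DtN}_j^{pml}=\mathrm{DtN}_{j+1}^{pml}$ is what makes \Cref{eq:Gsym} an equality instead of an approximation, and is the reason the single-layer-source-to-transmission-condition translation is exact.
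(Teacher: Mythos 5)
Your high-level structure (forward induction with zero initial guess, uniqueness of the subdomain BVPs, glue into the deferred-correction/ASH bookkeeping, downward induction over the shifted partition) matches the paper's outline, and your identification of the delicate point --- the $\delta'$ residual at interfaces and the purpose of the shifted decomposition $\{\tilde\Omega_j\}$ --- is accurate. The problem is the core step, where you translate the surface source $2\delta(s_{j-1})\mathbf n_j^T\alpha\nabla v_{j-1}$ into the DOSM transmission condition.

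You prove that identification via the Green's-function / mirror-symmetry argument of \Cref{eq:potentials}--\Cref{eq:Gsym}, and you assert that the hypothesis $\mathrm{DtN}_j^{pml}=\mathrm{DtN}_{j+1}^{pml}$ is ``exactly the statement that point sources mirrored across the interface generate mirrored fields''. That is not correct. The mirror-symmetry relation \Cref{eq:Gsym} is stated in the paper for the free-space Green's function in a homogeneous medium and is used only to \emph{motivate} the factor-2 single layer source; it is strictly stronger than the operator identity $\mathrm{DtN}_j^{pml}=\mathrm{DtN}_{j+1}^{pml}$ that the theorem actually assumes (two PML regions of entirely different shape or discretization can induce the same DtN map without any geometric mirror symmetry). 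As written, your argument therefore proves a weaker theorem than the one stated, and the Green's-function machinery is an unnecessary detour.

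The paper's proof of the key step is a two-line algebraic manipulation that works at the level of operators, with no representation formula needed. Write the DOSM left transmission condition
\begin{displaymath}
  \mathbf n_j^T\alpha\nabla u_j^{(1/2)}+\mathrm{DtN}_j^{pml}u_j^{(1/2)}
  =\mathbf n_j^T\alpha\nabla u_{j-1}^{(1/2)}+\mathrm{DtN}_j^{pml}u_{j-1}^{(1/2)}
  \quad\mbox{on }\Gamma_{j,j-1}.
\end{displaymath}
Now use two facts: the previous subdomain's homogeneous right condition
$\mathbf n_{j-1}^T\alpha\nabla u_{j-1}^{(1/2)}+\mathrm{DtN}_{j-1}^{pml}u_{j-1}^{(1/2)}=0$
together with $\mathbf n_j=-\mathbf n_{j-1}$ gives
$\mathrm{DtN}_{j-1}^{pml}u_{j-1}^{(1/2)}=\mathbf n_j^T\alpha\nabla u_{j-1}^{(1/2)}$; and the hypothesis $\mathrm{DtN}_j^{pml}=\mathrm{DtN}_{j-1}^{pml}$ on $\Gamma_{j,j-1}$ lets you replace $\mathrm{DtN}_j^{pml}u_{j-1}^{(1/2)}$ by $\mathrm{DtN}_{j-1}^{pml}u_{j-1}^{(1/2)}$. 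The right-hand side collapses to $2\,\mathbf n_j^T\alpha\nabla u_{j-1}^{(1/2)}$, i.e.\ the DOSM subproblem is the homogeneous PML condition plus a Neumann jump of size $2\,\mathbf n_j^T\alpha\nabla u_{j-1}^{(1/2)}$. Unfolding the PML and moving that Neumann jump to the right-hand side of the PDE as a surface source gives precisely $2\delta(s_{j-1})\mathbf n_j^T\alpha\nabla v_{j-1}$, which is the SLP subproblem for $v_j$. Uniqueness then closes the induction, and the backward sweep follows the same rewriting under the deferred-correction form, as you sketched. If you replace your Green's-function step with this direct manipulation, the rest of your argument goes through at the level of generality of the theorem.
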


\begin{proof}
  By the zero initial guess and the specific conditions for
  \Cref{alg:DOSMPDE} in the theorem, the $(j-1)$-st subproblem of
  \Cref{alg:DOSMPDE} imposes the following condition on
  $\Gamma_{j-1,j}=\Gamma_{j,j-1}$, since the partition is
  non-overlapping:
  \begin{displaymath}
  \mathbf{n}_{j-1}^T\alpha\nabla v_{j-1}+ \mathrm{DtN}_{j-1}^{pml}v_{j-1}=0.
  \end{displaymath}
  Note that $\mathbf{n}_j=-\mathbf{n}_{j-1}$ on $\Gamma_{j,j-1}$.  Substituting
  these and the assumption $\mathrm{DtN}_j^{pml}=\mathrm{DtN}_{j-1}^{pml}$ into
  the transmission condition on $\Gamma_{j,j-1}$ of the $j$-th subproblem, we
  find
  \begin{displaymath}
  \mathbf{n}_{j}^T\alpha\nabla v_{j}+ \mathrm{DtN}_{j}^{pml}v_{j}=
  2\mathbf{n}_{j}^T\alpha\nabla v_{j-1},
  \end{displaymath}
  which imposes a Neumann jump between $\Omega_j$ and the PML on the
  other side of $\Gamma_{j,j-1}$.  We recover the forward sweep of
  \Cref{alg:stolkpde} by moving the Neumann jump to the r.h.s. of the
  PDE as a surface source. Between the forward and the backward sweep,
  \Cref{alg:stolkpde} takes the residual $\tilde{f}$ and introduces
  overlaps of the old subdomains used in forward sweep and the new
  subdomains used in backward sweep.  This gives the deferred
  correction form as in \Cref{alg:GDCPDE} but mixed with the
  single layer potential on $\Gamma_{j,j+1}^h$ too.  Then, we can
  conclude by adapting the proof of \Cref{thm:GDCPDE} and the same
  arguments as in the forward sweep.
\end{proof}

\begin{theorem}\label{thm:stolkmat}
  Let \Cref{alg:GDCMAT} use
  $Q_{j\langle}^{\langle}=I_{j\langle}$, $Q_{j\rangle}^{\rangle}=I_{j\rangle}$,
  $P_{j\langle}^{\langle}=\tilde{S}_{j\langle}^{\langle}-A_{j\langle}^{\langle}$,
  $P_{j\rangle}^{\rangle}=\tilde{S}_{j\rangle}^{\rangle}-A_{j\rangle}^{\rangle}$,
  and let the initial guess be zero.  The following statements about
  \Cref{alg:stolkmat} then hold:
  \begin{itemize}
  \item[$1^{\circ}$] Suppose the subproblems of the first form of
    \Cref{alg:stolkmat} are well-posed and
    $\tilde{S}_{j\langle\langle}^{\langle}$,
    $\tilde{S}_{j\rangle\rangle}^{\rangle}$ are invertible.  Let
    $\tilde{S}_{j\langle}^{\langle}:=A_{j\langle} -A_{j\langle
      j\langle\langle}\left(\tilde{S}_{j\langle\langle}^{\langle}\right)^{-1}
    A_{j\langle\langle j\langle}$ and
    $\tilde{S}_{j\rangle}^{\rangle}:=A_{j\rangle} -A_{j\rangle
      j\rangle\rangle}\left(\tilde{S}_{j\rangle\rangle}^{\rangle}\right)^{-1}
    A_{j\rangle\rangle j\rangle}$.  If %
    $I_{j\langle\langle}=I_{j\langle}=-A_{j\langle j\langle\langle}$,
    $\tilde{S}_{j\langle\langle}^{\langle}=\tilde{S}_{j-1\rangle}^{\rangle}$ for
    $j=2,..,J$, $I_{j\rangle\rangle}=I_{j\rangle}=-A_{j\rangle
      j\rangle\rangle}$,
    $\tilde{S}_{j\rangle\rangle}^{\rangle}=\tilde{S}_{j+1\langle}^{\langle}$ for
    $j=1,..,J-1$ and $A_{j\langle j-1\bullet}=[A_{j\langle j\langle\langle}, 0]$,
    then the first form of \Cref{alg:stolkmat} is equivalent to the
    harmonic extension variant of \Cref{alg:GDCMAT}, see
    \Cref{rem:OSHM}.
  \item[$2^{\circ}$] Suppose the subproblems of the second form of
    \Cref{alg:stolkmat} are well-posed.  If
    $\tilde{S}_{j\langle}^{\langle}=\tilde{S}_{j-1\rangle}^{\rangle}$ for
    $j=2,..,J$ and
    $\tilde{S}_{j\rangle}^{\rangle}=\tilde{S}_{j+1\langle}^{\langle}$ for
    $j=1,..,J-1$, then the second form of \Cref{alg:stolkmat} is
    equivalent to the harmonic extension variant of \Cref{alg:GDCMAT},
    see \Cref{rem:OSHM}.
  \end{itemize}
\end{theorem}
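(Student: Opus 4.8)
The plan is to transcribe, at the discrete level, the proof of the PDE-level statement immediately preceding this theorem, combined with the running-iterate bookkeeping used in the proof of \Cref{thm:GDCMAT}; the two matrix forms are then linked by eliminating the extra PML layers of Form~1. (Equivalently one could first prove equivalence with \Cref{alg:DOSMMAT} and then invoke \Cref{thm:GDCMAT} together with \Cref{rem:OSHM}, but it is cleaner here to argue directly against the harmonic-extension variant of \Cref{alg:GDCMAT}.)

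I would start with part $2^{\circ}$. First note that, for the stated choice $Q_{j\langle}^{\langle}=I_{j\langle}$, $Q_{j\rangle}^{\rangle}=I_{j\rangle}$, $P_{j\langle}^{\langle}=\tilde S_{j\langle}^{\langle}-A_{j\langle}^{\langle}$, $P_{j\rangle}^{\rangle}=\tilde S_{j\rangle}^{\rangle}-A_{j\rangle}^{\rangle}$, the subdomain coefficient matrix of \Cref{alg:GDCMAT} is exactly the one appearing in the second form of \Cref{alg:stolkmat}; this is the rewriting \Cref{eq:matrt2}. The substantive work is to match the right-hand sides, which I would do by induction over the forward substeps $j=1,\dots,J$ following the scheme of \Cref{thm:GDCMAT}: assuming that the single-layer iterate $\mathbf{v}_j$ of \Cref{alg:stolkmat} equals $R_j$ applied to the running global approximation that the harmonic-extension variant of \Cref{alg:GDCMAT} holds right after its $j$-th update, I would evaluate the block residual $\Phi_j R_j(\mathbf{f}-A\mathbf{u}^{(\cdot)})$ that \Cref{alg:GDCMAT} feeds to the next solve. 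By the block-tridiagonal assumption of \Cref{sec:dd} only the contribution carried by $\overline{\Omega}_{j-1}$ reaches the interface $\Gamma_{j,j-1}$, so the $\langle$-block of that residual involves only $\mathbf{v}_{j-1\bullet}$ and $\mathbf{v}_{j-1[}$; the crucial point is that, because the $(j\langle,j\langle)$-entry of the coefficient matrix is the one-sided Schur complement $\tilde S_{j\langle}^{\langle}$ and not $A_{j\langle}^{\langle}$, and because the compatibility hypothesis gives $\tilde S_{j\langle}^{\langle}=\tilde S_{j-1\rangle}^{\rangle}$, the interface term doubles and produces exactly $\mathbf{f}_{j\langle}-2A_{j\langle j-1\bullet}\mathbf{v}_{j-1\bullet}-A_{j\langle}\mathbf{v}_{j-1[}$. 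This is the discrete counterpart of the computation $\mathbf{n}_j^T\alpha\nabla v_j+\mathrm{DtN}_j^{pml}v_j=2\,\mathbf{n}_j^T\alpha\nabla v_{j-1}$ in the preceding PDE proof, and it is precisely where the single-layer source with its factor $2$ appears. The $\bullet$- and $\rangle$-blocks reduce to $\mathbf{f}_{j\bullet}$ and $0$ by the same localization argument, so $\mathbf{v}_j$ solves exactly the Form~2 system; the update rule of \Cref{alg:GDCMAT} with full extension (as in ASH, see \Cref{rem:OSHM}) and the weighting conventions then advance the induction. The backward/correction sweep is treated identically, now using $\tilde S_{j\rangle}^{\rangle}=\tilde S_{j+1\langle}^{\langle}$, the initialization $\mathbf{w}_J\gets0$, and the fact that $\tilde{\mathbf{f}}=\mathbf{f}-A\mathbf{v}$ is precisely the residual \Cref{alg:GDCMAT} passes to its second sweep; the end cases $j=1$ and $j=J$ are handled by the usual convention of deleting the non-existent interface blocks.

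For part $1^{\circ}$ I would reduce the first form to the second. In each Form~1 subdomain solve the extra-layer unknowns $\mathbf{v}_{j\langle\langle}$ and $\mathbf{v}_{j\rangle\rangle}$ couple to the remaining unknowns only through the rows and columns carrying $\tilde S_{j\langle\langle}^{\langle}$, $A_{j\langle\langle j\langle}$, $A_{j\langle j\langle\langle}$ (and their right analogues). Since $\tilde S_{j\langle\langle}^{\langle}$ and $\tilde S_{j\rangle\rangle}^{\rangle}$ are assumed invertible, I would eliminate those blocks by block Gaussian elimination, as in \Cref{lem:schur}; by the definitions $\tilde S_{j\langle}^{\langle}:=A_{j\langle}-A_{j\langle j\langle\langle}(\tilde S_{j\langle\langle}^{\langle})^{-1}A_{j\langle\langle j\langle}$ and $\tilde S_{j\rangle}^{\rangle}:=A_{j\rangle}-A_{j\rangle j\rangle\rangle}(\tilde S_{j\rangle\rangle}^{\rangle})^{-1}A_{j\rangle\rangle j\rangle}$ the reduced coefficient matrix is literally the Form~2 matrix. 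It then remains to push the right-hand side through the elimination and simplify, using the normalizations $I_{j\langle\langle}=I_{j\langle}=-A_{j\langle j\langle\langle}$, $I_{j\rangle\rangle}=I_{j\rangle}=-A_{j\rangle j\rangle\rangle}$, the matchings $\tilde S_{j\langle\langle}^{\langle}=\tilde S_{j-1\rangle}^{\rangle}$ and $\tilde S_{j\rangle\rangle}^{\rangle}=\tilde S_{j+1\langle}^{\langle}$, and $A_{j\langle j-1\bullet}=[A_{j\langle j\langle\langle},0]$: these are exactly what is needed to recognize the data $\mathbf{v}_{j-1[[}$ on the overlapping extra layer as the harmonic extension of $\mathbf{v}_{j-1}$ into its right PML layer, so that the $A_{j\langle j\langle\langle}$-terms telescope and the reduced $\langle$-block collapses to $\mathbf{f}_{j\langle}-2A_{j\langle j-1\bullet}\mathbf{v}_{j-1\bullet}-A_{j\langle}\mathbf{v}_{j-1[}$, i.e.\ to the Form~2 right-hand side. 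The global-assembly step $\mathbf{v}\gets\sum_jR_j^T(\cdots)\mathbf{v}_j$ is unaffected by the elimination because the extra-layer d.o.f.\ do not enter it, so the output of Form~1 equals that of Form~2, and part $1^{\circ}$ follows from part $2^{\circ}$.

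I expect the main obstacle to be the residual bookkeeping in part $2^{\circ}$, specifically verifying that the coefficient $2$ really does emerge from $\Phi_j R_j(\mathbf{f}-A\mathbf{u}^{(\cdot)})$ once $A_{j\langle}^{\langle}$ on the diagonal has been replaced by $\tilde S_{j\langle}^{\langle}$ and the hypothesis $\tilde S_{j\langle}^{\langle}=\tilde S_{j-1\rangle}^{\rangle}$ is invoked; this requires tracking the splitting $A_{j\langle}=A_{j\langle}^{\langle}+A_{j\langle}^{\rangle}$ and the placement of the interface degrees of freedom with care, and it is the discrete shadow of the half-space symmetry (method of images) argument that justifies the single-layer source in the first place. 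The analogous step in part $1^{\circ}$ --- identifying $\mathbf{v}_{j-1[[}$ with a harmonic extension --- is the second place where the compatibility hypotheses must be used precisely.
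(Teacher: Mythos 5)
Your proof of $2^{\circ}$ captures the paper's argument: the coefficient matrix of \Cref{alg:GDCMAT} with the stated $P,Q$ is exactly the Form~2 matrix by \Cref{eq:matrt2}, and the factor $2$ in the $j\langle$-block of the r.h.s.\ comes precisely from combining the one-sided Schur complement relation $\tilde S_{j\langle}^{\langle}\mathbf{v}_{j-1[}=-A_{j\langle j-1\bullet}\mathbf{v}_{j-1\bullet}$ (the zeroed last row of the $(j-1)$-st solve) with the compatibility $\tilde S_{j\langle}^{\langle}=\tilde S_{j-1\rangle}^{\rangle}$, after which one falls back on the induction of \Cref{thm:GDCMAT} and \Cref{rem:OSHM}.

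Your route for $1^{\circ}$, however, has a genuine gap. Eliminating the extra-layer unknowns $\mathbf{v}_{j\langle\langle},\mathbf{v}_{j\rangle\rangle}$ does reproduce the Form~2 \emph{coefficient} matrix by the stated Schur-complement definitions, but the reduced $j\langle$-block of the r.h.s.\ comes out as
\begin{displaymath}
\mathbf{f}_{j\langle}-A_{j\langle j-1\bullet}\mathbf{v}_{j-1\bullet}
+\bigl(\tilde S_{j\langle}^{\langle}-A_{j\langle}\bigr)\mathbf{v}_{j-1[}\,,
\end{displaymath}
which is the r.h.s.\ of \Cref{alg:DOSMMAT} (cf.\ \Cref{eq:matrt2}), not the Form~2 r.h.s. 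The difference from $\mathbf{f}_{j\langle}-2A_{j\langle j-1\bullet}\mathbf{v}_{j-1\bullet}-A_{j\langle}\mathbf{v}_{j-1[}$ is $\bigl(\tilde S_{j\langle}^{\langle}-\tilde S_{j-1\rangle}^{\rangle}\bigr)\mathbf{v}_{j-1[}$, so the collapse you assert requires $\tilde S_{j\langle}^{\langle}=\tilde S_{j-1\rangle}^{\rangle}$, i.e.\ the \emph{$2^{\circ}$} compatibility. This does not follow from the $1^{\circ}$ hypotheses: the $1^{\circ}$ matchings relate $\tilde S_{j\langle\langle}^{\langle}$ to $\tilde S_{j-1\rangle}^{\rangle}$ and $\tilde S_{j\rangle\rangle}^{\rangle}$ to $\tilde S_{j+1\langle}^{\langle}$, and together with the defining recursions $\tilde S_{j\langle}^{\langle}=A_{j\langle}-A_{j\langle j\langle\langle}(\tilde S_{j\langle\langle}^{\langle})^{-1}A_{j\langle\langle j\langle}$ etc.\ they give a fixed-point system but \emph{not} the identity $\tilde S_{j\langle}^{\langle}=\tilde S_{j-1\rangle}^{\rangle}$ (you would additionally need a relation between $A_{j\langle\langle j\langle}$ and $A_{j-1\rangle\rangle\,j-1\rangle}$, e.g.\ symmetry of $A$, and even then an injectivity condition). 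So reducing $1^{\circ}$ to $2^{\circ}$ as you propose does not go through. The paper avoids this by stopping the reduction at the DOSMMAT r.h.s.\ and invoking the arguments of \Cref{thm:GDCMAT} with the harmonic-extension modification directly, without ever passing through Form~2; that is the route you should take for $1^{\circ}$ as well.

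A minor additional imprecision in your $2^{\circ}$ sketch: the deferred-correction iterate $\mathbf{v}_j$ of \Cref{alg:GDCMAT} is a \emph{correction}, not the local approximation, so the induction hypothesis should identify the Form~2 iterate with the iterate of \Cref{alg:DOSMMAT} (equivalently, with $R_j$ of the running global approximation \emph{after} the $j$-th update, not before), exactly as in the chain of identities in the proof of \Cref{thm:GDCMAT}. This is fixable bookkeeping, not a conceptual flaw.
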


\begin{proof}
  We first prove claim $1^{\circ}$.  We eliminate the first and the last rows of
  the $j$-th forward subproblem and substitute with the assumptions of
  $1^{\circ}$ to obtain
  \begin{equation}\label{eq:foldlayer}
    \small
    \arraycolsep0.2em
    \medmuskip=-1mu
    \thinmuskip=-1mu
    \thickmuskip=-1mu
    \nulldelimiterspace=-2pt
    \scriptspace=-1pt    
  \left[
    \begin{array}{cccc}
      \tilde{S}_{j\langle}^{\langle} & A_{j\langle j\bullet} & &\\
      A_{j\bullet j\langle} & A_{j\bullet} & A_{j\bullet j\rangle} &\\
      & A_{j\rangle j\bullet} & \tilde{S}_{j\rangle}^{\rangle}
    \end{array}
  \right]\hspace{-0.35em}
  \left[
    \begin{array}{c}
      \mathbf{v}_{j\langle}\vphantom{_{j\langle}^{\langle}}\\
      \mathbf{v}_{j\bullet}\\
      \mathbf{v}_{j\rangle}\vphantom{_{j\langle}^{\langle}}
    \end{array}
  \right]=
  \left[
    \begin{array}{c}
      \mathbf{f}_{j\langle}-A_{j\langle j\langle\langle}\mathbf{v}_{j-1[[}
      +(\tilde{S}_{j\langle}^{\langle}-A_{j\langle})\mathbf{v}_{j-1[}+
      \underline{(\tilde{S}_{j\langle\langle}^{\langle})^{-1}\mathbf{v}_{j-1[[}
      -\mathbf{v}_{j-1[}}\\
      \mathbf{f}_{j\bullet}\vphantom{^\langle_j}\\
      0\vphantom{^\langle_j}
    \end{array}
  \right].
  \end{equation}
  It can be shown that the underlined expression on the right above
  vanishes: in fact, the above subproblem is also used with $j-1$
  replacing $j$. In particular, the last row of the $(j-1)$-st
  subproblem reads
  \begin{equation}\label{eq:prevrow}
  A_{j-1\rangle j-1\bullet}\mathbf{v}_{j-1\bullet}
  +\tilde{S}^{\rangle}_{j-1\rangle}\mathbf{v}_{j-1\rangle}=0.
  \end{equation}
  Recalling that $j-1\rangle$ and $j\langle$ correspond to the same d.o.f.,
  because the decomposition is non-overlapping, and the
  assumptions that $A_{j\langle
    j-1\bullet}=[A_{j\langle j\langle\langle},0]$,
  $A_{j\langle j\langle\langle}=-I_{j\langle\langle}$ and
  $\tilde{S}^{\rangle}_{j-1\rangle}=\tilde{S}^{\langle}_{j\langle\langle}$, we obtain from
  \Cref{eq:prevrow} that
    $\tilde{S}^{\langle}_{j\langle\langle}\mathbf{v}_{j-1[}=\mathbf{v}_{j-1[[}$.
  This shows that the underlined expression in \Cref{eq:foldlayer} is zero.
  Similar to \Cref{eq:matrt2}, \Cref{eq:foldlayer} is almost the same
  subproblem used in \Cref{alg:DOSMPDE} but \Cref{eq:foldlayer} puts
  zero on the last row of the r.h.s. which is a trait of the harmonic extension
  variant.  By reusing some arguments from the proof of 
  \Cref{thm:GDCMAT}, we can show the equivalence to \Cref{alg:GDCMAT} and
  conclude with claim $1^{\circ}$.

  To prove claim $2^{\circ}$, the key is to show that the r.h.s. from
  the second form of \Cref{alg:stolkmat} is the same as the
  r.h.s. from \Cref{alg:DOSMMAT} except that the former puts zeros on
  the right interfaces in the forward sweep, and the left interfaces in the
  backward sweep.  Similar to \Cref{eq:matrt2}, we may write the first
  row of the r.h.s. from the forward sweep of \Cref{alg:DOSMMAT} as
  \begin{displaymath}
  \mathbf{f}_{j\langle}-A_{j\langle j-1\bullet}
  \mathbf{u}_{j-1\bullet}^{(\frac{1}{2})}
  +(\tilde{S}_{j\langle}^{\langle}-A_{j\langle})
  \mathbf{u}_{j-1[}^{(\frac{1}{2})}.
  \end{displaymath}
  Using the assumption that
  $\tilde{S}_{j\langle}^{\langle}=\tilde{S}_{j-1\rangle}^{\rangle}$ and the last
  row (with zeroed r.h.s.) of the ($j-1$)-st subproblem similar to
  \Cref{eq:prevrow}, we see that the above expression is equal to
  \begin{displaymath}
  \mathbf{f}_{j\langle}-2A_{j\langle j-1\bullet}
  \mathbf{u}_{j-1\bullet}^{(\frac{1}{2})}
  -A_{j\langle}\mathbf{u}_{j-1[}^{(\frac{1}{2})}.
  \end{displaymath}
  This is exactly the same as in the forward sweep of
  \Cref{alg:stolkmat}.  The remaining part of the proof of claim
  $2^{\circ}$ can now be done as in the proof of \Cref{thm:GDCMAT}.
\end{proof}

There is a final ingredient used in \cite{Stolk}
based on the idea of right preconditioning.
A preconditioner $M^{-1}$ such as the one defined by \Cref{alg:stolkmat}
can be used either on the
left or the right of the original operator $A$.  For right
preconditioning of \Cref{eq:las}, one first uses an iterative method
like Krylov or Richardson to solve $AM^{-1}\mathbf{r}=f$ for
$\mathbf{r}$, and then obtains the solution $\mathbf{u}$ of
\Cref{eq:las} by computing $\mathbf{u}=M^{-1}\mathbf{r}$.  Let
$\mathbf{r}^{(n)}$ for $n\geq0$ be the iterates for $\mathbf{r}$.
Denote by $\mathbf{v}^{(n+1)}:=M^{-1}\mathbf{r}^{(n)}$.  It can be
shown that if $\mathbf{r}^{(0)}=\mathbf{f}-A\mathbf{u}^{(0)}$ and
$\mathbf{r}^{(n)}$ and $\mathbf{u}^{(n)}$ are generated by the
Richardson iterations
\begin{displaymath}
\mathbf{r}^{(n+1)}=\mathbf{r}^{(n)}+\mathbf{f}-AM^{-1}\mathbf{r}^{(n)},\quad
\mathbf{u}^{(n+1)}=\mathbf{u}^{(n)}+M^{-1}(\mathbf{f}-A\mathbf{u}^{(n)}),
\end{displaymath}
then we have the relation $\mathbf{u}^{(n)}=\mathbf{v}^{(n)}+(I-M^{-1}A)^{n-1}
\mathbf{u}^{(0)}$ for $n\geq1$.  There is also a relation between the GMRES
iterates for the left and the right preconditioned systems; see \cite{SS07}.

If one solves the restricted version of the original problem exactly
in the interior of the subdomains using a direct solver, and then
glues the resulting local approximations into a global approximation,
then the global approximation has mostly a zero residual, except where
the residual is influenced by values of the global approximation from
different subdomains, see e.g. \cite{gander2014new}. This is why in
\Cref{alg:stolkpde}, the intermediate residual $\tilde{f}$ is
concentrated in the neighborhood of the interfaces $\Gamma_{j,j-1}$,
$j\geq2$, and the output $\tilde{u}$ leaves the residual
$f-\mathcal{L}\tilde{u}$ concentrated in the neighborhood of the
shifted interfaces $\Gamma_{j,j-1}^h$, $j\geq2$.  In
\Cref{alg:stolkmat}, a component of $\mathbf{\tilde{f}}$ is non-zero
only if the corresponding row in the matrix $A$ has at least one
non-zero entry belonging to a column associated with an interface
d.o.f., and the residual left by the output $\mathbf{\tilde{u}}$ has
also a similar sparsity.  The sparsity of the residuals can be
leveraged in the right preconditioned system because the essential
unknowns become the non-zero components of the residual.  This was
studied in detail in \cite{Ito06} and was also suggested by Stolk in
\cite{Stolk} for the preconditioner based on single layer potentials.
We summarize the sparse residual algorithm in \Cref{alg:spyres} and
justify it in \Cref{thm:spyres}.  Note that this substructured form
can be adapted to all the preconditioners resulting in sparse residuals,
e.g. \Cref{alg:GDCMAT}.  Compared to \Cref{alg:ISMAT}, the reduced
system in \Cref{alg:spyres} is typically of twice size but free of
applying the PML--DtN operators.

\begin{algorithm}
  \caption{Residual substructuring when most rows of $I-AM^{-1}$ vanish}
  \label{alg:spyres}
  Construct the 0-1 matrix $R_{r}$ such that $(I-R_{r}^TR_{r})(I-AM^{-1})=0$ and
  $R_rR_r^T=I_r$.

  Set an initial guess $\mathbf{u}^{(0)}$ such that
  $(I-R_r^TR_r)(\mathbf{f}-A\mathbf{u}^{(0)})=0$, e.g.
  $\mathbf{u}^{(0)}\gets M^{-1}\mathbf{f}$.

  Let $\mathbf{h}_{r}\gets R_{r}(\mathbf{f}-A\mathbf{u}^{(0)})$.  Solve
  (approximately) the substructured system for $\mathbf{r}_{r}$:
  \begin{equation}\label{eq:resub}
  (R_{r}AM^{-1}R_{r}^T)\mathbf{r}_{r}=\mathbf{h}_{r}.
  \end{equation}

  Let $\mathbf{u}\gets M^{-1}R_r^T\mathbf{r}_r +\mathbf{u}^{(0)}$ which is
  (approximately) the solution of \Cref{eq:las}.
\end{algorithm}

\begin{theorem}\label{thm:spyres}
  If $A$ and $M^{-1}$ are invertible, then the substructured system
  in \Cref{eq:resub} is well-posed, and if in addition
  \Cref{eq:resub} is solved exactly, then the output $\mathbf{u}$
  from \Cref{alg:spyres} is indeed the solution of \Cref{eq:las}.
\end{theorem}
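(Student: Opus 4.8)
The plan is to reduce everything to one structural observation about $B:=AM^{-1}$, namely that $B$ maps the subspace $W:=\{R_r^T\mathbf{y}:\mathbf{y}\in\mathbb{C}^r\}$ bijectively onto itself; once this is available both claims are short linear algebra. First I would record that, since $R_rR_r^T=I_r$, the matrix $P:=R_r^TR_r$ is the orthogonal projection onto $W$ (it is symmetric and idempotent, and fixes every element of $W$ pointwise, so its range is exactly $W$), and rewrite the defining property $(I-R_r^TR_r)(I-AM^{-1})=0$ of $R_r$ as $(I-P)B=I-P$. From this, for every $\mathbf{x}$ one has $(I-P)(B-I)\mathbf{x}=0$, so $(B-I)\mathbf{x}\in\ker(I-P)=W$; hence $B\mathbf{w}=\mathbf{w}+(B-I)\mathbf{w}\in W$ for $\mathbf{w}\in W$, i.e. $B(W)\subseteq W$, and since $B$ is invertible and $\dim W=r$ this forces $B(W)=W$.

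Next I would prove well-posedness by showing the $r\times r$ matrix $R_rBR_r^T$ is injective. If $R_rBR_r^T\mathbf{y}=0$, then $\mathbf{z}:=R_r^T\mathbf{y}\in W$, so $B\mathbf{z}\in B(W)=W$, say $B\mathbf{z}=R_r^T\boldsymbol\zeta$; applying $R_r$ and using $R_rR_r^T=I_r$ gives $\boldsymbol\zeta=R_rB\mathbf{z}=R_rBR_r^T\mathbf{y}=0$, whence $B\mathbf{z}=0$, then $\mathbf{z}=0$ because $B$ is invertible, and finally $\mathbf{y}=R_rR_r^T\mathbf{y}=R_r\mathbf{z}=0$. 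An injective square matrix is invertible, so \Cref{eq:resub} has a unique solution $\mathbf{r}_r$.

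For the correctness of the output I would use the orthogonal splitting $\mathbf{a}=R_r^T(R_r\mathbf{a})+(I-P)\mathbf{a}$, valid for all $\mathbf{a}\in\mathbb{C}^N$: two vectors are equal exactly when their images under $R_r$ agree and their images under $I-P$ agree. Apply this to $\mathbf{a}=BR_r^T\mathbf{r}_r$ and $\mathbf{b}=\mathbf{f}-A\mathbf{u}^{(0)}$. Their $R_r$-images agree because $\mathbf{r}_r$ solves $R_rBR_r^T\mathbf{r}_r=\mathbf{h}_r=R_r(\mathbf{f}-A\mathbf{u}^{(0)})$. Their $(I-P)$-images agree because $BR_r^T\mathbf{r}_r\in B(W)=W$ is killed by $I-P$, while $(I-P)(\mathbf{f}-A\mathbf{u}^{(0)})=0$ is precisely the requirement placed on the initial guess $\mathbf{u}^{(0)}$ in \Cref{alg:spyres}. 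Hence $BR_r^T\mathbf{r}_r=\mathbf{f}-A\mathbf{u}^{(0)}$, i.e. $AM^{-1}R_r^T\mathbf{r}_r=\mathbf{f}-A\mathbf{u}^{(0)}$, which rearranges to $A(M^{-1}R_r^T\mathbf{r}_r+\mathbf{u}^{(0)})=\mathbf{f}$; thus the output $\mathbf{u}=M^{-1}R_r^T\mathbf{r}_r+\mathbf{u}^{(0)}$ solves \Cref{eq:las}.

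The only step I expect to require genuine care is the first one — that the algebraic identity defining $R_r$ really does force $B$ to preserve $W$, and that invertibility of $B$ upgrades this to an onto map $W\to W$; after that the argument is just bookkeeping with the projection $P=R_r^TR_r$ and the fact that $R_r^T$ is an isometric embedding. In writing it up I would be careful to invoke $R_rR_r^T=I_r$ in both places it is needed: to certify that $P$ is an honest projection with range $W$, and to cancel $R_rR_r^T$ when peeling $R_r$ off vectors already known to lie in $W$.
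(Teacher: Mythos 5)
Your proof is correct and follows essentially the same route as the paper's: both arguments split every vector along the projection $P=R_r^TR_r$ versus $I-P$, use the defining identity $(I-R_r^TR_r)(I-AM^{-1})=0$ to control the $(I-P)$ component, the equation \Cref{eq:resub} itself to control the $R_r$ component, and the invertibility of $A$ and $M^{-1}$ to get uniqueness via a kernel argument. Your intermediate observation that $B=AM^{-1}$ maps $\mathrm{range}(R_r^T)$ onto itself is a pleasant conceptual repackaging, but the paper reaches the same conclusion by the more direct computation $(I-R_r^TR_r)AM^{-1}R_r^T\mathbf{r}_r=(I-R_r^TR_r)R_r^T\mathbf{r}_r=0$.
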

\begin{proof}
  We first assume that \Cref{eq:resub} has at least one solution. Hence,
  \begin{displaymath}
  R_r^TR_{r}AM^{-1}R_{r}^T\mathbf{r}_{r}=R_r^T\mathbf{h}_r=R_r^TR_r
  (\mathbf{f}-A\mathbf{u}^{(0)}) = \mathbf{f}-A\mathbf{u}^{(0)},  
  \end{displaymath}
  where the last equality follows from the assumption on $\mathbf{u}^{(0)}$.  By
  the assumptions on $R_r$, we also have
  \begin{displaymath}
  (I-R_r^TR_r)AM^{-1}R_{r}^T\mathbf{r}_{r}=
  (I-R_r^TR_r)R_{r}^T\mathbf{r}_{r}=0.
  \end{displaymath}
  Summing the two identities above, we obtain
  \begin{displaymath}
  AM^{-1}R_{r}^T\mathbf{r}_{r}=\mathbf{f}-A\mathbf{u}^{(0)},
  \end{displaymath}
  or $A\mathbf{u}=A(M^{-1}R_{r}^T\mathbf{r}_{r}+\mathbf{u}^{(0)})=\mathbf{f}$;
  that is, the output of \Cref{alg:spyres} is indeed the solution of
  \Cref{eq:las}.  Now if $\mathbf{h}_r=0$ then by the assumption on
  $\mathbf{u}^{(0)}$ we have
  $ AM^{-1}R_{r}^T\mathbf{r}_{r}=\mathbf{f}-A\mathbf{u}^{(0)}=0, $ which, since
  $A$ and $M^{-1}$ are invertible, implies $R_r^T\mathbf{r}_r=0$ and further
  $\mathbf{r}_r=0$ using $R_rR_r^T=I_r$.  Hence, \Cref{eq:resub} is well-posed.
\end{proof}

\subsection{Method of polarized traces using single and double layer
  potentials}

We have already seen that \Cref{eq:potentials} can be used to
propagate the wave field in $\Omega_{j-1}$ to $\Omega_j$.  The only
data we take from the previous subdomain are the Neumann and Dirichlet
traces on the interface $\Gamma_{j,j-1}$.  The method of polarized
traces introduced in \cite{Zepeda} iterates the Neumann and
Dirichlet traces from neighboring subdomains in the substeps of the
forward and backward sweeps, and upon completion of a double sweep, a
global approximation is constructed using the representation formula
\Cref{eq:rep} in subdomains where the volume potentials have been
precomputed before the sweeps\footnote{In the full paper \cite{ZD} that followed \cite{Zepeda}, a substructured system for the traces is first solved instead of the original system. For brevity, we will describe only the global form preconditioner, from which the corresponding substructured system is easy to derive.}.  We summarize the polarized traces
preconditioner in \Cref{alg:polarpde} at the PDE level. The relation
to \Cref{alg:DOSMPDE} is shown in \Cref{thm:polarpde}.
\begin{algorithm}
  \caption{{\bf Polarized traces} preconditioner at the PDE level
    (\cite[Algorithm~1]{Zepeda})}
  \label{alg:polarpde}
  Input the source term $f$ and assume for simplicity $g=0$ in \Cref{eq:pde}.

  Suppose $\Omega$ is decomposed into non-overlapping subdomains.
  
  Independently for $j=1,..,J$ solve
  \begin{displaymath}
    \small
  \begin{array}{r@{\hspace{0.2em}}c@{\hspace{0.2em}}ll}
    \mathcal{L}\,{v}_{j}^{0}&=&f_{j} & \mbox{ in }\Omega_j,\\
    \mathcal{B}\,v_{j}^0&=&g & \mbox{ on }
    \partial\Omega\cap\partial\Omega_j,\\
    \mathbf{n}_j^T\alpha\nabla v_{j}^0+
    \mathrm{DtN}_{j}^{pml}v_{j}^0&=&0 & \mbox{ on }\Gamma_{j,j-1},\\
    \mathbf{n}_j^T\alpha\nabla v_{j}^0+
    \mathrm{DtN}_{j}^{pml}v_{j}^0&=&0 & \mbox{ on }\Gamma_{j,j+1},
  \end{array}
  \end{displaymath}
  where $f_j:=f|_{\Omega_j}$, see \Cref{rem:impml} for a practical
  implementation of the PML--DtN operators.  Denote by
  $G_{j}(\mathbf{x},\mathbf{y})$ the Green's function for the
  subproblem above.  We have
  $v_{j}^0(\mathbf{x})=\int_{\Omega_j}G_j(\mathbf{x},\mathbf{y})
  f(\mathbf{y})\,\mathrm{d}\mathbf{y}$ for $\mathbf{x}\in\Omega_j$.

  Let $\lambda_{1\langle}^D\gets0$, $\lambda_{1\langle}^N\gets0$.  Successively
  for $j=2,..,J$ compute for all $\mathbf{x}\in\Gamma_{j,j-1}$,
  \begin{displaymath}
    \small
  \arraycolsep=.2em
  \begin{array}{rcl}
    \lambda^D_{j\langle}(\mathbf{x})&\gets&(\mathcal{S}_{j\langle}\lambda^N_{j-1\langle})
    (\mathbf{x})-(\mathcal{D}_{j\langle}\lambda^D_{j-1\langle})(\mathbf{x})
    + v_{j-1}^0(\mathbf{x}),\\
    \lambda^N_{j\langle}(\mathbf{x})&\gets&
    (\mathcal{D}^*_{j\langle}\lambda^N_{j-1\langle})(\mathbf{x})-
    (\mathcal{N}_{j\langle}\lambda^D_{j-1\langle})(\mathbf{x})+
    (\mathbf{n}_j^T\alpha\nabla v_{j-1}^0)(\mathbf{x}),
  \end{array}
  \end{displaymath}
  where the four surface potentials vanish for $j=2$,  and are given for
  $j\geq 3$ by
  \begin{equation}\label{eq:leftpt}
    \small
    \arraycolsep=.1em
    \begin{array}{rcl}
      (S_{j\langle}w)(\mathbf{x})&:=&\int_{\Gamma_{j-1,j-2}}w(\mathbf{y})
      {G}_{j-1}(\mathbf{x},\mathbf{y})\mathrm{d}\sigma(\mathbf{y}),\\
      (\mathcal{D}_{j\langle}w)(\mathbf{x})&:=&\int_{\Gamma_{j-1,j-2}}w(\mathbf{y})
      (\mathbf{n}_{j-1}^T\alpha^T)(\mathbf{y})\nabla_{\mathbf{y}}
      G_{j-1}(\mathbf{x},\mathbf{y})\,\mathrm{d}\sigma(\mathbf{y}),\\
      (\mathcal{D}^*_{j\langle}w)(\mathbf{x})&:=&\int_{\Gamma_{j-1,j-2}}w(\mathbf{y})
      (\mathbf{n}_{j}^T\alpha)(\mathbf{x})\nabla_{\mathbf{x}}
      G_{j-1}(\mathbf{x},\mathbf{y})\,\mathrm{d}\sigma(\mathbf{y}),\\
      (\mathcal{N}_{j\langle}w)(\mathbf{x})&:=&\int_{\Gamma_{j-1,j-2}}w(\mathbf{y})
      (\mathbf{n}_{j}^T\alpha)(\mathbf{x})\nabla_{\mathbf{x}}
      \left\{(\mathbf{n}_{j-1}^T\alpha^T)(\mathbf{y})\nabla_{\mathbf{y}}
        G_{j-1}(\mathbf{x},\mathbf{y})\right\}\,\mathrm{d}\sigma(\mathbf{y}).
    \end{array}
  \end{equation}

  Let $\lambda^D_{J\rangle}\gets0$, $\lambda^N_{J\rangle}\gets0$.  Successively for
  $j=J-1,..,1$ compute for all $\mathbf{x}\in\Gamma_{j,j+1}$,
  \begin{displaymath}
    \small
  \arraycolsep=.2em
  \begin{array}{rcl}
    \lambda^D_{j\rangle}(\mathbf{x})&\gets&(\mathcal{S}_{j\rangle}\lambda^N_{j+1\rangle})
    (\mathbf{x})-(\mathcal{D}_{j\rangle}\lambda^D_{j+1\rangle})(\mathbf{x})
    + v_{j+1}^0(\mathbf{x}),\\
    \lambda^N_{j\rangle}(\mathbf{x})&\gets&
    (\mathcal{D}^*_{j\rangle}\lambda^N_{j+1\rangle})(\mathbf{x})-
    (\mathcal{N}_{j\rangle}\lambda^D_{j+1\rangle})(\mathbf{x})+
    (\mathbf{n}_j^T\alpha\nabla v_{j+1}^0)(\mathbf{x}),
  \end{array}
  \end{displaymath}
  where the four surface potentials vanish for $j=J-1$, and are given
  for $j\leq J-2$ by
  \begin{equation}\label{eq:rightpt}
    \small
    \arraycolsep=.2em
    \begin{array}{rcl}
      (S_{j\rangle}w)(\mathbf{x})&:=&\int_{\Gamma_{j+1,j+2}}w(\mathbf{y})
      {G}_{j+1}(\mathbf{x},\mathbf{y})\mathrm{d}\sigma(\mathbf{y}),\\
      (\mathcal{D}_{j\rangle}w)(\mathbf{x})&:=&\int_{\Gamma_{j+1,j+2}}w(\mathbf{y})
      (\mathbf{n}_{j+1}^T\alpha^T)(\mathbf{y})\nabla_{\mathbf{y}}
      G_{j+1}(\mathbf{x},\mathbf{y})\,\mathrm{d}\sigma(\mathbf{y}),\\
      (\mathcal{D}^*_{j\rangle}w)(\mathbf{x})&:=&\int_{\Gamma_{j+1,j+2}}w(\mathbf{y})
      (\mathbf{n}_{j}^T\alpha)(\mathbf{x})\nabla_{\mathbf{x}}
      G_{j+1}(\mathbf{x},\mathbf{y})\,\mathrm{d}\sigma(\mathbf{y}),\\
      (\mathcal{N}_{j\rangle}w)(\mathbf{x})&:=&\int_{\Gamma_{j+1,j+2}}w(\mathbf{y})
      (\mathbf{n}_{j}^T\alpha)(\mathbf{x})\nabla_{\mathbf{x}}
      \left\{(\mathbf{n}_{j+1}^T\alpha^T)(\mathbf{y})\nabla_{\mathbf{y}}
        G_{j+1}(\mathbf{x},\mathbf{y})\right\}\,\mathrm{d}\sigma(\mathbf{y}).
    \end{array}
  \end{equation}

  Recover independently the subdomain solutions for $\mathbf{x}\in\Omega_j$
  $j=1,..,J$, by
  \begin{equation}\label{eq:allpt}
    \small
    \arraycolsep=.2em
    \begin{array}{rl}
      v_j(\mathbf{x})\gets
      &v_{j}^0(\mathbf{x})\hspace{-0.2em}+\hspace{-0.2em}
        \int_{\Gamma_{j,j-1}}\hspace{-0.8em}
        \lambda^N_{j\langle}(\mathbf{y}){G}_j(\mathbf{x},\mathbf{y})
        \mathrm{d}\sigma(\mathbf{y})
        \hspace{-0.2em}-\hspace{-0.2em}\int_{\Gamma_{j,j-1}}\hspace{-0.8em}
        \lambda^D_{j\langle}(\mathbf{y})(\mathbf{n}_j^T\alpha^T)
    (\mathbf{y})\nabla_{\mathbf{y}}{G}_j(\mathbf{x},\mathbf{y})
    \mathrm{d}\sigma(\mathbf{y})\\
    +&\int_{\Gamma_{j,j+1}}\lambda^N_{j\rangle}(\mathbf{y})
    {G}_j(\mathbf{x},\mathbf{y})\,\mathrm{d}\sigma(\mathbf{y})
    -\int_{\Gamma_{j,j+1}}\lambda^D_{j\rangle}(\mathbf{y})(\mathbf{n}_j^T\alpha^T)
    (\mathbf{y})\nabla_{\mathbf{y}}{G}_j(\mathbf{x},\mathbf{y})
    \,\mathrm{d}\sigma(\mathbf{y}).
    \end{array}
  \end{equation}

  Output the global approximation $\tilde{u}\gets v_j$ in $\Omega_j$,
  $j=1,..,J$.
\end{algorithm}

\begin{theorem}\label{thm:polarpde}
  Suppose the subproblems used for the $v_{j}^0$ in \Cref{alg:polarpde}
  are well-posed.  Let $\{u_j^{(\frac{1}{2})}\}_{j=1}^{J-1}$ and
  $\{u_j^{(1)}\}_{j=1}^J$ be generated by \Cref{alg:DOSMPDE} with zero
  initial guess, the $\mathcal{Q}$ be the identity and the $\mathcal{P}$ equal
  to the PML--DtN operators.  Let $u_J^{(\frac{1}{2})}:=0$ be defined
  on $\Omega_J$.  We have for \Cref{alg:polarpde}
  $\lambda_{j\langle}^D=u_{j-1}^{(\frac{1}{2})}$, $\lambda_{j\langle}^N=
  \mathbf{n}_j^T\alpha\nabla u_{j-1}^{(\frac{1}{2})}$ on $\Gamma_{j,j-1}$ and
  $\lambda_{j\rangle}^D=u_{j+1}^{(1)}-u_{j+1}^{(\frac{1}{2})}+v_{j+1}^0$,
  $\lambda_{j\rangle}^N= \mathbf{n}_j^T\alpha\nabla
  (u_{j+1}^{(1)}-u_{j+1}^{(\frac{1}{2})}+v_{j+1}^0)$ on $\Gamma_{j,j+1}$.
  Therefore, $v_j=u_j^{(1)}$ in $\Omega_j$.
\end{theorem}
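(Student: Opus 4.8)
The plan is a double induction on the subdomain index, with the representation formula \Cref{eq:rep} as essentially the only tool. Write $G_j$ for the subdomain Green's function of the local problem defining $v_j^0$ (it carries the homogeneous PML Robin condition on \emph{both} $\Gamma_{j,j-1}$ and $\Gamma_{j,j+1}$ and the physical condition on $\partial\Omega\cap\partial\Omega_j$). Recall that, with the $\alpha^T$, $\mathcal{B}^T$ convention for Green's functions, $\mathrm{DtN}_j^{pml}$ is the $L^2(\Gamma)$-transpose of its action on $G_j$; consequently, in \Cref{eq:rep} over an interface piece $\Gamma$, (a) if both $\phi$ and $G_j(\mathbf{x},\cdot)$ satisfy the same homogeneous PML Robin condition on $\Gamma$ the two boundary integrands cancel, and (b) if only $G_j(\mathbf{x},\cdot)$ does, the Dirichlet- plus Neumann-data integral over $\Gamma$ collapses to the single-layer integral $\int_\Gamma G_j(\mathbf{x},\mathbf{y})[(\mathbf{n}_j^T\alpha\nabla+\mathrm{DtN}_j^{pml})\phi](\mathbf{y})\,\mathrm{d}\sigma(\mathbf{y})$, and conversely. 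I will also use freely that $u_j^{(1/2)}-v_j^0$ and $u_j^{(1)}-v_j^0$ are $\mathcal{L}$-harmonic in $\Omega_j$ with homogeneous physical data, since $\mathcal{L}u_j^{(\cdot)}=f=\mathcal{L}v_j^0$ on $\Omega_j$.

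\emph{Forward sweep.} I claim, by induction on $j=2,\dots,J$, that $\lambda^D_{j\langle}=u_{j-1}^{(1/2)}$ and $\lambda^N_{j\langle}=\mathbf{n}_j^T\alpha\nabla u_{j-1}^{(1/2)}$ on $\Gamma_{j,j-1}$. For $j=2$ this holds because $\Omega_1$ has no left interface and the right transmission datum vanishes (zero initial guess), so $u_1^{(1/2)}=v_1^0$, while the surface potentials in the recurrence vanish. For the step, apply \Cref{eq:rep} in $\Omega_{j-1}$ with $G_{j-1}$ to $u_{j-1}^{(1/2)}-v_{j-1}^0$: the volume term is zero, the integrals over $\partial\Omega\cap\partial\Omega_{j-1}$ and over the right interface $\Gamma_{j-1,j}$ cancel by (a), and the integral over the left interface $\Gamma_{j-1,j-2}$ collapses by (b) to $\int_{\Gamma_{j-1,j-2}}G_{j-1}(\mathbf{x},\mathbf{y})[(\mathbf{n}_{j-1}^T\alpha\nabla+\mathrm{DtN}_{j-1}^{pml})u_{j-1}^{(1/2)}](\mathbf{y})\,\mathrm{d}\sigma(\mathbf{y})$. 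By the forward transmission condition of \Cref{alg:DOSMPDE} this density equals $(\mathbf{n}_{j-1}^T\alpha\nabla+\mathrm{DtN}_{j-1}^{pml})u_{j-2}^{(1/2)}$, which by the induction hypothesis equals $\lambda^N_{j-1\langle}+\mathrm{DtN}_{j-1}^{pml}\lambda^D_{j-1\langle}$; running (b) backwards turns this into the single- and double-layer potentials $\mathcal{S}_{j\langle}\lambda^N_{j-1\langle}-\mathcal{D}_{j\langle}\lambda^D_{j-1\langle}$ of \Cref{eq:leftpt}. Hence $u_{j-1}^{(1/2)}(\mathbf{x})=v_{j-1}^0(\mathbf{x})+(\mathcal{S}_{j\langle}\lambda^N_{j-1\langle})(\mathbf{x})-(\mathcal{D}_{j\langle}\lambda^D_{j-1\langle})(\mathbf{x})$ on $\Omega_{j-1}$; restricting to $\mathbf{x}\in\Gamma_{j,j-1}$ gives the recurrence for $\lambda^D_{j\langle}$, and applying $\mathbf{n}_j^T\alpha\nabla_{\mathbf{x}}$ (which turns $\mathcal{S}_{j\langle},\mathcal{D}_{j\langle}$ into $\mathcal{D}^*_{j\langle},\mathcal{N}_{j\langle}$) gives the recurrence for $\lambda^N_{j\langle}$, closing the forward induction.

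\emph{Backward sweep and the main obstacle.} Now induct downward on $j=J-1,\dots,1$, showing simultaneously that $v_{j+1}=u_{j+1}^{(1)}$ in $\Omega_{j+1}$ and that $\lambda^D_{j\rangle},\lambda^N_{j\rangle}$ are the Dirichlet/Neumann traces on $\Gamma_{j,j+1}$ of $\phi_{j+1}:=u_{j+1}^{(1)}-u_{j+1}^{(1/2)}+v_{j+1}^0$. For the base case $j=J-1$, the absence of a right interface for $\Omega_J$ gives $\lambda^D_{J\rangle}=\lambda^N_{J\rangle}=0$, so the recovered $v_J$ is $v_J^0$ plus only the left potentials; by the forward-sweep identities and \Cref{eq:rep} applied to $u_J^{(1)}-v_J^0$ this is precisely $u_J^{(1)}$, and the recurrence for $\lambda_{J-1\rangle}$ reduces to $v_J^0$, consistent with $\phi_J$ on $\Gamma_{J-1,J}$ under the placeholder convention for $u_J^{(1/2)}$. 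For the step, apply \Cref{eq:rep} in $\Omega_{j+1}$ with $G_{j+1}$ to $\phi_{j+1}-v_{j+1}^0=u_{j+1}^{(1)}-u_{j+1}^{(1/2)}$: the volume term and the $\partial\Omega$- and left-interface integrals drop (the left data of $u^{(1)}$ and $u^{(1/2)}$ agree — both are $u_j^{(1/2)}$ in the two sweeps of \Cref{alg:DOSMPDE}), and by (b) the right-interface integral collapses to $\int_{\Gamma_{j+1,j+2}}G_{j+1}(\mathbf{x},\mathbf{y})[(\mathbf{n}_{j+1}^T\alpha\nabla+\mathrm{DtN}_{j+1}^{pml})u_{j+2}^{(1)}](\mathbf{y})\,\mathrm{d}\sigma(\mathbf{y})$ via the backward transmission condition. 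The crux is the auxiliary identity $(\mathbf{n}_{j+1}^T\alpha\nabla+\mathrm{DtN}_{j+1}^{pml})(u_{j+2}^{(1/2)}-v_{j+2}^0)=0$ on $\Gamma_{j+1,j+2}$, which lets one replace that density by $(\mathbf{n}_{j+1}^T\alpha\nabla+\mathrm{DtN}_{j+1}^{pml})\phi_{j+2}=\lambda^N_{j+1\rangle}+\mathrm{DtN}_{j+1}^{pml}\lambda^D_{j+1\rangle}$ (induction hypothesis) and then, running (b) backwards, reproduce the recurrence of \Cref{eq:rightpt} for $\lambda_{j\rangle}$. This identity says that the forward-sweep correction $u_{j+2}^{(1/2)}-v_{j+2}^0$, which is outgoing into the right PML of $\Omega_{j+2}$, presents a homogeneous PML condition to $\Omega_{j+1}$ across the shared interface — it is a purely left-going wave; it is the PML analog of the $\mathrm{DtN}$-cancellation step in the proof of \Cref{thm:1steppde}, and is where the hypothesis that the $\mathcal{P}$'s are \emph{the} PML--DtN operators (so the right PML of $\Omega_{j+1}$ mirrors the left PML of $\Omega_{j+2}$) is essential. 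I expect this to be the hardest point; it is proved by writing $u_{j+2}^{(1/2)}-v_{j+2}^0$ as a single-layer potential against $G_{j+2}$ over $\Gamma_{j+2,j+1}$ with density $(\mathbf{n}_{j+2}^T\alpha\nabla+\mathrm{DtN}_{j+2}^{pml})u_{j+1}^{(1/2)}$ and checking, using PML consistency, that applying $(\mathbf{n}_{j+1}^T\alpha\nabla+\mathrm{DtN}_{j+1}^{pml})$ to it on $\Gamma_{j+1,j+2}$ annihilates the combination.

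\emph{Conclusion.} Finally $v_j=u_j^{(1)}$ falls out: by the forward identities the left potentials in the recovery formula \Cref{eq:allpt} collapse, via (b), to $\int_{\Gamma_{j,j-1}}G_j(\mathbf{x},\mathbf{y})[(\mathbf{n}_j^T\alpha\nabla+\mathrm{DtN}_j^{pml})u_{j-1}^{(1/2)}]\,\mathrm{d}\sigma(\mathbf{y})$, and the right potentials to $\int_{\Gamma_{j,j+1}}G_j(\mathbf{x},\mathbf{y})[\lambda^N_{j\rangle}+\mathrm{DtN}_j^{pml}\lambda^D_{j\rangle}]\,\mathrm{d}\sigma(\mathbf{y})=\int_{\Gamma_{j,j+1}}G_j(\mathbf{x},\mathbf{y})[(\mathbf{n}_j^T\alpha\nabla+\mathrm{DtN}_j^{pml})u_{j+1}^{(1)}]\,\mathrm{d}\sigma(\mathbf{y})$ by the backward identity together with the same auxiliary identity; the sum of $v_j^0$ and these two integrals is exactly the representation of $u_j^{(1)}$ obtained by applying \Cref{eq:rep} in $\Omega_j$ with $G_j$ and invoking the backward transmission conditions of \Cref{alg:DOSMPDE} on both interfaces of $\Omega_j$. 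Unique solvability of the subdomain problems then forces $v_j=u_j^{(1)}$ in $\Omega_j$ for $j=1,\dots,J$, which is the assertion of the theorem.
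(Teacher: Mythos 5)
Your proof follows the paper's own route almost step for step: the forward induction via the representation formula in $\Omega_j$ with the subdomain Green's function $G_j$, the use of \Cref{eq:Pcom} together with the PML condition on $G_j(\mathbf{x},\cdot)$ to collapse the Dirichlet-plus-Neumann layer integrand into a single layer carrying the Robin density $(\mathbf{n}_j^T\alpha\nabla+\mathrm{DtN}_j^{pml})(\cdot)$, then the transmission condition of \Cref{alg:DOSMPDE} to transfer the density to the neighbour, and finally the mirror construction $w_j=u_j^{(1)}-u_j^{(1/2)}+v_j^0$ for the backward sweep. You deserve credit for flagging explicitly the step the paper dismisses with ``by arguments similar to the last paragraph'': the backward recursion reproduces \Cref{eq:rightpt} only if one can replace $u_{j+2}^{(1)}$ by $w_{j+2}$ inside the collapsed layer integral, which requires
$\bigl(\mathbf{n}_{j+1}^T\alpha\nabla+\mathrm{DtN}_{j+1}^{pml}\bigr)\bigl(u_{j+2}^{(1/2)}-v_{j+2}^0\bigr)=0$ on $\Gamma_{j+1,j+2}$.
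This is genuinely the crux, and the paper does not write it down.

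Where your proposal falls short is that ``checking, using PML consistency'' is a restatement, not a proof, and the identity does not follow from the hypotheses as you (or the theorem) state them. The operator $\mathrm{DtN}_{j+1}^{pml}$ on $\Gamma_{j+1,j+2}$ is defined by solving the \emph{modified} operator $\widetilde{\mathcal{L}}$ in the thin layer $\Omega_{j+1\rangle}^{pml}$ sitting to the right of the interface, whereas $u_{j+2}^{(1/2)}-v_{j+2}^0$ solves the \emph{unmodified} operator $\mathcal{L}$ in $\Omega_{j+2}$; the single-layer representation of $u_{j+2}^{(1/2)}-v_{j+2}^0$ against $G_{j+2}$ that you invoke is therefore transparent to $\mathrm{DtN}_{j+2}^{pml}$, not automatically to $\mathrm{DtN}_{j+1}^{pml}$. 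The two extensions of the shared Dirichlet trace into the right of $\Gamma_{j+1,j+2}$ coincide (and hence the Robin combination vanishes) only under a compatibility hypothesis tying the right Schur complement of $\Omega_j$ to the left one of $\Omega_{j+1}$ — precisely the kind of condition stated explicitly in \Cref{thm:stolkmat} ($\tilde S_{j\rangle}^{\rangle}=\tilde S_{j+1\langle}^{\langle}$) but absent from \Cref{thm:polarpde}. Without it, the equality $\lambda_{j\rangle}^D=w_{j+1}$ on $\Gamma_{j,j+1}$, and hence $v_j=u_j^{(1)}$, does not close. (A related small bookkeeping wrinkle at $j=J$: with the placeholder $u_J^{(1/2)}:=0$ the claimed $\lambda_{J-1\rangle}^D=u_J^{(1)}-u_J^{(1/2)}+v_J^0$ does not match the algorithm's $\lambda_{J-1\rangle}^D=v_J^0$; consistency requires $u_J^{(1/2)}:=u_J^{(1)}$, i.e.\ $w_J:=v_J^0$ rather than $w_J:=u_J^{(1)}$.) So: same approach, correct identification of the delicate point, but the decisive identity is asserted rather than proved, and it needs an additional PML-matching hypothesis.
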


\begin{proof}
  For simplicity, we consider only the case $g=0$ in \Cref{eq:pde}.  According
  to \Cref{alg:DOSMPDE} and \Cref{alg:polarpde},
  $u_1^{(\frac{1}{2})}=v_{1}^0$ in $\Omega_1$ and so
  \begin{equation}\label{eq:DNinit}
    \lambda_{2\langle}^D=u_1^{(\frac{1}{2})},\quad
    \lambda_{2\langle}^N=\mathbf{n}_2^T\alpha\nabla u_1^{(\frac{1}{2})}\quad
    \mbox{on }\Gamma_{2,1}.
  \end{equation}
  From the algorithms, we also have for any fixed
  $\mathbf{x}\in {\Omega}_{j}$, $j=2,..,J$,
  \begin{displaymath}
    \begin{array}{l}
  (\mathbf{n}_j^T\alpha\nabla +
  \mathrm{DtN}^{pml}_j)(u_j^{(\frac{1}{2})}-u_{j-1}^{(\frac{1}{2})})=(\mathbf{n}_j^T\alpha^T
  \nabla+\mathrm{DtN}_{j}^{pml*})G_j(\mathbf{x},\cdot)=0\mbox{ on }\Gamma_{j,j-1},\\
    (\mathbf{n}_j^T\alpha\nabla+\mathrm{DtN}^{pml}_j)u_j^{(\frac{1}{2})}=
      (\mathbf{n}_j^T\alpha^T\nabla+\mathrm{DtN}_{j}^{pml*})G_j(\mathbf{x},\cdot)=0
      \mbox{ on }\Gamma_{j,j+1}
    \end{array}
  \end{displaymath}
  where $\mathrm{DtN}_j^{pml*}$ is similar to $\mathrm{DtN}_j^{pml}$ but using
  $\alpha^T$ instead of $\alpha$.  By the representation formula in
  \Cref{eq:rep} on
  $ \overline{\Omega}_j\cup\overline{\Omega}_{j\rangle}^{pml}$, where
    $\Omega_{j\rangle}^{pml}$ is the PML region along $\Gamma_{j,j+1}$, we have
  \begin{align}
    \small
    \medmuskip=-3mu
    \thinmuskip=-3mu
    \thickmuskip=-3mu
    \nulldelimiterspace=-2pt
    \scriptspace=-2pt
    u_j^{(\frac{1}{2})}(\mathbf{x})\hspace{-0.2em}= 
    &\hspace{-0.5em}\int_{\Omega_j}\hspace{-1em}f(\mathbf{y})
      G_j(\mathbf{x},\mathbf{y})\, \mathrm{d}\mathbf{y}\hspace{-0.2em}+
      \hspace{-0.5em}\int_{\Gamma_{j,j-1}}\hspace{-2em}
      \mathbf{n}_j^T\alpha\nabla u_j^{(\frac{1}{2})}(\mathbf{y})G_j
      (\mathbf{x},\mathbf{y})\hspace{-0.2em}-\hspace{-0.2em}
      \mathbf{n}_j^T\alpha^T\nabla_{\mathbf{y}}
      G_j(\mathbf{x},\mathbf{y}) u_j^{(\frac{1}{2})}(\mathbf{y})\,
      \mathrm{d}\sigma(\mathbf{y})\nonumber\\
    =& v_j^{0}(\mathbf{x})+\int_{\Gamma_{j,j-1}}(\mathbf{n}_j^T\alpha\nabla +
    \mathrm{DtN}_j^{pml})u_j^{(\frac{1}{2})}(\mathbf{y})
    G_j(\mathbf{x},\mathbf{y})\,\mathrm{d}\sigma(\mathbf{y})\label{eq:repuj}\\
    =& v_j^{0}(\mathbf{x})+\int_{\Gamma_{j,j-1}}(\mathbf{n}_j^T\alpha\nabla +
    \mathrm{DtN}_j^{pml})u_{j-1}^{(\frac{1}{2})}(\mathbf{y})
    G_j(\mathbf{x},\mathbf{y})\,\mathrm{d}\sigma(\mathbf{y})\label{eq:repuj3}\\
    =& v_j^{0}(\mathbf{x})+\int_{\Gamma_{j,j-1}}G_j(\mathbf{x},\mathbf{y})
    \mathbf{n}_j^T\alpha\nabla u_{j-1}^{(\frac{1}{2})}(\mathbf{y})-
    u_{j-1}^{(\frac{1}{2})}(\mathbf{y})\mathbf{n}_j^T\alpha^T
    \nabla_{\mathbf{y}}G_j(\mathbf{x},\mathbf{y})\,\mathrm{d}\sigma(\mathbf{y}),
       \label{eq:repujend}
  \end{align}
  where no integrals show up on the other boundaries of
  $\overline{\Omega}_j \cup\overline{\Omega}_{j\rangle}^{pml}$ because
  the boundary conditions there are homogeneous.  \Cref{eq:repuj}
  is obtained by substituting the PML condition for
  $G_j(\mathbf{x},\cdot)$ and using the following identity for any
  fixed $\mathbf{x}\in\Omega_j$ and any trace $v(\mathbf{y})$ (which
  can be proved by the definition of $\mathrm{DtN}_j^{pml}$)
  \begin{equation}\label{eq:Pcom}
  \int_{\Gamma_{j,j-1}}v(\mathbf{y})\mathrm{DtN}_j^{pml*}
  G_j(\mathbf{x},\mathbf{y})\,\mathrm{d}\sigma(\mathbf{y})=
  \int_{\Gamma_{j,j-1}}G_j(\mathbf{x},\mathbf{y})
  \mathrm{DtN}_j^{pml}v(\mathbf{y})\,\mathrm{d}\sigma(\mathbf{y}).
  \end{equation}
  \Cref{eq:repuj3} follows from the transmission
  conditions, and \Cref{eq:repujend} is obtained by applying \Cref{eq:Pcom} and
  substituting the PML condition for $G_j(\mathbf{x},\cdot)$ again.
  Assuming that
  \begin{equation}\label{eq:DNj}
    \lambda_{j\langle}^D=u_{j-1}^{(\frac{1}{2})},\quad
    \lambda_{j\langle}^N=\mathbf{n}_j^T\alpha\nabla u_{j-1}^{(\frac{1}{2})}\quad \mbox{on }
    \Gamma_{j,j-1},
  \end{equation}
  we substitute them into \Cref{eq:repujend} and taking Dirichlet and
  Neumann traces of $u_j^{(\frac{1}{2})}$ on
  $\mathbf{x}\in\Gamma_{j+1,j}$, we find \Cref{eq:DNj} holds for
  $j+1$ replacing $j$.  By induction based on \Cref{eq:DNinit}, we
  conclude that \Cref{eq:DNj} holds for all $j=2,..,J$.  Inserting
  \Cref{eq:DNj} into \Cref{eq:repuj} yields (for $j=J$ change the
  l.h.s. to $u_J^{(1)}$)
  \begin{equation}\label{eq:rephalf}
    u_j^{(\frac{1}{2})}(\mathbf{x})=v_j^{0}(\mathbf{x})+\int_{\Gamma_{j,j-1}}
    G_j(\mathbf{x},\mathbf{y})\lambda_{j\langle}^N(\mathbf{y})-
    \lambda_{j\langle}^D(\mathbf{y})\mathbf{n}_j^T\alpha^T
    \nabla_{\mathbf{y}}G_j(\mathbf{x},\mathbf{y})\,\mathrm{d}\sigma(\mathbf{y}).
  \end{equation}
  In particular, we have $u_J^{(1)}=v_J$ with $v_J$ from
  \Cref{alg:polarpde}.

  In the backward sweep of \Cref{alg:DOSMPDE}, we denote by
  $w_{j}:=u_j^{(1)}-u_j^{(\frac{1}{2})}+v_j^{0}$, $j=J-1,..,1$ and
  $w_J:=u_J^{(1)}$.  We find $w_j$ satisfies the PML conditions,
  homogeneous on $\Gamma_{j,j-1}$ but inhomogeneous on
  $\Gamma_{j,j+1}$.  By arguments similar to the last paragraph,
  we can show for all $j=J-1,..,1$ that
  \begin{displaymath}
    \lambda_{j\rangle}^D=w_{j+1},\quad
    \lambda_{j\rangle}^N=\mathbf{n}_j^T\alpha\nabla w_{j+1}\quad \mbox{on }
    \Gamma_{j,j+1},
  \end{displaymath}
  and further
  \begin{equation}\label{eq:repw}
    w_j(\mathbf{x})=v_j^{0}(\mathbf{x})+\int_{\Gamma_{j,j+1}}G_j(\mathbf{x},\mathbf{y})
    \lambda_{j\rangle}^N(\mathbf{y})-\lambda_{j\rangle}^D(\mathbf{y})\mathbf{n}_j^T\alpha^T
    \nabla_{\mathbf{y}}G_j(\mathbf{x},\mathbf{y})\,\mathrm{d}\sigma(\mathbf{y}).
  \end{equation}
  Combining \Cref{eq:rephalf} and \Cref{eq:repw}, we conclude
  that $u_j^{(1)}=v_j$ with $v_j$ from \Cref{alg:polarpde}.
\end{proof}

\begin{remark}\label{rem:jacobi}
\Cref{alg:polarpde} includes a new technique not present in
\Cref{alg:DOSMPDE}. First, note that the local solutions of the original problem
can be represented as sums of the left going and right going waves.  Furthermore,
the two parts can be simulated independently of each other.
That is, the backward sweep of \Cref{alg:polarpde} can be performed in parallel
to the forward sweep, whereas the backward sweep of \Cref{alg:DOSMPDE} aims to
simulate the total waves and thus needs to wait for the forward sweep to finish.
\Cref{alg:DOSMPDE} can be modified in the same spirit: just use the data
from the original problem for an initial solve on subdomains,
zero the left interface data in the backward sweep,
add the approximations from the forward/backward sweep
and subtract that from the initial solve to get the total waves.
Or for the block 2-by-2 interface system in \Cref{rem:iab} use
block Jacobi instead of Gauss-Seidel.
Similar techniques were proposed in \cite{Poulson,StolkImproved}.  For
\Cref{alg:polarpde}, the waves from the forward/backward sweep have different
{\it polarized} directions and they are propagated through their Dirichlet and
Neumann {\it traces}, which gives the name of the method.
\end{remark}

\begin{remark}
  In all the preceding sections, we did not discuss in detail solvers
  for the subproblems.  Typically, LU factorizations are precomputed
  before the iteration starts, and they are then reused for the
  different r.h.s. in the iterative procedure.  Even for the
  substructured forms, the typical way is {\it not} to precompute the
  interface operators explicitly, see \Cref{rem:iab}, but only to
  implement them as matrix actions through the subdomain LU solves.
  The reason is two-fold: first, to build an interface operator in a
  naive way, we need to solve as many times the subdomain problem as
  the number of d.o.f. on the interface; second, the resulting matrix
  is dense and a naive multiplication with a vector is not cheap.
  However, developments of low rank formats of matrices such as
  $\mathcal{H}$-matrices have greatly improved the situation.  For
  example, for the Laplace equation, the method in \cite{Gillman}
  reduces the building cost to $\mathcal{O}(N^{2-(2/d)})$ and the
  application cost to $\mathcal{O}(N^{1/2})$ in 2-D and
  $\mathcal{O}(N)$ in 3-D, and some tests for the Helmholtz
    equation were also performed. In \cite{Zepeda,ZD}, low rank
  techniques are used for building and applying the surface
  potentials in \Cref{eq:leftpt,eq:rightpt,eq:allpt} for
  \Cref{alg:polarpde}.
\end{remark}

To bring \Cref{alg:polarpde} to the matrix level, we first translate
the representation formula from \Cref{eq:rep} into the matrix language.
Suppose $G$ is the matrix analogue of the Green's function, i.e.
\begin{equation}\label{eq:Gmat}
  \left[
    \begin{array}{ccc}
      G_{e}  & G_{eb} & G_{ei}\\
      G_{be} & G_{b}  & G_{bi}\\
      G_{ie} & G_{ib} & G_{i}
    \end{array}
  \right]
  \left[
    \begin{array}{ccc}
      A_{e} & A_{eb} &\\
      A_{be} & A_{b} & A_{bi}\\
      & A_{ib} & A_{i}
    \end{array}
  \right]=
  \left[
    \begin{array}{ccc}
      I_{e} &  &\\
      & I_{b} & \\
      & & I_{i}
    \end{array}
  \right],
\end{equation}
where the rows and columns with the subscripts containing $e$ may all be empty.
Let $\mathbf{u}$ satisfy
\begin{equation}\label{eq:allu}
  \left[
    \begin{array}{cc}
      \widetilde{A}_{b} & A_{bi}\\
      A_{ib} & A_{i}
    \end{array}
  \right]
  \left[
    \begin{array}{c}
      \mathbf{u}_b\\ \mathbf{u}_i
    \end{array}
  \right]=
  \left[
    \begin{array}{c}
      \mathbf{f}_b+\boldsymbol{\lambda}_b\\ \mathbf{f}_i
    \end{array}
  \right].
\end{equation}
Then, we have the following representation formula for $\mathbf{u}_i$.

\begin{proposition}
  If \Cref{eq:Gmat} holds, then \Cref{eq:allu} implies
  \begin{equation}\label{eq:repmat}
    \mathbf{u}_i=G_i\mathbf{f}_i+G_{ib}\mathbf{f}_b+G_{ib}\boldsymbol{\lambda}_b^N-
    \{G_{ib}A^{(i)}_b+G_iA_{ib}\}\mathbf{u}_b,
  \end{equation}
  where
  $\boldsymbol{\lambda}_b^N:=A_b^{(i)}\mathbf{u}_b+A_{bi}\mathbf{u}_i-\mathbf{f}_b$
  and $A_b^{(i)}$ is an arbitrary matrix of appropriate size.
\end{proposition}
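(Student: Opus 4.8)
The plan is to reduce the statement to a single block identity coming from $GA=I$ in \Cref{eq:Gmat}, and then to substitute the interior equation of \Cref{eq:allu} together with the definition of $\boldsymbol\lambda_b^N$. First I would read off the $(i,i)$-block of the product $GA=I$. Since the block coupling the exterior d.o.f.\ directly to the interior d.o.f.\ vanishes in $A$ (the $(e,i)$ and $(i,e)$ blocks are zero in \Cref{eq:Gmat}), this block reads
\[
  G_{ib}A_{bi} + G_i A_i = I_i,
\]
equivalently $G_i A_i = I_i - G_{ib}A_{bi}$. This is the matrix counterpart of the fact that the Green's function inverts the operator in the interior up to a boundary term.

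Next I would take the second block-row of \Cref{eq:allu}, namely $A_{ib}\mathbf u_b + A_i\mathbf u_i = \mathbf f_i$, rewrite it as $A_i\mathbf u_i = \mathbf f_i - A_{ib}\mathbf u_b$, and left-multiply by $G_i$. Using the identity above this yields
\[
  (I_i - G_{ib}A_{bi})\mathbf u_i = G_i A_i \mathbf u_i = G_i\mathbf f_i - G_i A_{ib}\mathbf u_b,
\]
hence $\mathbf u_i = G_i\mathbf f_i - G_i A_{ib}\mathbf u_b + G_{ib}A_{bi}\mathbf u_i$. Finally, from $\boldsymbol\lambda_b^N := A_b^{(i)}\mathbf u_b + A_{bi}\mathbf u_i - \mathbf f_b$ I would substitute $A_{bi}\mathbf u_i = \boldsymbol\lambda_b^N - A_b^{(i)}\mathbf u_b + \mathbf f_b$ into the last expression and gather the two terms multiplying $\mathbf u_b$; this gives exactly $\mathbf u_i = G_i\mathbf f_i + G_{ib}\mathbf f_b + G_{ib}\boldsymbol\lambda_b^N - \{G_{ib}A_b^{(i)} + G_i A_{ib}\}\mathbf u_b$, which is \Cref{eq:repmat}.

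I expect no genuine analytic obstacle here: the statement is an exact algebraic identity, and only the $(i,i)$-block of $GA=I$ and the interior row of \Cref{eq:allu} are needed. The one point worth stating carefully is the bookkeeping: one must use the relation in the order $GA=I$ as written in \Cref{eq:Gmat} (not $AG=I$), pick out the correct block, and use that the interior rows of $A$ carry no exterior entries so that the $(i,i)$-block of $GA$ has only the two terms above. It is also worth noting that neither the first block-row of \Cref{eq:allu} (the one containing $\widetilde A_b$ and $\boldsymbol\lambda_b$) nor the exterior part of \Cref{eq:Gmat} enters the argument; this is consistent with $A_b^{(i)}$, and hence $\boldsymbol\lambda_b^N$, being arbitrary, and is the discrete analogue of the representation formula \Cref{eq:rep}, in which the interior field is reconstructed from the interior source together with the Dirichlet trace $\mathbf u_b$ and an associated (freely chosen) Neumann-type trace $\boldsymbol\lambda_b^N$.
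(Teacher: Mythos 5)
Your proposal is correct and uses exactly the same two ingredients as the paper's proof — the $(i,i)$-block identity $G_iA_i+G_{ib}A_{bi}=I_i$ from \Cref{eq:Gmat} and the interior row $A_{ib}\mathbf u_b+A_i\mathbf u_i=\mathbf f_i$ of \Cref{eq:allu}, combined with the definition of $\boldsymbol\lambda_b^N$. The only difference is the direction: the paper starts from the right-hand side of \Cref{eq:repmat} and collapses it to $(G_iA_i+G_{ib}A_{bi})\mathbf u_i$, while you run the substitution forward from the interior equation — a purely cosmetic reversal.
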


\begin{proof}
  Inserting the definition of $\boldsymbol{\lambda}_b^N$ and the last row of
  \Cref{eq:allu} into the r.h.s. of \Cref{eq:repmat} we obtain
  \begin{displaymath}
  \mbox{r.h.s. of \Cref{eq:repmat}}
  =(G_iA_i+G_{ib}A_{bi})\mathbf{u}_i.
  \end{displaymath}
  On the other hand, from \Cref{eq:Gmat} we have $G_iA_i+G_{ib}A_{bi}=I_i$
  which turns the above equation into \Cref{eq:repmat}.
\end{proof}

From \Cref{eq:repmat}, we can recognize $[G_i,G_{ib}]$, $G_{ib}$ and
$G_{ib}A_b^{(i)}+G_iA_{ib}$ as the volume, single layer and double layer
potentials. We now give the matrix analogue\footnote{In the full paper \cite{ZD}
  that appeared after \cite{Zepeda}, the matrix form of \Cref{alg:polarpde} is
  derived by a first-order finite difference discretization of the Neumann
  derivatives.  Then, $\boldsymbol{\lambda}_{j*}^N$ is not introduced but
  replaced with the d.o.f. immediately next to $\boldsymbol{\lambda}_{j*}^D$ in
  $\Omega_j$.  Since this difference is only minor, we will not study this
  variant further here. A referee pointed out to us that our presentation here
  is more close to \cite[Appendix C]{ZepedaNested}.} of \Cref{alg:polarpde} in
\Cref{alg:polarmat}, and prove its equivalence to \Cref{alg:DOSMMAT} in
\Cref{thm:polarmat}.
\begin{algorithm}
  \caption{{\bf Polarized traces} preconditioner at the matrix level}
  \label{alg:polarmat}
  Input the r.h.s. $\mathbf{f}$. Suppose the decomposition is non-overlapping.

  Independently for $j=1,..,J$ solve
  \begin{displaymath}
    \small
  \left[
    \begin{array}{ccc}
      \tilde{S}_{j\langle}^{\langle} & A_{j\langle j\bullet} & \\
      A_{j\bullet j\langle} & A_{j\bullet} & A_{j\bullet j\rangle}\\
      & A_{j\rangle j\bullet} & \tilde{S}_{j\rangle}^{\rangle}
    \end{array}
  \right]
  \left[
    \begin{array}{c}
      \mathbf{v}_{j\langle}^0\\\mathbf{v}_{j\bullet}^0\\ \mathbf{v}_{j\rangle}^0
    \end{array}
  \right]=
  \left[
    \begin{array}{c}
      \mathbf{f}_{j\langle}\vphantom{^\rangle_\langle}\\
      \mathbf{f}_{j\bullet} \\
      \mathbf{f}_{j\rangle}\vphantom{^\rangle_{\langle}}
    \end{array}
  \right],
  \end{displaymath}
  where $\tilde{S}_{j\langle}^{\langle}$ and $\tilde{S}_{j\rangle}^{\rangle}$
  are defined by the Schur complements of the PML exterior to $\Omega_j$
  (see \Cref{rem:imschur} for practical implementation).
  Denote by $G^{(j)}$ the inverse of the
  above coefficient matrix and let it be partitioned in the same way as the
  above coefficient matrix, e.g., $G_{j\bullet j\langle}^{(j)}:=
  I^j_{j\bullet}G^{(j)}I_j^{j\langle}$.  We can represent
  $\mathbf{v}_{j}^0=G^{(j)}[\mathbf{f}_{j\langle};\mathbf{f}_{j\bullet};
  \mathbf{f}_{j\rangle}]$.

  Let $\boldsymbol{\lambda}_{1\langle}^D\gets0$ and
  $\boldsymbol{\lambda}_{1\langle}^N\gets0$.
  Compute successively for $j=2,..,J$,
  \begin{displaymath}
    \small
  \arraycolsep=.2em
  \begin{array}{rcl}
    \boldsymbol{\lambda}_{j\langle}^D&\gets&S_{j\langle}\boldsymbol{\lambda}_{j-1\langle}^N
    -D_{j\langle}\boldsymbol{\lambda}_{j-1\langle}^D+\mathbf{v}_{j-1[}^0,\\
    \boldsymbol{\lambda}_{j\langle}^N&\gets&D^*_{j\langle}\boldsymbol{\lambda}_{j-1\langle}^N
    -N_{j\langle}\boldsymbol{\lambda}_{j-1\langle}^D-(A_{j\langle}^{\rangle}\mathbf{v}_{j-1[}^0+A_{j\langle j-1\bullet}\mathbf{v}_{j-1\bullet}^0),
  \end{array}
  \end{displaymath}
  where the matrix potentials vanish for $j=2$, and are given for
  $j\geq3$ by
  \begin{displaymath}
    \small
    \medmuskip=-2mu
    \thinmuskip=-2mu
    \thickmuskip=-2mu
    \nulldelimiterspace=0pt
    \scriptspace=-0.5pt
    \arraycolsep=.1em    
    \begin{array}{rcl}
      S_{j\langle}&:=&G^{(j-1)}_{j\langle j-1\langle},\mbox{ }
    D_{j\langle}:=G^{(j-1)}_{j\langle j-1\langle}A_{j-1\langle}^{\langle}+
    G^{(j-1)}_{j\langle j-1\bullet}A_{j-1\bullet j-1\langle},\mbox{ }
    D^*_{j\langle}:=-A_{j\langle}^{\rangle}G^{(j-1)}_{j\langle j-1\langle}-
    A_{j\langle j-1\bullet}G^{(j-1)}_{j-1\bullet j-1\langle},\\
    N_{j\langle}&:=&-A_{j\langle}^\rangle(G^{(j-1)}_{j\langle j-1\langle}
    A_{j-1\langle}^{\langle}+G^{(j-1)}_{j\langle j-1\bullet}
    A_{j-1\bullet j-1\langle})
    -A_{j\langle j-1\bullet}(G^{(j-1)}_{j-1\bullet j-1\langle}
    A_{j-1\langle}^{\langle}+G^{(j-1)}_{j-1\bullet}
    A_{j-1\bullet j-1\langle}).
  \end{array}
  \end{displaymath}

  Let $\boldsymbol{\lambda}_{J\rangle}^D\gets0$ and
  $\boldsymbol{\lambda}_{J\rangle}^N\gets0$.  Compute successively for
  $j=J-1,..,1$,
  \begin{displaymath}
    \small
  \arraycolsep=.2em
  \begin{array}{rcl}
    \boldsymbol{\lambda}_{j\rangle}^D&\gets&S_{j\rangle}\boldsymbol{\lambda}_{j+1\rangle}^N
    -D_{j\rangle}\boldsymbol{\lambda}_{j+1\rangle}^D+\mathbf{v}_{j+1]}^0,\\
    \boldsymbol{\lambda}_{j\rangle}^N&\gets&D^*_{j\rangle}\boldsymbol{\lambda}_{j+1\rangle}^N
    -N_{j\rangle}\boldsymbol{\lambda}_{j+1\rangle}^D-(A_{j\rangle}^{\langle}
   \mathbf{v}_{j+1]}^0+A_{j\rangle j+1\bullet}
    \mathbf{v}_{j+1\bullet}^0),
  \end{array}
  \end{displaymath}
  where the matrix potentials vanish for $j=J-1$, and are given for
  $j\leq J-2$ by
  \begin{displaymath}
    \small
    \medmuskip=-2mu
    \thinmuskip=-2mu
    \thickmuskip=-2mu
    \nulldelimiterspace=0pt
    \scriptspace=-0.5pt
    \arraycolsep=.1em
    \begin{array}{rcl}
      S_{j\rangle}&:=&G^{(j+1)}_{j\rangle j+1\rangle},\mbox{ }
      D_{j\rangle}:=G^{(j+1)}_{j\rangle j+1\rangle}A_{j+1\rangle}^{\rangle}+
    G^{(j+1)}_{j\rangle j+1\bullet}A_{j+1\bullet j+1\rangle},\mbox{ }
    D^*_{j\rangle}:=-A_{j\rangle}^{\langle}G^{(j+1)}_{j\rangle j+1\rangle}-
    A_{j\rangle j+1\bullet}G^{(j+1)}_{j+1\bullet j+1\rangle},\\
    N_{j\rangle}&:=&-A_{j\rangle}^\langle(G^{(j+1)}_{j\rangle j+1\rangle}
    A_{j+1\rangle}^{\rangle}+G^{(j+1)}_{j\rangle j+1\bullet}
    A_{j+1\bullet j+1\rangle})-A_{j\rangle j+1\bullet}(G^{(j+1)}_{j+1\bullet j+1\rangle}
    A_{j+1\rangle}^{\rangle}+G^{(j+1)}_{j+1\bullet}A_{j+1\bullet j+1\rangle}).
  \end{array}
  \end{displaymath}

  Recover the subdomain solutions independently for $j=1,..,J$,
  \begin{displaymath}
    \small
    \mathbf{v}_{j\bullet}\gets G^{(j)}_{j\bullet j\langle}
    \boldsymbol{\lambda}_{j\langle}^N
    -(G^{(j)}_{j\bullet j\langle}A_{j\langle}^{\langle}+ G^{(j)}_{j\bullet}
    A_{j\bullet j\langle})\boldsymbol{\lambda}_{j\langle}^D
     + G^{(j)}_{j\bullet j\rangle}\boldsymbol{\lambda}_{j\rangle}^N
    -(G^{(j)}_{j\bullet j\rangle}A_{j\rangle}^{\rangle}+
    G^{(j)}_{j\bullet}A_{j\bullet j\rangle})\boldsymbol{\lambda}_{j\rangle}^D
    +\mathbf{v}_{j\bullet}^0.
  \end{displaymath}

  Output $\mathbf{\tilde{u}}\gets\sum_{j=1}^{J}R_j^T\left(I_j^{j\langle}\boldsymbol
    {\lambda}_{j\langle}^D+I_j^{j\bullet}\mathbf{v}_{j\bullet}\right)$.
\end{algorithm}

\begin{theorem}\label{thm:polarmat}
  Suppose the subproblems for the $\mathbf{v}_{j}^0$ in
  \Cref{alg:polarmat} are well-posed.  Let
  $\{\mathbf{u}_j^{(\frac{1}{2})}\}_{j=1}^{J-1}$ and
  $\{\mathbf{u}_j^{(1)}\}_{j=1}^J$ be generated by \Cref{alg:DOSMMAT}
  with zero initial guess, the $Q$ be equal to the identity and the
  $P$ be equal to the PML--DtN operators.  Let
  $\mathbf{u}_J^{(\frac{1}{2})}:=0$.  We have for \Cref{alg:polarmat}
  $\boldsymbol{\lambda}_{j\langle}^D=\mathbf{u}_{j-1[}^{(\frac{1}{2})}$,
    $\boldsymbol{\lambda}_{j\langle}^N=-A_{j\langle}^{\rangle}
    \mathbf{u}_{j-1[}^{(\frac{1}{2})}-A_{j\langle j-1\bullet}
      \mathbf{u}_{j-1\bullet}^{(\frac{1}{2})}$ and
      $\boldsymbol{\lambda}_{j\rangle}^D=\mathbf{u}_{j+1]}^{(1)}-
    \mathbf{u}_{j+1]}^{(\frac{1}{2})}+\mathbf{v}_{j+1]}^0$,
  $\boldsymbol{\lambda}_{j\rangle}^N=-A_{j\rangle}^{\langle}
  (\mathbf{u}_{j+1]}^{(1)}-\mathbf{u}_{j+1]}^{(\frac{1}{2})}+\mathbf{v}_{j+1]}^0)
    - A_{j\rangle j+1\bullet}(\mathbf{u}_{j+1\bullet}^{(1)}-
    \mathbf{u}_{j+1\bullet}^{(\frac{1}{2})}+\mathbf{v}_{j+1\bullet}^0)$.
    Therefore, we have
    $\mathbf{v}_{j\bullet}=\mathbf{u}_{j\bullet}^{(1)}$.
\end{theorem}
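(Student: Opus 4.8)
The plan is to prove \Cref{thm:polarmat} by reproducing, at the matrix level, every step of the proof of \Cref{thm:polarpde}: the continuous representation formula \Cref{eq:rep} is replaced by its discrete analogue \Cref{eq:repmat} (the Proposition stated above), the subdomain boundary value problems are replaced by the matrix subproblems written in the form \Cref{eq:matrt2}, and the commutation identity \Cref{eq:Pcom} is replaced by the observation that $G^{(j)}$ is the \emph{exact} inverse of the PML--augmented local operator, so that the effect of the PML--DtN transmission condition is already built into \Cref{eq:repmat}. As in the PDE case the argument is a double induction: first over the forward substeps $j=2,\dots,J$, then over the backward substeps $j=J-1,\dots,1$.

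First I would settle the base case. Because the initial guess is zero and $Q=I$, $P=$ the PML--DtN operator, the coefficient-matrix blocks $\tilde S_{j\langle}^{\langle}$, $\tilde S_{j\rangle}^{\rangle}$ in the $j$-th forward subproblem of \Cref{alg:DOSMMAT} are exactly the PML Schur complements used in \Cref{alg:polarmat}; hence for $j=1$ that subproblem coincides with the one defining $\mathbf{v}_1^0$, and uniqueness gives $\mathbf{u}_1^{(\frac{1}{2})}=\mathbf{v}_1^0$. Since the matrix potentials $S_{2\langle},D_{2\langle},D^*_{2\langle},N_{2\langle}$ vanish, the definitions of $\boldsymbol{\lambda}_{2\langle}^D$, $\boldsymbol{\lambda}_{2\langle}^N$ in \Cref{alg:polarmat} immediately reduce to the claimed identities.

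The core of the proof is a discrete representation of $\mathbf{u}_{j\bullet}^{(\frac{1}{2})}$ (and of its restriction to $\Gamma_{j+1,j}$) playing the role of \Cref{eq:repuj}--\Cref{eq:repujend}. I would apply the Proposition to the PML--augmented subdomain $\Omega_j$ with the block $b$ taken to be the left interface $j\langle$, the block $i=\{j\bullet,j\rangle\}$, the coefficient matrix the one whose inverse is $G^{(j)}$, and the free matrix $A_b^{(i)}:=A_{j\langle}^{\rangle}$. Reading off from \Cref{eq:matrt2} the source excess $\boldsymbol{\lambda}_b$ of the $j$-th DOSM subproblem over the $\mathbf{v}_j^0$ subproblem, and combining it with the right-interface row of the $(j-1)$-st subproblem (the analogue of \Cref{eq:prevrow}) together with the splitting identifications $A_{j\langle}^{\langle}=A_{j-1\rangle}^{\langle}$, $A_{j\langle}^{\rangle}=A_{j-1\rangle}^{\rangle}$, one checks that the quantity $\boldsymbol{\lambda}_b^N$ produced by the Proposition equals $-A_{j\langle}^{\rangle}\mathbf{u}_{j-1[}^{(\frac{1}{2})}-A_{j\langle j-1\bullet}\mathbf{u}_{j-1\bullet}^{(\frac{1}{2})}$. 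Feeding in the induction hypothesis $\boldsymbol{\lambda}_{j\langle}^D=\mathbf{u}_{j-1[}^{(\frac{1}{2})}$, $\boldsymbol{\lambda}_{j\langle}^N=-A_{j\langle}^{\rangle}\mathbf{u}_{j-1[}^{(\frac{1}{2})}-A_{j\langle j-1\bullet}\mathbf{u}_{j-1\bullet}^{(\frac{1}{2})}$ and substituting for $\mathbf{u}_{j-1}^{(\frac{1}{2})}$ its own representation (the discrete analogue of \Cref{eq:rephalf} at index $j-1$, which by the hypothesis is written through $G^{(j-1)}$ and $\boldsymbol{\lambda}_{j-1\langle}^{D,N}$), the compositions appearing are precisely $S_{j+1\langle},D_{j+1\langle},D^*_{j+1\langle},N_{j+1\langle}$ as defined in \Cref{alg:polarmat}; restricting to $j\rangle$ and taking the matching ``Neumann'' combination reproduces the update rules for $\boldsymbol{\lambda}_{j+1\langle}^D$ and $\boldsymbol{\lambda}_{j+1\langle}^N$, closing the forward induction and, at $j=J$, giving the representation of $\mathbf{u}_{J\bullet}^{(1)}$ through $\boldsymbol{\lambda}_{J\langle}$.

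For the backward sweep I would set $\mathbf{w}_j:=\mathbf{u}_j^{(1)}-\mathbf{u}_j^{(\frac{1}{2})}+\mathbf{v}_j^0$ for $j<J$ and $\mathbf{w}_J:=\mathbf{u}_J^{(1)}$, the discrete counterpart of the $w_j$ used in the proof of \Cref{thm:polarpde}; one verifies that $\mathbf{w}_j$ solves a subproblem with homogeneous data on $\Gamma_{j,j-1}$ and inhomogeneous data on $\Gamma_{j,j+1}$, so the same representation-formula argument yields $\boldsymbol{\lambda}_{j\rangle}^D=\mathbf{w}_{j+1[}$, $\boldsymbol{\lambda}_{j\rangle}^N=-A_{j\rangle}^{\langle}\mathbf{w}_{j+1[}-A_{j\rangle j+1\bullet}\mathbf{w}_{j+1\bullet}$ together with the backward update rules, and adding the forward and backward representations in the recovery formula of \Cref{alg:polarmat} gives $\mathbf{v}_{j\bullet}=\mathbf{u}_{j\bullet}^{(1)}$. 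The main obstacle I expect is precisely the matrix shadow of \Cref{eq:Pcom}: at the PDE level one freely moves $\mathrm{DtN}_j^{pml}$ across $G_j$ and turns a normal derivative taken ``from the $\Omega_{j-1}$ side'' into one taken ``from the $\Omega_j$ side'', whereas at the matrix level this is a careful bookkeeping with the two splittings of $A_{j\langle}$ and of $A_{j\rangle}$ and with the right-interface rows of adjacent subproblems, so that the asymmetric way in which $A_{j\langle}^{\langle}$ versus $A_{j\langle}^{\rangle}$ enter the $S,D,D^*,N$ operators of \Cref{alg:polarmat} comes out correctly; the end subdomains $j=1,J$ (where some interface blocks are absent) need to be checked separately, as usual.
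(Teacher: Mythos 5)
Your plan coincides with the paper's own proof in every essential respect: the same base case $\mathbf{u}_1^{(\frac12)}=\mathbf{v}_1^0$, the same discrete representation of $\mathbf{u}_j^{(\frac12)}$ through $G^{(j)}$ (the paper verifies the identity \Cref{eq:ujrepform} directly by showing its difference from \Cref{eq:ujGinv} is annihilated because $G^{(j)}$ inverts the local coefficient matrix, which is exactly your ``the PML--DtN effect is already built into \Cref{eq:repmat}'' observation), the same forward induction on the $\boldsymbol{\lambda}_{j\langle}$ identities, the same backward correction variable $\mathbf{w}_j=\mathbf{u}_j^{(1)}-\mathbf{u}_j^{(\frac12)}+\mathbf{v}_j^0$, and the same final combination. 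The only deviations are presentational (invoking the Proposition with a chosen $A_b^{(i)}$ rather than checking the representation identity by hand, plus a harmless index slip $\mathbf{w}_{j+1[}$ for $\mathbf{w}_{j+1]}$), so the proposal is correct and follows the paper's route.
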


\begin{proof}
  From the algorithms, we have $\mathbf{u}_1^{(\frac{1}{2})}=\mathbf{v}_{1}^0$
  and so
  \begin{equation}\label{eq:dn0}
    \boldsymbol{\lambda}_{2\langle}^D=\mathbf{u}_{1[}^{(\frac{1}{2})},\quad
    \boldsymbol{\lambda}_{2\langle}^N=-(A_{2\langle}^{\rangle}
    \mathbf{u}_{1[}^{(\frac{1}{2})}+
    A_{2\langle 1\bullet}\mathbf{u}_{1\bullet}^{(\frac{1}{2})}).
  \end{equation}
  Since $G^{(j)}$ is the inverse of the coefficient matrix of the $j$-th
  subproblem of \Cref{alg:DOSMMAT}, the solution can be represented as
  \begin{equation}\label{eq:ujGinv}
    \arraycolsep0.2em
    \left[
      \begin{array}{c}
        \mathbf{u}_{j\bullet}^{(\frac{1}{2})}\\
        \mathbf{u}_{j\rangle}^{(\frac{1}{2})}
      \end{array}
    \right]=
    \left[
      \begin{array}{ccc}
        G^{(j)}_{j\bullet j\langle} & G^{(j)}_{j\bullet} & G^{(j)}_{j\bullet j\rangle}\\
        G^{(j)}_{j\rangle j\langle} & G^{(j)}_{j\rangle j\bullet} & G^{(j)}_{j\rangle}
      \end{array}
    \right]
    \left[
      \begin{array}{c}
        \mathbf{f}_{j\langle}+(\tilde{S}_{j\langle}^{\langle}-A_{j\langle})
        \mathbf{u}_{j-1[}^{(\frac{1}{2})}-A_{j\langle j-1\bullet}
        \mathbf{u}_{j-1\bullet}^{(\frac{1}{2})}
        \\
        \mathbf{f}_{j\bullet}\vphantom{^\rangle_\langle}\\
        \mathbf{f}_{j\rangle}\vphantom{^\rangle_\langle}
      \end{array}
    \right].
  \end{equation}
  We claim that the following identity holds:
  \begin{equation}\label{eq:ujrepform}
    \arraycolsep=.2em
    \begin{array}{rcl}
    \left[
      \begin{array}{c}
        \mathbf{u}_{j\bullet}^{(\frac{1}{2})}\\
        \mathbf{u}_{j\rangle}^{(\frac{1}{2})}
      \end{array}
    \right]&\hspace{-0.5em}=\hspace{-0.5em}&
    \left[
      \begin{array}{ccc}
       G^{(j)}_{j\bullet j\langle}  & G^{(j)}_{j\bullet} & G^{(j)}_{j\bullet j\rangle}\\
       G^{(j)}_{j\rangle j\langle} & G^{(j)}_{j\rangle j\bullet} & G^{(j)}_{j\rangle}
      \end{array}
    \right]\hspace{-0.3em}
    \left[
      \begin{array}{c}
        \mathbf{f}_{j\langle}\\
        \mathbf{f}_{j\bullet}\\
        \mathbf{f}_{j\rangle}
      \end{array}
    \right]\hspace{-0.2em}-\hspace{-0.2em}
    \left[
      \begin{array}{c}
        G^{(j)}_{j\bullet j\langle}\\
        G^{(j)}_{j\rangle j\langle}
      \end{array}
    \right](A_{j\langle}^{\rangle}\mathbf{u}_{j-1[}^{(\frac{1}{2})}
    +A_{j\langle j-1\bullet}\mathbf{u}_{j-1\bullet}^{(\frac{1}{2})})\\
    & & - \left\{
    \left[
      \begin{array}{c}
        G^{(j)}_{j\bullet j\langle}\\
        G^{(j)}_{j\rangle j\langle}
      \end{array}
    \right]A_{j\langle}^{\langle}+
    \left[
      \begin{array}{cc}
        G^{(j)}_{j\bullet} & G^{(j)}_{j\bullet j\rangle}\\
        G^{(j)}_{j\rangle j\bullet} & G^{(j)}_{j\rangle}
      \end{array}
    \right]
    \left[
      \begin{array}{c}
        A_{j\bullet j\langle}\vphantom{^{(j)}_{j\bullet j\rangle}}\\
        0\vphantom{^{(j)}_{j\bullet j\rangle}}
      \end{array}
    \right]\right\}\mathbf{u}_{j-1[}^{(\frac{1}{2})}.
    \end{array}
  \end{equation}
  In fact, the difference of the r.h.s. between \Cref{eq:ujGinv} and
  \Cref{eq:ujrepform} is
  \begin{displaymath}
  \left\{
  \left[
    \begin{array}{cc}
      G^{(j)}_{j\bullet j\langle}   & G^{(j)}_{j\bullet}\\
      G^{(j)}_{j\rangle j\langle} & G^{(j)}_{j\rangle j\bullet}
    \end{array}
  \right]
  \left[
    \begin{array}{c}
      \tilde{S}_{j\langle}^{\langle}\\
      A_{j\bullet j\langle}
    \end{array}
  \right]
  \right\}\mathbf{u}_{j-1[}^{(\frac{1}{2})},
  \end{displaymath}
  and the matrix in the braces vanishes because $G^{(j)}$ is the inverse of the
  coefficient matrix of the subproblem for $\mathbf{v}_{j}^0$ of
  \Cref{alg:polarmat}.  Assuming that
  \begin{equation}\label{eq:dnj}
    \boldsymbol{\lambda}^D_{j\langle}=\mathbf{u}_{j-1[}^{(\frac{1}{2})},\quad
    \boldsymbol{\lambda}^N_{j\langle}=-A_{j\langle}^{\rangle}
    \mathbf{u}_{j-1[}^{(\frac{1}{2})}-A_{j\langle j-1\bullet}
    \mathbf{u}_{j-1\bullet}^{(\frac{1}{2})},
  \end{equation}
  we substitute \Cref{eq:dnj} into \Cref{eq:ujrepform}, take the
  Dirichlet and Neumann traces on $\Gamma_{j+1,j}$, and compare the results with
  the updating rules in \Cref{alg:polarmat} to see that
  \Cref{eq:dnj} also holds for $j+1$ replacing $j$.  By induction based
  on \Cref{eq:dn0}, we conclude that \Cref{eq:dnj} holds for all
  $j=2,..,J$. Substituting \Cref{eq:dnj} and $\mathbf{v}_{j}^0$ into
  \Cref{eq:ujrepform} yields (for $j=J$ change the l.h.s. to
  $\mathbf{u}_J^{(1)}$)
  \begin{equation}\label{eq:ujrephalf}
      \mathbf{u}_{j\bullet}^{(\frac{1}{2})}
      =\mathbf{v}_{j\bullet}^0+
      G^{(j)}_{j\bullet j\langle}\boldsymbol{\lambda}_{j\langle}^N
      - \left(
        G^{(j)}_{j\bullet j\langle}A_{j\langle}^{\langle}+G^{(j)}_{j\bullet}A_{j\bullet j\langle}
      \right)\boldsymbol{\lambda}_{j\langle}^D.
  \end{equation}
  In particular, we have $\mathbf{u}_{J\bullet}^{(1)}=\mathbf{v}_{J\bullet}$ with
  $\mathbf{v}_J$ from \Cref{alg:polarmat}.

  In the backward sweep of \Cref{alg:DOSMMAT}, we denote by
  $\mathbf{w}_{j}:=\mathbf{u}_j^{(1)}-\mathbf{u}_j^{(\frac{1}{2})}+\mathbf{v}_{j}^0$,
  $j=J-1,..,1$ and $\mathbf{w}_J:=\mathbf{u}_J^{(1)}$.  By arguments similar
  to the last paragraph, we can show for all $j=J-1,..,1$ that
  \begin{displaymath}
    \boldsymbol{\lambda}_{j\rangle}^D=\mathbf{w}_{j+1]},\quad
    \boldsymbol{\lambda}_{j\rangle}^N=-A_{j\rangle}^{\langle}\mathbf{w}_{j+1]}
    -A_{j\rangle j+1\bullet}\mathbf{w}_{j+1\bullet},
  \end{displaymath}
  and further 
  \begin{equation}\label{eq:wjrep}
    \mathbf{w}_{j\bullet}=\mathbf{v}_{j\bullet}^0+
    G^{(j)}_{j\bullet j\rangle}\boldsymbol{\lambda}_{j\rangle}^N
    - \left(
      G^{(j)}_{j\bullet j\rangle}A_{j\rangle}^{\rangle}+G^{(j)}_{j\bullet}A_{j\bullet j\rangle}
    \right)\boldsymbol{\lambda}_{j\rangle}^D.
  \end{equation}
  Combining \Cref{eq:ujrephalf} and \Cref{eq:wjrep}, we conclude
  that $\mathbf{u}_{j\bullet}^{(1)}=\mathbf{v}_{j\bullet}$.
\end{proof}

\begin{remark}
  In \Cref{alg:polarmat}, the global approximation $\mathbf{\tilde{u}}$ is
  different from $\mathbf{u}^{(1)}$ of \Cref{alg:GDCMAT}.  In
  \Cref{alg:polarmat},
  $R_{j\rangle}\mathbf{\tilde{u}}=\mathbf{u}_{j\rangle}^{(\frac{1}{2})}$
  with $\mathbf{u}_j^{(\frac{1}{2})}$ from \Cref{alg:DOSMMAT}; while
  \Cref{alg:GDCMAT} takes
  $R_{j\rangle}\mathbf{u}^{(1)}=\mathbf{u}_{j+1]}^{(1)}$ with
  $\mathbf{u}_{j+1}^{(1)}$ from \Cref{alg:DOSMMAT}.
\end{remark}

\section{Optimal parallel Schwarz methods for arbitrary decompositions}

All the methods we discussed so far are only for a domain decomposition into a
sequence of subdomains, and the information is passed gradually from one
subdomain to its neighbor through the linear adjacency of the decomposition. The
methods converge after one double sweep if the appropriate DtN operators are
used in the transmission conditions between the subdomains. If the subdomain
solves are performed in parallel, then the methods converge in a number of
iterations that equals the number of subdomains, as was first pointed out in
\cite{NRS94}, see also \cite{nataf1995factorization}, and this result was
generalized in \cite{nier1998remarques} to domain decompositions whose
connectivity graph has no cycles.  Whether an optimal Schwarz method exists for
an {\it arbitrary} decomposition had been a question until the method was first
created in \cite{GK1}. The method converges in two iterations and thus the
  iteration matrix is nilpotent of degree two; each iteration exposes parallelism {\it
  between} the subdomains in solving the subproblems and after the first
iteration an all-to-all communication is invoked to transmit the interface data
between every pair of subdomains (even if they are not adjacent).  Note that the
communication happens on the whole interfaces of the subdomains, e.g. $\Omega_j$
will map the data on the entire of $\partial\Omega_j\cap\Omega$ to the data on
the entire of $\partial\Omega_l\cap\Omega$ and send them to $\Omega_l$, see
\Cref{fig:gcomm}.
\begin{figure}
  \centering
  \includegraphics[scale=.6]{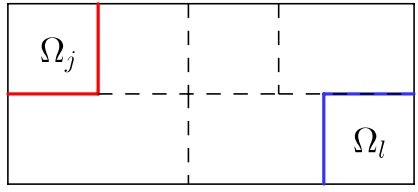}
  \caption{Two subdomains in an arbitrary decomposition.}
  \label{fig:gcomm}
\end{figure}

The optimal algorithm in \cite{GK1} was derived at the discrete level
using linear algebra techniques, and is thus valid for various types
of discretized partial differential equations. We give here an
equivalent formulation at the continuous level, to complete the pair
of discrete and continuous algorithms as we did for all the other
methods in this review. In the optimal algorithm from \cite{GK1},
$\Omega_l$ needs to take into account all the exterior sources as well
as the interior source.  Taking into account the interior source
requires to put a transparent boundary condition on
$\partial\Omega_l\cap\Omega$, while the exterior sources have to be
taken into account with the data $\lambda_{l,}:=\sum_{j\neq
  l}\lambda_{l,j}$ in the transmission condition on
$\partial\Omega_l\cap\Omega$.  Here, $\lambda_{l,j}$ and
$\lambda_{j,j}$ are certain types of traces on
$\partial\Omega_l\cap\Omega$ and $\partial\Omega_j\cap\Omega$ of the
wave field stimulated by the source in $\Omega_j$.  The map that turns
$\lambda_{j,j}$ into $\lambda_{l,j}$ is
\begin{equation}\label{eq:gmap}
  \begin{array}{r@{\hspace{0.2em}}c@{\hspace{0.2em}}ll}
    \mathcal{F}_{l,j}:\;\lambda_{j,j}\rightarrow \lambda_{l,j}=
    (\mathcal{B}_lv)|_{\partial\Omega_l\cap\Omega},
    \mbox{ s.t.}\qquad
    \mathcal{L}\,v&=&0\quad & \mbox{in }\,\Omega-\Omega_j,\\
    \mathcal{B}\,v&=&0\quad &\mbox{on }\,
                              \partial\Omega-\partial\Omega_j,\\
    \mathcal{C}_{j}v&=&\lambda_{j,j}\quad &\mbox{on }\,
                                            \partial\Omega_j\cap\Omega,
  \end{array}
\end{equation}
where $\mathcal{C}_j$ is the trace operator corresponding to
$\lambda_{j,j}$, and $\mathcal{B}_l$ is the trace operator
corresponding to $\lambda_{l,j}$.  For convenience, we can take
$\mathcal{B}_l:=\mathcal{T}_l$, where $\mathcal{T}_l$ is a transparent
boundary operator for the truncation of $\Omega$ to $\Omega_l$, %
which allows us to simulate the waves generated by $\lambda_{l,}$ and the
interior source together by the subproblem in $\Omega_l$.  We may use some
approximation $\widetilde{\mathcal{F}}_{l,j}$ of the operator
in \Cref{eq:gmap} to define a preconditioner, 
which leads to the algorithm given in \Cref{alg:gosm}.%
\smallskip

\begin{algorithm}
  \caption{Optimized Schwarz preconditioner using {\bf global transmission}
    conditions at the {\bf PDE} level}
  \label{alg:gosm}
  Input the source terms $f$ and $g$.

  Suppose the decomposition is arbitrary such that
  $\cup_{j=1}^{J}\overline{\Omega}_j=\overline{\Omega}$.

  Solve the following subproblems independently for $j=1,..,J$,
  \begin{displaymath}
    \small
  \begin{array}{r@{\hspace{0.2em}}c@{\hspace{0.2em}}ll}
    \mathcal{L}\,v_j^{(\frac{1}{2})}&=&f & \mbox{ in }\Omega_j,\\
    \mathcal{B}\,v_j^{(\frac{1}{2})}&=&g & \mbox{ on }\partial\Omega\cap\partial\Omega_j,\\
    \mathcal{B}_jv_j^{(\frac{1}{2})}&=&0 & \mbox{ on }\partial\Omega_j-\partial\Omega,
  \end{array}
  \end{displaymath}
  where $\mathcal{B}_j$ is an approximation of a transparent boundary operator for
  truncation of $\Omega$ to $\Omega_j$.  Take the trace
  $\lambda_{j,j}\gets\mathcal{C}_jv_j^{(\frac{1}{2})}$ on
  $\partial\Omega_j-\partial\Omega$ and map it to
  $\lambda_{l,j}\gets\widetilde{\mathcal{F}}_{l,j}\lambda_{j,j}$ on
  $\partial\Omega_l-\partial\Omega$ for all $l\neq j$.  Here,
  $\widetilde{\mathcal{F}}_{l,j}$ is an approximation of $\mathcal{F}_{l,j}$
  in \Cref{eq:gmap}.

  Solve the following subproblems independently for $j=1,..,J$,
  \begin{displaymath}
    \small
  \begin{array}{r@{\hspace{0.2em}}c@{\hspace{0.2em}}ll}
    \mathcal{L}\,v_j^{(1)}&=&f & \mbox{ in }\Omega_j,\\
    \mathcal{B}\,v_j^{(1)}&=&g & \mbox{ on }\partial\Omega\cap\partial\Omega_j,\\
    \mathcal{B}_jv_j^{(1)}&=&\sum_{l\neq j}\lambda_{j,l}
                               & \mbox{ on }\partial\Omega_j-\partial\Omega.
  \end{array}
  \end{displaymath}

  Output $\tilde{u}\gets\sum_{j=1}^J\mathcal{E}_j(\phi_jv_j^{(1)})$ with
  $\mathcal{E}_j$ the extension by zero to $\Omega$, and
  $\sum_{j=1}^J\mathcal{E}_j\phi_j=1$.
\end{algorithm}

\begin{theorem}\label{thm:gcon}
  If in \Cref{alg:gosm} $\widetilde{\mathcal{F}}_{l,j}=\mathcal{F}_{l,j}$ is
  uniquely defined as in \Cref{eq:gmap} and $\mathcal{B}_j$ is an exact
  transparent boundary operator, then the preconditioner given by
  \Cref{alg:gosm} is exact i.e. the output $\tilde{u}$ is the solution of
  \Cref{eq:pde}. This means the iteration operator is nilpotent of degree
    two.
\end{theorem}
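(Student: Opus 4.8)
The plan is to prove this by a linearity-plus-uniqueness argument, exactly in the spirit of the two-step convergence proof for the optimal parallel Schwarz method sketched in \Cref{SecUnderlying} and the one-step proof of \Cref{thm:1steppde}. Since the subproblems in \Cref{alg:gosm} are all consistent with the true solution $u$ of \Cref{eq:pde} (they are just \Cref{eq:pde} localized to $\Omega_j$ with an exact transparent condition on $\partial\Omega_j-\partial\Omega$, which by \Cref{lem:TC} has $u|_{\Omega_j}$ as its unique solution), it suffices to track the error $e_j^{(\ast)}:=u|_{\Omega_j}-v_j^{(\ast)}$ and show $e_j^{(1)}\equiv0$ for all $j$. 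Equivalently, we can simply decompose the solution by source: write $f=\sum_{j=1}^{J}f_j$ with $f_j:=\mathcal{E}_j(f|_{\Omega_j})$ (putting the boundary data $g$ with, say, $f_1$), and let $u_{,j}$ solve \Cref{eq:pde} with right-hand side $f_j$, so that $u=\sum_j u_{,j}$. The key observation is that the first half-step solve in $\Omega_j$ with homogeneous transparent condition produces exactly $v_j^{(\frac12)}=u_{,j}|_{\Omega_j}$, again by \Cref{lem:TC} applied to the data $f_j$.

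The core of the argument is then to show that the map $\mathcal{F}_{l,j}$ of \Cref{eq:gmap} transports the trace data correctly: starting from $\lambda_{j,j}=\mathcal{C}_j v_j^{(\frac12)}=\mathcal{C}_j(u_{,j}|_{\Omega_j})$ on $\partial\Omega_j\cap\Omega$, the function $v$ in \Cref{eq:gmap} solves the homogeneous equation in $\Omega-\Omega_j$ with the correct trace on $\partial\Omega_j\cap\Omega$ and homogeneous data on $\partial\Omega-\partial\Omega_j$; since $f_j$ vanishes outside $\Omega_j$, the restriction $u_{,j}|_{\Omega-\Omega_j}$ satisfies precisely this same problem, so by uniqueness $v=u_{,j}|_{\Omega-\Omega_j}$, and therefore $\lambda_{l,j}=\mathcal{F}_{l,j}\lambda_{j,j}=(\mathcal{B}_l v)|_{\partial\Omega_l\cap\Omega}=\mathcal{T}_l(u_{,j})|_{\partial\Omega_l\cap\Omega}$. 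Summing over $j\neq l$ and adding the contribution of the interior source $f_l$ (which the transparent operator $\mathcal{B}_l=\mathcal{T}_l$ handles automatically), the second-step subproblem in $\Omega_l$ is exactly \Cref{eq:pde} localized to $\Omega_l$ with transparent boundary condition and total source $\sum_j f_j=f$ felt correctly; hence by \Cref{lem:TC} its unique solution is $v_l^{(1)}=u|_{\Omega_l}$. Finally, $\tilde u=\sum_l \mathcal{E}_l(\phi_l v_l^{(1)})=\sum_l \mathcal{E}_l(\phi_l\, u|_{\Omega_l})=\left(\sum_l \mathcal{E}_l\phi_l\right)u=u$ by the partition-of-unity property, and the nilpotency-of-degree-two statement follows because the half-step iterate already equals $u_{,j}|_{\Omega_j}$ regardless of the arbitrary initial guess, so the error vanishes after the second solve.

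The step I expect to be the main obstacle is making the ``transparent boundary operator'' bookkeeping fully rigorous, in particular checking that $\mathcal{B}_l=\mathcal{T}_l$ really does let one ``simulate the waves generated by $\lambda_{l,}$ and the interior source together by the subproblem in $\Omega_l$'', i.e. that the superposition of the exterior-source contributions $\sum_{j\neq l}\mathcal{T}_l(u_{,j})$ entering as boundary data combines with the interior-source transparent solve to reproduce $u|_{\Omega_l}$ exactly. This requires invoking \Cref{lem:TC} (or a mild variant allowing an inhomogeneous transparent-type datum, obtained by lifting as in the remark following \Cref{lem:TC}) and verifying that the trace operators $\mathcal{C}_j$ and $\mathcal{B}_l$ are paired consistently so that the Dirichlet-to-Neumann-type data propagated by $\mathcal{F}_{l,j}$ matches what the $\Omega_l$ subproblem expects on $\partial\Omega_l\cap\Omega$; all of this is linear-algebra/trace-matching, but it needs the well-posedness hypotheses of the theorem (unique solvability of \Cref{eq:pde} and of the exterior problem in \Cref{eq:gmap}) to pin down every intermediate quantity uniquely. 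Everything else is a routine superposition argument.
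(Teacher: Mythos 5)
Your proof is correct in substance and, since the paper's own proof of \Cref{thm:gcon} is a single line ("The proof is straightforward by well-posedness and linearity"), you are essentially filling in the argument the authors decided not to write. The source-decomposition route you take -- decompose $f$ (and $g$) by subdomain, use \Cref{lem:TC} to identify $v_j^{(\frac12)}$ with the restriction of the solution $u_{,j}$ stimulated by the $\Omega_j$-supported source, then use the exterior-problem uniqueness in \Cref{eq:gmap} to show $\mathcal{F}_{l,j}$ transports $\lambda_{j,j}$ to $\mathcal{T}_l(u_{,j})$, and finally observe that the transparent operator $\mathcal{T}_l$ annihilates the interior contribution $u_{,l}$ so that $\sum_{j\neq l}\lambda_{l,j}=\mathcal{T}_l(u)$, making the second solve consistent with $u|_{\Omega_l}$ -- is exactly the linearity-plus-well-posedness argument the paper is gesturing at.

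Two small things are worth flagging. First, your opening claim that "the subproblems in \Cref{alg:gosm} are all consistent with the true solution $u$ ... which by \Cref{lem:TC} has $u|_{\Omega_j}$ as its unique solution" is not quite right for the \emph{first} half-step: that solve uses the \emph{homogeneous} transparent condition $\mathcal{B}_jv_j^{(\frac12)}=0$, so it is consistent with $u_{,j}|_{\Omega_j}$, not with $u|_{\Omega_j}$ (the true solution generally has $\mathcal{T}_j(u)|_{\partial\Omega_j\cap\Omega}\neq 0$ because of waves incoming from exterior sources). You recover from this immediately by switching to the source-decomposition argument, but the first sentence should not be read as a standalone justification. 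Second, the ansatz $f=\sum_j\mathcal{E}_j(f|_{\Omega_j})$, and likewise your tacit partition of $g$, presupposes a non-overlapping decomposition (or a partition-of-unity weighting of the source); the paper's phrase "arbitrary decomposition" is about the adjacency structure (cross points, cycles), not about permitting overlap, and the reference \cite{GK1} is indeed a non-overlapping construction, so your assumption is consistent with what the theorem intends -- but it would be cleaner to state it. Finally, the "nilpotent of degree two" remark: as written, \Cref{alg:gosm} has no initial guess, so what you prove is that the two-step procedure is exact; the nilpotency phrasing refers to the underlying fixed-point map whose application with arbitrary starting interface data converges in two steps, which your last sentence captures correctly.
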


\begin{proof}
  The proof is straightforward by well-posedness and linearity.
\end{proof}

As seen from \Cref{thm:gcon}, approximating \Cref{eq:gmap} is crucial for
\Cref{alg:gosm}.  Essentially, this consists in approximating the
off-diagonal part of the Green's function corresponding to the two interfaces.
But at the time of writing this paper, no effort has been made toward a
practical realization of \Cref{alg:gosm}.

\section{Numerical Experiments}

The main goal of our manuscript is theoretical and formal, namely
  to show that there is a common principle behind the new
  Helmholtz preconditioners based on sequential domain
  decomposition. Numerically, impressive results have been shown for
these algorithms in the literature, see for example
\cite{EY2,Poulson,Chen13b,Stolk,Kimsweep,ZD}.  Nevertheless, it
is interesting and fair for the readers of this review
to also see when these new algorithms get into difficulty. This
motivated us to add this section. A part of the results here have
been submitted to the proceedings of the {\it 24th International
  Conference on Domain Decomposition Methods} held in Svalbard,
Norway.

We consider the Helmholtz equation on the unit square
\begin{equation}\label{HelmholtzVar}
(\Delta+k(x)^2)u=f,\quad \mbox{in $\Omega:=(0,1)^2$},
\end{equation}
with suitable boundary conditions for well-posedness. We discretize
\Cref{HelmholtzVar} by the classical five-point finite difference
method. We split the square sequentially in the $x$ direction
into $p=4,8,16$ equal strips representing the subdomains with
vertical interfaces. Each subdomain has its own constant
wavenumber. For the case of four subdomains, we use the wavenumbers
\begin{equation}\label{LowerWaveNumber}
  k=[20\ 20\ 20\ 20]+\alpha[0\ 20\ 10\ -10],
\end{equation}
where $\alpha$ is a contrast parameter, and for larger $p$ we just
repeat this structure. The mesh resolution we choose guarantees
at least ten points per wavelength for this experiment.  We start with
the case of a wave guide in the $x$ direction, where we use Robin or
PMLs radiation conditions on the left and right, and homogeneous
Dirichlet conditions on top and bottom. We show in \Cref{GuideSolFig}
the real-part of the solution\footnote{The Dirichlet boundary points
  are not plotted, but the PMLs are plotted.} we obtain in the
four-layered medium with $\alpha=1$ stimulated by a point source at
$x=2h$, $y=\frac{1-h}{2}$ in the top row, and below for the point
source at $x=\frac{1}{2}$, $y=\frac{1-h}{2}$.
\begin{figure}
  \centering
  \mbox{\includegraphics[width=0.44\textwidth,clip]{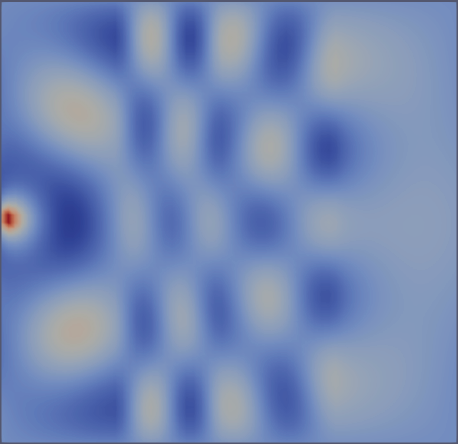}\qquad
  \includegraphics[width=0.505\textwidth,clip]{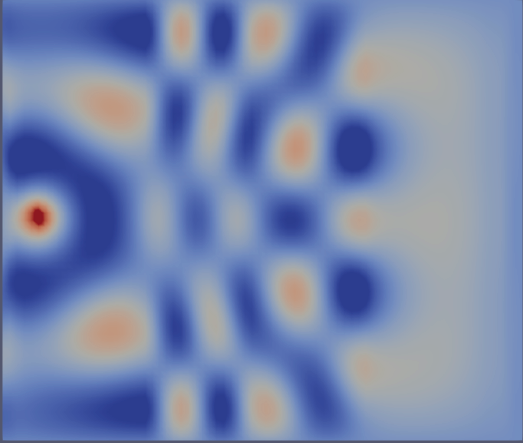}}\\
  \mbox{\includegraphics[width=0.44\textwidth,clip]{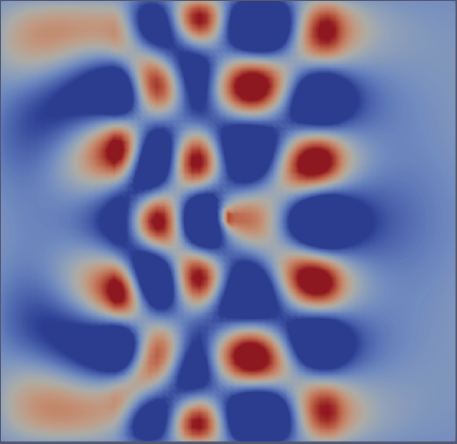}\qquad
  \includegraphics[width=0.505\textwidth,clip]{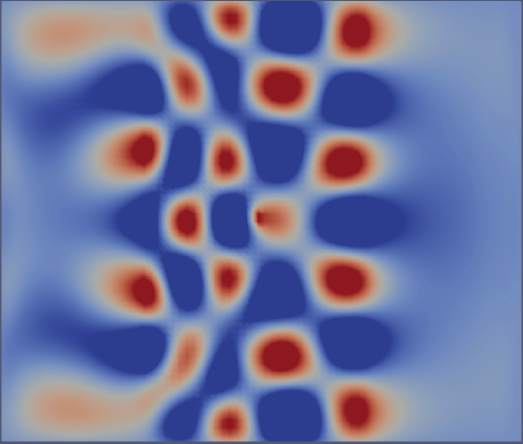}}
\caption{Guided waves from a point source near the left boundary
  (first row) or near the center (second row), with homogeneous
  Dirichlet conditions on the top and bottom and Robin (first column)
  or PMLs (second column) on the left and right boundaries. The media
  have four layers separated by vertical interfaces as in
  \Cref{LowerWaveNumber} with $\alpha=1$.}
  \label{GuideSolFig}
\end{figure}

We are going to test the two fundamental algorithms mentioned already
in \Cref{SecUnderlying} -- one based on the factorization and the other
based on the non-overlapping Schwarz method.  As stated in
\Cref{thm:lu}, the factorization method has (approximate) DtN or Schur
complement derived transmission conditions on the {\it left}
interfaces of subdomains, and Dirichlet conditions on the right.  For
the non-overlapping DOSM (see \Cref{alg:GDCPDE} and
\Cref{alg:GDCMAT}), we use the version with (approximate) DtN
derived transmission conditions on {\it both} interfaces of each
subdomain.  For the case of a constant medium, i.e. $\alpha=0$ in
\Cref{LowerWaveNumber}, we use the exact DtN for \Cref{HelmholtzVar}.
That is, we calculate the exact Schur complement for the discretized
problem.  For $\alpha>0$, we approximate the DtN for
\Cref{HelmholtzVar} by the exact DtN for the Helmholtz equation in
a modified medium: when calculating the exterior DtN on
the {\it left} interface of a subdomain, all the medium to the {\it
  left} of the subdomain is assumed to be the identity extension of
the medium in the {\it left} neighborhood of the
subdomain\footnote{This is a common assumption that most of the
  ABC and
  PML techniques are based on.}.  On the right interface of a subdomain
we do the same. Of course, this approximation has errors
for the heterogeneous media, and we will see that these errors 
have a dramatic impact on the convergence of the algorithms, something which is not yet well documented in the literature on these methods.

We test the two algorithms both as iterative solvers and as
preconditioners for GMRES for varying contrast parameter $\alpha$ and
subdomain numbers.  We do this both for mesh size $h=1/64$ and the
contrast profile in \Cref{LowerWaveNumber}, and on a refined mesh with
$h=1/128$, but also a profile with twice the size for the wavenumber,
i.e.
\begin{equation}\label{HigherWaveNumber}
  k=[40\ 40\ 40\ 40]+\alpha[0\ 40\ 20\ -20],
\end{equation}
so that we still have at least ten points per wavelength
resolution. We show in \Cref{Tab1} and \Cref{Tab2} the number of
iterations the methods took, where we stopped the iterative version of
the algorithms and GMRES when the residual was reduced by $1e-6$, and
we started with a zero initial guess of the solution $u$ for a random
source term $f$ in the physical domain $\Omega$ of
\Cref{HelmholtzVar}. In the PMLs outside $\Omega$, both the initial
guess and the source term are set to zero.
\begin{table}
  \centering
  \caption{LU iteration numbers in the wave guide setting}
  \label{Tab1}
  \setlength{\tabcolsep}{4pt}\vskip-0.5em
  \begin{tabular}{|c|ccc|ccc|ccc|ccc|ccc|ccc|}\hline
          & \multicolumn{6}{c|}{$p=4$} &\multicolumn{6}{c|}{$p=8$} &\multicolumn{6}{c|}{$p=16$} \\\hline
 $\alpha$ & \multicolumn{3}{c|}{Iterative} & \multicolumn{3}{c|}{GMRES}
 & \multicolumn{3}{c|}{Iterative} & \multicolumn{3}{c|}{GMRES}
 & \multicolumn{3}{c|}{Iterative} & \multicolumn{3}{c|}{GMRES}\\\hline
 $0$     & 1& 1& 1& 1& 1& 1& 1& 1& 1& 1& 1& 1& 1& 1& 1& 1& 1& 1\\
 $0.001$ & 4& 3& 3& 3& 3& 3& 5& 3& 3& 4& 3& 3& 6& 3& 3& 4& 3& 3\\
 $0.005$ & 6& 4& 4& 5& 3& 4&12& 5& 5& 7& 4& 4&13& 5& 4& 8& 5& 5\\
 $0.01$  & 8& 5& 4& 5& 4& 4&16& 6& 5& 8& 5& 5&38& 7& 7&11& 6& 6\\
 $0.05$  & -& 8& 6& 8& 6& 5& -&17&12&16& 7& 8& -&12&26&22& 9&10\\
 $0.1$   &32&10&11&10& 7& 6& -& -& -&18&11&11& -& -& -&26&14&15\\
 $1$     & -& -& -&20&19&19& -& -& -&45&38&38& -& -& -&86&63&62\\\hline\hline
 $0$     & 1& 1& 1& 1& 1& 1& 1&1&1& 1& 1& 1& 1& 1& 1& 1& 1& 1\\
 $0.001$ & 4& 2& 3& 3& 3& 3& 5&3&3& 4& 3& 3& 7& 4& 4& 6& 4& 4\\
 $0.005$ & 7& 3& 3& 5& 3& 3& -&5&4& -& 4& 4&30& 6& 6&12& 6& 5\\
 $0.01$  &11& 4& 4& 6& 4& 4& -&6&5&11& 5& 5& -&10&10&19& 7& 6\\
 $0.05$  & -& -& -&13& 7& 6& -&-&-&23&12&11& -& -& -&47&17&16\\
 $0.1$   & -&22&31&14& 9& 9& -&-&-&23&12&11& -& -& -&50&24&23\\
 $1$     & -& -& -&36&21&19& - &-&-&70&64&64& -& -& -& -&95&90\\\hline
\end{tabular}
\end{table}
\begin{table}
  \centering
  \caption{Schwarz iteration numbers in the wave guide setting}
  \label{Tab2}
  \setlength{\tabcolsep}{4pt}\vskip-0.5em
  \begin{tabular}{|c|ccc|ccc|ccc|ccc|ccc|ccc|}\hline
    & \multicolumn{6}{c|}{$p=4$} &\multicolumn{6}{c|}{$p=8$} &\multicolumn{6}{c|}{$p=16$} \\\hline
    $\alpha$ & \multicolumn{3}{c|}{Iterative} & \multicolumn{3}{c|}{GMRES}
 & \multicolumn{3}{c|}{Iterative} & \multicolumn{3}{c|}{GMRES}
 & \multicolumn{3}{c|}{Iterative} & \multicolumn{3}{c|}{GMRES}\\\hline
 $0$     & 1& 1&\,1\,\,& 1& 1& 1& 1& 1& 1& 1& 1& 1&1& 1& 1& 1& 1& 1\\
 $0.001$ & 3& 2&2& 3& 2& 2& 3& 2& 2& 3& 2& 2&3& 2& 2& 3& 2& 2\\
 $0.005$ & 5& 3&3& 4& 3& 3& 5& 3& 3& 4& 3& 3&5& 3& 3& 4& 3& 3\\
 $0.01$  & 7& 4&3& 4& 3& 3& 7& 4& 4& 5& 3& 3&7& 4& 4& 5& 4& 3\\
 $0.05$  &42&12&7& 7& 5& 4& -&16&12& 9& 5& 5&-&21&17&13& 7& 6\\
 $0.1$   & -& -&-& 9& 7& 6& -& -&-&14&12&11&-& -& -&20&17&17\\
 $1$     & -& -&-&26&23&24& -& -&-&48&47&47&-& -& -&59&68&65\\\hline\hline
 $0$     & 1& 1&1& 1& 1& 1& 1&1&1& 1& 1& 1& 1&1&1& 1& 1& 1\\
 $0.001$ & 3& 2&3& 3& 2& 2& 3&2&3& 3& 2& 2& 3&2&3& 3& 2& 2\\
 $0.005$ & 5& 3&4& 4& 3& 3& 6&3&4& 5& 3& 3& 7&3&4& 5& 4& 3\\
 $0.01$  & 8& 4&6& 5& 4& 4&12&4&6& 6& 4& 4&30&5&6& 7& 5& 4\\
 $0.05$  & -& -&-&10& 7& 6& -&-&-&16&11&10& -&-&-&22&16&16\\
 $0.1$   & -& -&-&13&11& 9& -&-&-&19&14&13& -&-&-&32&22&22\\
 $1$     & -& -&-&43&40&38& -&-&-&79&77&77& -&-&-& -& -& -\\\hline
\end{tabular}
\end{table}
The three columns within each 'Iterative' or 'GMRES' column correspond to Robin,
PMLs of thickness five times the mesh size and PMLs of thickness ten times
the mesh size on the left and right of the original domain.  The top
parts are for the smaller wavenumber experiment in \Cref{LowerWaveNumber},
and the bottom parts are for the larger wavenumber experiment in
\Cref{HigherWaveNumber}.  We first see that for $\alpha=0$, i.e. in the constant
wavenumber case, the factorization is exact, both the iterative version and
GMRES converge in one iteration step. As soon as we have however a non-constant
wavenumber, already for $\alpha=0.001$, the factorization is not exact any more.
Nevertheless the algorithms still converge well, up to $\alpha=0.01$ in the
smaller wavenumber case in the top parts of the tables, i.e. a one percent
variation in the wavenumber $k$. For larger contrast, the iterative version of
the algorithms can not be used any more, and GMRES deteriorates now rapidly, for
example if the contrast is at a factor of two, i.e. $\alpha=1$, GMRES iteration
numbers double when $p$ goes from 4 to 8, the two algorithms are not robust any
more. In the higher wavenumber case in the bottom parts of the tables, they
deteriorate even more rapidly for higher contrast. We can also see comparing the
last lines of the top and bottom parts of the tables that doubling the
wavenumber leads to a remarkable growth of the iteration numbers with GMRES as
soon as the contrast is large enough, and GMRES failed to converge in less than
hundred iterations at the bottom right.

We next perform the same set of experiments, but now using Robin or
PML conditions all around the original domain, see \Cref{ScatterSolFig},
\Cref{Tab3} and \Cref{Tab4}.
\begin{figure}
  \centering
  \mbox{\includegraphics[trim=0 -32 0 0,clip,width=0.44\textwidth]{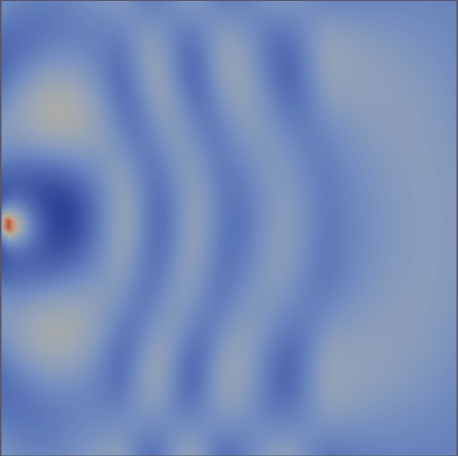}\qquad
  \includegraphics[width=0.505\textwidth,clip]{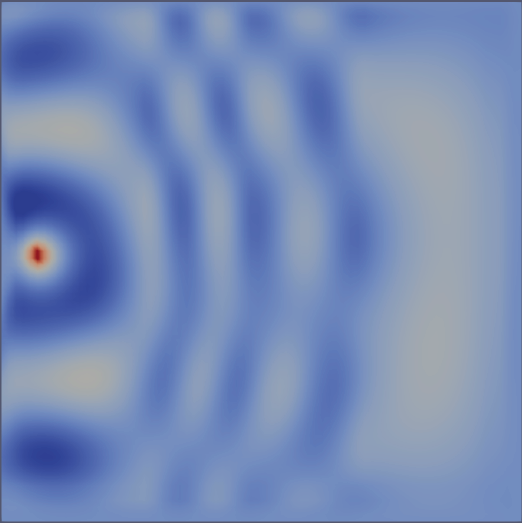}}\\
  \mbox{\includegraphics[trim=0 -32 0 0,clip,width=0.44\textwidth]{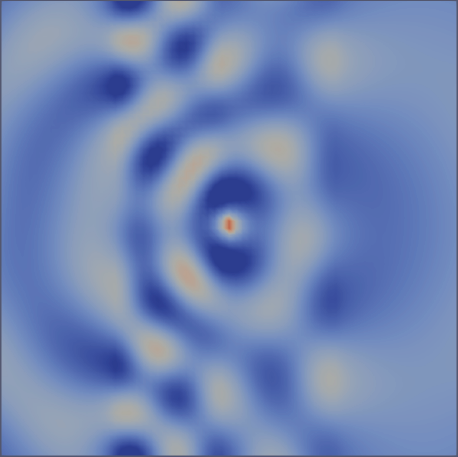}\qquad
  \includegraphics[width=0.505\textwidth,clip]{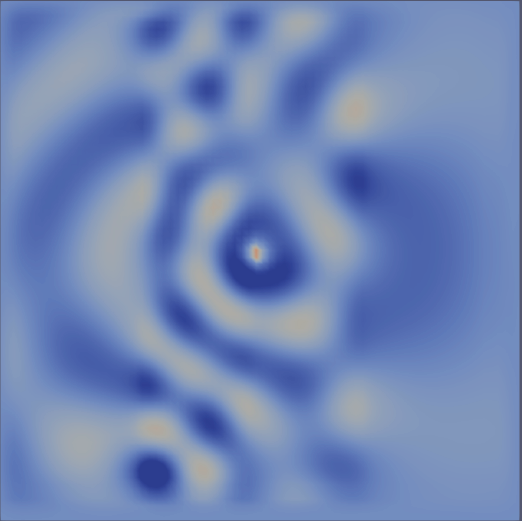}}
\caption{Scattered waves from a point source near the left boundary
  (first row) or near the center (second row), with Robin (first
  column) or PMLs (second column, bigger since the plot includes the PML region) all around the square domain. The
  media have four layers separated by vertical interfaces as in
  \Cref{LowerWaveNumber} with $\alpha=1$.}
  \label{ScatterSolFig}
\end{figure}
\begin{table}
  \centering
  \caption{LU iteration numbers for a domain with Robin or PMLs all around}
  \label{Tab3}
  \setlength{\tabcolsep}{4pt}\vskip-0.5em
    \begin{tabular}{|c|ccc|ccc|ccc|ccc|ccc|ccc|}\hline
    & \multicolumn{6}{c|}{$p=4$} &\multicolumn{6}{c|}{$p=8$} &\multicolumn{6}{c|}{$p=16$} \\\hline
    $\alpha$ & \multicolumn{3}{c|}{Iterative} & \multicolumn{3}{c|}{GMRES}
 & \multicolumn{3}{c|}{Iterative} & \multicolumn{3}{c|}{GMRES}
 & \multicolumn{3}{c|}{Iterative} & \multicolumn{3}{c|}{GMRES}\\\hline
 $0$     & 1& 1& 1& 1& 1& 1& 1&1&1& 1& 1& 1& 1&1&1& 1& 1& 1\\
 $0.001$ & 2& 2& 2& 2& 2& 2& 3&3&2& 3& 2& 2& 3&3&3& 3& 3& 3\\
 $0.005$ & 3& 3& 3& 3& 3& 3& 4&3&3& 4& 3& 3& 4&3&3& 4& 3& 3\\
 $0.01$  & 3& 3& 3& 3& 3& 3& 4&4&3& 4& 3& 3& 5&4&4& 5& 4& 3\\
 $0.05$  & 5& 5& 5& 5& 4& 4& 7&5&5& 6& 5& 5& 8&6&5& 8& 5& 5\\
 $0.1$   & 7& 6& 5& 6& 5& 5& 9&6&6& 8& 6& 6&12&7&7&10& 7& 6\\
 $1$     & -&31&27&15&12&12& -&-&-&32&25&22& -&-&-&58&34&29\\\hline\hline
 $0$     &1& 1& 1& 1& 1& 1& 1&1&1& 1& 1& 1& 1&1&1& 1& 1& 1\\
 $0.001$ &3& 2& 2& 3& 2& 2& 3&2&2& 3& 2& 2& 3&3&3& 3& 3& 3\\
 $0.005$ &3& 3& 3& 3& 3& 3& 4&3&3& 4& 3& 3& 5&3&3& 5& 3& 3\\
 $0.01$  &4& 3& 3& 4& 3& 3& 5&3&3& 5& 3& 3& 6&4&4& 6& 4& 4\\
 $0.05$  &6& 4& 4& 6& 4& 4&10&5&5& 8& 5& 5&14&6&6&11& 6& 6\\
 $0.1$   &8& 5& 5& 7& 5& 5&13&6&6&10& 6& 6&14&8&7&12& 8& 7\\
 $1$     &-& -&52&23&13&12& -&-&-&39&24&22& -&-&-&99&61&48\\\hline
\end{tabular}
\end{table}
\begin{table}  
  \centering
  \caption{Schwarz iteration numbers for a domain with Robin or PMLs all around}
  \label{Tab4}
  \setlength{\tabcolsep}{4pt}\vskip-0.5em
  \begin{tabular}{|c|ccc|ccc|ccc|ccc|ccc|ccc|}\hline
    & \multicolumn{6}{c|}{$p=4$} &\multicolumn{6}{c|}{$p=8$} &\multicolumn{6}{c|}{$p=16$} \\\hline
    $\alpha$ & \multicolumn{3}{c|}{Iterative} & \multicolumn{3}{c|}{GMRES}
 & \multicolumn{3}{c|}{Iterative} & \multicolumn{3}{c|}{GMRES}
 & \multicolumn{3}{c|}{Iterative} & \multicolumn{3}{c|}{GMRES}\\\hline
 $0$     &1&\,1\,\,&\,1\,\,& 1& 1& 1&\,\,1\,\,&1&1& 1& 1& 1&\,1\,\,&1&1& 1& 1& 1\\
 $0.001$ &3&2&2& 2& 2& 2&2&2&2& 2& 2& 2&2&2&2& 2& 2& 2\\
 $0.005$ &3&3&3& 3& 3& 3&3&3&3& 3& 3& 3&3&3&3& 3& 3& 3\\
 $0.01$  &3&3&3& 3& 3& 3&3&3&3& 3& 3& 3&3&3&3& 3& 3& 3\\
 $0.05$  &5&4&4& 4& 4& 4&5&5&4& 4& 4& 4&5&5&4& 4& 4& 4\\
 $0.1$   &6&5&5& 5& 5& 4&6&6&5& 5& 5& 5&7&6&5& 6& 5& 5\\
 $1$     &-&-&-&23&33&37&-&-&-&35&44&44&-&-&-&41&43&48\\\hline\hline
 $0$     &1&1&1& 1& 1& 1&1&1&1& 1& 1& 1&1&1&1& 1& 1& 1\\
 $0.001$ &3&2&2& 2& 2& 2&3&3&2& 2& 2& 2&3&3&2& 2& 2& 2\\
 $0.005$ &3&3&3& 3& 3& 3&3&3&3& 3& 3& 3&3&3&3& 3& 3& 3\\
 $0.01$  &4&3&3& 3& 3& 3&3&3&3& 3& 3& 3&3&3&3& 3& 3& 3\\
 $0.05$  &5&5&5& 5& 4& 4&5&5&5& 5& 4& 4&5&5&5& 5& 5& 4\\
 $0.1$   &7&7&6& 6& 6& 5&9&7&6& 7& 6& 5&9&7&7& 7& 6& 6\\
 $1$     &-&-&-&32&43&45&-&-&-&49&54&61&-&-&-&79&94&75\\\hline
\end{tabular}
\end{table}
We see that the outer radiation conditions are better than the wave
guide setting for the two algorithms, they work now in the iterative
version up to about a 10 percent variation of the wavenumber in this
specific experiment. As soon as however there is a variation as large
as a factor of two, the algorithms are not effective solvers any more,
the iterative versions diverge, and GMRES iteration numbers
deteriorate when the number of subdomains increases, and also when the
wavenumber is doubled. One thus has to be careful when claiming
  optimality of algorithms in this class of Helmholtz
  preconditioners.

\section{Conclusions}

We have seen that for a large class of new Helmholtz solvers the
underlying mathematical technique is the same: the solvers are based
on a nilpotent iteration given by a double sweep in a sequential
domain decomposition that uses the exact Dirichlet-to-Neumann
operators for transmission conditions at the interfaces. At the linear
algebra level, the corresponding algorithm is based on an exact block
LU factorization of the matrix. From domain decomposition, it is known
that when the solves are performed in parallel, instead of in a
sweeping fashion, the method is still nilpotent, but convergence is
then achieved in a number of iterations corresponding to the number of
subdomains \cite{NRS94,nataf1995factorization}. If the domain
decomposition is more general, such that the connectivity graph
includes cycles and thus cross points between subdomains are present,
we have given an algorithm at the continuous level based on the
discrete algorithm in \cite{GK1} that still is nilpotent. This
algorithm requires communication of every subdomain with every other
one, and convergence is achieved in two iterations. While there is
currently no practical realization of this algorithm, the fact that
the algorithm converges in two iterations, independently of the number
of subdomains, suggests that a coarse space component is active in
this optimal algorithm. Coarse spaces leading to nilpotent iterations
have first been described in the lecture notes
\cite{gander2012methode}, and then in
\cite{gander2014new,gander2014discontinuous}, with successful
approximations in \cite{gander:2016:SHEM,gander:2016:AON}.  The
property of domain decomposition methods in general to be nilpotent
has only very recently been investigated in more detail, see
\cite{gander:2016:ONS}.  None of the Helmholtz solvers we
described in this manuscript is using coarse space techniques
at the time of writing, which lets us expect that this area of
research will remain very active over the coming years.

\bibliographystyle{siamplain}
\bibliography{references}
\end{document}